\def\today{\ifcase\month\or
  January\or February\or March\or April\or May\or June\or
  July\or August\or September\or October\or November\or December\fi
  \space\number\day, \number\year}
\newtheorem{proposition}{Proposition}[section]
\newtheorem{lemma}[proposition]{Lemma}
\newtheorem{corollary}[proposition]{Corollary}
\newtheorem{theorem}[proposition]{Theorem}
\newtheorem*{theorem*}{Theorem}
\theoremstyle{definition}
\theoremstyle{remark}
\newtheorem*{remark}{Remark}
\numberwithin{equation}{section}
\DeclareMathOperator{\sgn}{\mathrm{sgn}}
\DeclareMathOperator*{\Res}{Res}
\DeclareMathOperator{\TV}{TV}
\DeclareMathOperator{\li}{li}
\DeclareMathOperator{\err}{err}
\DeclareMathOperator{\tri}{tri}
\DeclareMathOperator{\sech}{sech}
\DeclareMathOperator{\sinc}{sinc}
\DeclareMathOperator{\csch}{csch}
\DeclareMathOperator{\Ei}{Ei}
\DeclareMathOperator{\arsinh}{arsinh}
\newcommand{\C}{\mathbb{C}}
\newcommand{\R}{\mathbb{R}}
\newcommand{\Z}{\mathbb{Z}}
\colorlet{darkred}{red!70!black}
\colorlet{darkblue}{blue!70!black}
\colorlet{darkgreen}{black!60!white}
\tikzset{
  % mid-arrow *with* LaTeX arrow tip (for the contour)
  contour/.style={
    postaction={decorate},
    decoration={markings, mark=at position 0.5 with {\arrow{latex}}}
  },
  % endpoint arrow *with* LaTeX tip (for tiny orientation stubs if used)
  contourend/.style={->, >=latex},
  % motion arrows keep the normal arrow tip
  motion/.style={->},
  motiondotted/.style={dotted, ->},
}
\begin{document}

%\title{Optimal use of finitely many zeros to estimate sums}
%\title[Optimal bounds on sums from finite data on poles]{Optimal bounds on sums
%from finite data on poles, or:\\ 
%as if Mellin transforms could have compact support}
%the optimal use of finite information on Dirichlet series}
\title{Optimal bounds for sums of non-negative arithmetic functions}
%\title{Optimal bounds on partial sums of Dirichlet series:\\ non-negative case}
%\title{Optimal estimates for arithmetic sums: non-negative functions}
%\title{Optimal estimates for sums of arithmetic functions: non-negative case}
\author[Chirre and Helfgott]{Andr\'{e}s Chirre and  Harald Andr\'{e}s Helfgott}

\date{\today}
%\subjclass[2000]{11N35}

\address{Departamento de ciencias - secci\'on matem\'aticas, Pontificia Universidad Cat\'olica del Per\'u, Lima, Per\'u}
\email{cchirre@pucp.edu.pe}

\address{IMJ-PRG, Université Paris Cité, Bâtiment Sophie Germain, 8 Place Aurélie Nemours, 75205 Paris Cedex 13, France.} 
\email{harald.helfgott@gmail.com}

\allowdisplaybreaks
\numberwithin{equation}{section}

  %;  explicit bounds that were known were obtained by indirect means.
  %: a direct
%  analytic approach was considered unfeasible, and the best bounds combine
%  highly elaborate elementary procedures in the style of Chebyshev with
%  an iteration that uses explicit, non-elementary bounds on $\psi(x)-x$.

\begin{abstract}
Let $A(s) = \sum_n a_n n^{-s}$ be a Dirichlet series admitting meromorphic continuation
to the complex plane. Assume we know the location of the poles of $A(s)$ with $|\Im s| \leq T$, and their residues, for some large constant $T$. It is natural to ask how such finite spectral information may be best used to estimate partial sums $\sum_{n\leq x} a_n$.

  Here, we prove a sharp, general result on sums $\sum_{n\leq x} a_n n^{-\sigma}$ for 
  $a_n$ non-negative, giving an
  optimal way to use information on the poles of $A(s)$ with $|\Im s|\leq T$,
  with no need for zero-free regions. We give not just bounds, but an
  explicit formula with compact support.
  Our bounds on $\psi(x)-x$ are, unsurprisingly, better and often simpler than
  a long list of existing explicit versions of the Prime Number Theorem.
 We treat the case of $M(x)$ and similar functions in a companion paper.
  
  Our solution mixes a Fourier-analytic approach in the style of Wiener--Ikehara with contour-shifting, using optimal approximants of Beurling--Selberg type found in (Graham--Vaaler,
  1981). 
\end{abstract}

\maketitle

  \section{Introduction}

  \subsection{Basic problem}

  Many problems in analytic number theory involve
  estimating sums $\sum_{n\leq x} a_n$ of arithmetic functions.
  Here ``arithmetic function'' means ``a sequence $\{a_n\}_{n=1}^\infty$ that number 
  theorists study'' or, most often, a sequence $\{a_n\}$
  such that the Dirichlet
  series $\sum_n a_n n^{-s}$ converges absolutely on a right half-plane and has
  meromorphic continuation to a function $A(s)$ on $\mathbb{C}$.

  Two basic examples to keep in mind are:
  %-- because of their similarities and their differences -- are
  \begin{itemize}
    \item $a_n=\Lambda(n)$, where
      $\Lambda$ is the von Mangoldt function;
      %($\Lambda(n) = \log p$ if $n$ is a prime power $n=p^k$, $k\geq 1$, and $\Lambda(n)=0$ otherwise);
      then $A(s) = -\zeta'(s)/\zeta(s)$;
    \item $a_n = \mu(n)$, where $\mu$ is the Möbius function; then
      $A(s) = 1/\zeta(s)$.
  \end{itemize}
  We will focus on $\Lambda(n)$ and other non-negative functions here.
  %The case of $a_n=\mu(n)$ is
  %in some sense the critical one, in that no pure analytic bounds were known. 
%  Our task is to estimate sums of the form $\sum_{n\leq x} a_n n^{-s}$
%  explicitly -- that is,
%  to give actual bounds on these sums, or on the error terms in asymptotic
%  expressions for these sums. 
    %In particular: what can we prove if we are
  %given detailed information (location, residues) on the poles of $A(s)$ with8
  %$|\Im s|\leq T$ for some large finite $T$, and no information
  %on the poles of $A(s)$ for $|\Im s|>T$?
%These examples are both paradigmatic and bread-and-butter, in the sense that the need for estimates on $\sum_{n\leq x} \mu(n)$ and $\sum_{n\leq x} \Lambda(n)$ is everywhere in analytic number theory.
%  \subsection{Dichotomy}
  % -- both of them bread-and-butter in analytic number theory.
For $\Lambda(n)$ in particular, there were several sorts of useful estimates, thanks to the fact
that, for any meromorphic function $f$, the residue of $f'(s)/f(s)$ at a zero $\rho$ of $f(s)$ is simply the multiplicity of $\rho$. The situation was nevertheless unsatisfactory, in that the best way to
use information on the zeros of $\zeta(s)$ up to a height $T$ was not known. By ``zeros up to a height'',
we mean those with $|\Im s|\leq T$; they can be determined by rigorous computational means up to large,
finite $T$.

\begin{comment}
see \S \ref{subs:predif}. For $a_n = \mu(n)$, however, 
the situation was deeply unsatisfying. There were
  no correct, direct analytic bounds on the Mertens function $M(x) = \sum_{n\leq x} \mu(n)$ in the literature. (See \cite{RamEtatLieux} for a survey.)

This situation gives one answer to the question ``why care about explicit estimates?''. We know that the Prime Number Theorem (that is, $\sum_{n\leq x} \Lambda(n) = (1+o(1)) x$) and $M(x) = o(x)$ are equivalent, yet the two problems turn out to be qualitatively different once we ask for explicit bounds.
 The underlying analytical issue is that the residues of $1/\zeta(s)$ appear in the explicit formula for $M(x)$, and we know of no general, non-computational way to bound them.% 

 What we can do is find all poles of $A(s)=1/\zeta(s)$ with $0<|\Im s|\leq T$
 for some large constant $T$
 (that is, all zeros of $\zeta(s)$ in that region)
 and compute the residue $1/\zeta'(s)$ of $A(s)$ at each pole. These computations can be done rigorously (\S \ref{subs:compinp}).
% \cite{zbMATH07381909}. %; the algorithm for finding all zeros of $\zeta(s)$ in a region goes back to Turing (see \cite{zbMATH05251036}).
 The question is then how to best use this finite information
 to estimate $M(x)$. One can ask oneself the same question about %$\psi(x) = %\sum_{n\leq x} \Lambda(n)$, or about partial sums $\sum_{n\leq x} a_n$ or
 %$\sum_{n\leq x} a_n n^{-s}$ of 
 any other $A(s)$: shouldn't knowledge of finitely many poles be enough to give good bounds
 on partial sums?
\end{comment}

 A naïve student might set out by first looking for weight functions whose Mellin transform is compactly supported. There is
no such thing, but, as we will see, there is a conceptually clean way to proceed
that is sound and amounts to the same.

 \subsection{Results}

% We show how to use information on the poles of $A(s)$ with $|\Im s|\leq T$ optimally, without using any zero-free regions or other information for
% $|\Im s|> T$.  

%We give general results for $a_n$ bounded, such as $\mu(n)$, and for
%$a_n$ non-negative, such as $\Lambda(n)$.

%We consider sums $\sum_{n\leq x} a_n n^{-\sigma}$, not just $\sum_{n\leq x} a_n$.
%; there is plenty of literature
%for $\sigma = 0,1$.

%They reduce the issue
%of choosing a weight to extremal problems on band-limited functions. These
%problems have already been solved (\S \ref{sec:modidon}), at least
%in the cases needed in the current paper.

%We thus obtain bounds on $\sum_{n\leq x} a_n n^{-\sigma}$.
%Here are the
%cases of $a_n = \mu(n)$, $a_n = \Lambda(n)$, $\sigma = 0, 1$.

% \subsubsection{Estimates for $a_n$ bounded}

 %The term $\iota_{\delta,\sigma}$ in our main formula \eqref{eq:lilece} is
 %inherent to using data on $A(s)$ only up to $T$. Then come the

 We will need notation for two very mild technical conditions.
 %contribution $\sum_\rho$ of poles up to $T$
 %and a tiny error term
 %$\varepsilon(x,T)$. 
 We will ask for a function to be bounded on a 
``ladder'', that is, a union of segments
\begin{equation}\label{eq:sapli}%S_{\vec{\sigma}}(T) = 
S = ((-\infty,1] \pm i T)\cup
\bigcup_n (\sigma_n+i[-T,T])\;\;\;\;
\text{for some 
$\{\sigma_n\}_{n=0}^\infty$ with $\sigma_0 = 1$ and $\sigma_n\to -\infty$,}
\end{equation}
so as to be able to conveniently shift a contour to $\Re s = -\infty$.
(``Bounded'' here implies in particular that the function has no poles on $S$.)
Sums of the form 
$\sum_{\rho\in \mathcal{Z}}$ in \eqref{eq:quentino} should be read as $\lim_{n\to\infty}
\sum_{\rho\in \mathcal{Z}: \Re \rho > \sigma_n}$. 

\begin{figure}[ht] 
\scalebox{.6}{
\begin{tikzpicture}[>=latex,scale=0.6,
                    horizaux/.style={line width=1pt},   % thin, solid
                    vertaux/.style={gray,line width=0.25pt},
                    unionarrow/.style={-Latex,line width=0.8pt}]
\def\T{4}
\def\Left{-24}
\def\CrossSz{0.08}
\def\RightScale{1.6}

% y–axis
\draw[vertaux,->] (0,-5) -- (0,5);

% LEFT half-plane
\draw[vertaux] (\Left,0) -- (0,0);                 % x-axis to the left (no arrow)

\foreach \y in {\T,-\T}
  \draw[horizaux] (0,\y) -- (\Left,\y);      % horizontal half-lines (solid, thin)

\foreach \x in {-1,-4.5, -19}
  \draw[horizaux] (\x,-\T) -- (\x,\T);        % vertical guides (dotted)

\filldraw[black] (-1, 0) circle (1.2pt) node[above right] {\(\sigma_1\)};
\filldraw[black] (-4.5, 0) circle (1.2pt) node[above right] {\(\sigma_2\)};
\filldraw[black] (-19, 0) circle (1.2pt) node[above right] {\(\sigma_3\)};

% RIGHT half-plane (horizontally magnified)
\begin{scope}[xscale=\RightScale,clip]

  \filldraw[black] (1, 0) circle (1.2pt) node[above right] {\(\sigma_0=1\)};

  \draw[vertaux,->] (0,0) -- (2.2,0);               % x-axis to the right

  \draw[horizaux] (1,-\T) -- (1,\T);      % main vertical line

  % round endpoint dots
  \foreach \y in {\T,-\T}{
    \begin{scope}[shift={(1,\y)},xscale={1/\RightScale}]
      \fill (0,0) circle[radius=1pt];
    \end{scope}
  }

  \node[below right=2pt] at (1,-\T) {$1-iT$};
  \node[above right=2pt] at (1,\T)  {$1+iT$};

  % horizontal stubs (solid, thin)
  \foreach \y in {\T,-\T}
    \draw[horizaux] (1,\y) -- (0,\y);

  \coordinate (SUnionTarget) at (1,2.2);
\end{scope}

% Big S+caption placed further down/right; arrow pointing to the union (here, to the vertical segment)
\node[font=\Large\bfseries] (Slabel) at ($(SUnionTarget)+(6,1)$) {$S$};
\draw[unionarrow] (Slabel.west) to[out=160,in=-20] (SUnionTarget);

\end{tikzpicture}
}
\end{figure}

The second bit of nomenclature, needed for the same shift, is as follows. An {\em admissible contour} will be a continuous path $s(t)$ such that the intersection of the contour with any strip of width $1$ consists of finitely many smooth arcs of bounded total length.
 %\subsubsection{Estimates for $a_n$ non-negative}
 
Lastly, we write $O^*(R)$ to mean a quantity of absolute value at most $R$.
% To estimate $\psi(x)$, it is enough to verify the Riemann hypothesis up to $T$, i.e., verify that all zeros of $\zeta(s)$ with $0<|\Im s|\leq T$ satisfy $\Re s = 1/2$. The Riemann hypothesis has been verified rigorously up to $T = 3.00001\cdot 10^{12}$ \cite{zbMATH07381909}.
 
 \begin{theorem}\label{thm:mainthmB}
Let $\{a_n\}_{n=1}^\infty$, $a_n\geq 0$ for all $n$. Assume that
 $A(s)=\sum_{n}a_n n^{-s}$ converges absolutely for $\Re{s}>1$ and 
 extends meromorphically to $\mathbb{C}$, with a simple pole at $s=1$ and no other poles
 with $\Re s = 1$, $|\Im s|\leq T$ for some $T>0$.
 
 Assume that $A(s) T^s$ is bounded on some ladder $S$ as in \eqref{eq:sapli} and on some admissible contour
 $\mathcal{C}$ going from $1$ to $\Re s = -\infty$
  within $R_{1/4}=(-\infty,1] + i [0,\frac{T}{4}]$ in such a way that
 all real poles of $A(s)$ lie below it and all poles $\rho$ of $A(s)$ with $\Im \rho>0$ lie above it.

Then, for any $\sigma\in \mathbb{R}$ and any $x>T$,
\begin{equation}\label{eq:quentino}\begin{aligned}
\frac{\sum_{n\leq x} a_n n^{-\sigma}}{x^{1-\sigma}}  &=
\frac{\pi}{T}
{\sum_{\rho\in \mathcal{Z}_{A,\mathbb{R}} \cup \{\sigma\}}
  \Res_{s=\rho} \coth \frac{\pi (s-\sigma)}{T} A(s) x^{s-1}} +
  \frac{2\pi}{T}
\Im\!\!\!\sum_{\rho\in \mathcal{Z}_{A}^+(T)} \omega^+_{T,\sigma}(\rho) 
x^{\rho-1} \cdot \Res\limits_{s=\rho} A(s) 
% \Res_{s=\rho} \frac{\coth(\pi i \theta(s))}{2} A(s) x^s
\\&+ O^*\left(\frac{\pi}{T}
\sum_{\rho\in \mathcal{Z}_{A,\mathbb{R}}} \Res_{s=\rho} A(s) x^{s-1} + \frac{2 I_{+,\mathcal{C}}(T)}{T^2} + 
\frac{2\pi}{T}  \Re\!\!\!\sum_{\rho\in \mathcal{Z}_{A}^+(T)} 
\theta_{T,1}(\rho) x^{\rho-1}\cdot
\Res\limits_{s=\rho} A(s) \right) 
,
\end{aligned}\end{equation}
where $\mathcal{Z}_{A}^+(T)$ is the set of poles $\rho$
   of $A(s)$ with $0<\Im \rho< T$, $\mathcal{Z}_{A,\mathbb{R}}$ is the set of real poles of $A(s)$,
   $$\omega^+_{T,\sigma}(s) =\!-\theta_{T,\sigma}(s) \cot \pi \theta_{T,\sigma}(s) + c_{T,\sigma},\;\,
   \theta_{T,\sigma}(s)=1-\frac{s-\sigma}{i T},\;\,
   c_{T,\sigma} = \theta_{T,\sigma}(1+iT) \cot \pi \theta_{T,\sigma}(1+ i T),$$
   $$I_{+,\mathcal{C}}(T) =
\int_0^\infty t |F(1 - t + i T)| x^{-t} dt +
\left|\int_{\mathcal{C}} \Phi(s) F(s) x^{s-1} ds\right|,\;\;\;\; F(s) = A(s) - \frac{\Res_{s=1} A(s)}{s-1},
$$
where $\Phi(s)$ is holomorphic on a neighborhood of $R_{1/4}$,
with $\Phi(1)=0$ and $|\Phi'(s)|\leq 1$ for $s\in R_{1/4}$, and
the restriction $\Phi_{(-\infty,1)}$ is real-valued and of constant sign.
   \end{theorem}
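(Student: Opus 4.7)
The strategy combines a one-sided Beurling--Selberg sandwich (legitimate because $a_n\geq 0$) with a contour shift all the way to $\Re s=-\infty$. After substituting $u=\log(x/n)$, the indicator $\mathbf{1}_{n\leq x}$ becomes the Heaviside $\mathbf{1}_{u\geq 0}$, which I would bracket by Graham--Vaaler's optimal one-sided entire approximants $\mathcal{H}_\pm$ of exponential type $T$. Pulling back via $\phi_\pm(y):=\mathcal{H}_\pm(-\log y)$, one gets weights with $\phi_-\leq \mathbf{1}_{(0,1]}\leq \phi_+$, so non-negativity of $a_n$ yields
\[
\sum_n a_n n^{-\sigma}\phi_-(n/x) \;\leq\; \sum_{n\leq x}a_n n^{-\sigma} \;\leq\; \sum_n a_n n^{-\sigma}\phi_+(n/x).
\]
Mellin inversion then represents each bracketing sum, for $c>\max(1,\sigma)$, as
\[
\sum_n a_n n^{-\sigma}\phi_\pm(n/x) \;=\; \frac{1}{2\pi i}\int_{(c)} A(s)\,x^{s-\sigma}\,\tilde\phi_\pm(s-\sigma)\,ds,
\]
where $\tilde\phi_\pm$ is the Mellin transform of $\phi_\pm$.

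The key analytic input is an explicit partial-fraction formula for $\tilde\phi_\pm$, derived from the Graham--Vaaler expansion of $\mathcal{H}_\pm$ in $\sinc$-translates; this makes $\tilde\phi_\pm$ meromorphic on $\mathbb{C}$ with simple poles on the lattice $iT\mathbb{Z}$. Elementary manipulation should identify the symmetric average $\tfrac12(\tilde\phi_++\tilde\phi_-)(s-\sigma)$ with $(\pi/T)\coth(\pi(s-\sigma)/T)$, and show that the value of this average at a complex pole $\rho$ of $A$ with $0<\Im\rho<T$ equals $(2\pi/T)\omega^+_{T,\sigma}(\rho)$; the subtracted constant $c_{T,\sigma}$ in the definition of $\omega^+_{T,\sigma}$ is pinned by requiring $\omega^+_{T,\sigma}(1+iT)=0$, so that the horizontal segment $\Im s = T$ of the deformed contour contributes no isolated residue there. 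The antisymmetric piece $\tfrac12(\tilde\phi_+-\tilde\phi_-)$ correspondingly produces the $(2\pi/T)\theta_{T,1}$ factor in the error term.

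I would then shift the contour leftward, using the ladder $S$ outside the slab $|\Im s|\leq T$ and the admissible $\mathcal{C}$ inside $R_{1/4}$. Boundedness of $A(s)T^s$ on $S$ together with decay of $x^s$ for $\Re s\to -\infty$ kills the tails (and legitimizes the sum over $\mathcal{Z}_{A,\mathbb{R}}$ as the limit along the $\sigma_n$); admissibility of $\mathcal{C}$ controls the deformation within the critical strip. The shift picks up residues at (i) the real poles of $A$ and the pole of $\coth$ at $s=\sigma$, giving the first main sum; and (ii) the complex poles of $A$ with $0<|\Im\rho|<T$ --- the poles $\rho$ and $\bar\rho$ yield conjugate residues since $a_n\in\mathbb{R}$, so pairing them into $\mathcal{Z}_A^+(T)$ and taking $\Im$-parts gives the $\omega^+_{T,\sigma}(\rho)$ main term. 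The leftover integral splits into the deformed piece along $\mathcal{C}$ --- bounded by $\left|\int_\mathcal{C}\Phi(s)F(s)x^{s-1}ds\right|$ after subtracting the simple pole of $A$ at $s=1$ to define $F$ and inserting the auxiliary factor $\Phi$ vanishing at $1$ --- and the horizontal segments at $\Im s = \pm T$, bounded by $\int_0^\infty t|F(1-t+iT)|x^{-t}dt$, the weight $t$ reflecting the local growth of $\tilde\phi_\pm$ toward its pole at $\sigma+iT$. Averaging the formulas for $\phi_+$ and $\phi_-$ gives the main terms of \eqref{eq:quentino}, while the sandwich thickness $\sum_n a_n n^{-\sigma}(\phi_+-\phi_-)(n/x)$, computed via the same contour integral with $\tilde\phi_+-\tilde\phi_-$, is responsible for the $\Re$-part $\theta_{T,1}$ error.

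The principal technical obstacle is the identification in the second paragraph: writing $\tilde\phi_\pm$ explicitly from Graham--Vaaler's formulas, performing the symmetric/antisymmetric decomposition, and matching the result term-by-term to $\coth(\pi(s-\sigma)/T)$, $\omega^+_{T,\sigma}$, $\theta_{T,\sigma}$, and the normalizing constant $c_{T,\sigma}$. A secondary difficulty is the role of $\Phi$: the constraints $\Phi(1)=0$, $|\Phi'|\leq 1$ on $R_{1/4}$, and $\Phi$ real of constant sign on $(-\infty,1]$ must be engineered so that $\int_\mathcal{C}\Phi(s)F(s)x^{s-1}ds$ is simultaneously a valid upper bound for the deformed integral (the vanishing of $\Phi$ at $s=1$ compensating for the regularization of $A$ there) and flexible enough to be near-optimal in applications; verifying these normalizations is where most of the bookkeeping lives.
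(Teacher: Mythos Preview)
You have the right high-level architecture (one-sided sandwich plus contour shift), but two structural points are off, and the second is the crux of the argument.

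First, your Mellin transform $\tilde\phi_\pm(w)$ does not converge on the line $\Re s = c$: since $\phi_\pm(y) = \mathcal{H}_\pm(-\log y)$ decays only like $1/(\log y)^2$ as $y\to\infty$, the integral $\int_1^\infty \phi_\pm(y)\, y^{w-1}\, dy$ diverges whenever $\Re w > 0$. The paper sidesteps this by approximating not the Heaviside but the truncated exponential $I_\lambda(u) = e^{-\lambda u}\mathds{1}_{\sgn(\lambda)u\geq 0}$ with $\lambda = 2\pi(\sigma-1)/T$; the Graham--Vaaler extremal majorant and minorant of $I_\lambda$ (the Heaviside case would be Beurling--Selberg, the $\lambda\to 0$ limit) have Fourier transforms $\varphi_\pm$ supported on $[-1,1]$, and the integral is then over the \emph{finite} segment $[1-iT, 1+iT]$ on the edge $\Re s = 1$ (Prop.~\ref{prop:edgepole}), not an infinite vertical line.

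Second, and this is the mechanism you are missing: $\varphi_\pm$ on $[-1,1]$ is not the restriction of a single meromorphic function, so there is no meromorphic $\tilde\phi_\pm$ with poles on $iT\mathbb{Z}$ to shift. By Prop.~\ref{prop:grahamvaaler} one has $\varphi_\pm(t)=\Phi_\nu^{\pm,\circ}(\sgn(\lambda)t) + \sgn(\lambda)\sgn(t)\,\Phi_\nu^{\pm,\star}(\sgn(\lambda)t)$, and the factor $\sgn(t)$ forces a \emph{piecewise} holomorphic extension --- one formula on $\Im s > 0$, a different one on $\Im s < 0$. The contour shift (Lemma~\ref{lem:kolobrz2}) therefore writes $G = G^\circ + \sgn(\Im s)\,G^\star$ and, using $G^\star(\bar s) = -\overline{G^\star(s)}$, leaves behind a \emph{seam} integral $\int_{\mathcal{C}} G^\star x^s\, ds$ in addition to the horizontals at $\Im s = \pm T$. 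That is where $\int_{\mathcal{C}}\Phi(s) F(s) x^{s-1}\, ds$ comes from; $\Phi$ is not engineered but is literally $T\,\Phi_\nu^{\pm,\star}(\sgn(\lambda)z(s))$, and the conditions $\Phi(1)=0$, $|\Phi'|\leq 1$, real of constant sign on $(-\infty,1)$ are proved consequences (Lemma~\ref{lem:benvenuto2}), not imposed constraints. The $\coth$ at real poles arises because only $G^\circ$ contributes at poles inside $R_{\mathcal{C}}$ and $\tfrac12(\Phi_\nu^{+,\circ}+\Phi_\nu^{-,\circ}) = \tfrac12\coth\tfrac{w}{2}$; your symmetric/antisymmetric split in $\pm$ is correct at the \emph{complex} poles (it gives $\omega^+_{T,\sigma}$ and $\theta_{T,1}$, cf.\ Lemma~\ref{lem:omlet}), but it does not by itself produce the real-pole terms or explain the seam contour.
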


This is an explicit formula, that is, a result expressing a partial sum
as what may be called a sum over the spectral side -- classically,
a sum $\sum_\rho x^\rho$ over zeros $\rho$ of $\zeta(s)$. Here we have two sums on the
first line: the first one goes over real poles -- corresponding, for $A(s) = -\zeta'(s)/\zeta(s)$, to
the pole at $s=1$, the pole at $s=\sigma$ of the weight function, and the
trivial (that is, real) zeros of $\zeta(s)$ -- and the second one over complex poles,
corresponding to non-trivial zeros.
Then there are the third and fourth sums, on the second line, within $O^*()$. Error terms are inevitable because of the restriction $|\Im s|\leq T$; we shall soon see in what sense the ones here are best possible. Notice one can still 
get cancellation in the terms within $O^*()$.

The terms in the first
sum other than $\rho=1$ and $\rho=\sigma$ will typically be negligible. The second and fourth sums -- that is, the sums over complex poles $\rho \in \mathcal{Z}_A^+(T)$ --   will give the main secondary term in a broad but finite range; they will be dominated by the term $\rho=1$ in the third
sum for $x$ very large.  The term $\rho=1$ in the third sum gives us the term $\frac{\pi}{T} \Res_{s=1} A(s)$, dominant for large $x$.

For $\sigma\ne 1$, the total contribution of $\rho=1$ will be 
$\left(\frac{\pi}{T} \coth \frac{\pi (1-\sigma)}{T} +O^*\left(\frac{\pi}{T}\right)\right) \Res_{s=1} A(s)$. We will see that
$\frac{\pi}{T} \coth \frac{\pi (1-\sigma)}{T}$ is the right term -- not an artifact of the method -- and $O^*\left(\frac{\pi}{T}\right)$
is optimal in a strong sense: Proposition \ref{prop:counterxe} states that, for $\sigma=0$, we can 
construct $A(s)$ satisfying the conditions such that $O^*\left(\frac{\pi}{T}\right)$ is sharp; moreover, that error 
is centered on $\frac{\pi}{T} \coth \frac{\pi}{T}$, which comes clearly through.

The contribution of $\rho=\sigma$ for $\sigma\ne 1$, $\sigma\notin \{-2,-4,\dotsc\}$ will be 
$\frac{A(\sigma)}{x^{1-\sigma}}$. This term is negligible for $\sigma=0$, say, but it becomes dominant for
$\sigma>1$, as makes sense: $\sum_{n\leq x} a_n n^{-\sigma}$ then converges.

The weights $\omega_{T,\sigma}^+(s)$, $\theta_{T,1}$ are optimal for our support $[-T,T]$. 
They come from the Fourier transform of an extremal function in the sense of Beurling-Selberg found by Graham-Vaaler \cite{zbMATH03758875}.

The second line in \eqref{eq:quentino} does not depend on $\sigma$. In particular, $\theta_{T,1}$ is not a typo for $\theta_{T,\sigma}$.

Let us see what Thm.~\ref{thm:mainthmB} yields for the primes. 
When we say that the Riemann hypothesis holds up to height $T$, we mean that all non-trivial 
zeros of $\zeta(s)$ with $0<\Im s \leq T$ have real part $\Re s = \frac{1}{2}$.
 \begin{corollary}\label{cor:psi}
 Assume the Riemann hypothesis holds up to height $T\geq 10^7$. For $x> \max(T,10^9)$,
   \[\left|\psi(x)-x\cdot \frac{\pi}{T} \coth \frac{\pi}{T}\right|\leq \frac{\pi}{T-1}\cdot x + \left(\frac{1}{2\pi} \log^2 \frac{T}{2\pi} - \frac{1}{6\pi} \log \frac{T}{2\pi}\right) \sqrt{x},\]
   \[\left|\sum_{n\leq x} \frac{\Lambda(n)}{n} - (\log x - \gamma)\right|\leq  
   \frac{\pi}{T-1} + \left(\frac{1}{2\pi} \log^2 \frac{T}{2\pi} - \frac{1}{6\pi} \log \frac{T}{2\pi}\right) \frac{1}{\sqrt{x}}
   ,\]
%   If $x\geq T^2$, $T-2$ can be replaced by $T-1$ in both cases.
where $\gamma=0.577215\dotsc$ is Euler's constant.
\end{corollary}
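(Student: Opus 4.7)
The plan is to apply Theorem~\ref{thm:mainthmB} with $A(s) = -\zeta'(s)/\zeta(s)$, taking $\sigma = 0$ for the bound on $\psi(x)$ and $\sigma = 1$ for the bound on $\sum_{n\leq x}\Lambda(n)/n$. The required input is standard: $A(s)$ has a simple pole at $s=1$ with residue $1$, simple real poles at the trivial zeros $s = -2k$ with residue $1$, and complex poles at the non-trivial zeros. To verify the hypotheses I pick a ladder with $\sigma_n = -(2n+1)$ (midway between trivial zeros) and control $A(s)T^s$ on $S$ via classical bounds on $\zeta'/\zeta$ away from zeros; the contour $\mathcal{C}$ is chosen inside $R_{1/4}$ with a linear weight $\Phi$ vanishing at $s=1$.

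Next I evaluate the residues appearing in the main sum of \eqref{eq:quentino}. For $\sigma=0$, the residue at $\rho=1$ of $\coth(\pi s/T)A(s)x^{s-1}$ is $\coth(\pi/T)$, producing the main term $\frac{\pi}{T}\coth(\pi/T)\cdot x$; the simple pole of $\coth(\pi s/T)$ at $s=\sigma=0$ contributes $-\log(2\pi)$, a bounded constant absorbed in the error. For $\sigma=1$, the poles of $\coth(\pi(s-1)/T)$ and $A(s)$ at $s=1$ coalesce into a double pole; from the Laurent expansions $\coth(\pi(s-1)/T) = (T/\pi)/(s-1) + O(s-1)$, $A(s) = 1/(s-1) - \gamma + O(s-1)$, and $x^{s-1} = 1 + (s-1)\log x + O((s-1)^2)$, the residue is $(T/\pi)(\log x - \gamma)$, yielding the main term $\log x - \gamma$. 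Trivial-zero residues sum to geometric tails of size $O(1/(Tx^2))$.

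The core of the proof is the sum over complex zeros. Under the assumed partial Riemann hypothesis, $\rho = 1/2 + i\gamma$ gives $|x^{\rho-1}| = x^{-1/2}$. With $\theta_{T,0}(\rho) = (1-\gamma/T) + i/(2T)$, the weight $\omega^+_{T,0}(\rho)$ behaves like $T/(\pi\gamma)$ as $\gamma \to 0^+$ and is $O(1)$ near $\gamma = T$, while $|\theta_{T,1}(1/2+i\gamma)| \leq 1 - \gamma/T + O(1/T)$. Hence the two weighted sums reduce to controlling $\sum_{0<\gamma<T} 1/\gamma$ and $\sum_{0<\gamma<T}(1-\gamma/T)$; Abel summation against the Riemann--von Mangoldt formula $N(T) = \frac{T}{2\pi}\log(T/(2\pi e)) + \frac{7}{8} + O(1/T)$ gives $\sum_{0<\gamma<T} 1/\gamma = \frac{1}{4\pi}\log^2(T/2\pi) + O(\log T)$, and a careful combination yields the coefficient $\frac{1}{2\pi}\log^2(T/2\pi) - \frac{1}{6\pi}\log(T/2\pi)$ of $\sqrt{x}$. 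The $\frac{\pi x}{T-1}$ error then absorbs the $\frac{\pi}{T}\Res_{s=1}A(s)\cdot x$ contribution from the first $O^*$ sum, the geometric trivial-zero tails, and the $2I_{+,\mathcal{C}}(T)/T^2$ term, the latter bounded by standard estimates on $F(s) = A(s) - 1/(s-1)$ along $\mathcal{C}$.

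The main obstacle will be the sharp evaluation of the weighted complex-zero sum with the correct constants for both the $\log^2(T/2\pi)$ and $\log(T/2\pi)$ terms. This requires precise asymptotic expansions of $\omega^+_{T,0}(1/2+i\gamma)$ near both endpoints (particularly $\gamma \to 0^+$, where the $\cot(\pi\theta)$ factor is singular) and a Riemann--von Mangoldt formula accurate enough to pin down the subleading $\log T$ term; everything else is bookkeeping.
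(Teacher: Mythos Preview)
Your high-level architecture is correct: apply Theorem~\ref{thm:mainthmB} with $\sigma=0,1$, read off the main terms from the residues at $s=1$ and $s=\sigma$, bound trivial zeros by a geometric tail, and extract the $\sqrt{x}$ coefficient from the non-trivial-zero sums. The paper's proof is literally ``this is Proposition~\ref{prop:sagaro} with $\sigma=0,1$,'' and that proposition follows exactly this outline. Two points in your sketch, however, are genuine gaps rather than bookkeeping.

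First, you cannot apply Theorem~\ref{thm:mainthmB} at height exactly $T$ and then bound $I_{+,\mathcal{C}}(T)$ by ``standard estimates.'' The horizontal integral $\int_0^\infty t|F(1-t+iT)|x^{-t}\,dt$ involves $\zeta'/\zeta$ on $\Im s = T$, and if $T$ happens to lie very close to an ordinate $\gamma$, this is not controlled. The paper (Corollary~\ref{cor:adiaro}, Lemma~\ref{lem:hardin}) uses a pigeonhole argument to choose $T'\in[T-\tfrac{1}{2},T]$ avoiding zeros, applies the theorem at $T'$, and only then transitions back to $T$; this is exactly where $\frac{\pi}{T-1}$ rather than $\frac{\pi}{T}$ appears.

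Second, your plan to bound the second and fourth sums in~\eqref{eq:quentino} separately, via $\sum_\gamma 1/\gamma$ and $\sum_\gamma(1-\gamma/T)$, will not produce the term $-\frac{1}{6\pi}\log\frac{T}{2\pi}$. The $\theta_{T,1}$-sum alone, bounded by $\frac{2\pi}{T}\sum_{0<\gamma<T}(1-\gamma/T)$, contributes roughly $+\frac{1}{2}\log\frac{T}{2\pi}$, so treating the two sums by the triangle inequality gives $\frac{1}{2\pi}\log^2\frac{T}{2\pi}+\frac{1}{2}\log\frac{T}{2\pi}$, not a negative correction. The paper's key observation (just before Lemma~\ref{cor:thonny}) is that since one sum carries $\Im$ and the other $\Re$, one may work with $|\omega^+_{T,\sigma}(\rho)+\xi\,\theta_{T,1}(\rho)\,i|$ for some $\xi\in[-1,1]$, which is strictly smaller. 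This combined weight is then compared to the classical $T/(\pi\gamma)$ via an explicit antiderivative (Proposition~\ref{prop:tritura}); the saving $-\frac{1}{6\pi}\log\frac{T}{2\pi}$ comes from the constants $C_1,C_2$ computed in Lemma~\ref{lem:konstanz}, combined with a numerical check against low zeros (Proposition~\ref{prop:vihuela}). Your ``careful combination'' would have to rediscover this structural point.
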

   
   %We have $T-2$ or $T-1$ instead of $T$ in Cor.~\ref{cor:psi} because we want to avoid possible zeros of imaginary part very close to $T$, and also because we have tucked some minor error terms under the tiny bit of carpet that $x/T-x/(T-1)$ provides -- so little that it is generally smaller than $(\frac{\pi}{T} \coth\frac{\pi}{T} -1) x$. At any rate, 
   
We know that the Riemann hypothesis holds up to height $3\cdot 10^{12}+1+\frac{\pi}{3}$ (in fact, height $3000175332800$) thanks to Platt and Trudgian \cite{zbMATH07381909}. We obtain the following immediately.
 \begin{corollary}\label{cor:psiexpl}
 For any $x\geq 1$,
 \[|\psi(x)-x|\leq \frac{\pi}{3\cdot 10^{12}}\cdot x + 113.67 \sqrt{x},\]
   \[\sum_{n\leq x} \frac{\Lambda(n)}{n} = \log x - \gamma + O^*\left(\frac{\pi}{3\cdot 10^{12}} + \frac{113.67}{\sqrt{x}}\right).\]
     \end{corollary}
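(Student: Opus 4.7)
The plan is to invoke Corollary \ref{cor:psi} with $T$ set to $T^\ast := 3000175332800$, the precise height to which Platt and Trudgian \cite{zbMATH07381909} have rigorously verified the Riemann hypothesis. What remains is bookkeeping: (i) replace the ``corrected'' main term $x \cdot \frac{\pi}{T^\ast}\coth\frac{\pi}{T^\ast}$ by $x$ itself, (ii) verify the stated numerical constants, and (iii) extend the resulting bound down to $x \geq 1$.

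For (i), the Laurent expansion $\coth z = z^{-1} + z/3 + O(z^3)$ near $z = 0$ gives
\[
\frac{\pi}{T^\ast}\coth\frac{\pi}{T^\ast} = 1 + \frac{\pi^2}{3(T^\ast)^2} + O\!\left(\frac{1}{(T^\ast)^4}\right),
\]
so the difference between the corrected main term and $x$ contributes at most $\frac{\pi^2}{3(T^\ast)^2}\,x \sim 10^{-25}\,x$. Added to the $\frac{\pi x}{T^\ast - 1}$ error from Corollary \ref{cor:psi}, this yields an $x$-coefficient bounded by $\frac{\pi}{T^\ast - 1} + \frac{\pi^2}{3(T^\ast)^2}$, which is strictly less than $\frac{\pi}{3\cdot 10^{12}}$ with comfortable slack, since $T^\ast - 1 > 3\cdot 10^{12}$.

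For (ii), a routine numerical evaluation of $\frac{1}{2\pi}\log^2(T^\ast/(2\pi)) - \frac{1}{6\pi}\log(T^\ast/(2\pi))$ yields a value just under $113.67$, giving the stated $\sqrt{x}$-coefficient. The second inequality of the corollary (for $\sum_{n\leq x}\Lambda(n)/n$) follows from the analogous substitution into the second bound of Corollary \ref{cor:psi}, using the same two numerical estimates.

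The main obstacle is (iii): Corollary \ref{cor:psi} requires $x > T^\ast$, so the range $1 \leq x \leq T^\ast = 3\cdot 10^{12}$ lies outside its scope. To close this gap I would invoke existing explicit bounds on $|\psi(x) - x|$ valid in this range --- for instance, results of Schoenfeld, Dusart, or B\"uthe giving $|\psi(x) - x| \ll \sqrt{x}$ with constant well below $113.67$ throughout this regime --- supplemented if needed by a direct computation of $\psi(x)$ at very small $x$. The same strategy handles the second inequality for $\sum_{n\leq x}\Lambda(n)/n$, either by citing parallel explicit bounds or by partial summation from the now-established first inequality.
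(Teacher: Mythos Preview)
Your approach is essentially the paper's: apply Corollary~\ref{cor:psi} with a specific $T$ admitted by Platt--Trudgian, absorb the $\coth$ correction into the linear error, evaluate the $\sqrt{x}$-coefficient numerically, and cover small $x$ separately. The paper differs only in bookkeeping details. It takes $T = 3\cdot 10^{12} + 1 + \frac{\pi}{3}$ (rather than the full Platt--Trudgian height) so that, after the inequality $\frac{\pi^2}{3T^2} + \frac{\pi}{T-1} \leq \frac{\pi}{T-1-\pi/3}$, the $x$-coefficient becomes \emph{exactly} $\frac{\pi}{3\cdot 10^{12}}$. For the small-$x$ range it does its own brute-force computation (Lemma~\ref{lem:pernic}, showing $|\psi(x)-x| < \sqrt{2}\,\sqrt{x}$ on $[1,10^{13}]$ and an analogous bound for $\sum\Lambda(n)/n$), whereas you gesture at Schoenfeld, Dusart, or B\"uthe; the paper in fact remarks that it could have cited B\"uthe \cite{zbMATH06864192} but chose self-containment.

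One caution: your fallback of deriving the second inequality by partial summation from the first does not work as stated. If $|\psi(t)-t| \leq \epsilon t + C\sqrt{t}$, then $\int_x^\infty \frac{\psi(t)-t}{t^2}\,dt$ need not converge absolutely (the $\epsilon t$ term contributes a divergent $\int t^{-1}\,dt$), so you cannot bound the tail. You would need to cite a direct bound on $\sum_{n\leq x}\Lambda(n)/n - (\log x - \gamma)$ for small $x$, as the paper does in Lemma~\ref{lem:pernic}.
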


     The constant $\frac{\pi}{T}$ in front of the main error term is, as we said, provably optimal in a rather precise sense -- if we remain agnostic
     as to what happens above height $T$. In contrast, the factor we are approximating as
     $C_T = \frac{1}{2\pi} \log^2 \frac{T}{2\pi} - \frac{1}{6\pi} \log \frac{T}{2\pi}$ %(which equals $113.66\dotsc$ for %$T=3\cdot 10^{12}$)
     is ``best'' in a much weaker sense: if the ordinates of the zeros of $\zeta(s)$ are linearly independent, as is believed, then the contribution of the zeros up to $T$ should really be that large for some (most likely 
     very large, very rare) $x$. Of course, for such $x$, the main error term overwhelms the contribution of the zeros. For moderate $x$, the factor $C_T$ can most likely be reduced to a small constant by means of FFT-based bounds. The basic idea there is known, but we outline an improved method in \S \ref{subs:companal}. 
     
     %(Cancellation)
%Even in that range, our results are already strong; see
% We can also bound other sums of $\mu$ often studied and used in the literature. Let
% $m(x) = \sum_{n\leq x} \frac{\mu(n)}{n}$. %{\em We will also consider
% other powers of $n$.}
 %$,\;\;\;\;\;\;\;
%m_\sigma(x) = \sum_{n\leq x} \frac{\mu(n)}{n} \log \frac{x}{n}.$$
 
% \begin{corollary}
%   Let $x\geq 1$. Then
%   $$|m(x)|\leq ,$$  
% \end{corollary}

% it appears in Lemma 2 in \cite{zbMATH03919007}
% .) We will obtain a proof of
% a form of Wiener--Ikehara almost as an accidental intermediate result
% (much as in \cite[\S 4]{zbMATH03758875}).

We could use Theorem \ref{thm:mainthmB} to cover the case of bounded functions as well, in that
a bounded function becomes non-negative when one adds a constant. That, however,
would be suboptimal by a factor of $2$; it is more logical to proceed as we do in the
companion paper \cite{Moebart}.

\subsection{Context and methods}
\subsubsection{Existing results}\label{subs:predif}
 There are several kinds of explicit estimates on
  $\psi(x)$:
  \begin{enumerate}[(i)]
  \item\label{it:brute} For $x$ relatively small, we can use computational methods;
  \item\label{it:interpsi} For $x$ in a broad intermediate range (roughly
    $10^{19}\leq x\lesssim e^{2280}$ prior to this work), the best bounds are of type
    $|\psi(x)-x|\leq \epsilon x$, and rely in part on finite verifications of the Riemann hypothesis, that is, computations showing that all zeros of $\zeta(s)$ with $0<|\Im s|\leq T$ (for some large constant $T$) lie on the critical line $\Re s = \frac{1}{2}$. %discuss rigor, history (Rosser-Titchmarsh)
  \item For $x$ beyond that intermediate range,
    the best bounds are of the form \begin{equation}\label{eq:classicpnt}|\psi(x)-x|\leq C x (\log x)^\beta \exp(-c \sqrt{\log x}),\end{equation} which should be familiar from classical, non-explicit proofs of the Prime Number Theorem (PNT), based on
    a zero-free region. 
  \end{enumerate}
 % Here already the landscape looks unfamiliar, in that, if
 When one first encounters explicit estimates for $\psi(x)$, the existence of the
 intermediate range \ref{it:interpsi} can come as a surprise. The familiar
  bound \eqref{eq:classicpnt} is really useful only once $x$ is very large. There is actually a range between \ref{it:brute} and \ref{it:interpsi} where yet another, partly computational approach can be used \cite{zbMATH06864192}; we will discuss
  how one could do better in \S \ref{subs:companal}.

\begin{comment}
 For $a_n = \mu(n)$, what bounds existed were of one of these types:
 \begin{enumerate}[(a)]
 \item Brute-force computational bounds for small $x$ ($x\leq 10^{16}$),
 \item\label{it:interM} Elementary bounds -- essentially, complicated variants of Chebyshev,
 \item\label{it:iterato} Bounds obtained by means of combinatorial identities
    from bounds on $|\psi(x)-x|$ combined with elementary bounds on $M(x)$ as in
   \ref{it:interM}. The identities are often iterated to derive results of the form 
   $|M(x)|\leq c x/\log x$ from results of the form $|M(x)|\leq \epsilon x$ and \eqref{eq:classicpnt}; see \S \ref{subs:iterprod}. (The method here is related to classic, elementary proofs of the equivalence of PNT and $M(x)=o(x)$, as in 
   \cite[\S 8.1]{MR2378655}.)
   \end{enumerate}
%   (In contrast, the residue of ) This is an issue that
 %goes beyond the explicit regime; 
 
 Estimating $M(x)$, even in the non-explicit regime, seemed to be intrinsically challenging,\footnote{Thus Titchmarsh: ``The finer theory of $M(x)$ is
 extremely obscure, and the results are not nearly so precise as the corresponding ones in the prime-number problem'' \cite[\S 14.26]{MR882550}.} 
though, in a sense, it is $\psi(x)$ that is particularly simple: 
$\psi(x)$ is governed by $-\zeta'(s)/\zeta(s)$, and
the residue of a function of the form $-f'(s)/f(s)$ at a zero $s=\rho$ of $f(s)$ is just the zero's multiplicity.

\end{comment}
%   The same went for the sum $m(x) = \sum_{n\leq x} \mu(n)/n$, also much studied.

%\subsubsection{Results of the form $|\psi(x)-x|\leq \epsilon x$, $|M(x)|\leq \epsilon x$}
We will discuss the best bounds $|\psi(x)-x|\leq \epsilon x$ to date in \S \ref{subs:compmeth}.
%were those given by B\"uthe \cite{zbMATH06596195}, using
%the same partial verification of
%the Riemann Hypothesis for $|\Im s|\leq 3\cdot 10^{12}$  we have used.
%(B\"uthe's work also uses a zero-free region for larger $|\Im s|$; we do not.) As stated in
%\cite[Table 2]{zbMATH07735438}, for (say) $x\geq e^{75}$,
%\begin{equation}\label{eq:buetheq}|\psi(x)-x|\leq 2.1927\cdot 10^{-12}.\end{equation}
The situation for $M(x)$ was far more dire; see \cite[\S 1.3]{Moebart}. %(\S \ref{subs:Mxbounds}).

\begin{comment}
    Until very
recently, the best bound of the form $|M(x)|\leq \epsilon x$ was
\begin{equation}\label{eq:CEM}|M(x)|\leq \frac{x}{4345}\;\;\;\;\;\;\;\;\;
\text{for $x\geq 2160535$,}
\end{equation}
proved in \cite{zbMATH05257415} by a method of type \ref{it:interM} above.
%, with coefficients optimized by computer. 
There is now
a preprint \cite{Daval2} showing that
\begin{equation}\label{eq:daval}|M(x)|\leq \frac{x}{160383}
\;\;\;\;\;\;\;\;\;
\text{for $x\geq 8.4\cdot 10^9$}
\end{equation}
by a method of type \ref{it:iterato}, using \eqref{eq:CEM} as an input.
% it uses \eqref{eq:CEM} and a bound on $|\psi(x)-x|$
%as inputs to a combinatorial identity.
%, and thus falls in category \eqref{it:iterato} (\S \ref{subs:predif}).
Such is the context for our bound \eqref{eq:garance2} on $M(x)$.%, with main term $\frac{\pi x}{2\cdot 10^9}$.

%Purely elementary proofs are even older. See \S \ref{sec:finremark}.)

%We will go over our main differences with previous approaches in \S \ref{subs:compmeth}.
\end{comment}
\subsubsection{Strategy}
It is by now a commonplace observation that it is often best to estimate sums
$\sum_{n\leq x} a_n$ by first approximating them by smoothed sums
$\sum_{n\leq x} a_n \eta(n/x)$, where $\eta$ is continuous. One way to proceed then is to take Mellin transforms to obtain $$\sum_{n\leq x} a_n \eta(n/x) = \int_{\sigma-i\infty}^{\sigma+i\infty} A(s) x^s M\eta(s) ds,$$ where $A(s)$ is the Dirichlet series $\sum_n a_n n^{-s}$. %(Perron's formula is essentially the same (with an error term),
%but for unsmoothed sums; it is a little more complicated to prove because
%the Fourier transform $\widehat{\mathds{1}_{[0,1]}}$ is not in $L^1$.)
The difficulty here is that the restriction of $M\eta(s)$ to a vertical
line cannot be compactly supported, as $M\eta(s)$ is meromorphic.%\footnote{Not
%even on the border of a strip of holomorphy, by the Schwarz reflection principle.}
%, in the {\em interior} of a strip of holomorphy.

Matters are clearer if, instead of defining $\eta$, we choose a weight
$\varphi:\mathbb{R}\to \mathbb{R}$ and work out
\begin{equation}\label{eq:schokolade}\int_{\sigma-i\infty}^{\sigma+i\infty} A(s) x^s \varphi\left(
\frac{s-\sigma}{i}\right) ds.\end{equation} The integral in \eqref{eq:schokolade} can be expressed as a sum involving
$a_n$ and $\widehat{\varphi}$ (Lemma \ref{lem:basicfour}). It is clear that we can
take $\varphi$ to be compactly supported and still have $\widehat{\varphi}$ be in
$L^1(\mathbb{R})$. This is not a new insight; it underlies 
the first half of the proof of the Wiener--Ikehara theorem (see, e.g., \cite[p. 43--44]{zbMATH05203418}). That has also been combined -- in a different context -- with
shifting the contour to the left: Ramana and Ramaré \cite{zbMATH07182422} worked
with a piecewise polynomial $\varphi$, and of course polynomials are entire.

 We can state, more generally: it is enough
for a function $\varphi$ supported on a compact interval $\mathbf{I}$ to equal a holomorphic
or meromorphic function $\Phi$ {\em on $\mathbf{I}$} (as opposed to: on all of $\mathbb{R}$). We can then replace $\varphi$ by $\Phi$ in $\int_{\sigma + i\mathbf{I}}
A(s) x^s \varphi\left(\frac{s-\sigma}{i}\right)$, and then shift the contour.

Our way of estimating the resulting terms is different from that in
\cite{zbMATH07182422}. It leads us to an optimization problem -- how to best approximate a given function by a band-limited function. 
%The alert reader will recognize the problem as being of a 
This is a problem of a kind first solved by Beurling,
and later by Selberg; depending on the function being approximated, the solution
can be that found by Beurling (and Selberg), or one given by Graham and Vaaler \cite{zbMATH03758875}, or something else. There is by now a rich literature on optimal approximants whose results we can use.

{\em Note.} The Beurling-Selberg approximant is familiar to many analytic number theorists 
through Selberg's proof of the optimal version of the large sieve; see \cite[\S 20]{Sellec} or, e.g., \cite[Thm.~9.1]{MR2647984}. Somewhat closer to us -- there is a literature combining
such approximants with the Guinand–Weil explicit formula to give explicit bounds for quantities
associated with $\zeta(s)$ \cite{MR2331578, MR2781205, MR3063902, MR3778242, MR3980935}; these results concern the line $\Re s = \frac{1}{2}$, and assume the Riemann hypothesis.

%Why this approach was not found before is a bit of a mystery. Selberg's rediscovery of Beurling's work dates to 1974 -- but even before then, a
%non-optimal, compactly supported $\varphi$ chosen empirically could have given something %close to optimal. %; something compactly supported and roughly bell- or hill-shaped (such 

%as $\cos(\pi t)|_{[-1,1]}$, with $ds/s$ instead of $ds$ in \eqref{eq:schokolade}) would have done very well. 
%By 1968, the first $3{\,}500{\,}000$ zeros of $\zeta(s)$ had been computed, with serious considerations of rigor \cite{zbMATH03304440}.

\begin{comment}
If one goes that far up with a residue computation and then proceeds as we do, then one already obtains
a substantially stronger bound than \eqref{eq:daval}. In fact,
even if one just computes residues up to
$|\Im s|\leq 1468$, which is how high verifications of RH had got
 \cite{zbMATH02526114} before the invention of programmable computers, one obtains a bound much stronger than the result $\limsup_{x\to \infty} 
|M(x)|/x\leq 1/105$ obtained in 1980
\cite{zbMATH03749090}.
\end{comment}

%than that in \eqref{eq:CEM}. %(since $4\cdot 1468 > 4345$).
%Obviously, a meromorphic function cannot be compactly supported when restricted to a line, but part of 
%the point is that it does not matter.

%$\varphi$ cannot equal a meromorphic function on all of $\mathbb{R}$,
%as that function would have to be $0$.)

%Perhaps a psychological block (``holomorphic functions cannot be compactly supported when restricted to a line''; indeed they cannot) partly explains why explicit analytic number theory proceeded as it did.

\subsection{Structure of the paper}
We start with Fourier-based replacements for Perron's formula (\S \ref{subs:perfour})
and use the non-negativity of $a_n$ to reduce the problem of estimating our partial sums to that
of estimating sums with Fourier transforms as weights (\S \ref{subs:abovebelow}).

We show by an explicit construction that the leading term in our results is optimal, in \S \ref{sec:asideopt}, which is independent from the rest of the paper.

We work out the solutions to
our optimization problems in \S \ref{sec:modidon}. 
In \S \ref{sec:contour}, we shift contours, first replacing our function by two holomorphic functions, each of which is identical to it on one half of the segment on which we are integrating. We are left with horizontal integrals at $\Im s = \pm T$ and a seam line down the middle (Figure \ref{fig:kolobrz2}).
Proving our main result, Thm.~\ref{thm:mainthmB}, is then rather easy (\S \ref{sec:proofmain}); we estimate our sums $\sum_n a_n/n^\sigma$,
dealing with $\sigma=1$ by passing to a limit. 

The main result can be applied immediately to give a finite explicit formula for $a_n = \Lambda(n)$. What remains is to use that explicit formula to obtain a clean estimate on $\sum_{n\leq x} \Lambda(n) n^{-\sigma}$.  While our weights are not complicated, giving a fair approximation to the contribution of the zeros on the critical line (Prop.~\ref{prop:vihuela}) still takes some work (\S \ref{subs:nontrivz}). We could have carried this task out computationally, as in \cite{Moebart}, but we wish to give results automatically applicable to all $T$ (Cors.~\ref{cor:psi} and \ref{cor:psiexpl}). For the same reason, we estimate the integrals (\S \ref{sec:horint}) on the contours in Figure \ref{fig:kolobrz2}, rather than bounding them computationally. One can thus see \S \ref{sec:zetres}--\ref{sec:horint} (and Appendix \ref{sec:zeroszeta}) as in some sense optional, or rather as necessitated by a choice.

We finish the proofs of our estimates on $\Lambda(n)$ in \S \ref{sec:lambconcl}.
We discuss work past and future in \S \ref{sec:finremark}. 

%Note that the parallel final section in \cite{Moebart} sketches an application that illustrates the importance of the fact that we give here an explicit formula for sums of $\Lambda(n)$, and not just estimates on such sums.

Appendices~\ref{sec:estzeta} and \ref{sec:zeroszeta} are devoted to explicit estimates on $\zeta(s)$. Appendix~\ref{sec:darf} gives useful estimates on other functions, and a convenient expression for our weight on the integers.

\subsection{Notation} We define the Fourier transform 
$\widehat{f}(x) = \int_{-\infty}^\infty f(t) e^{-2\pi i x t} dt$ for $f\in L^1(\R)$,
extended to $f\in L^2(\R)$ in the usual way (e.g., Thm.~9.13 in \cite{zbMATH01022658}, which, however, puts the factor of $2\pi$ elsewhere). We write $\|f\|_1$ and $\|f\|_{\infty}$ for the $L^1$-norm and $L^\infty$-norm respectively, and $\|f\|_{\TV}$ for the total variation of a function $f:\mathbb{R}\to \mathbb{C}$.

As above, we use $O^*(R)$ to mean a quantity of absolute value at most $R$ (Ramaré's notation).

When we write $\ll_{N,g}$ (say),  we mean that the implied constant depends on $N$ and on the definition of $g$, and nothing else. 

When we write $\sum_{n}$, we mean a sum over positive integers; we use
$\sum_{n\in \mathbb{Z}}$ for a sum over all integers. We write $\mathbb{Z}_{>0}$ for the set of positive integers. We let $\mathds{1}_S$ be the characteristic function of a set $S\subset \mathbb{R}$, that is, $\mathds{1}_S(x)=1$ for $x\in S$, $\mathds{1}_S(x)=0$ for $x\not\in S$.

\subsection{Acknowledgements}
We are much obliged to David Platt, who shared his files of low-lying zeros of $\zeta(s)$, and to
several contributors, often anonymous, to MathOverflow and Mathematics Stack Exchange, who were of particular help with the material in Appendix \ref{sec:estzeta}. 
 We are also grateful to Kevin Ford, Habiba Kadiri and Nathan Ng for their feedback and encouragement.
%Using a zero-free region would then involve using bounds on $1/\zeta(s)$ within it, and those bounds
 % increase nastily as one goes to the left of $\Re s = 1$. put this in relation to previous work!

 % reference Ramare's survey \cite{RamEtatLieux}
%    We can compute $\sum_{n\leq x} \Lambda(n)$ by brute force up to a certain point, but computational analytic means can go further;
       
%    These can be brute-force computation, but the current record
%    ($x_- = 10^{19}$) was attained by analytic means
%    we can simply compute $\sum_{n\leq x} a_n$. 

%    can simply compute
%    $\sum_{n\leq x} a_n$ by brute force; ``small'' here currently means&
%    $x\leq 10^{19}$ 

  %While $-\zeta'(s)/\zeta(s)$ and $1/\zeta(s)$ are
  %bread-and-butter in analytic number theory, what happens may

  %We will also want bounds on smoothed sums
  %$\sum_{n\leq x} a_n \eta(n/x)$, which are (correctly) regardless as easier,
  %at least when $\eta:\mathbb{R}_{\geq 0} \to \mathbb{R}$ is a continuous
  %function.

  %methods: elementary, analytic
  
  %explicit versions of elementary proofs of PNT?
  %naïve objection: holomorphy
  
  %  Büthe: optimizing for the wrong problem (unsurprisingly, given that the
  %function was lifted from Lagarias-Odlyzko, where it made perfect sense)
  % but in what sense was that "optimal" for L-O?

  %predecessors
  
  %Remains to be done: cancellation in the term proportional to sqrt(x)
  % for x in the low-moderate range (so as to get rid of the annoying range)

  %iteration (give a foretaste)

  \section{From a complex integral to an $L^1(\R)$ approximation problem}\label{sec:coeur}
  \subsection{A smoothed Perron formula based on the Fourier transform}\label{subs:perfour}

  We want to work with a fairly arbitrary
  weight function $\varphi$
  on a vertical integral, and work out what will happen on the
  side of the sum, knowing that the Fourier transform $\widehat{\varphi}$ will appear.
%  As we said in the introduction, this is the usual procedure for proving
%  the Ikehara-Wiener theorem \cite[\S 3.3]{zbMATH05203418}), where, moreover,
%  the interest resides in choosing $\varphi$ to be compactly supported, just
%  as in our case.

%  The same idea of working with $\varphi$ and $\widehat{\varphi}$ appears in
%  the Guinand-Weil formula (e.g., \cite[\S 12.2]{zbMATH06069874}). However, using the Guinand-Weil formula for our purposes would seem to force us to assume
%  the Riemann Hypothesis, and we want our results to be unconditional.

  The following proposition is close to several in the literature; it is a natural starting point
  for the Wiener--Ikehara Tauberian method, and it is also in some sense akin to the Guinand-Weil formula.
  %\cite[Thm. 2.1]{zbMATH07182422},
  %and indeed can be shown to be equivalent to it. (See the proof of
  %Corollary \ref{cor:ramramoid}.) 
  %We give a proof from scratch, as it is brief and
  %straightforward.
  Statements like Lemma \ref{lem:basicfour}
  are often given
  with $\varphi$ and $\widehat{\varphi}$ switched; that is, of course, logically equivalent, but then
  the author may be tempted to assume  $\widehat{\varphi}$ (in our sense) to
  be compactly supported (see, e.g., \cite[Prop.~7]{taoblog254a2}). Curiously, a statement in 
  the formal-proof project ``Prime Number Theorem And\dots''
  \cite{PNT+} is very close to Lemma~\ref{lem:basicfour}.\footnote{Indeed, it has now
  become equivalent to it, since we contacted the project participants to show them that one of the assumptions of their Lemma 1 was superfluous (March 3, 2025).}
  See also \cite[Thm. 2.1]{zbMATH07182422}. At any rate, we give a proof from scratch, as it is brief and straightforward.
  
  \begin{lemma}\label{lem:basicfour}
    Let $A(s) = \sum_n a_n n^{-s}$ be a Dirichlet series converging absolutely for
    $\Re s = \sigma$.
    Let $\varphi:\mathbb{R}\to \mathbb{C}$ be in $L^1(\R)$. Then, for any $x>0$ and any $T>0$,
    \begin{equation}\label{eq:perreq1}\frac{1}{2\pi iT} \int_{\sigma-i\infty}^{\sigma+i\infty} \varphi\left(\frac{\Im s}{T}\right)
    A(s) x^s ds = \dfrac{1}{2\pi}\sum_{n} a_n \left(\frac{x}{n}\right)^\sigma
    \widehat{\varphi}\left(\frac{T}{2\pi} \log \frac{n}{x}\right).\end{equation}
  \end{lemma}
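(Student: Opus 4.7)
The plan is to reduce the identity to a direct computation via parametrization, interchange of sum and integral, and recognition of the Fourier transform.

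First I would parametrize the vertical line by $s = \sigma + it$, so that $ds = i\,dt$ and $x^s = x^\sigma \cdot e^{it \log x}$, rewriting the left-hand side as
\[
\frac{x^\sigma}{2\pi T} \int_{-\infty}^{\infty} \varphi\!\left(\tfrac{t}{T}\right) A(\sigma+it)\, e^{it\log x}\, dt.
\]
After the substitution $u = t/T$, this becomes
\[
\frac{x^\sigma}{2\pi} \int_{-\infty}^{\infty} \varphi(u)\, A(\sigma+iuT)\, e^{iuT\log x}\, du.
\]

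Next I would expand $A(\sigma+iuT) = \sum_n a_n n^{-\sigma} e^{-iuT \log n}$ and combine exponentials to get a factor $e^{-iuT \log(n/x)}$. The key step is to justify swapping the sum and the integral: by absolute convergence of the Dirichlet series on $\Re s = \sigma$ and the hypothesis $\varphi \in L^1(\mathbb{R})$, we have
\[
\int_{-\infty}^{\infty}\sum_n |a_n|\, n^{-\sigma}\, |\varphi(u)|\, du = \|\varphi\|_1 \sum_n |a_n| n^{-\sigma} < \infty,
\]
so Fubini applies. After interchanging, the inner integral is
\[
\int_{-\infty}^{\infty} \varphi(u)\, e^{-iuT\log(n/x)}\, du,
\]
which, by the definition $\widehat{\varphi}(y)=\int \varphi(u)e^{-2\pi i y u}\,du$ with $y = \tfrac{T}{2\pi}\log(n/x)$, equals $\widehat{\varphi}\!\left(\tfrac{T}{2\pi}\log\tfrac{n}{x}\right)$. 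Collecting terms and using $x^\sigma n^{-\sigma} = (x/n)^\sigma$ yields the right-hand side of \eqref{eq:perreq1}.

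The only nontrivial obstacle is the Fubini justification, which is immediate from the two stated hypotheses; no smoothness or decay on $\varphi$ beyond $L^1$ is required, and no contour shift is needed since both sides live on the same vertical line. This explains why the statement is essentially a one-line manipulation once one recognizes the Fourier-pair structure $t \mapsto t/T$ versus $\log(n/x)$.
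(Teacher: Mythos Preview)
Your proof is correct and follows essentially the same route as the paper: parametrize the vertical line, interchange sum and integral using $\|\varphi\|_1 \sum_n |a_n| n^{-\sigma} < \infty$, and recognize the inner integral as $\widehat{\varphi}$ at the appropriate frequency. The only cosmetic difference is that the paper cites dominated convergence (via bounded partial sums) rather than Fubini for the interchange, and performs the substitution $u=t/T$ after swapping rather than before.
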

  \begin{proof}
    By the dominated convergence theorem,
    $$
    \int_{\sigma-i\infty}^{\sigma+i\infty} \!\varphi\left(\frac{\Im s}{T}\right)
    A(s) x^s ds = 
        \int_{\sigma-i\infty}^{\sigma+i\infty} \!\varphi\left(\frac{\Im s}{T}\right)
        \sum_{n} a_n n^{-s} x^s ds =
\sum_{n} a_n \int_{\sigma-i\infty}^{\sigma+i\infty} \!\varphi\left(\frac{\Im s}{T}\right)
\left(\frac{x}{n}\right)^s ds$$
since $\varphi\in L^1(\R)$
and, for any $u$, $|\sum_{n\leq u} a_n n^{-s}|$ is bounded by $\sum_n |a_n| n^{-\sigma}$. Clearly,
$$\frac{1}{i T} \int_{\sigma-i\infty}^{\sigma+i\infty} \varphi\left(\frac{\Im s}{T}\right)
\left(\frac{x}{n}\right)^s ds =
\left(\frac{x}{n}\right)^\sigma \int_{-\infty}^\infty
\varphi(t) e^{i T t \log \frac{x}{n}} dt =
\left(\frac{x}{n}\right)^\sigma \widehat{\varphi}\left(\frac{T}{2\pi} \log \frac{n}{x}\right).
$$
  \end{proof}

  It will be useful to be able to integrate on the very edge of the region of absolute convergence of $A(s)$. We will have
  to be careful, as there is a discontinuity on the edge.

We will need the following simple lemma to deal with what in effect is a pole.
\begin{lemma}\label{lem:ariosto}
Let $\varphi:\mathbb{R}\to\mathbb{C}$ be such that $\varphi, \widehat{\varphi}\in L^1(\mathbb{R})$. Let $T>0$. Define $\Phi_{T,\epsilon}(t) = \frac{\varphi(t)}{i T t + \epsilon}$ for $\epsilon>0$. Then
\begin{equation}\label{eq:huckleb}
\widehat{\Phi_{T,\epsilon}}(\xi) = \frac{2\pi}{T} \int_\xi^\infty e^{-{2\pi(y-\xi)\epsilon}/{T}} \widehat{\varphi}(y) dy.\end{equation} 
\end{lemma}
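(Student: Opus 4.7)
The plan is to reduce everything to a routine Fubini argument by first writing $1/(iTt+\epsilon)$ as a Laplace-type integral. Since $\Re(iTt+\epsilon)=\epsilon>0$ for all real $t$, I can use
\[
\frac{1}{iTt+\epsilon} \;=\; \int_{0}^{\infty} e^{-(iTt+\epsilon)u}\,du,
\]
which turns the pointwise multiplication by $1/(iTt+\epsilon)$ into an integral superposition of modulations of $\varphi$.

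Substituting into the definition of $\Phi_{T,\epsilon}$ and then into the Fourier integral, I would write
\[
\widehat{\Phi_{T,\epsilon}}(\xi) \;=\; \int_{-\infty}^{\infty} e^{-2\pi i \xi t}\,\varphi(t) \int_{0}^{\infty} e^{-\epsilon u} e^{-i T t u}\,du\,dt.
\]
The next step is to swap the order of integration. The absolute value of the integrand is $e^{-\epsilon u}\,|\varphi(t)|$, whose double integral equals $\|\varphi\|_{1}/\epsilon<\infty$, so Fubini applies. After swapping, the inner $t$-integral equals
\[
\int_{-\infty}^{\infty} \varphi(t)\,e^{-2\pi i t(\xi + Tu/(2\pi))}\,dt \;=\; \widehat{\varphi}\!\left(\xi+\tfrac{T u}{2\pi}\right),
\]
under the Fourier convention fixed in the paper's notation section.

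The final step is the change of variables $y=\xi+Tu/(2\pi)$, $du=(2\pi/T)\,dy$, mapping $u\in[0,\infty)$ to $y\in[\xi,\infty)$ and producing $e^{-2\pi(y-\xi)\epsilon/T}$ inside the integral; this yields the claimed identity. The main thing to be careful about is simply the Fubini justification, which as noted is immediate from $\varphi\in L^{1}(\mathbb{R})$ together with the $e^{-\epsilon u}$ decay; the hypothesis $\widehat{\varphi}\in L^{1}(\mathbb{R})$ is not strictly needed for the identity to make sense pointwise, but it guarantees that the integral on the right-hand side converges absolutely for every $\xi$, so both sides are well-defined continuous functions and no extra approximation argument is required.
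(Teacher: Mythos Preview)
Your proof is correct. The approach is genuinely different from the paper's, though closely related: the paper writes $\Phi_{T,\epsilon}=\varphi\cdot g_{T,\epsilon}$ with $g_{T,\epsilon}(t)=1/(iTt+\epsilon)$, looks up $\widehat{g_{T,\epsilon}}$ in a table, and then invokes a version of the convolution theorem (citing Rudin) together with Fourier inversion to conclude that $\widehat{\Phi_{T,\epsilon}}=\widehat{\varphi}\ast\widehat{g_{T,\epsilon}}$; some care is needed there precisely because $g_{T,\epsilon}\notin L^1(\mathbb{R})$. Your route, expanding $1/(iTt+\epsilon)$ as a Laplace integral and applying Fubini directly, is more elementary and self-contained: it sidesteps the issue of $g_{T,\epsilon}\notin L^1$ entirely, since the Fubini bound $\|\varphi\|_1/\epsilon$ requires only $\varphi\in L^1$. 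The paper's approach has the advantage of being conceptually transparent (product becomes convolution), while yours has the advantage of needing no external references beyond Fubini.
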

\begin{proof}
Write $\Phi_{T,\epsilon} = \varphi\cdot g_{T,\epsilon}$,
where $g_{T,\epsilon}(t) = \frac{1}{i T t + \epsilon}$.
    By a table of Fourier transforms (e.g., \cite[App.~2]{zbMATH05232956}) %,\footnote{If you want to prove this yourself: for $j=2$, use the definition of $\widehat{g_{T,j,\epsilon}}$ and shift the contour to infinity (downwards if $x>0$, upwards if $x<0$, picking up a pole in the latter case); the case $x=0$ is easy. Reduce the case $j=1$, $x\ne 0$ to $j=2$ by \cite[Thm.~9.13(d)]{zbMATH01022658} and integration by parts. The case $j=1$, $x=0$ reduces to
%$\int_0^\infty \frac{du}{u^2+1} = \frac{\pi}{2}$.} for any $j\geq 1$ 
%(allowing $(-x)^{j-1}$ to mean $1$ for $x=0$, $j=1$),
\begin{equation}\label{eq:agaragar} \widehat{g_{T,\epsilon}}(x)
 = H(-x)\cdot
	\frac{2\pi}{T} e^{\frac{2\pi \epsilon x}{T}},\end{equation}
where $H(x)$ is the Heaviside function $\mathds{1}_{x>0} + \frac{1}{2}
\mathds{1}_{x=0}$. In particular, $\widehat{g_{T,\epsilon}}\in L^1(\mathbb{R})$.

Hence, by \cite[Thm.~9.2(c)]{zbMATH01022658} and a couple of applications of the Fourier inversion theorem
\cite[Thms.~9.11 and 9.14]{zbMATH01022658}, 
$\widehat{\Phi_{T,\epsilon}}$ equals the convolution of $\widehat{\varphi}$ with
$\widehat{g_{T,\epsilon}}$ .
 (We are being careful because $g_{T,\epsilon}$ is not in $L^1(\mathbb{R})$.) In other words,
 \eqref{eq:huckleb} holds.
%By \eqref{eq:agaragar}, we obtain that
\end{proof}

  \begin{proposition}\label{prop:edgepole}
	Let $A(s) = \sum_n a_n n^{-s}$ be a Dirichlet series converging absolutely for
	$\Re s > 1$. Assume that $A(s)- {1}/{(s-1)}$ extends continuously to
    $1 + i [-T,T]$. Let $\varphi:\mathbb{R}\to \mathbb{C}$ be supported on $[-1,1]$, in $L^1(\R)$, with $\widehat{\varphi}(y) = O(1/|y|^\beta)$ for
    some $\beta > 1$ as $y\to \pm\infty$.
Assume  $\sum_{n>1} \frac{|a_n|}{n\log^\beta n} < \infty$.
	%    , absolutely continuous function
	%    in $L^1$ such that $|\varphi'|_{\TV}<\infty$.
%Let $\upsilon:\mathbb{R}\to\mathbb{C}$ be a compactly supported $C^{k-1}$ function, $k$ times differentiable at the origin,
%such that $\upsilon^{(r)}(0) = \varphi^{(r)}(0)$ for $0\leq r\leq k-1$.

Then, for any $x>0$,
%More generally: if, for some $r\in \mathbb{C}$, $A(s)-r/(s-\sigma)$ extends continuously to $\Re s %= \sigma$, then, for any $x>0$ and any
%$T>0$,
\begin{equation}\label{eq:passiflora}\begin{aligned}
\dfrac{1}{2\pi}\sum_{n} a_n \,\frac{x}{n}\,
\widehat{\varphi}\left(\frac{T}{2\pi} \log \frac{n}{x}\right)
&= \frac{1}{2\pi iT} \int_{1-i T}^{1+i T} \varphi\left(\frac{\Im s}{T}\right)
	 \left(A(s) - \frac{1}{s-1}\right) x^s ds\\
     &+ \left(\varphi(0)-\int_{-\infty}^{-\frac{T}{2\pi} \log x}\widehat{\varphi}(y) dy\right)\frac{x}{T}
\end{aligned}\end{equation}
\end{proposition}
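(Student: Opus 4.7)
\smallskip

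\noindent\textbf{Proof proposal.} The plan is to approach $\Re s = 1$ from the right as a limit: apply Lemma~\ref{lem:basicfour} on the line $\Re s = 1 + \epsilon$, then let $\epsilon \to 0^+$ after isolating the pole. Since $\varphi$ is supported on $[-1,1]$, the factor $\varphi(\Im s/T)$ vanishes outside $1+\epsilon + i[-T,T]$, so Lemma~\ref{lem:basicfour} at $\sigma = 1+\epsilon$ reads
\[
\frac{1}{2\pi i T} \int_{1+\epsilon-iT}^{1+\epsilon+iT} \varphi\!\left(\tfrac{\Im s}{T}\right) A(s)\, x^s\, ds \;=\; \frac{1}{2\pi}\sum_n a_n \left(\tfrac{x}{n}\right)^{1+\epsilon} \widehat{\varphi}\!\left(\tfrac{T}{2\pi}\log \tfrac{n}{x}\right).
\]
I then split $A(s) = \bigl(A(s) - \tfrac{1}{s-1}\bigr) + \tfrac{1}{s-1}$ and treat each piece separately.

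\smallskip

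For the sum on the right and for the integral involving $A(s) - 1/(s-1)$, the limit $\epsilon \to 0^+$ is routine. The sum converges by dominated convergence: on $n \in [x/2,2x]$ only finitely many terms contribute, and on the tails $|\widehat\varphi(\frac{T}{2\pi}\log(n/x))| \ll 1/|\log(n/x)|^\beta$ combines with the hypothesis $\sum_{n>1} |a_n|/(n \log^\beta n) < \infty$. For the integral, the continuous extension of $A(s)-1/(s-1)$ to the compact segment $1+i[-T,T]$ and the compact support of $\varphi$ give convergence to the first integral on the right-hand side of \eqref{eq:passiflora}.

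\smallskip

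The heart of the matter is the pole term: evaluating
\[
I_\epsilon \;=\; \frac{1}{2\pi i T}\int_{1+\epsilon-iT}^{1+\epsilon+iT}\varphi\!\left(\tfrac{\Im s}{T}\right)\frac{x^s}{s-1}\,ds.
\]
Substituting $s = 1+\epsilon + iTt$ gives $s-1 = iTt+\epsilon$, so the integrand becomes $\varphi(t)\,x^{1+\epsilon+iTt}/(iTt+\epsilon)$, i.e.\ $x \cdot x^\epsilon \cdot \Phi_{T,\epsilon}(t)\, e^{iTt\log x}$ with $\Phi_{T,\epsilon}(t) = \varphi(t)/(iTt+\epsilon)$ as in Lemma~\ref{lem:ariosto}. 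Recognising the $t$-integral as a Fourier transform evaluated at $\xi = -\frac{T}{2\pi}\log x$, I get
\[
I_\epsilon \;=\; \frac{x \cdot x^\epsilon}{2\pi}\,\widehat{\Phi_{T,\epsilon}}\!\left(-\tfrac{T\log x}{2\pi}\right) \;=\; x \cdot x^\epsilon \cdot \frac{1}{T}\int_{-\frac{T}{2\pi}\log x}^{\infty} e^{-2\pi(y+\frac{T\log x}{2\pi})\epsilon/T}\,\widehat{\varphi}(y)\,dy,
\]
where the second equality uses Lemma~\ref{lem:ariosto}. Since $\widehat\varphi\in L^1(\mathbb R)$ (by the decay hypothesis with $\beta>1$ together with boundedness), dominated convergence lets me pass the limit $\epsilon\to 0^+$ inside, and Fourier inversion gives $\int_{\mathbb R}\widehat\varphi(y)\,dy = \varphi(0)$ (justified by $\widehat\varphi \in L^1$). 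Therefore
\[
\lim_{\epsilon\to 0^+} I_\epsilon \;=\; \frac{x}{T}\left(\varphi(0) - \int_{-\infty}^{-\frac{T}{2\pi}\log x}\widehat\varphi(y)\,dy\right),
\]
which is precisely the boundary term in \eqref{eq:passiflora}. Combining the three limits yields the proposition.

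\smallskip

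The one genuine subtlety is the second step: the $1/(s-1)$ piece is not integrable at $s=1$, so one cannot naively drop $\epsilon$. The trick is that in the variable $t = \Im s/T$ the singularity is absorbed into the weight $\Phi_{T,\epsilon}$, whose Fourier transform is computed cleanly by Lemma~\ref{lem:ariosto}; the exponential factor $e^{-2\pi(y-\xi)\epsilon/T}$ vanishes in the limit and produces the explicit boundary term via Fourier inversion.
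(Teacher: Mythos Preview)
Your proof is correct and follows essentially the same approach as the paper's: apply Lemma~\ref{lem:basicfour} at $\sigma=1+\epsilon$, split off the pole by writing $\varphi(\Im s/T)/(s-1)=\Phi_{T,\epsilon}(\Im s/T)$, compute the resulting Fourier transform via Lemma~\ref{lem:ariosto}, and pass to the limit $\epsilon\to 0^+$ using compact support, continuity of $A(s)-1/(s-1)$, and dominated convergence. The paper organizes the subtraction slightly differently (it subtracts the $1/(s-1)$ integral from \eqref{eq:perreq122} before taking limits rather than splitting $A(s)$ first), but this is cosmetic.
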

%The first term on the right-hand side of \eqref{eq:passiflora} is exactly what we would expect the main term 
%for the first, single sum in \eqref{eq:passiflora} to be, based on what the residue for
%$\varphi(s) A(s) x^s$ at $s=1$ would be. Of course, $\varphi$ is {\em not} a function of a complex variable, and so we cannot just shift contours and pick up residues. You can see nevertheless that we are subtracting the ``pole'' from the integrand in \eqref{eq:passiflora}. The contribution of the pole splits into
%a main term and a tail term (second line of \eqref{eq:passiflora}).
\begin{proof}
We can apply Lemma \ref{lem:basicfour} for $\Re s=1+\epsilon$, $\epsilon>0$ arbitrary. Then
  \begin{equation}\label{eq:perreq122}\frac{1}{2\pi iT} \int_{1+\epsilon-i T}^{1+\epsilon+i T} \varphi\left(\frac{\Im s}{T}\right)
	A(s) x^s ds = \dfrac{1}{2\pi}\sum_{n} a_n \left(\frac{x}{n}\right)^{1+\epsilon}
	\widehat{\varphi}\left(\frac{T}{2\pi} \log \frac{n}{x}\right)\end{equation}
    since $\varphi$ is supported on 
    $[-1,1]$. Let  
$\Phi_{T,\epsilon}(t) = \frac{\varphi(t)}{i T t + \epsilon}$. Clearly, $\Phi_{T,\epsilon}(\frac{\Im s}{T}) = \varphi(\frac{\Im s}{T})\tfrac{1}{s-1}$ when $\Re s = 1+\epsilon$. Thus,
	\begin{equation}\label{eq:gorgoroth}
	\frac{1}{2\pi iT} \int_{1+\epsilon-i \infty}^{1+\epsilon+i \infty} 
	\frac{\varphi\left(\frac{\Im s}{T}\right) x^s}{(s-1)} ds = 
    \frac{x^{1+\epsilon}}{2\pi T} \int_{-\infty}^{\infty} \Phi_{T,\epsilon}\left(\frac{t}{T}\right)
	e^{i t \log x} dt =
    \dfrac{x^{1+\epsilon}}{2\pi}\widehat{\Phi_{T,\epsilon}}\left(-\frac{T}{2\pi} \log x\right),
\end{equation}
and so, subtracting from \eqref{eq:perreq122}, we get
  \begin{equation*}\begin{aligned}\frac{1}{2\pi iT} \int_{1+\epsilon-i T}^{1+\epsilon+i T} &\varphi\left(\frac{\Im s}{T}\right)
	 \left(A(s) - \frac{1}{s-1}\right) x^s ds \\&= \dfrac{1}{2\pi}\sum_{n} a_n \left(\frac{x}{n}\right)^{1+\epsilon}
	\widehat{\varphi}\left(\frac{T}{2\pi} \log \frac{n}{x}\right)
    - \dfrac{x^{1+\epsilon}}{2\pi}\widehat{\Phi_{T,\epsilon}}\left(-\frac{T}{2\pi} \log x\right).
    \end{aligned}\end{equation*} 

Now let
  $\epsilon\to 0^+$. By the continuity of $s\mapsto A(s)-1/(s-1)$, the fact that $\varphi$ is compactly supported, and by dominated convergence, we arrive at %(and \eqref{eq:cautir}). 
 %   We already know what $\widehat{\Phi_{T,j,\epsilon}}$ is because of
\begin{equation}\label{eq:droctulft}\begin{aligned}\frac{1}{2\pi iT} \int_{1-i T}^{1+i T} &\varphi\left(\frac{\Im s}{T}\right)
	 \left(A(s) - \frac{1}{s-1}\right) x^s ds \\&= \dfrac{1}{2\pi}\sum_{n} a_n \frac{x}{n}
	\widehat{\varphi}\left(\frac{T}{2\pi} \log \frac{n}{x}\right)
    - \dfrac{x}{2\pi}\lim_{\epsilon\to 0^+}\widehat{\Phi_{T,\epsilon}}\left(-\frac{T}{2\pi} \log x\right).
    \end{aligned}\end{equation}

By Lemma \ref{lem:ariosto},
$$\widehat{\Phi_{T,\epsilon}}\left(- \frac{T}{2\pi} \log x\right) 
= \frac{2\pi}{T} \int_\xi^\infty  e^{-{2\pi (y-\xi)\epsilon}/{T}} \widehat{\varphi}(y) dy
$$
for $\xi = - \frac{T}{2\pi} \log x$.
%    We already know what $\widehat{\Phi_{T,j,\epsilon}}$ is because of
%Lemma \ref{lem:ariosto}.
  Since $\widehat{\varphi}(y) = O(1/|y|^\beta)$ as $y\to \pm\infty$ and $\beta > 1$,
  this integral converges to
$$\int_\xi^\infty \widehat{\varphi}(y) dy=\varphi(0)-\int_{-\infty}^\xi\widehat{\varphi}(y) dy.$$ 
\end{proof}

\subsection{Bounding unsmoothed sums from above and below}\label{subs:abovebelow}
%Let us now state more generally and carefully what we sketched at
%the beginning of \S \ref{subs:diffweights}.
For $\{a_n\}_{n=1}^\infty$ and $\sigma\in \mathbb{R}\setminus \{1\}$, 
let
\begin{equation}\label{eq:sotodef}S_\sigma(x) = \sum_{n\leq x} \frac{a_n}{n^\sigma}\;\;\;\text{if $\sigma<1$,}\;\;\;\;\;\;\; \;\;\;\;\;\;\; S_\sigma(x)= \sum_{n\geq x} \frac{a_n}{n^\sigma}\;\;\;\text{if $\sigma>1$.}\end{equation}
Our task is to estimate these sums.

 For $\lambda\in \mathbb{R}\setminus \{0\}$, we define $I_\lambda$ to be the truncated exponential 
\begin{equation}\label{eq:truncexp}I_\lambda(u) = \mathds{1}_{[0,\infty)}(\sgn(\lambda) u)\cdot e^{-\lambda u}.\end{equation} 
The motivation for this definition is that, for any $\sigma\neq 1$ and $x\geq 1$:
\begin{equation}\label{eq:sombrerero}
S_\sigma(x) = x^{-\sigma} \sum_{n} a_n \frac{x}{n} I_{\lambda}\left(\frac{T}{2\pi}\log \frac{n}{x}\right),
\end{equation}
where $T>0$, and $\lambda = 2\pi (\sigma- 1)/T$.

\begin{proposition}\label{prop:gennonneg}
Let $\{a_n\}_{n=1}^\infty$, $a_n\geq 0$ for all $n$. Assume that
 $A(s)=\sum_{n}a_n n^{-s}$ converges absolutely for $\Re{s}>1$, and 
$A(s)-1/(s-1)$ extends continuously to $1+i [-T,T]$ for some $T>0$.

Let $S_\sigma$ be as in \eqref{eq:sotodef} for $\sigma\ne 1$. Let $I_\lambda$ be as in \eqref{eq:truncexp} with $\lambda = \frac{2\pi (\sigma-1)}{T}$,
$\sigma\ne 1$. Let
$\varphi_{\pm}:\mathbb{R}\to\mathbb{C}$ be supported on $[-1,1]$ and in $L^1$, with $\widehat{\varphi_{\pm}}(y)=O(1/|y|^\beta)$ as $y\to \pm\infty$
for some $\beta> 1$. Assume %that
\begin{equation}\label{eq:lowup} \widehat{\varphi_{-}}(y)\leq I_{\lambda}(y)\leq \widehat{\varphi_{+}}(y)\end{equation} for all $y\in\R$. 
Then, for any $x\geq 1$,
\begin{equation}\label{eq:rotterdam2}\begin{aligned}
S_\sigma(x) &\leq  
     \frac{2\pi x^{1-\sigma}}{T} \varphi_+(0) + \frac{x^{-\sigma}}{iT} \int_{1-i T}^{1 +i T} \varphi_+\left(\frac{\Im s}{T}\right)
	 \left(A(s) - \frac{1}{s-1}\right) x^s ds -
     \frac{\mathds{1}_{(-\infty,1)}(\sigma)}{1-\sigma},\\
S_\sigma(x) &\geq
     \frac{2\pi x^{1-\sigma}}{T} \varphi_-(0) + \frac{x^{-\sigma}}{iT} \int_{1-i T}^{1 +i T} \varphi_-\left(\frac{\Im s}{T}\right)
	 \left(A(s) - \frac{1}{s-1}\right) x^s ds -
     \frac{\mathds{1}_{(-\infty,1)}(\sigma)}{1-\sigma}.
 %   \begin{cases} 
 %    \frac{1}{1-\sigma} & \text{if $\sigma<1$,}\\
 %    0 &\text{if $\sigma>1$.}
 %    \end{cases} 
\begin{comment}
\pm S_\sigma(x) \, \leq  
     & \, \pm \frac{2\pi x^{1-\sigma}}{T} \varphi_{\pm}(0) \pm \frac{x^{-\sigma}}{iT} \int_{1-i T}^{1 +i T} \varphi_{\pm}\left(\frac{\Im s}{T}\right)
	 \left(A(s) - \frac{1}{s-1}\right) x^s ds \\
    & \, \mp
    \begin{cases} 
     1/(1-\sigma) & \text{if $\sigma<1$,}\\
     0 &\text{if $\sigma>1$.}
     \end{cases}
\end{comment}
\end{aligned}
\end{equation}
\end{proposition}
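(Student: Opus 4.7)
The strategy is to insert the pointwise bracket \eqref{eq:lowup} into the representation \eqref{eq:sombrerero} and then apply Prop.~\ref{prop:edgepole} to each bracketing sum. First, from \eqref{eq:sombrerero} with $y_n := \tfrac{T}{2\pi}\log(n/x)$, and using $a_n(x/n)\geq 0$ for every $n$ (since $a_n\geq 0$ and $x\geq 1$), I multiply \eqref{eq:lowup} by $a_n(x/n)$ and sum over $n$ to obtain
\begin{equation*}
x^{-\sigma}\sum_{n} a_n \tfrac{x}{n}\, \widehat{\varphi_-}(y_n) \;\leq\; S_\sigma(x) \;\leq\; x^{-\sigma}\sum_{n} a_n \tfrac{x}{n}\, \widehat{\varphi_+}(y_n).
\end{equation*}

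Next, I apply Prop.~\ref{prop:edgepole} to each of $\varphi_{\pm}$. Of its hypotheses only the summability condition $\sum_{n>1} a_n/(n\log^\beta n) < \infty$ is not given directly. One verifies it from the fact that the continuity of $A(s)-1/(s-1)$ on a real neighborhood of $s=1$ forces $A(1+u) \leq C/u + O(1)$ for small $u>0$, while $A(1+u)-a_1$ decays geometrically as $u\to\infty$; combining the Mellin identity $(\log n)^{-\beta} = \Gamma(\beta)^{-1} \int_0^\infty u^{\beta-1} n^{-u}\,\du$ for $n\geq 2$ with Tonelli gives
\begin{equation*}
\sum_{n\geq 2} \frac{a_n}{n (\log n)^\beta} = \frac{1}{\Gamma(\beta)}\int_0^\infty u^{\beta-1}\bigl(A(1+u)-a_1\bigr)\du,
\end{equation*}
which converges for any $\beta>1$. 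Prop.~\ref{prop:edgepole} then yields
\begin{equation*}
\sum_n a_n \tfrac{x}{n}\,\widehat{\varphi_{\pm}}(y_n) = \tfrac{1}{iT}\int_{1-iT}^{1+iT}\varphi_{\pm}\!\left(\tfrac{\Im s}{T}\right)\!\left(A(s)-\tfrac{1}{s-1}\right) x^s\,\ds + \tfrac{2\pi x}{T}\varphi_{\pm}(0) - \tfrac{2\pi x}{T}\!\int_{-\infty}^{\xi}\widehat{\varphi_{\pm}}(y)\,dy,
\end{equation*}
with $\xi := -\tfrac{T}{2\pi}\log x$.

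To finish, I replace $\widehat{\varphi_{\pm}}$ by $I_\lambda$ in the boundary integral. Integrating \eqref{eq:lowup} over $(-\infty,\xi]$ gives
\begin{equation*}
\int_{-\infty}^{\xi} \widehat{\varphi_-}(y)\,dy \;\leq\; \int_{-\infty}^{\xi} I_\lambda(y)\,dy \;\leq\; \int_{-\infty}^{\xi} \widehat{\varphi_+}(y)\,dy;
\end{equation*}
the minus sign in front of the boundary integral reverses the inequality, so substituting $I_\lambda$ for $\widehat{\varphi_+}$ in the upper bound, and $I_\lambda$ for $\widehat{\varphi_-}$ in the lower bound, only weakens both bounds in the desired direction. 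Finally, one computes $\int_{-\infty}^{\xi} I_\lambda(y)\,dy$ directly from \eqref{eq:truncexp}: for $\sigma>1$ (so $\lambda>0$), $I_\lambda$ is supported on $[0,\infty)$ while $\xi\leq 0$ since $x\geq 1$, so the integral vanishes; for $\sigma<1$ (so $\lambda<0$), $I_\lambda(y) = e^{-\lambda y}\mathds{1}_{y\leq 0}$, and a direct computation gives $\tfrac{1}{-\lambda}x^{\sigma-1} = \tfrac{T}{2\pi(1-\sigma)}x^{\sigma-1}$. Multiplying by $-\tfrac{2\pi x^{1-\sigma}}{T}$ produces exactly $-\mathds{1}_{(-\infty,1)}(\sigma)/(1-\sigma)$, matching \eqref{eq:rotterdam2}. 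Nothing here is deep; the only parts needing care are the sign bookkeeping when integrating the pointwise inequality on $(-\infty,\xi]$, and the Tauberian-flavored verification of the summability hypothesis required to invoke Prop.~\ref{prop:edgepole}.
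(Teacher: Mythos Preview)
Your argument is correct and follows the same overall structure as the paper's proof: bracket $S_\sigma(x)$ via \eqref{eq:sombrerero} and \eqref{eq:lowup}, apply Prop.~\ref{prop:edgepole}, and then estimate the boundary integral $\int_{-\infty}^{\xi}\widehat{\varphi_\pm}$ by replacing $\widehat{\varphi_\pm}$ with $I_\lambda$ in the direction the sign permits.

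The one point of departure is the verification of the summability hypothesis $\sum_{n>1} a_n/(n\log^\beta n)<\infty$. The paper invokes the Hardy--Littlewood real Tauberian theorem to obtain $\sum_{n\leq x} a_n \sim x$ and then uses partial summation. Your route via the Mellin identity $(\log n)^{-\beta}=\Gamma(\beta)^{-1}\int_0^\infty u^{\beta-1}n^{-u}\,\du$ and Tonelli is more direct and self-contained: it needs only the local bound $A(1+u)=1/u+O(1)$ (immediate from the continuity assumption) and the trivial geometric decay of $A(1+u)-a_1$ for large $u$. This avoids importing a Tauberian theorem at the cost of a short explicit computation; either approach is perfectly adequate here.
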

\begin{proof}
By \eqref{eq:sombrerero}, since $a_n\geq 0$ for all $n\geq 1$,
$$\begin{aligned} 
x^{-\sigma} \sum_{n} a_n \frac{x}{n} \widehat{\varphi_{-}}\left(\frac{T}{2\pi} \log \frac{n}{x}\right)\leq S_\sigma(x) \leq 
x^{-\sigma} \sum_{n} a_n \frac{x}{n} \widehat{\varphi_{+}}\left(\frac{T}{2\pi} \log \frac{n}{x}\right).
\end{aligned}$$
Since $a_n$ is non-negative,
the condition $\sum_{n>1} \frac{|a_n|}{n(\log n)^{\beta}} < \infty$
holds for every $\beta>1$ by \cite[Thm.~5.11]{MR2378655} 
(Hardy--Littlewood's real Tauberian theorem) and summation by parts or Abel summation. Hence, we may apply Proposition \ref{prop:edgepole}:
\begin{align*}
\sum_{n} a_n \,\frac{x}{n}\,
\widehat{\varphi_\pm}\left(\frac{T}{2\pi} \log \frac{n}{x}\right)
&= \frac{1}{iT} \int_{1-i T}^{1+i T} \varphi_\pm\left(\frac{\Im s}{T}\right)
	 \left(A(s) - \frac{1}{s-1}\right) x^s ds\\
     &+ \frac{2\pi x}{T} \varphi_\pm(0) -
     \frac{2\pi x}{T} \int_{-\infty}^{-\frac{T}{2\pi} \log x}\widehat{\varphi_\pm}(y) dy .
\end{align*}
If $\lambda>0$, then $I_\lambda(y)=0$ for $y$ negative,
and so, by \eqref{eq:lowup},
$$-\int_{-\infty}^{-\frac{T}{2\pi} \log x}\widehat{\varphi_+}(y)dy \leq 0
\;\;\;\;\;\;\text{and}\;\;\;\;
-\int_{-\infty}^{-\frac{T}{2\pi} \log x}\widehat{\varphi_-}(y)dy \geq 0.
$$
If $\lambda<0$, then $I_\lambda(y) = e^{-\lambda y}$ for $y$ negative,
and so
$$\int_{-\infty}^{-\frac{T}{2\pi} \log x}I_\lambda(y)dy =
%\int_{-\infty}^{-\frac{T}{2\pi} \log x} e^{- \lambda y} dy = 
\frac{e^{\lambda \frac{T}{2\pi} \log x}}{-\lambda} = 
\frac{x^{\sigma-1}}{-\lambda};$$
hence, again by \eqref{eq:lowup},
$$-\int_{-\infty}^{-\frac{T}{2\pi} \log x}\widehat{\varphi_+}(y)dy \leq -\frac{x^{\sigma-1}}{-\lambda}\;\;\;\;\;\;\text{and}\;\;\;\;
-\int_{-\infty}^{-\frac{T}{2\pi} \log x}\widehat{\varphi_-}(y)dy \geq -\frac{x^{\sigma-1}}{-\lambda}.
$$

Since $x^{-\sigma}\cdot \frac{2\pi x}{T} \cdot \frac{x^{\sigma-1}}{-\lambda}=
\frac{1}{1-\sigma}$, we are done.
%This implies our desired result.
\end{proof}

\section{An aside on optimality}\label{sec:asideopt}
We will now show that the leading term of Thm.~\ref{thm:mainthmB} is sharp. The construction is inspired by the well-known example $A(s) = (\zeta(s+i)+\zeta(s-i))/2 = \sum_n \cos(\log n)\cdot n^{-s}$, often used to show that one cannot derive asymptotics for $\sum_{n\leq x} a_n$ just from the behavior of $\sum_n a_n n^{-\sigma}$ for real $\sigma\to 1^+$. 
%The proof is independent from the rest of the paper.

\begin{proposition}\label{prop:counterxe}
Let $T\geq 1$. For every $\epsilon>0$, there are $\{a_n\}_{n=1}^\infty$, $a_n\geq 0$, such that
\begin{itemize}
\item $A(s) = \sum_n a_n n^{-s}$ converges absolutely for $\Re s > 1$, with meromorphic continuation to
$\mathbb{C}$, 
\item $A(s)$ has a simple pole at $s=1$, with residue $1$, and no other poles with $|\Im s|\leq T$, 
\end{itemize}
and \[\limsup_{x\to\infty} \frac{1}{x}\sum_{n\leq x}a_n > 
\frac{\pi}{T} \coth \frac{\pi}{T} + \frac{\pi}{T} - \,\epsilon,\quad
\liminf_{x\to\infty} \frac{1}{x}\sum_{n\leq x}a_n < 
\frac{\pi}{T} \coth \frac{\pi}{T} - \frac{\pi}{T} + \,\epsilon.\]
%\begin{equation}\label{eq:chepita}\limsup_{x\to\infty} \sum_{n\leq x} \frac{a_n}{n} 
%> A(1) + (1-\epsilon) \frac{\pi}{2 T}.\end{equation}
\end{proposition}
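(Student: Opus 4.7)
The plan is to construct $A(s)$ as a geometrically weighted sum of shifts of $\zeta$, extending the example in the statement. Fix parameters $\beta > T$ with $\beta/(2\pi) \notin \Q$, and $r \in (0,1)$, to be tuned in terms of $\epsilon$; set
\[
A(s) = \sum_{k\in\Z} r^{|k|}\, \zeta(s - ik\beta).
\]
The functional equation and Stirling give $\zeta(s - ik\beta)$ at most polynomial growth in $|k|$ on compact subsets of $\C$, so the series converges uniformly away from its poles and $A(s)$ is meromorphic on $\C$, with simple poles at $s = 1 + ik\beta$ of residue $r^{|k|}$. Since $\beta > T$, the only pole with $|\Im s| \leq T$ is the simple one at $s=1$ of residue $1$. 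The Dirichlet coefficients are
\[
a_n = \sum_{k\in\Z} r^{|k|}\, n^{ik\beta} = P_r(\beta\log n), \qquad P_r(v) = \frac{1-r^2}{1-2r\cos v + r^2},
\]
the Poisson kernel, so $a_n \geq 0$.

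To pass from the partial sums to a periodic profile, I would combine the summation-by-parts bound $\sum_{n\leq x} n^{it} = x^{1+it}/(1+it) + O(1 + |t|\log x)$ with the absolute convergence of $\sum_k r^{|k|}(1+|k|\beta)$ to get
\[
\frac{1}{x}\sum_{n\leq x} a_n = \Psi(\beta\log x) + O_{r,\beta}\!\left(\tfrac{\log x}{x}\right),\qquad \Psi(v) := \sum_{k\in\Z} \frac{r^{|k|}\, e^{ikv}}{1+ik\beta}.
\]
The function $\Psi$ is continuous and $2\pi$-periodic. Since $\beta\log(n+1) - \beta\log n \to 0$, the set $\{\beta\log n \bmod 2\pi\}_{n\geq 1}$ is dense in $[0,2\pi)$, so
\[
\limsup_{x\to\infty} \frac{1}{x}\sum_{n\leq x} a_n = \sup_{v} \Psi(v),\qquad \liminf_{x\to\infty} \frac{1}{x}\sum_{n\leq x} a_n = \inf_{v} \Psi(v).
\]

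The key step is to compute these extrema and see the $\coth$ appear. Using $(1+ik\beta)^{-1} = \int_0^\infty e^{-(1+ik\beta)t}\,dt$ and Fubini, $\Psi$ is a scaled convolution of $P_r$ with a one-sided exponential,
\[
\Psi(v) = \int_0^\infty e^{-t}\, P_r(v - \beta t)\, dt = \frac{1}{\beta}\int_0^\infty e^{-\tau/\beta}\, P_r(v - \tau)\, d\tau.
\]
The $2\pi$-periodicity of $P_r$ collapses the contributions from successive periods into a geometric series in $e^{-2\pi/\beta}$, yielding
\[
\Psi(v) = \frac{1}{\beta(1-e^{-2\pi/\beta})}\int_0^{2\pi} e^{-\tau/\beta}\, P_r(v - \tau)\, d\tau.
\]
Since $\int_0^{2\pi} P_r = 2\pi$, this gives the uniform upper bound $\Psi(v) \leq 2\pi/(\beta(1-e^{-2\pi/\beta}))$; for fixed small $\delta > 0$, as $r \to 1^-$ the Poisson kernel concentrates at $\tau = v$, so that $\Psi(\delta) \to 2\pi e^{-\delta/\beta}/(\beta(1-e^{-2\pi/\beta}))$, matching the bound as $\delta \to 0^+$. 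A parallel argument at $v = 2\pi - \delta$ handles the infimum. Using $1 - e^{-2\pi/\beta} = 2e^{-\pi/\beta}\sinh(\pi/\beta)$ together with $\coth x \pm 1 = e^{\pm x}/\sinh x$, one obtains
\[
\sup_v \Psi(v) \;\xrightarrow[r \to 1^{-}]{}\; \tfrac{\pi}{\beta}\bigl(\coth(\pi/\beta) + 1\bigr),\qquad \inf_v \Psi(v) \;\xrightarrow[r \to 1^{-}]{}\; \tfrac{\pi}{\beta}\bigl(\coth(\pi/\beta) - 1\bigr).
\]

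To finish, given $\epsilon > 0$, I would first pick $\beta > T$ (with $\beta/(2\pi)$ irrational) close enough to $T$ that $\tfrac{\pi}{\beta}(\coth(\pi/\beta)\pm 1)$ lies within $\epsilon/2$ of $\tfrac{\pi}{T}\coth(\pi/T) \pm \tfrac{\pi}{T}$, then $r<1$ close enough to $1$ to bring $\sup_v \Psi$ and $\inf_v \Psi$ within $\epsilon/2$ of their $r\to 1^-$ limits. The main technical obstacle I foresee is the one-sided convergence $\sup_v \Psi \nearrow \tfrac{\pi}{\beta}(\coth(\pi/\beta)+1)$ as $r\to 1^-$: the pointwise limit profile has a jump at $v=0$ while $\Psi$ is smooth for every $r<1$, so one needs a quantitative lower bound on $\Psi(v_r)$ along a judiciously chosen sequence $v_r \to 0^+$ (e.g., $v_r = \sqrt{1-r}$). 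The remaining bookkeeping -- meromorphic continuation, convergence of the series defining $A(s)$, and error terms -- is routine.
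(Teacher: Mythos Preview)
Your proposal is correct and follows essentially the same route as the paper: both build $a_n$ as a non-negative trigonometric approximate identity evaluated at $(\text{parameter})\cdot\log n$, so that $A(s)$ is a weighted sum of shifts $\zeta(s-ik\cdot)$; both reduce the partial sums to a convolution of a one-sided exponential against that kernel, periodize to pick up the factor $(1-e^{-2\pi/\beta})^{-1}$, and then let the kernel concentrate. The difference is the choice of kernel: you use the Poisson kernel $P_r$ (an infinite series, with $r\to 1^-$), whereas the paper uses the Fej\'er kernel $F_K$ (a finite sum, with $K\to\infty$). The Fej\'er choice makes $A(s)$ a \emph{finite} linear combination of shifted $\zeta$'s, so meromorphic continuation and the pole structure come for free; your Poisson version requires the polynomial-growth argument you sketch, but in exchange your integral representation $(1+ik\beta)^{-1}=\int_0^\infty e^{-(1+ik\beta)t}\,dt$ makes the convolution step slightly slicker than the paper's Fej\'er-summation argument.

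Two minor remarks. First, the irrationality condition $\beta/(2\pi)\notin\Q$ is unnecessary: $\beta\log(n{+}1)-\beta\log n\to 0$ already gives density, and in any case $x$ ranges over reals so $\beta\log x$ hits every residue class. Second, the ``main technical obstacle'' you flag is just the standard approximate-identity maneuver: for fixed $\delta>0$ one has $\Psi(\delta)\to 2\pi e^{-\delta/\beta}/(\beta(1-e^{-2\pi/\beta}))$ as $r\to 1^-$, and then one lets $\delta\to 0^+$; no delicate choice of $v_r$ is needed. The paper does exactly the analogous thing with $K\to\infty$ followed by $\delta\to 0^+$.
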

The main term in Theorem \ref{thm:mainthmB} for $\sigma=0$ is precisely
$\frac{\pi}{T} \coth \frac{\pi}{T} + O^*\left(\frac{\pi}{T}\right)$: the contribution of $\rho=1$ to the first sum on the right of \eqref{eq:quentino} is $\frac{\pi}{T} \coth \frac{\pi}{T}$,
whereas its contribution to the third sum on the right is $\frac{\pi}{T}$. In other words,
Theorem \ref{thm:mainthmB} is optimal ``to the dot'', among all methods that use only information 
on poles with $|\Im s|\leq T$.
\begin{proof}
It is clearly enough to prove the statement with the weaker condition that $|\Im s|<T$, as we can then
apply that statement with $T_+$ instead of $T$, where $T_+$ tends to $T$ from above.

Let $
F_K(t):=\sum_{k=-K}^{K}\Big(1-\frac{|k|}{K+1}\Big)e^{ikt}
$ be the Fejér kernel, which is non-negative (see, e.g., \cite[\S III.3]{zbMATH01881986} or \cite[Lem.~2.2 (i)]{zbMATH07503892}). Define 
$a_n = F_K(T_{}\log n)$. Then
\begin{equation}\label{eq:dududu}\begin{aligned}\sum_n a_n n^{-s} &= 
%\sum_n n^{-s} \sum_{k=-K}^K\left(1-\frac{|k|}{K+1}\right)e^{ikT_{}\log n}= 
\sum_n n^{-s} \sum_{k=-K}^K\left(1-\frac{|k|}{K+1}\right)n^{ikT_{}}
= \sum_{k=-K}^K\left(1-\frac{|k|}{K+1}\right)\zeta(s-i kT_{})\end{aligned}\end{equation}
for $\Re s >1$, and so $\sum_n a_n n^{-s}$ has the meromorphic continuation \[A(s) = \sum_{k=-K}^K\left(1-\frac{|k|}{K+1}\right)\zeta(s-i kT_{})\] for all $s$. Clearly, $A(s)$ has a simple pole at $s=1$ with residue $1$ (from $k=0$) and it has no poles with $|\Im s|< T_{}$. Now, let $x = \exp\left(\frac{2\pi N +\delta}{T}\right)$, $N\in \mathbb{Z}_{>0}$ and $0<\delta<\pi$. Then
\begin{align} \label{10_30pm}
\sum_{n\leq x}a_n = \sum_{n\leq x}\sum_{k=-K}^K\left(1-\frac{|k|}{K+1}\right)n^{ikT_{}}  = \sum_{k=-K}^K\left(1-\frac{|k|}{K+1}\right)\sum_{n\leq x}n^{ikT_{}}.
\end{align}
By Euler--Maclaurin in degree $1$, 
\begin{equation}\label{eq:fandango}
\sum_{n\leq x}n^{ikT_{}} = \dfrac{x^{ikT_{}+1}}{ikT_{}+1} + O(|k|T_{}\log x+1).
\end{equation}
%$$
%\sum_{n\leq x}n^{ikT_{}} = \dfrac{x^{ikT_{}+1}}{ikT_{}+1} + O(kT_{}\log x+1)= \dfrac{x^{ikT_{}+1}}{ikT_{}} + O\left(\dfrac{x}{T_{}^2}+kT_{}\log x\right).
%$$
Thus, the sum on the right-hand side of \eqref{10_30pm} is
\begin{align*} 
x\cdot \sum_{|k|\leq K}\left(1-\frac{|k|}{K+1}\right)\dfrac{x^{ikT_{}}}{ikT_{}+1} + O\left(\sum_{|k|\leq K}\left(1-\frac{|k|}{K+1}\right)\left(|k|T_{}\log x+1\right)\right). 
\end{align*}
We bound  the error term easily by $O(K^2 T_{}\log x+K)$. 

Define $g(t)=\frac{1}{T} e^{(t-\delta)/T} \mathds{1}_{(-\infty,\delta]}(t)$.
Then, for any $k\ne 0$, \[\begin{aligned}\int_{-\infty}^\infty g(t) e^{i k t} dt &=
\frac{e^{-\delta/T}}{T} \int_{-\infty}^\delta e^{\left(i k + \frac{1}{T}\right) t} dt
= \frac{e^{-\delta/T}}{i k T + 1} e^{\left(i k + \frac{1}{T}\right) \delta} =
\frac{e^{i k \delta}}{i k T + 1}.
%\frac{e^{-\delta/T}-1}{T} \int_{-\infty}^\delta e^{\left(i k + \frac{1}{T}\right) t} dt + 
%\frac{1}{T} \int_0^\delta e^{\left(i k + \frac{1}{T}\right) t} dt\\&= 
%\frac{e^{-\delta/T}-1}{i k T + 1} e^{\left(i k + \frac{1}{T}\right) \delta} +
%\frac{1}{i k T + 1} \left(e^{\left(i k + \frac{1}{T}\right) \delta}-1\right) = 
%\frac{e^{i k \delta}-1}{i k T + 1}.
\end{aligned}\]
Since $x^{ikT_{}} = e^{ik \delta}$, we can thus write
\[\begin{aligned}\sum_{|k|\leq K}\left(1-\frac{|k|}{K+1}\right)\dfrac{x^{ikT_{}}}{ikT_{}+1} 
&= \int_{-\infty}^{\delta}g(t) F_{K}(t) dt .\end{aligned}\]
Since $F_K$ has period $2\pi$, 
$\int_{-\infty}^{\delta} g(t) F_{K}(t) dt = \int_{-\pi}^{\pi} G(t) F_K(t) dt$, where, for
$-\pi\leq t<\pi$,
\[G(t) = g(t) + \frac{e^{-\delta/T}}{T} \sum_m e^{\frac{t-2\pi m}{T}} = 
\frac{e^{(t-\delta)/T}}{T} \left(\mathds{1}_{[-\pi,\delta]}(t) + \frac{1}{e^{2\pi/T}-1}\right).\]  
In particular, $G$ is continuous at $0$, and so (\cite[III, (3.4)]{zbMATH01881986} or
\cite[Thm.~2.3]{zbMATH07503892})
\[\lim_{K\to \infty} \frac{1}{2\pi} \int_{-\pi}^{\pi} G(t) F_K(t) dt = G(0) = 
\frac{1}{T} \frac{e^{(2\pi-\delta)/T}}{e^{2\pi/T}-1}.\]
In other words, by taking $K$ sufficiently large, we can make sure that 
$ \sum_{|k|\leq K}\left(1-\frac{|k|}{K+1}\right)\frac{x^{ikT_{}}}{ikT_{}+1}$ is larger
than $\frac{2\pi}{T} \frac{e^{(2\pi -\delta)/T}}{e^{2\pi/T}-1} - \frac{\epsilon}{3}$. We fix $K$ and let
$N\to \infty$, and so $x$ goes to infinity; then the term $O(K^2 T \log x + K)$ becomes $< (\epsilon/3) x$ in absolute value for $N$ sufficiently large. Hence
\[\frac{1}{x} \sum_{n\leq x} a_n > \frac{2\pi}{T} \frac{e^{(2\pi-\delta)/T}}{e^{2\pi/T}-1}  - \frac{2\epsilon}{3}
= \left(\frac{\pi}{T}  + \frac{\pi}{T} \coth \frac{\pi}{T}\right) e^{-\frac{\delta}{T}} - \frac{2\epsilon}{3}
\]
for $N$ sufficiently large. Set $\delta$ small enough for the right side to be
$\geq \frac{\pi}{T}  + \frac{\pi}{T} \coth \frac{\pi}{T} - \epsilon$. We have proved that
$\limsup_{x\to\infty} \frac{1}{x}\sum_{n\leq x}a_n > 
\frac{\pi}{T} \coth \frac{\pi}{T} + \frac{\pi}{T} - \epsilon$.

To obtain the statement on $\liminf_{x\to \infty}$, we proceed just as above, but with
$x = \exp\left(\frac{2\pi N -\delta}{T}\right)$.
%Combining these estimates in \eqref{10_30pm},
\end{proof}

\begin{comment} 
On the other hand, let $\epsilon>0$ and $\frac{\pi}{2}\leq x\leq \pi-{\varepsilon}$. Clearly $|\cot(x+i\varepsilon)|=\frac{|e^{2i(x+i\varepsilon)}+1|}{|e^{2i(x+i\varepsilon)}-1|}\leq \frac{2}{{|e^{2i(x+i\varepsilon)}-1|}}$, and using the estimate $\sin t\geq \frac{2t}{\pi}$ for $0\leq t\leq \frac{\pi}{2}$, we bound as follows
\begin{align*}
|e^{2i(x+i\varepsilon)}-1|& = |e^{2ix}(e^{-2\varepsilon}-1)+e^{2ix}-1|  \geq |e^{2ix}-1| - |1-e^{-2\varepsilon}| \\
& \geq 2|\sin x| - \varepsilon = 2\sin x - \varepsilon = 2\sin(\pi-x) - \varepsilon \geq \left(\frac{4}{\pi}-1\right)\varepsilon.
\end{align*}
Using the inequality: $\sin t\geq \frac{2t}{\pi}$ for $0\leq t\leq \frac{\pi}{2}$ we bound the above expression. When $\frac{\pi\varepsilon}{2}\leq x\leq \frac{\pi}{2}$, $|e^{2i(x+i\varepsilon)}-1|\geq \varepsilon$, and when $\frac{\pi}{2}\leq x\leq \pi-\varepsilon$, $|e^{2i(x+i\varepsilon)}-1|\geq  2\sin (\pi -x)-\varepsilon\geq (\frac{4}{\pi}-1)\varepsilon$. In any case, we obtain that for $0<\epsilon<1$ and $\frac{\pi\varepsilon}{2}\leq x\leq \pi-\frac{\varepsilon}{2}$ we have $|\cot(x+i\varepsilon)|< \frac{8}{\varepsilon}$.
\end{comment}

{\em Comparison with the construction for bounded $a_n$.} In the companion paper \cite{Moebart},
we construct a bounded sequence $\{a_n\}_{n=1}^\infty$ such that 
$\limsup_{x\to \infty} \frac{1}{x}\sum_{n\leq x} a_n >  \tanh \frac{\pi}{2 T} - \epsilon$.
Here we just managed to construct a sequence with 
$\limsup_{x\to \infty} \left(\frac{1}{x}\sum_{n\leq x} a_n -1\right)$ greater than $\frac{\pi}{T} \coth \frac{\pi}{T} - 1 + \frac{\pi}{T} - \epsilon$, which is about twice $\tanh \frac{\pi}{2 T}$. Of course
what we have done is to show that the main result in this paper is sharp, just as our construction in \cite{Moebart} shows that the main result in 
\cite{Moebart} is sharp.

What is the difference between the construction here and the construction in \cite{Moebart}, and why does the construction here give a lower
bound that is about twice that in \cite{Moebart}?
Our sequence $\{a_n\}$ here has mean $1$ and large peaks around $\exp\left(\frac{2\pi N}{T}\right)$, $N>0$. There is thus a large imbalance in $\sum_{n\leq x} a_n$ for $x$ right after and right before each peak. If  $a_n$ is bounded, we cannot have large peaks; rather, $a_n$ can
approximate a square wave, which also has an imbalance -- but a smaller one, by about half --
right before or right after $a_n$ goes from about $1$ to about $-1$, or vice versa.

\section{Extremal approximants to the truncated exponential}\label{sec:modidon}
Our task is now to give band-limited approximations in $L^1$-norm
to a given function $I:\mathbb{R}\to \mathbb{C}$.
By ``band-limited'' we mean that our approximation is the Fourier transform
$\widehat{\varphi}$ of a function $\varphi$ supported on a compact interval (in our case, $[-1,1]$).
%or the definite
%integral $\int_\xi^\infty \widehat{\varphi}(y) dy$ of such a transform. (The two cases can be made into one if we allow $\varphi$ to have a singularity at the origin.)

%This is the same as saying that
%our approximation must have exponential type $2\pi$. 

%We will
%actually allow the Fourier transform of our approximation to have a pole at the origin.
%Equivalently: when we are given a function $I$ not in $L^1$, we seek to approximate it, not by the Fourier transform $\widehat{\varphi}$ of a function supported on $[-1,1]$, but by the integral $F(y) = \int_y^\infty \widehat{\varphi}(x) dx$ of that transform.

%We are considering two problems.
%Let us state our extremal problem precisely, in all its subtypes.
\vspace{0.1cm}

%\noindent
%\textbf{Approximation}
%\begin{enumerate}[(a)]
%\item\label{it:appra}
To be precise: let $I:\mathbb{R}\to \mathbb{C}$ be in $L^1(\R)$.
We want to find $\varphi:\mathbb{R}\to \mathbb{C}$ supported on $[-1,1]$, with 
$\varphi, \widehat{\varphi}\in L^1(\R)$, such that
\begin{equation}
\|\widehat{\varphi}-I\|_1
\end{equation}
is minimal. This is the {\em approximation} problem;
the function $\widehat{\varphi}$ here is sometimes called a {\em two-sided approximant}. If we add
the constraint that $\widehat{\varphi}-I$ is non-negative (or,
non-positive), we speak of a {\em majorization}
(or, respectively, {\em minorization}) problem; the majorant or minorant
$\widehat{\varphi}$ is called a {\em one-sided approximant}.
%\item Let $I:\mathbb{R}\to \mathbb{C}$ be in $L^1(\R)$.
%Find $\varphi:\mathbb{R}\to \mathbb{C}$ supported on $[-1,1]$, with $\varphi, \w%idehat{\varphi}\in L^1(\R)$, such that
%\begin{equation}
%\|F-I\|_1
%\end{equation}
%is minimal given the constraint that $\widehat{\varphi}-I$ be non-negative (or,
%respectively, non-positive), where 
%$F(\xi) = \int_\xi^\infty \widehat{\varphi}(y) dy$.
%\end{enumerate}

%, or, more generally, so that
%  \begin{equation}\label{eq:midimo}
   % \left|\widehat{\varphi}(u)-1_{u\leq 0}\cdot (-u)^k
    %e^{\frac{2\pi \sigma}{T} u}\right|_1
    %\end{equation}
%is minimal, for $\sigma>0$ and $k\geq 0$ an integer. We will also want to solve
%these problems under the additional constraint that $\widehat{\varphi}$ be
%a {\em majorant} or a {\em minorant} of the function being approximated; in other words, we want
%to minimize \eqref{eq:midimo} while obeying the condition that
%$\varphi(u)\geq 1_{u\leq 0} \cdot (-u)^k e^{\frac{2\pi \sigma}{T} u}$ (that is,
%$\varphi$ is a majorant) or $\varphi(u)\leq 1_{u\leq 0} \cdot (-u)^ke^{\frac{2\pi \sigma}{T} u}$ ($\varphi$ is a minorant).

%We also want to find $\varphi:\mathbb{R}\to \mathbb{C}$ supported on $[-1,1]$ such that, for $F(y) = \int_y^\infty \widehat{\varphi}(x) dx$,
%\begin{equation}\label{eq:vaalprob}
  %\left|F(u)-1_{u\leq 0}\cdot (-u)^k\right|_1\end{equation}
%is minimal, where $k\geq 0$; again, we may want $F$ to be a majorant or a minorant. Let us address \eqref{eq:vaalprob} first, since,
%at least for $k=0$, it has a well-known solution.

%We can unify cases \ref{it:appra} and \ref{it:apprb} by using the notion

Let $\lambda\in \mathbb{R}\setminus \{0\}$. We will consider the functions
$I = I_\lambda$ defined in
\eqref{eq:truncexp}. For those functions, the majorization/minorization problem was solved by Graham and Vaaler \cite{zbMATH03758875}. %Throughout this section we will fix a parameter $\lambda\ne0$. 
Our task will be mainly to work out the rather nice Fourier transforms 
$\varphi_{\pm,\lambda}$ of the approximants.

Results in the literature are often phrased in terms 
of {\em exponential type}. An entire function $F$ is of exponential type $2\pi\Delta$, with $\Delta>0$, if
$|F(z)| \ll_\epsilon e^{(2\pi\Delta + \epsilon) |z|}$. The Paley--Wiener
theorem states that, if $\varphi:\mathbb{R}\to\mathbb{C}$
is supported on $[-\Delta,\Delta]$ and in $L^2(\mathbb{R})$, then $\widehat{\varphi}$ is entire 
and of exponential type $2\pi\Delta$;
conversely, if $F$ is
%Assuming $\widehat{\varphi}\in L^1(\mathbb{R})$, the same is
%true of $F(z) = \int_z^\infty \widehat{\varphi}(u) du$. 
%Conversely, if $F$ is of 
exponential type $2\pi\Delta$, and the restriction of $F$ to $\mathbb{R}$ lies
in $L^2(\mathbb{R})$, then $F$ is the Fourier transform of some $\varphi\in L^2(\mathbb{R})$ supported on $[-\Delta,\Delta]$ (\cite[\S 5]{zbMATH03026314}, \cite[Ch. XVI, Thm.~7.2]{zbMATH01881986},  or
\cite[Thm.~19.3]{zbMATH01022658}).

{\em Remark.} As we shall see, when $\lambda\to 0^+$, the optimal one-sided approximants to $I_\lambda$ tend to the optimal one-sided
  approximants to $I_0$ found by Beurling
  and rediscovered by Selberg\footnote{The result may be most familiar
to number theorists due to its use by Selberg to prove an optimal form of the large
sieve, matching Montgomery and Vaughan's. See, e.g.,
\cite[\S 9.1]{MR2647984}.}; see the comments in \cite[pp. 226]{Sellec}
  on the non-publication history. 
  %We will be seeing this convergence on the Fourier side. 
  This is a ``cultural'' comment, in that we will be
  working only with $\lambda\ne 0$, letting $\lambda\to 0$ only once we
  reach our applications, so as to avoid special cases.

\subsection{Graham--Vaaler's one-sided approximants and their transforms}

\begin{proposition}\label{prop:grahamvaaler}
Let $F(z)$ be an entire function of exponential type $2\pi$. Let $I_\lambda$ be as in \eqref{eq:truncexp}, where $\lambda \in \mathbb{R}\setminus \{0\}$.
\begin{enumerate}[(i)]
\item If $F(x)\geq I_\lambda(x)$ for all $x\in \R$, then
\begin{equation}\label{eq:agora1}\|F-I_\lambda\|_1 \geq \frac{1}{1-e^{-|\lambda|}}-\frac{1}{|\lambda|},\end{equation}
with equality if and only if $F =\widehat{\varphi_{+,\lambda}}$.
\item If $F(x)\leq I_\lambda(x)$ for all $x\in \R$, then
\begin{equation}\label{eq:agora2}\|F-I_\lambda\|_1 \geq \frac{1}{|\lambda|}-\frac{1}{e^{|\lambda|}-1},\end{equation}
with equality if and only if $F = \widehat{\varphi_{-,\lambda}}$. 
\end{enumerate}
Here
$\varphi_{\pm,\lambda}(t) = \varphi_{|\lambda|}^\pm(\sgn(\lambda) t)$, where,
for $\nu>0$,
\begin{equation}\label{eq:arnor1}\varphi_{\nu}^\pm(t) =
\mathds{1}_{[-1,1]}(t) \cdot (
\Phi_\nu^{\pm,\circ}(t) + \sgn(t) \Phi_\nu^{\pm,\star}(t)),
\end{equation}
%for $\lambda>0$ or $F(u) = $
%$\varphi_{\geq,\lambda} = \varphi_{+,\lambda}$ if $\lambda>0$ and
%$\varphi_{\geq,\lambda} = \mathds{1}_{[-1,1]}(t) (1-|t|) - \varphi_{+,\lambda}$ if $%\lambda < 0$,
%and, in turn,
\begin{equation}\label{eq:arnoros}
\Phi_\nu^{\pm,\circ}(z) = \frac{1}{2} \left(\coth \frac{w}{2} \pm 1\right),
\;\;\;\;\;
\Phi_\nu^{\pm,\star}(z) = 
\frac{i}{2\pi}\left(
\frac{\nu}{2} \coth \frac{\nu}{2} - \frac{w}{2} \coth \frac{w}{2} \pm \pi i z\right),
 \end{equation}
and $w = w(z) = -2\pi i z  + \nu$.
\begin{comment}
\begin{equation}\label{eq:arnoros}
\Phi_\nu^\circ(z) = \frac{1}{1-e^{2\pi i z - \nu}},\;\;\;\;\;\;\;\;
\Phi_\nu^\star(z) = \frac{1}{2\pi i} \left(
\frac{2\pi i z - \nu}{e^{2\pi i z -\nu}-1} -
\frac{-\nu}{e^{-\nu}-1}
\right).
\end{equation}
\end{comment}
%\mathds{1}_{[-1,1]}(t) \cdot \left(
%\dfrac{1-|t|}{1-e^{2\pi it-\nu}} + \frac{\sgn(t) \nu}{2\pi i}
%\left(\frac{1}{e^{-\nu}-1} - \frac{1}{e^{2\pi i t -\nu}-1}
%\right)\right).
%\end{equation}
%Note that $\varphi_{\geq,\lambda}(t) = \varphi_{+,-\lambda}(-t)$, and so
%$\varphi_{+,\lambda}(t) =
%\mathds{1}_{[-1,1]}(t) (1-|t|) - \varphi_{+,-\lambda}(-t)$. 
%\end{enumerate}
%Note that $\varphi_{\leq,\lambda} =  - \varphi_{-|\lambda|}^+(-\sgn(\lambda) t)$.
\end{proposition}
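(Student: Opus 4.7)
The plan is to invoke Graham and Vaaler's resolution of the majorization/minorization problems for the truncated exponential \cite{zbMATH03758875}, and then to make the Fourier-side description of the extremal approximants explicit, matching \eqref{eq:arnor1}--\eqref{eq:arnoros}. The problem immediately reduces to $\lambda = \nu > 0$: since $I_{-\nu}(u) = I_\nu(-u)$, and the reflection $F(z) \mapsto F(-z)$ preserves both being entire of exponential type $2\pi$ and the sign of $F - I_\lambda$, while having no effect on the $L^1$-norm and corresponding to $\varphi \mapsto \varphi(-\cdot)$ on the Fourier side, the case $\lambda < 0$ follows from the case $\lambda > 0$ via the rule $\varphi_{\pm,\lambda}(t) = \varphi_{|\lambda|}^\pm(\sgn(\lambda) t)$ already built into the statement.

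For $\nu > 0$, Graham--Vaaler's main theorem yields the sharp values $\|F_+ - I_\nu\|_1 = \tfrac{1}{1-e^{-\nu}} - \tfrac{1}{\nu}$ and $\|F_- - I_\nu\|_1 = \tfrac{1}{\nu} - \tfrac{1}{e^\nu - 1}$, together with the inequalities \eqref{eq:agora1}, \eqref{eq:agora2}, and a uniqueness/characterization of extremal entire functions $F_\pm$ of exponential type $2\pi$ specified by Hermite interpolation of $I_\nu$ (values and first derivatives) at the integers, with appropriate adjustments at $n=0$ accommodating the jump. The Paley--Wiener theorem then delivers functions $\varphi_\nu^\pm$ supported on $[-1,1]$ with $\widehat{\varphi_\nu^\pm} = F_\pm$, and all that remains is to identify $\varphi_\nu^\pm$ with the closed form in \eqref{eq:arnor1}--\eqref{eq:arnoros}.

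The key computation is the Fourier inversion itself. Using the partial fraction expansion
\[\coth(w/2) = \frac{2}{w} + \sum_{k \ne 0} \frac{2w}{w^2 + 4\pi^2 k^2}\]
with $w = -2\pi i t + \nu$, both $\Phi_\nu^{\pm,\circ}(t)$ and $\Phi_\nu^{\pm,\star}(t)$ decompose into sums of rational functions of $e^{-2\pi i t + \nu}$ together with a linear-in-$t$ remainder; inserting $\mathds{1}_{[-1,1]}(t)$ and $\sgn(t)$ and integrating against $e^{-2\pi i \xi t}$ yields, term by term, exactly the interpolation series defining Graham--Vaaler's $F_\pm$ at $\xi = n \in \Z$ together with its derivative data. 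The main obstacle will be the bookkeeping in this last step: the correction factor $\sgn(t)\Phi_\nu^{\pm,\star}(t)$, carrying the prefactor $i/(2\pi)$ together with the $\pm \pi i z$ inside the parentheses, must combine with the jump of $\sgn(t)$ at the origin to produce precisely the derivative boundary data of the Hermite interpolant, and must yield a real-valued $\varphi_\nu^\pm$ despite the factor of $i$. Once this identification is in hand, the sharp inequalities and their equality cases are inherited from Graham--Vaaler, and the proposition follows.
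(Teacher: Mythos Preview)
Your high-level strategy --- reduce to $\nu = \lambda > 0$ by reflection, cite Graham--Vaaler for the extremal inequalities and the characterization of the extremal $F_\pm$, then invoke Paley--Wiener --- matches the paper. (The paper actually cites Carneiro--Littmann \cite{zbMATH06384942} for the explicit constants in \eqref{eq:agora1}--\eqref{eq:agora2}, noting that \cite{zbMATH03758875} already gives the extremal functions; this is a minor citation choice.)

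Where you diverge is the identification of $\varphi_\nu^\pm$, which you only sketch. You propose to compute $\widehat{\varphi_\nu^\pm}$ directly by expanding $\coth(w/2)$ via Mittag--Leffler and integrating term by term over $[-1,1]$. There is a slip here: that expansion gives rational functions of $w$ (hence of $t$), not ``rational functions of $e^{-2\pi i t + \nu}$'' as you write, and integrating $\mathds{1}_{[-1,1]}(t)/(t-\alpha)$ against $e^{-2\pi i\xi t}$ produces exponential-integral terms, not the tidy interpolation series you want. The paper goes in the opposite direction and avoids all of this by applying Vaaler's formula \cite[Thm.~9]{zbMATH03919007}: for any $L^1$ entire function $F$ of exponential type $2\pi$,
\[
\widehat{F}(t) = (1-|t|)\sum_{n\in\Z} F(n)\, e^{-2\pi i n t} + \frac{\sgn(t)}{2\pi i}\sum_{n\in\Z} F'(n)\, e^{-2\pi i n t} \qquad (|t|\le 1).
\]
Applied to $F = M_\nu$, the interpolation construction gives $M_\nu(n) = I_\nu(n)$ and $M_\nu'(n) = I_\nu'(n)$ for $n\ne 0$, so both sums collapse to geometric series in $e^{-\nu - 2\pi i t}$; the $\coth$-form \eqref{eq:arnoros} then drops out in two lines. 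This formula is the missing tool: it explains structurally why the answer has the shape $(\text{periodic}) + \sgn(t)\cdot(\text{periodic})$, and it makes the ``bookkeeping'' you anticipate evaporate.
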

%While Graham and Vaaler gave the majorant and minorant constructions in \cite{zbMATH03758875}, 
%some of the properties relevant to our work were established by Carneiro and Littmann \cite{zbMATH06384942}, and 
%we
%will get to do some work ourselves: the expressions for $\varphi_\nu^+$
%and $\varphi_\nu^-$ seem to be new.
\begin{proof}
Apply \cite[Theorem 2]{zbMATH06384942} with $c=0$ and $\delta=1$:
if $F(x)\geq I_\lambda(x)$ for all $x$, then
\begin{align*}  \label{L1CarneiroLittmann2}
\|F-I_\lambda\|_1 & = \int_{-\infty}^\infty (F(\sgn(\lambda) u)-E_{|\lambda|}(u))du  \geq \frac{1}{1-e^{-|\lambda|}}-\frac{1}{|\lambda|},
\end{align*} 
with equality if and only if $F(u)=M_{|\lambda|}(\sgn(\lambda) u)$, where, for $\nu>0$, $M_{\nu}$ is the entire function of exponential type $2\pi$ given by
\begin{equation}\label{eq:harambe}
M_{\nu}(z)=\left(\dfrac{\sin\pi z}{\pi}\right)^2\left\{\sum_{n}\left(\dfrac{E_{\nu}(n)}{(z-n)^2}+\dfrac{E_{\nu}'(n)}{z-n}-\dfrac{E_{\nu}'(n)}{z}\right)+\dfrac{1}{z^2}\right\};\end{equation}
if $F(x)\leq I_\lambda(x)$ for all $x$, then \eqref{eq:agora2} holds instead,
with equality iff $F(u) = L_{|\lambda|}(\sgn(\lambda) u)$,
where $L_\nu(z) = 
M_\nu(z) - \left(\frac{\sin \pi z}{\pi z}\right)^2$ \cite[(3.21)]{zbMATH06384942}.
(It was already proved in \cite[Thm.~9]{zbMATH03758875} that $M_\nu$ and $L_\nu$
were the unique majorant and minorant minimizing $\|F-I_\lambda\|_1$; we
are referring to \cite{zbMATH06384942} because they give the specific values
on the right sides of \eqref{eq:agora1} and \eqref{eq:agora2}.) By
\cite[Thm.~6]{zbMATH03758875}, the restrictions of $M_\nu$ and $L_\nu$ to $\mathbb{R}$ are in $L^1\cap L^2$, since $I_\lambda$ and $\left(\frac{\sin \pi z}{\pi z}\right)^2$ are.

By the Fourier inversion formula, if $F(u) = M_{|\lambda|}(\sgn(\lambda) u)$, then
$F=\widehat{\varphi_{+,\lambda}}$, where $\varphi_{+,\lambda}(t)=
\widehat{M_{|\lambda|}}(-\sgn(\lambda) t)$; if
$F(u) = L_{|\lambda|}(\sgn(\lambda) u)$, then
$F=\widehat{\varphi_{-,\lambda}}$, where $\varphi_{-,\lambda}(t)=
\widehat{L_{|\lambda|}}(-\sgn(\lambda) t)$.
By \cite[Theorem~9]{zbMATH03919007} and $M_\nu\in L^1(\mathbb{R})$,
for $\nu>0$,
\begin{align} \label{12_51am}
\widehat{M_\nu}(t)=(1-|t|)\sum_{n\in\mathbb{Z}} M_\nu(n)e^{-2\pi i nt}+\dfrac{1}{2\pi i}\sgn(t)\sum_{n\in\mathbb{Z}} M'_\nu(n)e^{-2\pi i nt}.
\end{align} 
for all $t\in [-1,1]$.
It follows from \eqref{eq:harambe} that $M_\nu(n)=E_\nu(n)$ for all $n\ne 0$ and $M_\nu(0) = 1$. Hence
$$
\sum_{n\in\Z} M_\nu(n)e^{-2\pi i nt}=\sum_{n=0}^\infty e^{-\nu n}e^{-2\pi i nt} = \dfrac{e^{2\pi it+\nu}}{e^{2\pi it+\nu}-1} = 
\dfrac{1}{1-e^{-2\pi it-\nu}}.
$$
Again by \eqref{eq:harambe}, $M'_\nu(n)=E'_\nu(n)$ for all $n\neq 0$ and $M'_\nu(0)=-\sum_{m\neq 0}E'_\nu(m)$. Thus
\begin{align*} 
	\sum_{n\in\Z} M'_\nu(n)e^{-2\pi i nt} & = -\sum_{m}E'_\nu(m) + \sum_{n} E'_\nu(n)e^{-2\pi i nt}  \\
	& = \nu \sum_{m}e^{-\nu m} -\nu  \sum_{n} e^{-\nu n}e^{-2\pi i nt} 
    = \dfrac{\nu}{e^\nu -1} -\dfrac{\nu}{e^{2\pi it+\nu}-1}. 
\end{align*} 
Therefore, by \eqref{12_51am} we conclude that, for all $|t|\leq 1$:
\begin{equation}\label{eq:pomme1}
	\widehat{M_\nu}(t)= \dfrac{(1-|t|)e^{2\pi it+\nu}}{e^{2\pi it+\nu}-1}+\dfrac{\sgn(t) \nu}{2\pi i}\left( \dfrac{1}{e^\nu -1} -\dfrac{1}{e^{2\pi it+\nu}-1}\right),
\end{equation}
%Since $M_\nu$ is in $L^1$, $\widehat{M_\nu}$ is continuous; the function
%on the right side of \eqref{eq:pomme1} is also continuous, and so \eqref{eq:pomme1} must actually hold for all $|t|\leq 1$.

It is not hard to see from \eqref{eq:harambe} that $M_\nu$ is bounded. Since %$\|M_\nu-I_\nu\|_1<\infty$
%and $I_\nu\in L^1(\mathbb{R})$, 
$M_\nu\in L^1(\mathbb{R})$, it follows that $M_\nu \in L^2(\mathbb{R})$.
The Paley--Wiener theorem then tells us that $M_\nu$ is the Fourier transform in the $L^2$ sense of a function
$g\in L^2(\mathbb{R})$ supported on $[-1,1]$. By 
\cite[Thm.~9.14]{zbMATH01022658} and $M_\nu\in L^1(\mathbb{R})$, $\widehat{M_\nu}(t) =
g(-t)$ almost everywhere. As $\widehat{M_\nu}$ is continuous (because $M_\nu\in L^1(\mathbb{R})$), it follows that $g$ has a continuous representative supported on $[-1,1]$.

Now, in general, for any $w$,
$$\frac{e^w}{e^w-1} = %\frac{1}{2} + \frac{1}{2} \frac{e^w+1}{e^w-1} =
\frac{1}{2} \left(\coth \frac{w}{2} + 1\right),\;\;\;\;\;\;\;\;\;\;
\frac{1}{e^w-1} %= \frac{1}{2} \left(\frac{e^w+1}{e^w-1}-1\right)
= \frac{1}{2} \left(\coth \frac{w}{2} - 1\right).
$$
It follows that, for $w= 2\pi i t + \nu$ and $|t|\leq 1$,
$$\begin{aligned}
\widehat{M}_\nu(t) &= (1-|t|)\cdot \frac{1}{2} \left(\coth \frac{w}{2} + 1\right)
+ \frac{\sgn(t) \nu}{2\pi i} \cdot \frac{1}{2}\left(\coth \frac{\nu}{2} - 
\coth \frac{w}{2}\right)\\
&= \frac{1}{2} \left(\coth \frac{w}{2} + 1\right) + \frac{\sgn(t)}{2\pi i}\left(
\frac{\nu}{2} \coth \frac{\nu}{2} - \frac{w}{2} \coth \frac{w}{2} - t\pi i\right).
\end{aligned}
$$
where we note that $|t| = t \sgn(t)$.
Since the Fourier transform of 
$\left(\frac{\sin \pi z}{\pi z}\right)^2$ is $(1-|t|) \cdot \mathds{1}_{[-1,1]}$,
$$\widehat{L_\nu}(t) = \widehat{M_\nu}(t) - (1-|t|) = 
\frac{1}{2} \left(\coth \frac{w}{2} - 1\right) + \frac{\sgn(t)}{2\pi i}\left(
\frac{\nu}{2} \coth \frac{\nu}{2} - \frac{w}{2} \coth \frac{w}{2} + t\pi i\right)
$$
for $|t|\leq 1$, and $\widehat{L_\nu}(t)=0$ for $|t|>1$.

Finally, we define $\varphi_\nu^+(t) = \widehat{M_\nu}(-t)$,
$\varphi_\nu^-(t) = \widehat{L_\nu}(-t)$, and obtain
$$\varphi_\nu^\pm(t) = \mathds{1}_{[-1,1]}(t)\cdot \left(
\frac{1}{2} \left(\coth \frac{w}{2} \pm 1\right) - \frac{\sgn(t)}{2\pi i}\left(
\frac{\nu}{2} \coth \frac{\nu}{2} - \frac{w}{2} \coth \frac{w}{2} \pm t\pi i\right)
\right)
$$
for $w = -2\pi i t + \nu$. Then $\varphi_{\pm,\lambda}(t) = 
\varphi_{|\lambda|}^\pm(\sgn(\lambda) t)$.
We may write $\varphi_\nu^\pm(t)$ as \eqref{eq:arnor1} and \eqref{eq:arnoros}.
\end{proof}

\begin{figure}[ht]
\begin{minipage}[c]{0.45\linewidth}
\centering
\includegraphics[width=\textwidth]{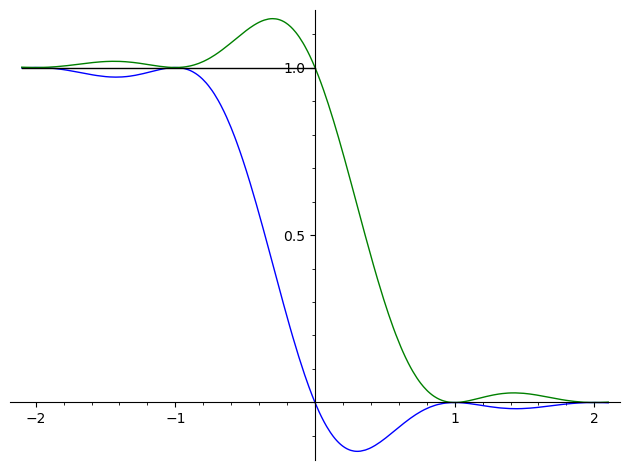}
\caption{Beurling--Selberg majorant and minorant of $\mathds{1}_{(-\infty,0]}(u)$}
\end{minipage}
\begin{minipage}[c]{0.45\linewidth}
\centering
\includegraphics[width=\textwidth]{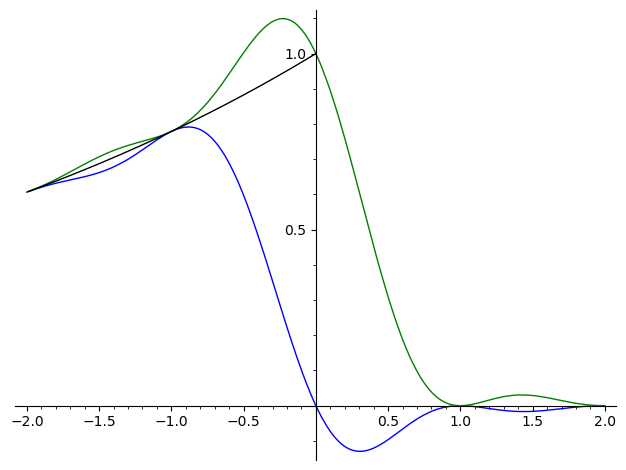}
\caption{Graham--Vaaler majorant and minorant of $\mathds{1}_{(-\infty,0]}(u)\cdot e^{\lambda u}$ for $\lambda = 1/4$}
\label{fig:grahamvaaler}
\end{minipage}
\end{figure}

 We may also write, for $\xi = \pm 1$,
\begin{equation}\label{eq:arnoroid}(\Phi_{\nu}^{\pm,\circ}+ \xi \Phi_\nu^{\pm,\star})(z) = 
\frac{i \xi}{2\pi} \left(\frac{\nu}{2} \coth \frac{\nu}{2} -\frac{w(z-\xi)}{2}\coth \frac{w(z-\xi)}{2} \pm \pi i (z-\xi)\right),\end{equation}
which follows from
\eqref{eq:arnoros}: $\coth \frac{w(z\pm 1)}{2} = \coth\left(\frac{w(z)}{2}\pm \pi i\right) = \coth \frac{w(z)}{2}$.
\begin{remark}
Vaaler finds the optimal one-sided approximants for $\sgn(x)$ of exponential type $2\pi$ \cite[Thm.~8]{zbMATH03919007}: the optimal majorant $B(x)$ is defined in \cite[Eq (1.1)] {zbMATH03919007} as
\begin{equation}\label{eq:fibel}B(x) = 
\left(\frac{\sin \pi x}{\pi}\right)^2\left(\sum_{n\in \mathbb{Z}}  \frac{\sgn(n)}{(x-n)^2}+\dfrac{2}{x} + \dfrac{1}{x^2}\right).\end{equation}
Since $\sgn(x)$ is odd, the optimal minorant is given by $-B(-x)$. 

Thus, the optimal majorant for $\mathds{1}_{[0,\infty)}(x)$ is  $\frac{B(x)+1}{2}$. By the formula
$\frac{\pi^2}{\sin^2 \pi x} = \sum_{n\in \Z}\frac{1}{(x-n)^2}$, 
\begin{equation}\label{eq:gawat}
\dfrac{B(x)+1}{2}=\left(\frac{\sin \pi x}{\pi}\right)^2\left(\sum_{n}  \frac{1}{(x-n)^2}+\dfrac{1}{x}+\dfrac{1}{x^2}\right).
\end{equation}
By \eqref{eq:harambe}, if $\lambda>0$, the optimal majorant of exponential type $2\pi$ for
$\mathds{1}_{[0,\infty)}(x)\, e^{-\lambda x}$  is
\begin{equation}\label{eq:gowot}
\widehat{\varphi_{+,\lambda}}(x)
= \left(\dfrac{\sin\pi x}{\pi}\right)^2\left\{\sum_{n}\left(\dfrac{e^{-\lambda n}}{(x-n)^2}-\dfrac{\lambda e^{-\lambda n}}{x-n}+\dfrac{\lambda e^{-\lambda n}}{x}\right)+\dfrac{1}{x^2}\right\}.
\end{equation}
%\\
%& = \left(\dfrac{\sin\pi x}{\pi}\right)^2\left\{\sum_{n}\dfrac{e^{-%\lambda n}}{(x-n)^2}-\sum_{n}\dfrac{\lambda e^{-\lambda n}}{x-n}+\dfrac{\lambda}{e^\lambda-1}\,\dfrac{1}{x}+\dfrac{1}{x^2}\right\}.
For $x\in \Z$, both \eqref{eq:gawat} and \eqref{eq:gowot} have their values
defined by continuity, and equal the function they majorize:
$\widehat{\varphi_{+,\lambda}}(n) = \frac{B(n)+1}{2} = 0$ for $n<0$,
and, for $n\geq 0$, as $\lambda\to 0^+$,
$\widehat{\varphi_{+,\lambda}}(n) = e^{-\lambda n}\to 1 = \frac{B(n)+1}{2}$.

Fix $x\notin \Z$. 
As $\lambda\to 0^+$, $\sum_n \lambda e^{-\lambda n} = \frac{\lambda}{e^\lambda - 1} \to 1$, and,
by monotone convergence, $ \sum_{n}\frac{e^{-\lambda n}}{(x-n)^2} \to \sum_{n}\frac{1}{(x-n)^2}$. For any $N$,\; $\sum_{n\leq x+N} \frac{e^{-\lambda n}}{x-n}$ is bounded
independently of $\lambda\geq 0$, and so
$\sum_{n\leq x+N} \frac{\lambda e^{-\lambda n}}{x-n}$ tends to $0$. 
The remainder $\sum_{n> x+N} \frac{\lambda e^{-\lambda n}}{x-n}$ is bounded by
$\frac{1}{N} \sum_n \lambda e^{-\lambda n} = \frac{1}{N} \frac{\lambda}{e^\lambda - 1}
< \frac{1}{N}$. Hence, $\lim_{\lambda \to 0^+}
\sum_{n\leq x+N} \frac{\lambda e^{-\lambda n}}{x-n} = 0$. We conclude that 
$$\lim_{\lambda\to 0^+} \widehat{\varphi_{+,\lambda}}(x) = \frac{B(x)+1}{2}.$$
Moreover, our argument gives uniform convergence on compact intervals. The same argument gives $\lim_{\lambda\to 0^+} \widehat{\varphi_{-,\lambda}}(x) = \frac{-B(-x)+1}{2}$,
or else we can deduce that from $-B(-x) = B(x) - 2 \left(\frac{\sin \pi x}{\pi x}\right)^2$ and $ \widehat{\varphi_{-,\lambda}}(x) =
\widehat{\varphi_{+,\lambda}}(x) - \left(\frac{\sin \pi x}{\pi x}\right)^2$.

\end{remark}

\subsection{Useful bounds and properties}
\begin{lemma}\label{lem:cothder}
If $|\Im z|\leq \frac{\pi}{4}$, then $|(z\coth z)'|< 1$. If $|\Im z|\leq \frac{\pi}{2}$, then $|(z\coth z)'|\leq |z|$.
\end{lemma}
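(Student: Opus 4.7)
The plan is to derive and exploit the partial-fraction identity
\[
(z\coth z)' \;=\; 4z\sum_{n=1}^\infty \frac{n^2\pi^2}{(z^2+n^2\pi^2)^2},
\]
which follows by differentiating the Mittag--Leffler expansion $z\coth z = 1 + 2z^2\sum_{n\ge 1}(z^2+n^2\pi^2)^{-1}$. This expression both makes the removable singularity at $z=0$ explicit and, together with a triangle-inequality argument, delivers the second bound; the first bound will require a more delicate boundary analysis combined with Phragm\'en--Lindel\"of.

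For $|(z\coth z)'|\le|z|$ on $|\Im z|\le\pi/2$, the triangle inequality gives $|(z\coth z)'|\le 4|z|\sum_n n^2\pi^2/|z^2+n^2\pi^2|^2$. Writing $z=x+iy$, one has $|z^2+n^2\pi^2|^2=(x^2-y^2+n^2\pi^2)^2+4x^2y^2$; for $|y|\le\pi/2$ this is strictly increasing in $x^2$ (since $\partial/\partial(x^2)=2x^2+2y^2+2n^2\pi^2>0$) and, at $x=0$, strictly decreasing in $y^2$ on $[0,\pi^2/4]$, so its minimum over the closed strip is $\pi^4(n^2-\tfrac14)^2$, attained at $z=\pm i\pi/2$. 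The claim then reduces to the identity $\sum_{n\ge 1} n^2/(n^2-1/4)^2=\pi^2/4$, which I would prove by the partial fraction
\[
\frac{n^2}{(n^2-1/4)^2}=\frac{1/2}{n-1/2}-\frac{1/2}{n+1/2}+\frac{1/4}{(n-1/2)^2}+\frac{1/4}{(n+1/2)^2},
\]
with a telescoping first half and the Euler sum $\sum_{k\ge 0}(k+1/2)^{-2}=\pi^2/2$ for the second. Equality is attained at $z=\pm i\pi/2$, where $(z\coth z)'=\pm i\pi/2$.

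For $|(z\coth z)'|<1$ on $|\Im z|\le\pi/4$, my plan is to compute the boundary values at $z=x+i\pi/4$ explicitly and then extend by Phragm\'en--Lindel\"of. Using $\coth(x+i\pi/4)=\tanh 2x-i\sech 2x$ and $\csch^2(x+i\pi/4)=-2\sech^2 2x-2i\tanh 2x\sech 2x$, a direct expansion of $(z\coth z)'=\coth z-z\csch^2 z$, with the cross terms collapsing via $\tanh^2 u+\sech^2 u=1$, produces the clean identity
\[
|(z\coth z)'|^2 \;=\; \bigl(1-\tfrac{\pi}{2}\sech 2x\bigr)^2+4x^2\sech^2 2x \qquad\bigl(z=x+\tfrac{i\pi}{4}\bigr).
\]
Setting $u=2x$, the inequality $|(z\coth z)'|^2<1$ simplifies, after dividing by $\sech u>0$, to the elementary statement $\cosh u>u^2/\pi+\pi/4$, which I would verify by observing that $\psi(u):=\cosh u-u^2/\pi-\pi/4$ is even with $\psi(0)=1-\pi/4>0$ and $\psi'(u)=\sinh u-2u/\pi>0$ for $u>0$ (since $\sinh u>u>2u/\pi$). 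To pass from the boundary to the whole closed strip, I invoke Phragm\'en--Lindel\"of: $(z\coth z)'$ is analytic on a neighborhood of $|\Im z|\le\pi/4$ (the $1/z$-singularities of $\coth z$ and $z\csch^2 z$ cancel), is bounded there, and tends to $\pm 1$ as $\Re z\to\pm\infty$; hence the supremum on the closed strip is $1$, attained only in the limit at infinity, for any interior attainment would force $(z\coth z)'$ constant by the maximum-modulus principle, contradicting $(z\coth z)'|_{z=0}=0$.

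The main obstacle will be the fortunate cancellation that produces the compact boundary identity in the last display; once that is secured, the reduction to $\cosh u>u^2/\pi+\pi/4$ and the Phragm\'en--Lindel\"of closure are routine.
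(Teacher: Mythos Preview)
Your proof is correct. For the first bound you and the paper do the same thing: reduce to the boundary $\Im z=\pi/4$ via Phragm\'en--Lindel\"of and obtain the identity $|(z\coth z)'|^2=(1-\tfrac{\pi}{2}\sech 2x)^2+4x^2\sech^2 2x$ (the paper writes it as $\frac{(\cosh 2x-\pi/2)^2+4x^2}{\cosh^2 2x}$), reducing the check to $\cosh u>u^2/\pi+\pi/4$. For the second bound the paper again uses Phragm\'en--Lindel\"of, this time applied to $f(z)/z$ on the wider strip $|\Im z|\le\pi/2$; on the boundary $z=x+i\pi/2$ one has $\coth z=\tanh x$ and $\csch^2 z=-\sech^2 x$, and after some algebraic massaging the inequality $|f(z)|\le|z|$ is reduced to $2x-2\tanh x-\sinh x\cosh x\le 0$. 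Your Mittag--Leffler route is genuinely different and rather cleaner here: you bypass the boundary algebra, minimize $|z^2+n^2\pi^2|^2$ directly over the strip, and finish with the exact evaluation $\sum_{n\ge1}n^2/(n^2-\tfrac14)^2=\pi^2/4$. The paper's approach is uniform across the two parts; yours trades a second boundary computation for a classical series identity, and it makes the sharpness at $z=\pm i\pi/2$ immediately visible.
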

\begin{proof}
Since $z\coth(z)$ is regular at $0$ and an even function,
we see 
that $f(z) := (z \coth z)'$ and $f(z)/z$ are regular at $0$, and hence analytic on the strip $|\Im z|\leq \frac{\pi}{2}$.
We see from 
 $f(z) = \coth z - z \csch^2 z$ that $f(z)$ has at most exponential
 growth as $\Re z\to \pm \infty$ within the strip.
Hence, by Phragmén-Lindelöf, 
it is enough to verify the inequalities
$|f(z)/z|\leq 1$ for $\Im z = \pm \frac{\pi}{2}$ and 
$|f(z)|\leq 1$ for $\Im z = \pm \frac{\pi}{4}$; 
by complex conjugation, it suffices to check them for
$\Im z = \frac{\pi}{2}$ and $\Im z =\frac{\pi}{4}$.
%respectively

By the above, $f(z) = \frac{(\sinh 2 z)/2 - z}{\sinh^2 z}$. 
Now, for $z = x + i \frac{\pi}{4}$ with $x\in \mathbb{R}$, we have $\sinh 2 z = i \cosh 2 x$ and $\sinh^2 z = - \frac{1}{2} + \frac{i}{2} \sinh 2 x$, and so $|f(z)|^2 = \frac{\left(\cosh 2 x - \pi/2\right)^2 + 4 x^2}{1 + \sinh^2 2 x}$. 
By $1 + \sinh^2 2 x = \cosh^2 2 x$, $$|f(z)|^2 = 
1 - \frac{\pi \cosh 2 x - \pi^2/4 - 4 x^2} {\cosh^2 2 x}.$$
Since $\cosh 2 x = 1 + 2\sinh^2 x\geq 1 + 2 x^2$,
$\pi>\frac{\pi^2}{4}$ and $2 \pi>4$, the numerator here is positive. We conclude that $|f(z)|^2<1$ for $z = x+i\frac{\pi}{4}$, as was desired.

For $z = x+ i\frac{\pi}{2}$ with $x\in \R$, we have $\coth z=\tanh x$ and $\csch^2 z=-\sech^2x$. Then $|f(z)|^2 = (\tanh x + x\sech^2x)^2 + (\frac{\pi}{2}\sech^2x)^2$. Since $\sech^2x -1 = -\tanh^2x$, this
is equal to
$$\tanh^2x\sech x\left(\cosh x+2x\csch x -|z|^2(\sech x+ \cosh x)\right)+|z|^2,$$
Since $|z|^2 \geq \frac{\pi^2}{4}>2$, it suffices to show that
$2x\csch x - 2\sech x-\cosh x\leq 0$ for all $x\in\R$; by parity,
it is enough to check all $x\geq 0$. The statement is then equivalent to $g(x)=2x-2\tanh x-
\sinh x \cosh x\leq 0$, since $\sinh(x)\geq 0$. That which follows
from $g'(x)= 2 \tanh^2 x - \cosh^2 x - \sinh^2 x =  
-2\sinh^2 x\tanh^2x-1 \leq 0$
(by $1-\cosh^2x =- \sinh^2x$) and $g(0)=0$. 

\begin{comment}
For $z = x+ i\frac{\pi}{2}$,  $\sinh 2 z = - \sinh 2 x$
and $\sinh^2 z = - \cosh^2 x$. Our task is to show that
$$|z|^2 - |f(z)|^2 = \frac{|z|^2\cosh^4 x  
- \left(\left(\frac{\sinh 2 x}{2} + x\right)^2 + \frac{\pi^2}{4}\right)}{\cosh^4 x}
= \frac{|z|^2 (\cosh^4 x - 1) - x \sinh 2 x - \frac{\sinh^2 2 x}{4}}{\cosh^4 x}
$$
is non-negative for all real $x$. Since $|z|^2 \geq \frac{\pi^2}{4}>2$, it suffices to show that
$F(x) := 2 (\cosh^4 x-1) - x \sinh 2 x - \frac{\sinh^2 2 x}{4} \geq 0$. By $\sinh^2 2 x = \frac{\cosh 4 x -1}{2}$ and
$\cosh^4 x = \frac{1}{8} \cosh 4 x+ \frac{1}{2} \cosh 2 x + \frac{3}{8}$,
we see that 
$F(x) = %\frac{1}{4} \cosh 4 x + \cosh 2 x - \frac{5}{4} - x\sinh 2 x
%- \frac{1}{8} (\cosh 4 x - 1) = 
\frac{1}{8} \cosh 4 x + \cosh 2 x - x \sinh 2 x - \frac{9}{8}
$. Thus, $F(x) = \sum_{k=0}^\infty c_{2 k} x^{2 k}$, where $c_0 = 0$ and
$c_{2 k} = \frac{1}{8} \frac{4^{2 k}}{(2 k)!} + \frac{2^{2 k}}{(2 k)!} - \frac{2^{2 k-1}}{(2 k-1)!}= \frac{2^{2k}}{(2 k)!} (2^{2 k - 3} + 1 - k)$
for $k\geq 1$. 
Since $2^{2k-3}+1> k$ for all
$k\geq 1$,  we see that $c_{2 k}>0$, and so $F(x)\geq 0$ for
all real $x$. Hence, $|f(z)|\leq |z|$ for $z = x + i \pi/2$.
\end{comment}
\end{proof} 

\begin{lemma}\label{lem:benvenuto2}
Let $\Phi_{\nu}^{\pm,\circ}(z)$ and $\Phi_{\nu}^{\pm,\star}(z)$ be as in \eqref{eq:arnoros} for $\nu>0$. Then
\begin{itemize}[-]
    \item $\Phi_{\nu}^{\pm,\circ}(z)$ is a meromorphic function whose poles, all of
them simple, are at $n -  \frac{i\nu}{2\pi}$, $n\in \mathbb{Z}$; the residue at every pole
is $\frac{i}{2\pi}$. Moreover, $\overline{\Phi_{\nu}^{\pm,\circ}(z)}=\Phi_{\nu}^{\pm,\circ}(-\overline{z})$.
\item $\Phi_{\nu}^{\pm,\star}(z)$ is a meromorphic function whose poles, all of
them simple, are at $n -  \frac{i\nu}{2\pi}$, $n\in \mathbb{Z}\setminus\{0\}$;
the residue at $n- \frac{i\nu}{2\pi}$ is $-\frac{i n}{2\pi}$. Moreover, $\overline{\Phi_{\nu}^{\pm,\star}(z)}=-\Phi_{\nu}^{\pm,\star}(-\overline{z})$.
\end{itemize}
On every region $\{z: \Im z\geq c\}$, $c>-\frac{\nu}{2\pi}$,
or $\{z:\Im z\leq c\}$, $c<-\frac{\nu}{2\pi}$, 
the function $\Phi_\nu^{\pm,\circ}(z)$ is bounded and 
$\Phi_\nu^{\pm,\star}(z) = O(|z|+1)$. Moreover, these bounds hold uniformly
for all $\nu$ in an interval $[\nu_0,\nu_1]$, with conditions $c>-\frac{\nu_0}{2\pi}$,
$c<-\frac{\nu_1}{2\pi}$, respectively.

%For either sign $\sigma=\pm
We have $\Phi_{\nu}^{\sigma,\star}(0) = 0$.
For $z$ with $0\leq \Re z\leq \frac{1}{4}$, and
for either sign $\sigma=\pm$,
$$\left|(\Phi_\nu^{\pm,\star})'(z)\right|\leq 1,\quad\quad
\left|\Phi_{\nu}^{\sigma,\star}(\pm z)\right|\leq |z|,\quad\quad
\left|(\Phi_{\nu}^{\sigma,\circ}\pm \Phi_{\nu}^{\sigma,\star})(\pm 1 \mp z)\right|\leq |z|.$$
%\left|(\Phi_{\nu}^{\sigma,\circ}\pm \Phi_{\nu}^{\sigma,\star})(\pm 1+ i t)\right|\leq |t|$ for all real $t$.
Moreover, for $z$ purely imaginary, $(\Phi_\nu^{\sigma,\star})'(\pm z)$, which
is purely real, is of constant sign.

Note that $\Phi_\nu^{\sigma,\circ}(z)\pm \Phi_\nu^{\sigma,\star}(z)$
is regular at $\pm 1 - \frac{i\nu}{2\pi}$, since the residues cancel out.
\end{lemma}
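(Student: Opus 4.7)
The plan is to proceed in three stages, matching the three groups of claims.

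First, I will identify the poles and residues and derive the conjugation symmetries. Writing $w(z)/2 = -\pi i z + \nu/2$, the map $z\mapsto w(z)/2$ is an affine bijection with derivative $-\pi i$, so the simple poles of $\coth$ at $i\pi k$, $k\in\Z$, each of residue $1$, pull back to simple poles of $\coth(w/2)$ in $z$ at $z = -k - i\nu/(2\pi)$. Relabelling $n = -k$ gives the claimed poles $z_n = n - i\nu/(2\pi)$, and the chain rule yields residue $1/(-\pi i) = i/\pi$ for $\coth(w/2)$ in $z$, hence residue $i/(2\pi)$ for $\Phi_\nu^{\pm,\circ}$. For $\Phi_\nu^{\pm,\star}$, using $w(z_n)/2 = -i\pi n$, the term $-\tfrac{w}{2}\coth\tfrac{w}{2}$ has residue $-(-i\pi n)(i/\pi) = -n$ in $z$, so $\Phi_\nu^{\pm,\star}$ picks up residue $-in/(2\pi)$; this vanishes at $n=0$, as stated. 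At $\pm 1 - i\nu/(2\pi)$ (so $n = \pm 1$), the residues of $\pm\Phi_\nu^{\sigma,\star}$ both equal $-i/(2\pi)$, cancelling the residue $i/(2\pi)$ of $\Phi_\nu^{\sigma,\circ}$ and giving the regularity of $\Phi_\nu^{\sigma,\circ}\pm\Phi_\nu^{\sigma,\star}$ at those points. The conjugation identities follow from $\overline{w(z)} = w(-\overline z)$ (since $\nu\in\R$) together with $\overline{\coth(u)} = \coth(\overline u)$; the extra minus sign for $\Phi^\star$ comes from the $i/(2\pi)$ prefactor.

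Second, I will handle the boundedness assertions. Since $\Re w = \nu + 2\pi\Im z$, on $\{\Im z \geq c > -\nu/(2\pi)\}$ one has $\Re w \geq \nu + 2\pi c > 0$, so $\coth(w/2)$ is pole-free there (the poles lie on $\Re w = 0$), continuous, and tends to $1$ as $\Re w\to +\infty$, hence bounded. This gives boundedness of $\Phi_\nu^{\pm,\circ}$, uniformly for $\nu\in[\nu_0,\nu_1]$ whenever $c > -\nu_0/(2\pi)$. The region $\Im z\leq c<-\nu/(2\pi)$ is entirely symmetric, with $\coth(w/2)\to -1$. Since $-\tfrac{w}{2}\coth\tfrac{w}{2} = O(|w|) = O(|z|+1)$ on these regions, $\Phi_\nu^{\pm,\star}(z) = O(|z|+1)$ there.

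Third, the estimates on $|\Re z|\leq 1/4$ and the sign claim. The identity $\Phi_\nu^{\pm,\star}(0) = 0$ drops out of $w(0)=\nu$. Differentiating, with $d(w/2)/dz = -\pi i$,
\[(\Phi_\nu^{\pm,\star})'(z) \;=\; -\tfrac{1}{2}\bigl(h'(w/2) \pm 1\bigr), \qquad h(u) := u\coth u.\]
Because $\Im(w/2) = -\pi\Re z$, the condition $|\Re z|\leq 1/4$ places $w/2$ in the strip $|\Im\cdot|\leq \pi/4$, so Lemma~\ref{lem:cothder} yields $|h'(w/2)|<1$ and hence $|(\Phi_\nu^{\pm,\star})'(z)|\leq 1$. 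Integrating along the straight segment from $0$ to $\pm z$, which stays in $|\Re\cdot|\leq 1/4$, gives $|\Phi_\nu^{\sigma,\star}(\pm z)|\leq |z|$. A direct comparison of \eqref{eq:arnoros} with \eqref{eq:arnoroid} shows that the latter is equivalent to
\[(\Phi_\nu^{\sigma,\circ} + \xi\Phi_\nu^{\sigma,\star})(z) \;=\; \xi\,\Phi_\nu^{\sigma,\star}(z-\xi),\qquad \xi = \pm 1,\]
so $(\Phi_\nu^{\sigma,\circ}\pm\Phi_\nu^{\sigma,\star})(\pm 1\mp z) = \pm\Phi_\nu^{\sigma,\star}(\mp z)$, reducing the third bound to the one just shown. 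For the sign claim, when $z$ is purely imaginary, $w/2 = \pi\Im z + \nu/2$ is real, so $h'(w/2)$ is real and satisfies $|h'(w/2)|<1$ strictly by Lemma~\ref{lem:cothder}; hence $h'(w/2)\pm 1$ -- and therefore $(\Phi_\nu^{\pm,\star})'$ -- has strict constant sign on the imaginary axis, and the same applies with $\pm z$ in place of $z$. Nothing in the argument is genuinely hard; the main care is in bookkeeping the various $\pm/\sigma/\xi$ signs and in spotting the reformulation of \eqref{eq:arnoroid} that turns the third estimate into an immediate corollary of the second.
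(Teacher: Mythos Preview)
Your proof is correct and follows essentially the same route as the paper's: both derive poles, residues, and conjugation directly from \eqref{eq:arnoros}, use boundedness of $\coth$ on half-planes $\Re w \geq \delta > 0$ for the growth claims, and invoke Lemma~\ref{lem:cothder} on the strip $|\Im(w/2)|\leq \pi/4$ for the derivative bound and the sign claim on the imaginary axis.

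There is one small but pleasant difference worth noting. For the third estimate, the paper uses the conjugation identities to reduce the $-1+z$ case to the $1-z$ case, and then redoes the derivative computation for $\Phi_\nu^{\sigma,\circ}+\Phi_\nu^{\sigma,\star}$ via \eqref{eq:arnoroid} with $w=-2\pi i(z-1)+\nu$. You instead recognize that \eqref{eq:arnoroid} is literally the identity $(\Phi_\nu^{\sigma,\circ}+\xi\Phi_\nu^{\sigma,\star})(z)=\xi\,\Phi_\nu^{\sigma,\star}(z-\xi)$, which reduces the third bound immediately to the second with no further computation. This is a tidier packaging of the same content. Similarly, where the paper uses conjugation to handle $\Phi_\nu^{\sigma,\star}(-z)$, you simply observe that the derivative bound already holds on the symmetric strip $|\Re z|\leq 1/4$; both are valid.
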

Our convention is that all signs denoted by $\pm$ in the same equation are the same, $\mp$ is the opposite sign, and $\sigma$ denotes a sign that may or not may be the same.
\begin{proof}
The statements on poles and residues follow directly from \eqref{eq:arnoros};
so do the statements on $\overline{\Phi_{\nu}^{\pm,\circ}(z)}$ and
$\overline{\Phi_{\nu}^{\pm,\star}(z)}$.
The statements on the boundedness of $\Phi_\nu^{\sigma,\circ}(z)$ and the growth
of $\Phi_\nu^{\sigma,\star}(z)$ follow from \eqref{eq:arnoros} and the
fact that $\coth(w)$ is bounded on $\Re w\geq c$ for $c>0$ arbitrary and on
$\Re w \leq c$ for $c<0$ arbitrary. Since $|\Phi_\nu^{\sigma,\star}(-z)| = 
 |\Phi_\nu^{\sigma,\star}(\overline{z})|$ and
$\left|(\Phi_{\nu}^{\sigma,\circ}-\Phi_{\nu}^{\sigma,\star})(-1+z)\right|
= \left|(\Phi_{\nu}^{\sigma,\circ}+\Phi_{\nu}^{\sigma,\star})(1-\overline{z})\right|$,
it is left to check that 
$\left|\Phi_{\nu}^{\sigma,\star}(z)\right|\leq |z|$ and
$\left|(\Phi_{\nu}^{\sigma,\circ}+ \Phi_{\nu}^{\sigma,\star})(1-z)\right|\leq |z|$.

By \eqref{eq:arnoros}, $\Phi_\nu^{\pm,\star}(0) = 0$ and
$(\Phi_\nu^{\pm,\star})'(z) = -\frac{d}{dw} \left(\frac{w}{2} \coth \frac{w}{2}\right) \mp 1/2$ at $w = -2\pi i z +\nu$. Hence, for $0\leq \Re z\leq 1/4$, by Lemma \ref{lem:cothder},
$\left|(\Phi_\nu^{\pm,\star})'(z)\right|\leq 1$, and so
$\left|(\Phi_\nu^{\pm,\star})(z)\right|\leq |z|$; moreover,
$(\Phi_\nu^{\pm,\star})'(z)$ does not change sign for $z$ purely imaginary, as thn $w$ is real, and the term
$\mp \frac{1}{2}$ always dominates.
By \eqref{eq:arnoroid},
$(\Phi_{\nu}^{\pm,\circ}+ \Phi_{\nu}^{\pm,\star})(1)=0$ and
$(\Phi_{\nu}^{\pm,\circ}+ \Phi_{\nu}^{\pm,\star})'(z) =
-\frac{d}{dw} \left(\frac{w}{2} \coth \frac{w}{2}\right)
\mp \frac{1}{2}$ at $w = - 2\pi i (z-1) + \nu$. Hence, again by Lemma \ref{lem:cothder},
for $0\leq \Re z\leq \frac{1}{4}$,
$\left|(\Phi_{\nu}^{\sigma,\circ}+ \Phi_{\nu}^{\sigma,\star})(1-z)\right|\leq |z|$.
\end{proof}

\begin{lemma}\label{lem:omlet}
    For $z\in \mathbb{C}$, 
    $\lambda\in \mathbb{R}\setminus \{0\}$,
    define
    $$\Phi_\lambda^\pm(z) =  \Phi_{|\lambda|}^{\pm,\circ}(\sgn(\lambda) z) + \sgn(\lambda)
\sgn(\Re z) \Phi_{|\lambda|}^{\pm,\star}(\sgn(\lambda) z),$$
where $\Phi_{|\lambda|}^{\pm,\circ}$, $\Phi_{|\lambda|}^{\pm,\star}$ are as in
\eqref{eq:arnoros}, and $\sgn(0)=0$. 
Let $T>0$, and let $z(s) = \frac{s-1}{i T}$.

Then, for $s\in \mathbb{C}$,
\begin{equation}\label{eq:Phicong}
\Phi_\lambda^{\pm}(z(\overline{s}))
= \overline{\Phi_\lambda^{\pm}(z(s))}.
\end{equation}
Let $\sigma\in \mathbb{R}\setminus \{1\}$. Let
$\lambda = \frac{2\pi}{T} (\sigma-1)$ and 
 write $\theta(s) = 1 - \frac{s-\sigma}{i T}$. If $\Im s >0$,
\begin{equation}\label{eq:explorm}
\Phi_\lambda^{\pm}(z(s)) = i \sgn(\lambda)
\left(- \frac{\theta(s)}{2} \cot(\pi \theta(s)) + \frac{\theta(1+i T)}{2} \cot(\pi \theta(1+ i T))\right)
 \pm \frac{1 - z(s)}{2}.
\end{equation}
\end{lemma}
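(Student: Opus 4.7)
The plan is to verify both identities by direct computation, using the symmetry and explicit forms provided by Lemma~\ref{lem:benvenuto2} together with the change-of-variables formula $\coth(iy) = -i\cot(y)$.

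For the conjugation identity \eqref{eq:Phicong}, the first observation is that $z(\overline{s}) = (\overline{s}-1)/(iT) = -\overline{(s-1)/(iT)} = -\overline{z(s)}$, so in particular $\Re z(\overline{s}) = -\Re z(s)$ and hence $\sgn(\Re z(\overline{s})) = -\sgn(\Re z(s))$. Using the symmetries $\overline{\Phi_\nu^{\pm,\circ}(z)} = \Phi_\nu^{\pm,\circ}(-\overline{z})$ and $\overline{\Phi_\nu^{\pm,\star}(z)} = -\Phi_\nu^{\pm,\star}(-\overline{z})$ from Lemma~\ref{lem:benvenuto2}, I compute $\overline{\Phi_\lambda^\pm(z(s))}$ term by term; the sign flip coming from the $\Phi^{\pm,\star}$ symmetry combines with the flip in $\sgn(\Re z)$ to restore the correct sign, yielding $\Phi_\lambda^\pm(z(\overline{s}))$.

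For the explicit formula \eqref{eq:explorm}, I first note that $\Im s > 0$ gives $\Re z(s) = \Im s/T > 0$, so the sign factor $\sgn(\Re z(s))$ is $+1$ and
\[\Phi_\lambda^\pm(z(s)) = \Phi_{|\lambda|}^{\pm,\circ}(\sgn(\lambda)z(s)) + \sgn(\lambda)\,\Phi_{|\lambda|}^{\pm,\star}(\sgn(\lambda)z(s)).\]
Then I plug into the definitions \eqref{eq:arnoros}. With $w = -2\pi i \sgn(\lambda) z(s) + |\lambda|$ and using $\lambda = 2\pi(\sigma-1)/T$ together with $s-\sigma = iT(1-\theta(s))$, a short calculation gives $w/2 = \sgn(\lambda)\pi i (\theta(s)-1)$. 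Since $\coth$ is odd and $\coth(iy) = -i\cot(y)$, one obtains $\coth(w/2) = -\sgn(\lambda) i\cot(\pi\theta(s))$ (using $\cot(\pi\theta-\pi) = \cot(\pi\theta)$) and consequently $(w/2)\coth(w/2) = \pi(\theta(s)-1)\cot(\pi\theta(s))$.

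The constant $\tfrac{\nu}{2}\coth(\nu/2) = \tfrac{|\lambda|}{2}\coth(|\lambda|/2)$ is identified with $\pi\theta(1+iT)\cot(\pi\theta(1+iT))$: indeed, $\theta(1+iT) = -i\lambda/(2\pi)$, so $\pi\theta(1+iT) = -i\lambda/2$, and using $\cot(iy) = -i\coth(y)$ together with the evenness of $x\coth(x)$ gives the claim. Substituting these three identities into the formulas for $\Phi_{|\lambda|}^{\pm,\circ}(\sgn(\lambda)z(s))$ and $\Phi_{|\lambda|}^{\pm,\star}(\sgn(\lambda)z(s))$, and collecting the two $\cot(\pi\theta(s))$ contributions (one proportional to $1$, the other to $\theta(s)-1$) into a single term $\theta(s)\cot(\pi\theta(s))$, yields \eqref{eq:explorm} directly. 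There is no conceptual obstacle — the main care required is sign-tracking through $\sgn(\lambda)$, and verifying that the term $\pm\tfrac{1}{2}$ from $\Phi^{\pm,\circ}$ combines with the $\pm \pi i \sgn(\lambda) z(s)$ term from $\Phi^{\pm,\star}$ to produce the clean factor $\pm (1-z(s))/2$.
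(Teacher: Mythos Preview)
Your proposal is correct and follows essentially the same route as the paper: both prove \eqref{eq:Phicong} via $z(\overline{s}) = -\overline{z(s)}$ and the conjugation symmetries of $\Phi_\nu^{\pm,\circ}$, $\Phi_\nu^{\pm,\star}$ (you cite Lemma~\ref{lem:benvenuto2} for these, while the paper re-derives them from \eqref{eq:arnoros}), and both obtain \eqref{eq:explorm} by computing $w = -\sgn(\lambda)\frac{2\pi}{T}(s-\sigma)$, applying $\coth(iu) = -i\cot u$ and the periodicity of $\cot$, and identifying the constant term with $\pi\theta(1+iT)\cot(\pi\theta(1+iT))$.
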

\begin{proof}
%Let us simplify the expression \eqref{eq:trothis} coming
%from Prop.~\ref{prop:shiftnonneg}. 
When we evaluate $\Phi_\lambda^\pm$ at $z(s)$,
we evaluate $\Phi_{|\lambda|}^{\pm,\circ}$ and
$\Phi_{|\lambda|}^{\pm,\star}$ at $\sgn(\lambda) z(s)$, and so 
the variable $w$ in \eqref{eq:arnoros} is given by
\begin{equation}\label{eq:firmow}\begin{aligned} w &= 
-2\pi i \sgn(\lambda) \frac{s-1}{i T} + |\lambda|
= \sgn(\lambda) \left(-\frac{2\pi}{T} (s-1) + \lambda\right) = - \sgn(\lambda) \frac{2\pi}{T} (s- \sigma).
\end{aligned}\end{equation}
In particular, when we conjugate $s$, we conjugate $w$.
We thus see from \eqref{eq:arnoros} that %, for $z(s) = \frac{s-1}{i T}$,
\begin{equation}\label{eq:mechner}\Phi_{|\lambda|}^{\pm,\circ}(\sgn(\lambda) z(\overline{s}))
= \overline{\Phi_{|\lambda|}^{\pm,\circ}(\sgn(\lambda) z(s))},\;\;\;\;\;\;\;
\Phi_{|\lambda|}^{\pm,\star}(\sgn(\lambda) z(\overline{s}))
= - \overline{\Phi_{|\lambda|}^{\pm,\star}(\sgn(\lambda) z(s))},\end{equation}
and thus, since $\sgn(\Re z(\overline{s})) = - \sgn(\Re z(s))$,
\eqref{eq:Phicong} holds.
%$$\Phi_\lambda^{\pm}(z(\overline{s}))
%= \overline{\Phi_\lambda^{\pm}(z(s))}.
%$$ 

If $\Im s > 0$,
\begin{equation}\label{eq:kalinka}\begin{aligned}\Phi_\lambda^\pm(z(s)) &=  \Phi_{|\lambda|}^{\pm,\circ}(\sgn(\lambda) z(s)) + 
\sgn(\lambda) \Phi_{|\lambda|}^{\pm,\star}(\sgn(\lambda) z(s))
\end{aligned}\end{equation}
because $\Im s > 0$ implies $\Re z(s)>0$. Since
$\coth$ is an odd function, \eqref{eq:arnoros} and
\eqref{eq:firmow} give us
$$\Phi_{|\lambda|}^{\pm,\circ}(\sgn(\lambda) z(s)) =
\frac{1}{2} \left(-\sgn(\lambda) \coth \frac{\pi (s-\sigma)}{T}
\pm 1\right),$$
$$\Phi_{|\lambda|}^{\pm,\star}(\sgn(\lambda) z(s)) =
\frac{i}{2\pi} \left(\frac{\lambda}{2} \coth \frac{\lambda}{2} - \frac{\pi (s-\sigma)}{T} \coth
 \frac{\pi (s-\sigma)}{T} \pm \sgn(\lambda) \pi i z(s)\right).$$
 Thus, for $\Im s> 0$, \eqref{eq:kalinka} gives us
 $$\Phi_\lambda^\pm(z(s)) = 
-\sgn(\lambda)\left( \left(\frac{i (s-\sigma)}{2 T} + \frac{1}{2}\right)
 \coth \frac{\pi (s-\sigma)}{T} - \frac{i \lambda}{4 \pi}
 \coth \frac{\lambda}{2}\right)
 \pm \frac{1- z(s)}{2}.
 $$
So, by $\coth u = - i \cot(u/i)$, $\coth(-u) = - \coth u$,
$\cot(\pi-u) = -\cot u$
and $\theta(s) = 1 -  \frac{s-\sigma}{i T}$,
 $$\Phi_\lambda^\pm(z(s)) = 
i \sgn(\lambda)\left( - \frac{\theta(s)}{2}
 \cot (\pi \theta(s)) - \frac{i \lambda}{4 \pi}
 \cot \frac{\lambda}{2 i}\right)
 \pm \frac{1- z(s)}{2}.
 $$
 %We see from  that
 %$\cot \pi \frac{s-\sigma}{i T} 
 %= -\cot \pi \theta(s)$. 
 Since
 $\theta(1+ i T) = \frac{\sigma-1}{i T} = \frac{\lambda}{2\pi i}$,
we have $\cot \frac{\lambda}{2 i} = 
 \cot (\pi \theta(1+ i T))$. 
%Hence
%$$ 
%\left(\frac{1- \frac{s-\sigma}{i T}}{2}\right)
% \cot \pi \frac{s-\sigma}{i T} - \frac{\lambda}{4 \pi i}
% \cot \frac{i \lambda}{2} = 
%- \frac{\theta(s)}{2} \cot \pi \theta(s) + \frac{\theta(1+i T)}{2} \cot %\pi \theta(1+ i T).$$
\end{proof}

\section{Shifting contours}\label{sec:contour}

%The following lemma is of a very standard kind -- just a shift of a contour to $-\infty$.

It is now time to shift our contours of integration to $\Re s = -\infty$.
%\subsection{General setup}\label{subs:cardinale}
For the sake of clarity, we will do our contour-shifting in some generality.
We will consider an integral of the form
%There will be situations where we want to evaluate
$$
\int_{1-iT}^{1+iT} (G^\circ(s) + \sgn(\Im{s})G^\star(s))x^sds,
$$
where $G^\circ$ and $G^\star$ are meromorphic, and $G^\star(\overline{s}) = - \overline{G^\star(s)}$. Shifting the contour to the left for  $G^\circ(s)$ is straightforward; when we shift it to the left for $G^\star(s)$,
we will stop short of the real axis, on either side. We do not separate $G^\circ(s)$ and $G^\star(s)$ from the start because they
may have poles on the $\pm 1 +i \mathbb{R}$ that $G^\circ(s)+\sgn(\Im s) G^\star(s)$ does not have.

In the end, we obtain a contour $\mathcal{C}_\infty$ consisting of a straight path from $1-i T$ to $-\infty -i T$ and another from $-\infty + i T$ to $1 + i T$, and also a contour
$\mathcal{C}$ going from $1$ to $\Re s = -\infty$; on the latter contour, we integrate only
$G^\star(s)$, not $G^\circ(s)$.

``Shifting a contour to $\Re s = -\infty$'' truly means
shifting it first to a vertical line, and then to another further to the left, etc.,
taking care that the contour never goes through poles. We write 
    \begin{equation}\label{eq:defL}
      L \;=\; \bigcup_{n=1}^{\infty}\!\bigl(\sigma_n + i[-T,T]\bigr)
    \end{equation}
for these lines; here $1>\sigma_1>\sigma_2>\dotsc$ is a sequence of our choice, tending to $-\infty$ as $n\to \infty$.

%For a given $T>0$, let $R^{\star}$ denote the (open) region enclosed by the %lines from $1$ to $1+ iT$, from $1+ iT$ to $-\infty+ iT$, from $-\infty+ iT$ %to $-\infty + i$, from $-\infty + i$ to $i$, and the diagonal from $i$ to $1$.

\begin{lemma}\label{lem:kolobrz2}
Let $G(s) = G^\circ(s) + \sgn(\Im{s}) G^\star(s)$, where
$G^\circ(s)$ and $G^\star(s)$ are meromorphic functions on $R = (-\infty,1] + i [-T,T]$. 

 Let $\mathcal{C}$ be an admissible contour contained in $(-\infty,1]+i [0,T]$, 
 going from 
$1$ to $\Re s = -\infty$, and let $R_{\mathcal{C}}$ be the closed subregion of $R$ between 
$\mathcal{C}$ and $\overline{\mathcal{C}}$. 
Assume that, for some $x_0\geq 1$, $G(s) x_0^s$ is bounded on 
$\partial R$, and both
$G^\circ(s) x_0^s$ and $G^\star(s) x_0^s$ are bounded on $L$ and on $\mathcal{C}$, where
$L$ is as in \eqref{eq:defL}. 

Assume as well that
$G^\star(\overline{s}) = -\overline{G^\star(s)}$.
Then, for any $x>x_0$,
\begin{equation}\label{eq:runada}\begin{aligned}
\!\!\frac{1}{2\pi i}\int_{1-iT}^{1+iT} G(s)x^sds
 =  &\frac{1}{2\pi i} \int_{\mathcal{C}_\infty} G(s) x^s ds +
 \frac{1}{\pi} \Im \int_{\mathcal{C}}  G^\star(s) x^s ds
 \\
 + &\sum_{\substack{\text{$\rho$ a pole of $G$}\\ \rho\in R\setminus R_{\mathcal{C}}}}
\Res\limits_{s=\rho} G(s) x^s +
\sum_{\substack{\text{$\rho$ a pole of $G^\circ$}\\ \rho\in R_{\mathcal{C}}}} 
\Res\limits_{s=\rho} G^\circ(s) x^s
% & +  \sum_{\text{$\rho\in R$ a pole of $G^\circ$}} \Res\limits_{s=\rho} G^\circ(s) x^s +
% 2 \Re \left\{
%\sum_{\text{$\rho\in R^+$ a pole of $G^\star$}} \Res\limits_{s=\rho} G^\star(s) x^s
% + 2\Re\left\{\sum_{\text{$\rho\in R'$ a pole of $G$}}  %\underset{s=\rho}{\mathrm{Res}} G(s) x^s
%\right\}.
\end{aligned}\end{equation}
where $\mathcal{C}_\infty$ is a straight path from $1-iT$ to
$-\infty- i T$, and another from $-\infty +i T$ to $1+iT$.
\end{lemma}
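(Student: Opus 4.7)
The plan is to split the integration path at $s=1$ into upper and lower halves, deform each half separately to the left via the residue theorem, and then recombine; the symmetry $G^\star(\overline{s})=-\overline{G^\star(s)}$ is invoked only at the end to collapse a pair of contour integrals to an imaginary part.

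First I would write
\begin{equation*}
\int_{1-iT}^{1+iT} G(s)\, x^s\, ds \;=\; \int_{1-iT}^{1}(G^\circ - G^\star)(s)\, x^s\, ds \;+\; \int_{1}^{1+iT}(G^\circ + G^\star)(s)\, x^s\, ds,
\end{equation*}
which is legitimate because $\sgn(\Im s)$ is locally constant in each open half-plane, so $G$ agrees there with the meromorphic function $G^\circ\pm G^\star$. For the upper piece I would deform the right edge $\{1\}+i[0,T]$ to the concatenation of $\mathcal{C}$ (from $1$ to its intersection with $\Re s=\sigma_n$), a vertical closing segment on $L$ up to $\sigma_n+iT$, and the top edge back to $1+iT$. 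The residue theorem then contributes $2\pi i$ times the residues of $G^\circ + G^\star$ in the enclosed region (the upper half of $R$ above $\mathcal{C}$); the vertical closing piece vanishes as $\sigma_n \to -\infty$ because $G^\circ(s) x_0^s$ and $G^\star(s) x_0^s$ are bounded on $L$ and $(x/x_0)^{\sigma_n} \to 0$. The parallel construction for the lower piece uses $\overline{\mathcal{C}}$ and the bottom edge, picking up the analogous residues of $G^\circ - G^\star$ in the lower half of $R$ below $\overline{\mathcal{C}}$.

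Adding the two identities, the horizontal contributions assemble into $\int_{\mathcal{C}_\infty} G(s)\,x^s\,ds$ (on the top $\sgn(\Im s)=+1$ and $G^\circ+G^\star = G$, on the bottom $\sgn(\Im s)=-1$ and $G^\circ-G^\star = G$), and the residue contributions assemble into $2\pi i \sum_{\rho \in R \setminus R_{\mathcal{C}}} \Res_{s=\rho} G(s)\, x^s$, again because $G$ agrees with $G^\circ\pm G^\star$ locally near each $\rho$ off the real axis. What remains is $\int_{\mathcal{C}} (G^\circ + G^\star)(s) x^s ds - \int_{\overline{\mathcal{C}}} (G^\circ - G^\star)(s) x^s ds$, which I split as $\bigl(\int_{\mathcal{C}} G^\circ - \int_{\overline{\mathcal{C}}} G^\circ\bigr) + \bigl(\int_{\mathcal{C}} G^\star + \int_{\overline{\mathcal{C}}} G^\star\bigr)$. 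Parameterizing $\overline{\mathcal{C}}$ as the reflection of $\mathcal{C}$ and using $G^\star(\overline{s})=-\overline{G^\star(s)}$ together with $x^{\overline{s}}=\overline{x^s}$ (valid since $x>0$), one finds $\int_{\overline{\mathcal{C}}} G^\star(s) x^s ds = -\overline{\int_{\mathcal{C}} G^\star(s) x^s ds}$, so the second bracket collapses to $2i\,\Im \int_{\mathcal{C}} G^\star(s) x^s ds$, producing the desired $\frac{1}{\pi}\Im \int_{\mathcal{C}} G^\star(s) x^s ds$ after division by $2\pi i$.

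For the remaining $G^\circ$ leftover $\int_{\mathcal{C}} G^\circ - \int_{\overline{\mathcal{C}}} G^\circ$, I would apply the residue theorem once more to the closed contour formed by $\mathcal{C}$, a vertical closing segment on $L$ at $\Re s=\sigma_n$, and $\overline{\mathcal{C}}$ traversed in reverse. This is a counterclockwise traversal of a truncation of $R_{\mathcal{C}}$, so the difference equals $2\pi i \sum_{\rho \in R_{\mathcal{C}}} \Res_{s=\rho} G^\circ(s)\, x^s$ in the limit (the vertical piece vanishing as before by boundedness of $G^\circ(s) x_0^s$ on $L$). Collecting everything and dividing by $2\pi i$ yields \eqref{eq:runada}. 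The main obstacle is bookkeeping — getting the orientations correct (the region $R_{\mathcal{C}}$ straddles the real axis, so ``counterclockwise'' flips direction across it), correctly identifying which residues lie in which truncated region, and using admissibility of $\mathcal{C}$ (finitely many arcs of bounded total length in any vertical strip) to draw the vertical closing segments generically without crossing arcs of $\mathcal{C}$. A subtlety worth flagging: $G^\circ$ and $G^\star$ are not assumed bounded individually on the horizontal edges of $\mathcal{C}_\infty$ (only their combination $G$ is, via $G(s) x_0^s$ bounded on $\partial R$), so the proof never separates them on those edges; the separation is used only on $\mathcal{C}$, on $\overline{\mathcal{C}}$ (where $G^\star$ is bounded by symmetry and $G^\circ$ is assumed bounded as well in any application of this lemma), and on $L$.
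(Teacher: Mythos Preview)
Your proposal is correct and takes essentially the same approach as the paper: split at $s=1$, deform each half via truncations of $\mathcal{C}$, $\overline{\mathcal{C}}$, and vertical segments on $L$, collapse the $G^\star$ contributions using the symmetry $G^\star(\overline{s})=-\overline{G^\star(s)}$, and shift $G^\circ$ across $R_{\mathcal{C}}$. Your worry about $G^\circ$ on $\overline{\mathcal{C}}$ is unnecessary --- the paper keeps all contours truncated at $\sigma_n$ until the very end, shifting $G^\circ$ from $\mathcal{C}_{n,1}^+\cup\mathcal{C}_{n,1}^-$ directly to the vertical segment $\sigma_n+i[-T,T]$ on $L$ (where boundedness is assumed) and then letting that vanish, so no separate convergence of $\int_{\overline{\mathcal{C}}} G^\circ$ is ever invoked.
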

Here $\overline{\mathcal{C}}$ means the image of $\mathcal{C}$ under
complex conjugation.
%Under the same conditions, for any $x>x_0$
%\begin{equation} \label{gripe2}
%\frac{1}{2\pi i} \int_{\sigma}^{\sigma+i T} G(s) x^s ds = \lim_{\epsilon\to 0^+}\frac{1}{2\pi i}\int_{\mathcal{C}_{\sigma,\epsilon}^+} G(s) x^s ds +  \lim_{\epsilon\to 0^+}\sum_{\text{$\rho\in R_\epsilon^+$ a pole of $G$}} \mathrm{Res}_{s=\rho} G(s) x^s,
%\end{equation} 
%where $\mathcal{C}_{\sigma,\epsilon}^+$ is a straight path from $\sigma+i\epsilon$ to $-\infty +i \epsilon$ and another from $-\infty + i T$ to $\sigma + i T$, $R_\epsilon^+=(-\infty,\sigma] + i [\epsilon,T]$,
%and
%\begin{equation} \label{gripe3}
%\frac{1}{2\pi i} \int_{\sigma-iT}^{\sigma} G(s) x^s ds = \lim_{\epsilon\to 0^+}\frac{1}{2\pi i}\int_{\mathcal{C}_{\sigma,\epsilon}^-} G(s) x^s ds +  \lim_{\epsilon\to 0^+}\sum_{\text{$\rho\in R_\epsilon^-$ a pole of $G$}} \mathrm{Res}_{s=\rho} G(s) x^s,
%\end{equation}
%where $\mathcal{C}_{\sigma,\epsilon}^-$ is a straight path from $-\infty -i \epsilon$ to $\sigma-i\epsilon$ and another from $\sigma - i T$ to $-\infty - i T$, and $R_\epsilon=(-\infty,\sigma] - i [T,\epsilon]$.
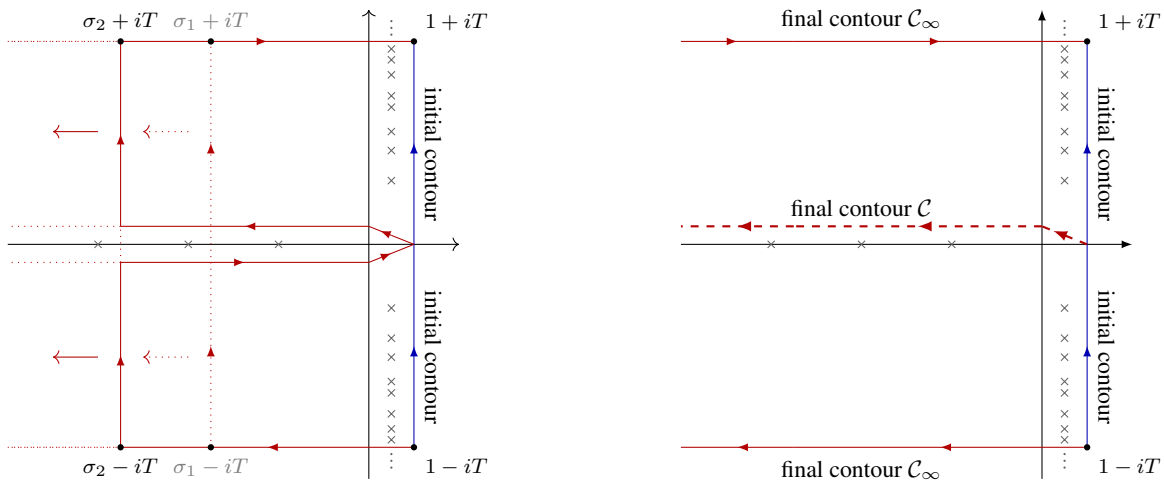
\begin{figure}[ht]
  \centering
  \begin{minipage}{0.45\textwidth}
    \centering
\begin{tikzpicture}[scale=0.6,
                    arrow/.style={postaction={decorate},
                  decoration={markings, mark=at position 0.5 with {\arrow{>}}}}]   
\useasboundingbox (-9,-6) rectangle (3,6);

\draw[->] (-8,0) -- (2,0) node[above] {};
\draw[->] (0,-5.2) -- (0,5.2) node[left] {};

%\fill (1,0) circle (.07) node[below right] {\scriptsize $1$};

\draw[darkblue,contour] (1,-4.5) -- (1,0);
\draw[darkblue,contour] (1,0) -- (1,4.5);

\draw[darkred,dotted] (-8,4.5) -- (-5.5,4.5);
\draw[darkred,dotted] (-8,-4.5) -- (-5.5,-4.5);

\draw[darkred,contour] (1,-4.5)  -- (-5.5,-4.5);
\draw[darkred,contour] (-5.5,-4.5) -- (-5.5,-0.4);
\draw[darkred,dotted] (-5.5,-0.4) -- (-5.5,0.4);
\draw[darkred,contour] (-5.5,0.4) -- (-5.5,4.5);
\draw[darkred,contour] (-5.5,4.5) -- (1,4.5);
\draw[darkred,dotted,contour] (-3.5,0) -- (-3.5,4.5);
\draw[darkred,dotted,contour] (-3.5,-4.5) -- (-3.5,0);

\draw[darkred,dotted] (-5.5,4.5) -- (-8,4.5);
\draw[darkred,dotted] (-5.5,-4.5) -- (-8,-4.5);

\draw[darkred,motiondotted] (-4,-2.5) -- (-5,-2.5);
\draw[darkred,motiondotted] (-4,2.5) -- (-5,2.5);
\draw[darkred,motion] (-6,-2.5) -- (-7,-2.5);
\draw[darkred,motion] (-6,2.5) -- (-7,2.5);

\fill (1,4.5) circle (.07) node[above right] {\tiny $1+iT$};
\fill (1,-4.5) circle (.07) node[below right] {\tiny $1-iT$};
\fill (-3.5,4.5) circle (.07) node[above] {\textcolor{gray}{\tiny $\sigma_1+iT$}};
\fill (-3.5,-4.5) circle (.07) node[below] {\textcolor{gray}{\tiny $\sigma_1-iT$}};
\fill (-5.5,4.5) circle (.07) node[above] {\tiny $\sigma_2+iT$};
\fill (-5.5,-4.5) circle (.07) node[below] {\tiny $\sigma_2-iT$};

\foreach \x in {-2,-4,-6} { \draw[very thin,darkgreen]
  ({\x+.08},.08) -- ({\x-.08},-.08) ({\x+.08},-.08) -- ({\x-.08},.08); }
\foreach \y in {1.413,2.0802,2.501,3.042,3.294,3.759,4.092,4.333} {
    \draw[darkgreen]
        (.58,{\y+.08}) -- (.42,{\y-.08}) (.42,{\y+.08}) -- (.58,{\y-.08})
        (.58,{-\y+.08}) -- (.42,{-\y-.08}) (.42,{-\y+.08}) -- (.58,{-\y-.08}); }
\foreach \y in {4.65,4.8,4.95} {
    \fill[darkgreen] (.5,\y) circle (.025) (.5,{-\y}) circle (.025); }
\node[rotate=270, anchor=south] at (1,2) {\scriptsize initial contour};
\node[rotate=270, anchor=south] at (1,-2.5) {\scriptsize initial contour};

% Camino superior (con varias flechas)
\draw[darkred] (1,0) -- (0.5,0.2);
\draw[darkred,contour]  (0.5,0.2) -- (0,0.4);
\draw[darkred,contour] (0,0.4) -- (-5.5,0.4);
\draw[darkred,dotted] (-5.5,0.4) -- (-8,0.4);

% Camino inferior (reflejo, con varias flechas)
\draw[darkred,contour]  (0,-0.4) -- (1,0);
\draw[darkred,contour] (-5.5,-0.4) -- (0,-0.4);
\draw[darkred,dotted] (-5.5,-0.4) -- (-8,-0.4);

%\draw[darkgreen] (1,0) circle (.08);
%\draw[darkgreen] (1.04,.07) -- (.96,-.07) (.96,.07) -- (1.04,-.07);
\end{tikzpicture}
\end{minipage}
\hspace{40pt}
\begin{minipage}{0.45\textwidth}
\begin{tikzpicture}[>=latex,scale=0.6,
                    arrow/.style={postaction={decorate},
                  decoration={markings, mark=at position 0.5 with {\arrow{>}}}}]   
\useasboundingbox (-9,-6) rectangle (3,6);
\draw[->] (-8,0) -- (2,0) node[above] {};
\draw[->] (0,-5.2) -- (0,5.2) node[left] {};
%\foreach \x in {-7,-6,...,-1}
%    { \draw (\x,.1) -- (\x,-.1) node[below] {}; }
%\foreach \y in {-40,-30,-20,-10,10,20,30,40}
%         { \draw (.1,{\y/10}) -- (-.1,{\y/10}) node[left] {}; }
%\fill (0,0) circle (.07) node[below right] {\scriptsize $0$};
%\fill (1,0) circle (.07) node[below right] {\scriptsize $1$};
\draw (-2,2.5) node {};
%\draw (1,1.5) node[right] {\small $\gamma_1$};
%\draw (-5.5,-3.5) node[left] {\small $\gamma_2$};
\draw[darkblue,arrow] (1,-4.5) -- (1,0);
\draw[darkblue,arrow] (1,0) -- (1,4.5);
\draw[darkred,arrow] (-8,4.5) -- (-5.5,4.5);
\draw[darkred,arrow] (-5.5,-4.5) -- (-8,-4.5);
\draw[darkred,arrow] (1,-4.5) -- (-5.5,-4.5);
\draw[darkred,arrow] (-5.5,4.5) -- (1,4.5);
\fill (1,4.5) circle (.07) node[above right] {\tiny $1+iT$};
\fill (1,-4.5) circle (.07) node[below right] {\tiny $1-iT$};
\foreach \x in {-2,-4,-6} { \draw[thin,darkgreen]
  ({\x+.08},.08) -- ({\x-.08},-.08) ({\x+.08},-.08) -- ({\x-.08},.08); }
\foreach \y in {1.413,2.0802,2.501,3.042,3.294,3.759,4.092,4.333} {
    \draw[darkgreen]
        (.58,{\y+.08}) -- (.42,{\y-.08}) (.42,{\y+.08}) -- (.58,{\y-.08})
        (.58,{-\y+.08}) -- (.42,{-\y-.08}) (.42,{-\y+.08}) -- (.58,{-\y-.08}); }
\foreach \y in {4.65,4.8,4.95} {
    \fill[darkgreen] (.5,\y) circle (.025) (.5,{-\y}) circle (.025); }
\node[rotate=270, anchor=south] at (1,2) {\scriptsize initial contour};
\node[rotate=270, anchor=south] at (1,-2.5) {\scriptsize initial contour};
\node at (-4,5) {\scriptsize final contour $\mathcal{C}_\infty$};
\node at (-4,-5) {\scriptsize final contour $\mathcal{C}_\infty$};    

% Camino superior (con varias flechas)
\draw[thick,dashed,darkred] (1,0) -- (0.5,0.2);
\draw[thick,dashed,darkred,contour]  (0.5,0.2) -- (0,0.4);
\draw[thick,dashed,darkred,contour] (0,0.4) -- (-5.5,0.4);
\draw[thick,dashed,darkred,contour] (-5.5,0.4) -- (-8,0.4);
\node at (-4,0.8) {\scriptsize final contour $\mathcal{C}$};
%\node at (-8.2,0.7) {\scriptsize $\mathcal{C}$};
%\node at (-8.2,4.85) {\scriptsize $\mathcal{C}_\infty$};
%\node at (-8.2,-4.15) {\scriptsize $\mathcal{C}_\infty$};
%\draw[darkgreen] (1,0) circle (.08);
%\draw[darkgreen] (1.04,.07) -- (.96,-.07) (.96,.07) -- (1.04,-.07);
\end{tikzpicture}
\end{minipage}
\captionof{figure}{Lemma \ref{lem:kolobrz2}: contour-shifting and result. Only $G^\star(s)$ is integrated on $\mathcal{C}$.}\label{fig:kolobrz2} 
\end{figure}

\begin{proof}
We start by separating the integral on the left side of \eqref{eq:runada} into
an integral from $1$ to $1+iT$ and an integral from $1-i T$ to $1$.
For $n\geq 1$, let $\mathcal{C}_n^+$ be a contour following
$\mathcal{C}$ leftwards up to $\Re s =\sigma_n$, and then going
upwards up to $\Im s = T$, and then to the right up to
$s = 1 + i T$. Then
%, and let $R_n^+$ the region bounded by these curves with the line from $1$ to $1+iT$. Then
\begin{align*}
\dfrac{1}{2\pi i}\int_{1}^{1+iT}G(s) x^s ds = \frac{1}{2\pi i}\int_{\mathcal{C}_{n}^+} G(s) x^s ds + \sum_{\substack{\text{$\rho$ a pole of $G$}\\ \rho\in R^+,\;\Re \rho > \sigma_n}}   \Res\limits_{s=\rho} G(s) x^s,
\end{align*}
where we write $R^+$ for the subregion of $R$ above $\mathcal{C}$.

Similarly, for $\mathcal{C}_n^-$ equal to the complex conjugate
of $\mathcal{C}_n^+$ traversed backwards (go on a straight line from $1-iT$ to $\sigma_n-i T$, then upwards up to $\mathcal{C}$, then follow $\mathcal{C}$ backwards up to $1$),
\begin{align*} 
\dfrac{1}{2\pi i}\int_{1-iT}^1 G(s) x^s ds = \frac{1}{2\pi i}\int_{\mathcal{C}_{n}^-} G(s) x^s ds + \sum_{\substack{\text{$\rho$ a pole of $G$}\\ \rho\in \overline{R^+},\;\Re \rho > \sigma_n}}  \Res\limits_{s=\rho} G(s) x^s,
\end{align*}
where $\overline{R^+} = \{\overline{s}: s\in R^+\}$, i.e., $\overline{R^+}$ is
the subregion of $R$ below $\overline{\mathcal{C}}$.

We split
$\mathcal{C}_n^+$ into a horizontal contour from $\sigma_n+iT$ to $1+ i T$, and
the rest $\mathcal{C}_{n,1}^+$. Similarly, we separate
$\mathcal{C}_n^-$ into a horizontal contour from $1-iT$ to $\sigma_n - i T$, and 
$\mathcal{C}_{n,1}^-$.
For $G^\circ(s)$, we can just shift the union of the contours
$\mathcal{C}_{n,1}^+$, $\mathcal{C}_{n,1}^-$ 
wholly to
$\Re s = \sigma_n$:
$$\frac{1}{2\pi i} \left(\int_{\mathcal{C}_{n,1}^+} + 
\int_{\mathcal{C}_{n,1}^-}\right)
G^\circ(s) x^s ds 
%+
%\left(
%\sum_{\substack{\text{$\rho$ a pole of $G^\circ$}\\ \rho\in R^+,\;\Re \rho > \sigma_n}}  %+ 
%\sum_{\substack{\text{$\rho$ a pole of $G^\circ$}\\ \rho\in \overline{R^+},\;\Re \rho > \sigma_n}} 
%\right) \Res\limits_{s=\rho} G^\circ(s) x^s$$
%equals $$
= \frac{1}{2\pi i} \int_{\sigma_n-i T}^{\sigma_n + i T} 
G^\circ(s) x^s ds +
\sum_{\substack{\text{$\rho$ a pole of $G^\circ$}\\ \rho\in R_{\mathcal{C}},\;\Re \rho > \sigma_n}} 
 \Res\limits_{s=\rho} G^\circ(s) x^s.
$$

For $G^\star(s)$, we simply note that, since $G^\star(\overline{s}) = 
- \overline{G^\star(s)}$, 
$$\begin{aligned}
\int_{\mathcal{C}_{n,1}^+} G^\star(s) x^s ds -
\int_{\mathcal{C}_{n,1}^-} G^\star(s) x^s ds &= 
\int_{\mathcal{C}_{n,1}^+} (G^\star(s) x^s ds +G^\star(\overline{s}) x^{\overline{s}} d\overline{s})  =
2 i
\Im \int_{\mathcal{C}_{n,1}^+} G^\star(s) x^s ds.\end{aligned}$$
%and non-real poles come in pairs $\rho$, $\overline{\rho}$, with
%$$\Res_{s=\overline{\rho}} (-G^\star(s) x^s) =
%- \overline{\Res_{s=\rho} \overline{G^\star(\overline{s})} x^s} = 
%\overline{\Res_{s=\rho} G^\star(s) x^s}$$.

We conclude that $\frac{1}{2\pi i}\int_{1-iT}^{1+iT} G(s)x^s ds$
equals
\begin{align*} 
 \frac{1}{2\pi i} \left(\int_{1-i T}^{\sigma_n-i T} +
 \int_{\sigma_n+i T}^{1+ i T}\right)G(s) x^s ds &+
\frac{1}{2\pi i} \int_{\sigma_n-i T}^{\sigma_n + i T} 
G^\circ(s) x^s ds + \frac{1}{\pi}
\Im \int_{\mathcal{C}_{n,1}^+} G^\star(s) x^s ds\\
+ \sum_{\substack{\text{$\rho$ a pole of $G$}\\ \rho\in R\setminus R_{\mathcal{C}},\;\Re \rho > \sigma_n}} 
\Res\limits_{s=\rho} G(s) x^s &+
\sum_{\substack{\text{$\rho$ a pole of $G^\circ$}\\ \rho\in R_{\mathcal{C}},\;\Re \rho > \sigma_n}} 
\Res\limits_{s=\rho} G^\circ(s) x^s.
% + 2\Re\left\{\sum_{\text{$\rho\in R'$ a pole of $G^\star$}}  %\Res\limits_{s=\rho} G(s) x^s
%\right\}
\end{align*}
Finally,  we let $n\to \infty$. Since $G(s)x_0^s$ is bounded on $\partial R$,
$|G(s) x^s|\leq \left|G(s) x_0^s (x/x_0)^s\right| \ll (x/x_0)^{\Re s}$, and so $$\lim_{n\to \infty} \int_{\sigma_n\pm i T} ^{1\pm i T} G(s) x^s ds = \int_{-\infty\pm i T} ^{1\pm i T} G(s) x^s ds.$$
Similarly, since $G^\circ(s) x_0^s$ is bounded on $L$, then, as $n\to \infty$,
$$\left|\int_{\sigma_n-i T}^{\sigma_n + i T} G^\circ(s) x^s ds\right|\ll (x/x_0)^{\sigma_n} T \to 0.$$
\end{proof}

Let us now apply Lemma \ref{lem:kolobrz2} to our specific function $\Phi_\lambda^\pm$.

\begin{proposition}\label{prop:shiftnonneg}
Let $F(s)$ be a meromorphic function on $R = (-\infty,1] + i [-T,T]$ with
$\overline{F(s)} = F(\overline{s})$. Let $\mathcal{C}$ be an admissible contour contained in $(-\infty,1] + i [0,\frac{T}{4}]$, going from $1$ to $\Re s = -\infty$. 
Assume that, for some $x_0\geq 1$, $F(s) x_0^s$ is bounded on $\partial R \cup \mathcal{C}\cup L$, where $L$ is as in \eqref{eq:defL}.

Let $\lambda\ne 0$.
%, and assume
%$\mathcal{C}$ does not go through $1 + \lambda T/2\pi$.
Let $\Phi_{|\lambda|}^{\pm,\circ}$ and $\Phi_{|\lambda|}^{\pm,\star}$ be as in \eqref{eq:arnoros}. Define $$\Phi_\lambda^\pm(z) =  \Phi_{|\lambda|}^{\pm,\circ}(\sgn(\lambda) z) + 
\sgn(\lambda) \sgn(\Re z) \Phi_{|\lambda|}^{\pm,\star}(\sgn(\lambda) z).$$
Then, for any $x>x_0$,
\begin{equation}\label{eq:garmun}\frac{1}{2\pi i} \int_{1-iT}^{1+i T} \Phi_{\lambda}^\pm\left(\frac{s-1}{i T}\right) F(s) x^s ds 
\end{equation} equals
\begin{equation}\label{eq:trothis}\begin{aligned}
&\sum_{\substack{\text{$\rho$ a pole of $F(s)$}\\ \rho\in R\setminus R_{\mathcal{C}}}} 
\Res\limits_{s=\rho} \; \Phi_{\lambda}^\pm\left(\frac{s-1}{i T}\right) F(s) x^s 
+
\sum_{\substack{\text{$\rho\in R_{\mathcal{C}}$ a pole of $F(s)$}\\
\text{or $\rho = 1 +\frac{\lambda T}{2\pi}$ and $\lambda<0$}}}
\Res\limits_{s=\rho} \;\Phi_{|\lambda|}^{\pm,\circ}\left(\sgn(\lambda) \frac{s-1}{i T}\right) F(s) x^s 
\\ &+ \frac{1}{2\pi} O^*\left(\frac{1}{T}
\sum_{\xi=\pm 1} \int_0^\infty t |F(1 - t + i \xi  T)| x^{1-t} dt +
2\left|\int_{\mathcal{C}} \Phi_{|\lambda|}^{\pm,\star}(\sgn(\lambda) z(s)) F(s) x^s ds\right|\right),
\end{aligned}\end{equation}
where  $R_{\mathcal{C}}$ is the closed subregion of $R$ between 
$\mathcal{C}$ and $\overline{\mathcal{C}}$.
\end{proposition}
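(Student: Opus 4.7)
The plan is to apply Lemma \ref{lem:kolobrz2} to $G(s) := \Phi_\lambda^\pm(z(s)) F(s)$, where $z(s) = (s-1)/(iT)$, with the decomposition suggested by the definition of $\Phi_\lambda^\pm$. I set $G^\circ(s) := \Phi_{|\lambda|}^{\pm,\circ}(\sgn(\lambda) z(s)) F(s)$ and $G^\star(s) := \sgn(\lambda) \Phi_{|\lambda|}^{\pm,\star}(\sgn(\lambda) z(s)) F(s)$. Since $\Re z(s) = \Im s / T$, so $\sgn(\Re z(s)) = \sgn(\Im s)$, the definition of $\Phi_\lambda^\pm$ gives exactly $G(s) = G^\circ(s) + \sgn(\Im s) G^\star(s)$. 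The conjugation condition $G^\star(\overline{s}) = -\overline{G^\star(s)}$ follows from $\overline{\Phi_{|\lambda|}^{\pm,\star}(w)} = -\Phi_{|\lambda|}^{\pm,\star}(-\overline{w})$ (Lemma \ref{lem:benvenuto2}), $\overline{F(s)} = F(\overline{s})$, and the identity $z(\overline{s}) = -\overline{z(s)}$. Boundedness of $G^\circ(s) x_0^s$ and $G^\star(s) x_0^s$ on $L$ and on $\mathcal{C}$ follows from the boundedness of $F(s) x_0^s$ together with the bounds in Lemma \ref{lem:benvenuto2} ($\Phi_{|\lambda|}^{\pm,\circ}$ bounded, $\Phi_{|\lambda|}^{\pm,\star}$ at most linear); the at-most-polynomial growth of $G^\star(s)x_0^s$ on $\mathcal{C}$ as $\Re s \to -\infty$ is absorbed by the exponential decay $(x/x_0)^{\Re s}$ since $x > x_0$, securing convergence of $\int_{\mathcal{C}} G^\star x^s\, ds$.

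Lemma \ref{lem:kolobrz2} then rewrites the left-hand side of \eqref{eq:garmun} as a sum of residues of $G$ in $R \setminus R_{\mathcal{C}}$, a sum of residues of $G^\circ$ in $R_{\mathcal{C}}$, an integral over $\mathcal{C}_\infty$, and $(1/\pi)\, \Im \int_{\mathcal{C}} G^\star(s) x^s\, ds$. Since $G(s) = \Phi_\lambda^\pm(z(s)) F(s)$ off the real axis, the first sum is the first sum in \eqref{eq:trothis}. For the second, the poles of $G^\circ$ in $R_{\mathcal{C}}$ are the poles of $F$ together with poles of $\Phi_{|\lambda|}^{\pm,\circ}(\sgn(\lambda) z(s))$, which by Lemma \ref{lem:benvenuto2} are located at $s = 1 + \lambda T/(2\pi) + iT \sgn(\lambda) n$, $n \in \mathbb{Z}$. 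For $\lambda > 0$ all of these have $\Re s > 1$, so none lie in $R$. For $\lambda < 0$ the $n=0$ pole is the real point $1 + \lambda T/(2\pi) \in R_{\mathcal{C}}$; the $|n| \geq 1$ poles have $|\Im s| \geq T$ and, because $\mathcal{C} \subset (-\infty,1]+i[0,T/4]$, lie either outside $R$ or in $R \setminus R_{\mathcal{C}}$ (above $\mathcal{C}$). Such $|n|=1$ poles on $\partial R$ do not in fact contribute to the residue sum over $R \setminus R_{\mathcal{C}}$ because they are cancelled by poles of $\Phi_{|\lambda|}^{\pm,\star}$ in the combination appearing in $\Phi_\lambda^\pm$ (the last remark of Lemma \ref{lem:benvenuto2}). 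This accounts exactly for the side condition ``$\rho = 1 + \lambda T/(2\pi)$ and $\lambda < 0$'' in \eqref{eq:trothis}.

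For the remaining integrals, on $\mathcal{C}_\infty$ I parametrize $s = 1 - t + i\xi T$ with $t \geq 0$ and $\xi \in \{\pm 1\}$; then $\sgn(\lambda) z(s) = \xi\sgn(\lambda) + i\sgn(\lambda)t/T$ can be put in the form $\pm 1 \mp z_0$ with $\Re z_0 = 0$ and $|z_0| = t/T$, where the choice of sign is $\sigma' := \xi\sgn(\lambda) \in \{\pm 1\}$. The combination $\Phi^{\pm,\circ} + \sgn(\lambda)\sgn(\Im s)\,\Phi^{\pm,\star} = \Phi^{\pm,\circ} + \sigma' \Phi^{\pm,\star}$ that enters $\Phi_\lambda^\pm(z(s))$ is precisely the one for which Lemma \ref{lem:benvenuto2} yields $|(\Phi^{\pm,\circ}\pm\Phi^{\pm,\star})(\pm 1 \mp z_0)| \leq |z_0|$. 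Hence $|\Phi_\lambda^\pm(z(s))| \leq t/T$ on both horizontal rays, which with $|x^s| = x^{1-t}$ and the $1/(2\pi i)$ prefactor yields the first error term of \eqref{eq:trothis}. The second follows from $(1/\pi)|\Im \int_{\mathcal{C}} G^\star x^s\, ds| \leq (1/\pi)|\int_{\mathcal{C}} \Phi_{|\lambda|}^{\pm,\star}(\sgn(\lambda) z(s)) F(s) x^s\, ds|$ by $|\sgn(\lambda)| = 1$, writing $1/\pi = 2/(2\pi)$.

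The main obstacle is the sign bookkeeping combined with the geometry: one must simultaneously track the $\pm$ in $\Phi_\lambda^\pm$, the sign $\sgn(\lambda)$, and $\sgn(\Im s)$, and recognize that the same pair of signs $(\sgn(\lambda), \sgn(\Im s))$ that selects which combination $\Phi^{\pm,\circ} \pm \Phi^{\pm,\star}$ enters $\Phi_\lambda^\pm$ is the one for which the cancellations at $\pm 1 - i|\lambda|/(2\pi)$ and the $L^\infty$-bound of Lemma \ref{lem:benvenuto2} are designed. Only once this is made explicit does it become clear that the extra poles of $\Phi^{\pm,\circ}$ that could clutter the residue sum disappear except for the single real pole at $s = 1 + \lambda T/(2\pi)$ when $\lambda < 0$.
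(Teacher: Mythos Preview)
Your proof follows the same route as the paper's: the same decomposition into $G^\circ$ and $G^\star$, the same application of Lemma~\ref{lem:kolobrz2}, the same identification of the extra pole at $1+\lambda T/(2\pi)$ for $\lambda<0$, and the same use of the bound $|(\Phi^{\circ}\pm\Phi^{\star})(\pm 1\mp z)|\le |z|$ from Lemma~\ref{lem:benvenuto2} on $\mathcal{C}_\infty$.

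Two technical points are handled more carefully in the paper and are glossed over in your write-up. First, since $\Phi_{|\lambda|}^{\pm,\star}$ grows linearly, $G^\star(s)x_0^s$ (and likewise $G(s)x_0^s$ on $\partial R$) is \emph{not} bounded; you recognize this and invoke the decay $(x/x_0)^{\Re s}$, but to satisfy the hypotheses of Lemma~\ref{lem:kolobrz2} as stated the paper passes from $x_0$ to $(1+\epsilon)x_0$ with $(1+\epsilon)x_0<x$, so that $\Phi^{\star}(\sgn(\lambda)z(s))(1+\epsilon)^s$ is genuinely bounded. Second, when $\lambda<0$, the real pole of $\Phi^{\circ}$ at $1+\lambda T/(2\pi)$ may lie on $\mathcal{C}$ or coincide with some $\sigma_n$ in $L$; the paper addresses this by lifting $\mathcal{C}$ slightly off that point (taking a limit at the end) and by restricting to the tail $L_0$ of $L$ with $\sigma_n<1+\lambda T/(2\pi)$. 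Neither point changes the strategy, but both are needed for the argument to be watertight.
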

%It is unnecessary to recall the convention $\sgn(0)=0$, since
%$\Phi_{|\lambda|}^{\pm, \star}(0) = 0$.
\begin{proof}
Let $z(s) =\frac{s-1}{iT}$.
By Lemma \ref{lem:benvenuto2},  
$\Phi_{|\lambda|}^{\pm,\circ}(\sgn(\lambda) z(s))$ is bounded
outside arbitrarily small neighborhoods of the points $s = 1 + \frac{\lambda T}{2 \pi} + i T m$ for $m\in \{-1,0,1\}$,
$\Phi_{|\lambda|}^{\pm,\star}(\sgn(\lambda) z(s)) = O(1+|z(s)|) = O(1+|s|)$ outside arbitrarily small neighborhoods of the points $s = 1 + \frac{\lambda T}{2 \pi} + i T m$ for $m\in \{-1,1\}$,
and $\Phi_{\lambda}^{\pm}(z(s)) = O(1+|s|)$ outside an arbitrarily
small neighborhood of $s = 1 + \frac{\lambda T}{2 \pi}$.
Hence,
$\Phi_{|\lambda|}^{\pm,\circ}(\sgn(\lambda) z(s))$ is bounded
on the part $L_0$ of $L$ corresponding to all $\sigma_n<1+\frac{\lambda T}{2\pi}$, and, for any $\epsilon>0$,
 $\Phi_{|\lambda|}^{\pm,\star}(\sgn(\lambda) z(s)) (1+\epsilon)^s$
is bounded
on $L_0\cup \mathcal{C}$ 
and $\Phi_\lambda^\pm(z(s)) (1+\epsilon)^s$ is bounded on
$\partial R$.

Moreover, to ensure that
$\Phi_{|\lambda|}^{\pm,\circ}(\sgn(\lambda) z(s))$ is bounded on $\mathcal{C}$, it suffices
to lift $\mathcal{C}$ slightly if it goes through $1 + \frac{\lambda T}{2\pi}$, without going through
a pole of $F(s)$; we can then take limits at the very end. We let $\epsilon>0$ be such that
$(1+\epsilon) x_0 < x$, and replace $x_0$ by $(1+\epsilon) x_0$.

At this point we may apply Lemma \ref{lem:kolobrz2} with $G(s) = \Phi_\lambda^\pm(z(s)) F(s)$,
$$G^\circ(s) = \Phi_{|\lambda|}^{\pm,\circ}(\sgn(\lambda) z(s)) F(s),\;\;\;\;\;\;\;
G^\star(s) = \sgn(\lambda) \Phi_{|\lambda|}^{\pm,\star}(\sgn(\lambda) z(s)) F(s),$$ and $L_0$ instead of $L$; the assumption $G^\star(\overline{s}) = 
-\overline{G^\star(s)}$ is satisfied because $F(\overline{s}) = \overline{F(s)}$
and (again by Lemma \ref{lem:benvenuto2})
$\Phi_{|\lambda|}^{\pm,\star}(z(\overline{s}))=-\overline{\Phi_{|\lambda|}^{\pm,\star}(z(s))}$,
and the assumption that $G(s) x_0^s$ is bounded on $\partial R$ and 
$G^\circ(s) x_0^s$ and $G^\star(s) x_0^s$ are bounded on $L_0\cup \mathcal{C}$ holds,
since we have replaced $x_0$ by $(1+\epsilon) x_0$.
Note also that $\sgn(\Re z) = 
 \sgn(\Im s)$, and so $G(s) = G^\circ(s) + \sgn(\Im s) G^\star(s)$
 as required by Lemma \ref{lem:kolobrz2}.
%and $\Phi_{\lambda}^\pm\left(-\overline{z}\right) = - \overline{\Phi_{\lambda}^\pm(z)}$.

We obtain that the expression in \eqref{eq:garmun} equals
\[\begin{aligned}
&\frac{1}{2\pi i} \int_{\mathcal{C}_\infty} \Phi_{\lambda}^\pm\left(z(s)\right) F(s) x^s ds +
 \frac{\sgn(\lambda)}{\pi} \Im \int_{\mathcal{C}}  
 \Phi_{|\lambda|}^{\pm,\star}\left(\sgn(\lambda) z(s) \right) F(s) x^s ds
 \\
+ &\sum_{\substack{\text{$\rho$ a pole of $G$}\\ \rho\in R\setminus R_{\mathcal{C}}}} 
\Res\limits_{s=\rho} G(s) x^s +
\sum_{\substack{\text{$\rho$ a pole of $G^\circ$}\\ \rho\in R_{\mathcal{C}}}} 
\Res\limits_{s=\rho} G^\circ(s) x^s,
\end{aligned}\]
where $\mathcal{C}_\infty$ consists of
a straight path from $1 - i T$ to $-\infty- i T$, and another from $-\infty+i T$ to $1 + i T$.
By Lemma \ref{lem:benvenuto2}, the poles of $G(s)$ in $R\setminus R_{\mathcal{C}}$ are just the poles of $F(s)$ in $R\setminus R_{\mathcal{C}}$, and the poles of
$G^\circ(s)$ in $R_{\mathcal{C}}$
are just the poles of $F(s)$ in $R_{\mathcal{C}}$, plus a pole at
$1+\frac{\lambda T}{2\pi}$ if $\lambda<0$, coming from the pole of $\Phi_{|\lambda|}^{\pm,\circ}$
there.
Yet again by Lemma \ref{lem:benvenuto2},
$|\Phi_{\lambda}^\pm(1 + i r)| \leq |r|$ and $|\Phi_\lambda^\pm(-1 + i r)| \leq |r|$
for $r$ real.
Therefore, $$\frac{1}{2\pi}\left|\int_{\mathcal{C}_\infty} \Phi_{\lambda}^\pm\left(\frac{s-1}{i T}\right) F(s) x^s ds
\right|\leq \frac{1}{2\pi T} \sum_{\xi=\pm 1} \int_0^\infty t |F(1 - t + i \xi  T)| x^{1-t} dt.$$
\begin{comment}
One last time by Lemma \ref{lem:benvenuto2},
$|\Phi_{|\lambda|}^{\pm,\star}(z)| \leq |z|$ for all $z$ with $|\Re z|\leq 1/4$.
Since, for $s$ on $\mathcal{C}$, $|\Re \sgn(\lambda) z(s)| = |\Im s|/T\leq 1/4$, we obtain
$$\frac{1}{\pi} \left|\int_{\mathcal{C}} \Phi_{|\lambda|}^{\pm,\star}(\sgn(\lambda) z(s)) F(s) x^s ds\right|
\leq \frac{1}{\pi T} \int_{\mathcal{C}} |(s-1) F(s) x^s| |ds|$$
\end{comment}

    \end{proof}

\section{Proof of the main theorem}\label{sec:proofmain}

We will now prove Theorem \ref{thm:mainthmB}, that is, our general result for non-negative $a_n$.

\begin{comment}
We will prove Theorem~\ref{thm:mainthmB} in \S \ref{subs:mainpos}. What will remain is to work out fully what it gives 
for $a_n = \Lambda(n)$, that is, what kind of estimate it gives for the primes. In \S \ref{subs:zetres},
we will work out the sums over residues in the statement of Thm.~\ref{thm:mainthmB}. Then we will have to bound the two integrals
in $I_{+,\mathcal{C}}$; we do most of the work in the appendices. Thus we will obtain Corollary \ref{cor:psi}.
\end{comment}

We will start by applying Prop.~\ref{prop:gennonneg}.
We will want to choose $\varphi_+$ and
$\varphi_-$ so as to make the term $\varphi_\pm(0)= \int_{-\infty}^\infty \widehat{\varphi_\pm}(u) du $ in
\eqref{eq:rotterdam2} as close to $\int_{-\infty}^\infty I_\lambda(u) du$
%= 1/|\lambda|$ 
as possible.
Because $\widehat{\varphi_+}$ is a majorant and $\widehat{\varphi_-}$ is a minorant,
that is the same as minimizing $\|\widehat{\varphi_{\pm}} - I_\lambda\|_1$. Thus, we will
choose $\varphi_\pm = \varphi_{\pm,\lambda}$ as in Prop.~\ref{prop:grahamvaaler}.

A benevolent thief then comes in the night and replaces $\varphi_{\pm,\lambda}(z)$
in the integrand by something equal to it on the segment
of integration, namely, $\Phi_\lambda^\pm(z)$. As we know, $\Phi_\lambda^\pm(z)$ consists
of two holomorphic functions, sewn together along the $x$-axis. We can then shift
the contour to the left, as in Prop.~\ref{prop:shiftnonneg}. If $\sigma=1$, we work with $\sigma\to 1^-$.

\begin{proposition}\label{prop:ewsai}
    Let the assumptions on $A(s)$ and $T$ be as in 
    the statement of Theorem \ref{thm:mainthmB}.
    Let $S_\sigma(x)$ be as in \eqref{eq:sotodef} for
    $\sigma\in \mathbb{R}\setminus \{1\}$. Then, for any
    $x>T$,
    \begin{equation}\label{eq:julcot}\begin{aligned}
    x^\sigma &S_\sigma(x) = \frac{2\pi }{T}
\Im 
\sum_{\substack{\text{$\rho$ a pole of $A(s)$}\\ \rho\in R^+}}
\sgn(\sigma-1)
 \left(\theta(\rho)\cot (\pi \theta(\rho)) - c_{\sigma,T}\right) x^\rho \cdot
\Res\limits_{s=\rho} A(s)\\ &- \frac{\pi}{T} 
\sum_{\substack{\text{$\rho\in \mathbb{R}$ a pole of $A(s)$}\\
\text{or $\rho = \sigma$ and $\lambda<0$}} }
 \sgn(\sigma-1)
  \Res_{s=\rho} \coth \left(\frac{\pi (s-\sigma)}{T}\right) A(s) x^s\\
&+ \frac{2\pi}{T} O^*\left(
\Re
\sum_{\substack{\text{$\rho$ a pole of $A(s)$}\\ \rho\in R^+}} 
 \left(1 - \frac{\rho-1}{i T}\right) x^\rho\cdot
\Res\limits_{s=\rho} A(s) + \frac{1}{2}  
\sum_{\text{$\rho\in \mathbb{R}$ a pole of $A(s)$}} \Res_{s=\rho} A(s) x^s
\right)\\
&+ \frac{1}{T^2} O^*\left(
\sum_{\xi=\pm 1} \int_0^\infty t |F(1 - t + i \xi  T)| x^{1-t} dt +
2 T\left|\int_{\mathcal{C}} \Phi_{2\pi|\sigma-1|/T}^{\pm,\star}(\sgn(\lambda) z(s)) F(s) x^s ds\right|
\right),
    \end{aligned}\end{equation}
    where  $R^+ =(-\infty,1] + i (0,T)$,
    $$\theta(s) = 1 - \frac{s-\sigma}{i T},\;\;
    c_{\sigma,T} = \theta(1+i T) \cot (\pi \theta(1+ i T)),\;\;z(s) = \frac{s-1}{i T},\;\;
    F(s) = A(s) - \frac{\Res_{s=1} A(s)}{s-1},$$
    and $\Phi_\nu^{\pm,\star}$ is as in
    \eqref{eq:arnoros}.
\end{proposition}
\begin{proof}
We can assume without loss of generality that $\Res_{s=1} A(s) = 1$.
%We will also assume at first that $\sigma\ne 1$; we will treat the case 
%$\sigma=1$ in the end by taking limits.

Apply Proposition \ref{prop:gennonneg} with $\varphi_\pm=\varphi_{\pm,\lambda}$, where $\varphi_{\pm,\lambda}$ are as in 
Proposition \ref{prop:grahamvaaler} and
$\lambda = \frac{2\pi (\sigma-1)}{T}$. The condition $\widehat{\varphi_\pm}(y) = 
O(1/|y|^2)$ holds because the second distributional derivative
$\varphi_\pm''$ lies in $L^1(\mathbb{R})$. 
We are integrating on a straight line from $1-iT$ to $1+i T$, and there $\Im s = \frac{s-1}{i}$.
Thus
\begin{equation}\label{eq:hofman}\begin{aligned}S_\sigma(x) &\leq  
     \frac{2\pi x^{1-\sigma}}{T} \varphi_+(0) + \frac{x^{-\sigma}}{iT} \int_{1-i T}^{1 +i T} \varphi_+\left(\frac{s-1}{i T}\right)
	 \left(A(s) - \frac{1}{s-1}\right) x^s ds +
     \frac{\mathds{1}_{(-\infty,1)}(\sigma)}{\sigma-1},
    %\begin{cases} 
     %\frac{1}{\sigma-1} & \text{if $\sigma<1$,}\\
     %0 &\text{if $\sigma>1$,}
     %\end{cases}
     \\S_\sigma(x) &\geq
     \frac{2\pi x^{1-\sigma}}{T} \varphi_-(0) + \frac{x^{-\sigma}}{iT} \int_{1-i T}^{1 +i T} \varphi_-\left(\frac{s-1}{i T}\right)
	 \left(A(s) - \frac{1}{s-1}\right) x^s ds +
     \frac{\mathds{1}_{(-\infty,1)}(\sigma)}{\sigma-1}.
\end{aligned}\end{equation}
% is continuous, compactly
%supported, and piecewise $C^2$ (indeed piecewise $C^\infty$), with
%$\varphi_\pm'$ bounded.
%If $\sigma>1$, we replace $x$ by
%$x^+$, so that the definition \eqref{eq:sotodef} of $S_\sigma$ becomes 
%$\sum_{n>x} \frac{a_n}{n^\sigma}$.

By \eqref{eq:arnor1}--\eqref{eq:arnoros},
since  $\Phi_{|\lambda|}^{\pm,\circ}(0) = \frac{1}{2} \coth \frac{|\lambda|}{2} \pm \frac{1}{2}$
and $\Phi_{|\lambda|}^{\pm,\star}(0) = 0$,
$$\varphi_{\pm}(0) = \frac{1}{2} \coth \frac{|\lambda|}{2} \pm \frac{1}{2}.$$
\begin{comment}
Since $\widehat{\varphi_-}(y)\leq I_\lambda(y)\leq \widehat{\varphi_+}(y)$ for all
$y$, Proposition \ref{prop:grahamvaaler} gives us that
$$\varphi_+(0)  = \int_{-\infty}^\infty I_\lambda(u) du + 
\|\widehat{\varphi_+} - I_\lambda\|_1 = \frac{1}{1 - e^{-|\lambda|}},
\;\;\;\;\varphi_-(0)  = \int_{-\infty}^\infty I_\lambda(u) du - 
\|\widehat{\varphi_-} - I_\lambda\|_1 = \frac{1}{e^{|\lambda|}-1}.
$$
We observe that $\varphi_{\pm}(0) = \frac{1}{2} \coth \frac{|\lambda|}{2} \pm \frac{1}{2}$ because,
for any $t$,
$$\frac{1}{2} \left(\frac{1}{1-e^{-t}} + \frac{1}{e^t-1}\right) = \frac{1}{2} \coth \frac{t}{2},
\;\;\;\;\;\; \frac{1}{2} \left(\frac{1}{1-e^{-t}} - \frac{1}{e^t-1}\right) = \frac{1}{2}.$$
\end{comment}
Also by \eqref{eq:arnor1},
\begin{equation}\label{eq:amfia}
\int_{1-i T}^{1 +i T} \varphi_\pm\left(\frac{s-1}{i T}\right)
	 \left(A(s) - \frac{1}{s-1}\right) x^s ds =
\int_{1-i T}^{1 +i T} \Phi_\lambda^\pm\left(\frac{s-1}{i T}\right)
	 \left(A(s) - \frac{1}{s-1}\right) x^s ds
\end{equation}
where $\Phi_\lambda^\pm(z) =  \Phi_{|\lambda|}^{\pm,\circ}(\sgn(\lambda) z) + \sgn(\lambda)
\sgn(\Re z) \Phi_{|\lambda|}^{\pm,\star}(\sgn(\lambda) z)$.
We apply Proposition
\ref{prop:shiftnonneg} with $F(s) = A(s)-1/{(s-1)}$ to estimate
the integral on the right in \eqref{eq:amfia}. Let us simplify what 
we obtain.

As the coefficients $a_n$ are real, the poles of $A(s)$ come
in conjugate pairs $\rho$, $\overline{\rho}$. Hence, by \eqref{eq:Phicong},
$$
\sum_{\substack{\text{$\rho$ a pole of $F(s)$}\\ \rho\in R\setminus R_{\mathcal{C}}}} 
\Res\limits_{s=\rho} \; \Phi_\lambda^\pm(z(s)) F(s) x^s = 
2 \Re
\sum_{\substack{\text{$\rho$ a pole of $F(s)$}\\ \rho\in R^+}} 
\Res\limits_{s=\rho} \; \Phi_\lambda^\pm(z(s)) F(s) x^s.
$$
Here we are using the simplifying assumption that all poles of $A(s)$ in $R^+$ lie above $\mathcal{C}$. 
 Since, by Lemma \ref{lem:benvenuto2},
$\Phi_\lambda^{\pm}(z(s))$ has no poles in $R^+$,
we can replace $F(s) = A(s)-1/(s-1)$ here 
by $A(s)$, and write
$\Res_{s=\rho} \; \Phi_\lambda^\pm(z(s)) F(s) x^s$ as
$\Phi_\lambda^\pm(z(\rho)) x^\rho \Res_{s=\rho} A(s)$.
Thus, for $\rho$ with $\Im \rho > 0$, Lemma \ref{lem:omlet} gives us that
$\Re \Res\limits_{s=\rho} \Phi_\lambda^\pm(z(s)) A(s) x^s$ equals 
$$\begin{aligned}
   &\sgn(\lambda) \Im 
\left(\left( \frac{\theta(\rho)}{2} \cot (\pi \theta(\rho)) -
\frac{\theta(1+i T)}{2} \cot (\pi \theta(1+ i T))
 \right)x^\rho\cdot
 \Res\limits_{s=\rho} A(s) \right)
\\
&\pm\frac{1}{2} \Re \left( \left(1 - z(\rho)\right) x^\rho\cdot
\Res\limits_{s=\rho} A(s) \right).
\end{aligned}$$
%while, for any $s$,
%$\Phi_{|\lambda|}^{\pm,\circ}(\sgn(\lambda) z(s))$ can be written
%fas $\frac{i}{2} \sgn(\lambda) \cot \pi \theta(s) \pm 1/2$.

Let us now examine the second sum in \eqref{eq:trothis}, and more particularly, the contribution
of $\rho = 1 + \frac{\lambda T}{2\pi} = \sigma$ for $\lambda<0$. By Lemma \ref{lem:benvenuto2},
$\Phi_{|\lambda|}^{\pm,\circ}$ has a simple pole at $\sgn(\lambda) z(1 + \frac{\lambda T}{2\pi}) = 
- \frac{i|\lambda|}{2\pi}$ with residue $\frac{i}{2\pi}$. Hence, by the chain rule for residues, 
\begin{equation}\label{eq:resbal}\Res\limits_{s=\sigma} \;\Phi_{|\lambda|}^{\pm,\circ}\left(\sgn(\lambda) z(s)\right) =
\frac{i}{2\pi} \cdot \frac{1}{\sgn(\lambda) z'(\sigma)} = 
\frac{i}{2\pi} \cdot \frac{1}{\sgn(\lambda)/(i T)} = \frac{T}{2\pi}.
\end{equation}
Therefore, if $\lambda<0$, 
the contribution of the term $-\frac{1}{s-1}$ in $A(s)$ to $S_\sigma(x)$ at $\rho=\sigma$  is
$$\frac{2\pi x^{-\sigma}}{T} \Res\limits_{s=\sigma} \;\Phi_{|\lambda|}^{\pm,\circ}\left(\sgn(\lambda) z(s)\right) \left(-\frac{1}{s-1}\right) x^s 
= \frac{2 \pi x^{-\sigma}}{T} \cdot \frac{T}{2\pi} \left(-\frac{1}{\sigma-1}\right) x^\sigma =  - \frac{1}{\sigma-1}.
$$
This contribution thus cancels out the term $\frac{1}{\sigma-1}$ in \eqref{eq:hofman} for $\lambda<0$, which is
precisely when that term appears in the first place. 

At poles $\rho\in R_{\mathcal{C}}$ other than
$\sigma$, we know $\Phi_{|\lambda|}^{\pm,\circ}\left(\sgn(\lambda) z(s)\right)$ does not have
a pole,  and so the term $\frac{1}{s-1}$ can be removed as before, as it does not affect the residue
 $\Res\limits_{s=\rho} \;\Phi_{|\lambda|}^{\pm,\circ}\left(\sgn(\lambda) z(s)\right) F(s) x^s$.
We recall that $\Phi_{|\lambda|}^{\pm,\circ}\left(\sgn(\lambda) z(s)\right) = 
\frac{1}{2} \left(\coth \frac{w}{2} \pm 1\right)$, where $w =
-\sgn(\lambda) \frac{2\pi (s-\sigma)}{T}$.

We conclude that
\begin{comment}
%By \eqref{eq:arnoros}, \eqref{eq:firmow} and $\theta(s) = \frac{s-\sigma}{i T}$,
%we see that $w(z(s)) = -2\pi i \sgn(\lambda) \theta(s)$, and so
%$$
%\Phi_{|\lambda|}^{\pm,\circ}\left(\sgn(\lambda) z(s)\right) = \frac{1}{2} \coth\left(-\frac{\sgn(\lambda) \pi (s-1)}{T}
%+ \frac{|\lambda|}{2}\right)=
%-\frac{\sgn(\lambda)}{2} \coth \frac{\pi (s-\sigma)}{T} \pm \frac{1}{2}
$$
%for any $s$. We also remember that \eqref{eq:mechner} %holds, and that (non-real) poles $\rho$ come in pairs. 
\end{comment}
\begin{equation}\label{eq:padrone}
S_\sigma(x) = \frac{x^{1-\sigma} }{\sigma-1} \cdot \frac{\lambda}{2} \coth \frac{|\lambda|}{2} +
\frac{2 \pi x^{-\sigma}}{T} \left(\mathcal{T}_{1,\sigma,x} +  O^*(\mathcal{T}_{2,\sigma,x})\right)+ 
\frac{x^{-\sigma}}{T^2} O^*(\mathcal{T}_{3,\sigma,x}),\end{equation}
where
\[\begin{aligned}
\mathcal{T}_{1,\sigma,x} &= 2 \sgn(\lambda) 
\Im 
\sum_{\substack{\text{$\rho$ a pole of $F(s)$}\\ \rho\in R^+}}
 \left(\frac{\theta(\rho)}{2} \cot (\pi \theta(\rho)) - \frac{\theta(1+ i T)}{2}
 \cot (\pi \theta(1+ i T))\right) x^\rho \cdot
\Res\limits_{s=\rho} A(s) \\
&- \frac{\sgn(\lambda)}{2} 
\sum_{\substack{\text{$\rho\in (-\infty,1)$ a pole of $F(s)$}\\
\text{or $\rho = \sigma$ and $\lambda<0$}} }
  \Res_{s=\rho} \coth \frac{\pi (s-\sigma)}{T} A(s) x^s
,\end{aligned}\]
\[\begin{aligned}
\mathcal{T}_{2,\sigma,x}&= \frac{x}{2} + \Re
\sum_{\substack{\text{$\rho$ a pole of $F(s)$}\\ \rho\in R^+}} 
 \left(1 - \frac{\rho-1}{i T}\right) x^\rho\cdot
\Res\limits_{s=\rho} A(s) + \frac{1}{2}  
\sum_{\text{$\rho\in (-\infty,1)$ a pole of $F(s)$}} \Res_{s=\rho} A(s) x^s,
\end{aligned}\]
\[\begin{aligned}
\mathcal{T}_{3,\sigma,x}&=
\sum_{\xi=\pm 1} \int_0^\infty t |F(1 - t + i \xi  T)| x^{1-t} dt +
2 T\left|\int_{\mathcal{C}} \Phi_{|\lambda|}^{\pm,\star}(\sgn(\lambda) z(s)) F(s) x^s ds\right|.\;\;\;\;\;\;\;\;\;\;\;\;\;\;\;\;\;
\end{aligned}\]
%with $\sum_\rho^*$ denoting a sum over poles where real poles are counted twice.
We realize that
$$\begin{aligned}
    \Res_{s=1} \coth \frac{\pi (s-\sigma)}{T} A(s) x^s = \coth 
    \frac{\pi (1-\sigma)}{T} \cdot \Res_{s=1} A(s) \cdot  x = 
x \coth \frac{-\lambda}{2} = - x \coth \frac{\lambda}{2},\end{aligned}
$$
and so the contribution of that residue to $S_{\sigma}(x)$, if included in the second sum in
$\mathcal{T}_{1,\sigma,x}$, would be $\frac{2 \pi}{T} x^{1-\sigma} \frac{\sgn(\lambda)}{2} \coth \frac{\lambda}{2}
= \frac{x^{1-\sigma}}{\sigma-1} \frac{\lambda}{2} \coth \frac{|\lambda|}{2}$, matching the
first term in \eqref{eq:padrone}. So, we include $\rho=1$ in that sum.
Similarly, we subsume the term $\frac{x}{2}$ in $\mathcal{T}_{2,\sigma,x}$ by including
$\rho=1$ in the second sum in $\mathcal{T}_{2,\sigma,x}$.
\end{proof}
\begin{proof}[Proof of Theorem \ref{thm:mainthmB}]
Let $\Phi(s)$ be whichever of
$T \Phi_{2\pi|\sigma-1|/T}^{\xi,\star}(\sgn(\lambda) (s-1)/i T)$, $\xi=\pm 1$, would
make the last integral in \eqref{eq:julcot} larger. Then $\Phi(1)=0$ and
$|\Phi'(s)|\leq 1$ for $s\in R_{1/4}$ by Lemma \ref{lem:benvenuto2}.

{\bf Case $\sigma <1$.} We apply Proposition \ref{prop:ewsai} and are done.

{\bf Case $\sigma >1$.} We %actually
want
to estimate $\sum_{n\leq x} a_n n^{-\sigma} = \sum_n a_n n^{-\sigma} - \sum_{n>x} a_n n^{-\sigma} = A(\sigma) - S_\sigma(x^+)$, so
we apply Prop.~\ref{prop:ewsai} with $x^+$ (that is, a sequence of reals tending to $x$ from above) instead of $x$. Since $\sigma>1$, $A(s)$ does not have a pole at $s=\sigma$,
whereas $\coth\big(\frac{\pi(s-\sigma)}{T}\big)$ has a simple pole at $s=\sigma$ with residue
$\frac{T}{\pi}$, and so $\coth\big(\frac{\pi(s-\sigma)}{T}\big) A(s) x^s$ has a simple pole
with residue $\frac{T}{\pi} A(\sigma) x^\sigma$. We see, then, that that pole would contribute
exactly $-A(\sigma)$ to \eqref{eq:padrone} if included in the sum $\mathcal{T}_{1,\sigma,x}$, and so we include it. The sign in $-S_\sigma(x^+)$ cancels out the minus sign here.

There is a subtlety regarding convergence here: the sums $\sum_\rho$ are, in general,
infinite sums, and we do not know a priori that the limit as $x_n\to x^+$ of such a sum of residues equals the sum of the limits of the residues. 
The solution is the same as in \cite{Moebart}. Recall that $\sum_{\rho \in \mathcal{Z}_A^+(T) }$ here means
$\lim_{m\to \infty} \sum_{\rho \in \mathcal{Z}_A^+(T) : \Re \rho > \sigma_m}$, where $\sigma_m\to -\infty$ (monotonically, it may be assumed). The difference between the sums of this form for two consecutive
values is then
\begin{equation}\label{eq:adorno} \Delta_m = \sum_{\rho \in \mathcal{Z}_A^+(T): \sigma_m < \Re \rho\leq \sigma_{m+1}}
\omega^+_{T,\sigma}(\rho) x^{\rho-1}
\Res_{s=\rho}  A(s).\end{equation}
and the same with $\theta_{T,1}(\rho)$ instead of $\omega^+_{T,\sigma}(\rho)$.
These sums equal $\frac{1}{2\pi}$ times the integral on the closed contours that goes from
$\sigma_m + i T$ to $\sigma_{m+1} + i T$, then goes down vertically until meeting
the contour $\mathcal{C}$, follows $\mathcal{C}$ rightwards until meeting
the line $\Re s = \sigma_m$, and then goes vertically up to $\sigma_m+ i T$.

On that closed contour, $A(s) T^{s-1}$ is uniformly bounded, and our weights 
$\omega^+_{T,\sigma}(s)$,  $\theta_{T,1}(s)$ are at most linear on $s$.
So, by $x_n>x>T$, we see that $\Delta_m$
decays exponentially on $\sigma_m$, uniformly on $n$.
Since 
$\sum_{\rho \in \mathcal{Z}_A^+(T): \Re \rho > \sigma_M}$ is the sum of
the terms \eqref{eq:adorno} for $m<M$, then, by dominated convergence, the limit as
$x_n\to x^+$ of the limit as $M\to \infty$ equals the limit as $M\to \infty$ of the limit as $x_n\to x^+$.
In other words, the limit as $x_n\to x^+$ of each of the sums  $\sum_{\rho \in \mathcal{Z}_A^+(T)}$ in Prop.~\ref{prop:ewsai} is just the
sum  $\sum_{\rho \in \mathcal{Z}_A^+(T)}$ for $x$. The same argument works
for the sums $\sum_{\rho\in \mathcal{Z}_{A,\mathbb{R}}\cup \{\sigma\}}$.

{\bf Case $\sigma = 1$.} Apply our final statement \eqref{eq:quentino} with $\sigma\to 1^-$. The
same issue regarding convergence arises as for $\sigma>1$, and it is dealt with in the same way.
All that is left to show is that one quantity, namely, the limit as $\sigma\to 1^-$ of the sum of the contributions of 
$\rho=\sigma$ and
$\rho=1$ to the first sum in \eqref{eq:quentino}, equals another, namely, the contribution of $\rho=1$ to the first sum in \eqref{eq:quentino} when $\sigma=1$. (We get the former through our limit process, and wish to show that it matches the latter.)

For $\sigma\ne 1$, the residues of $\frac{\pi}{T} \coth \frac{\pi (s-\sigma)}{T} A(s) x^{s-1}$ at 
$\rho=\sigma$ and $\rho=1$ are $A(\sigma) x^{\sigma-1}$ and $\frac{\pi}{T} \coth \frac{\pi (1-\sigma)}{T}$, respectively.
For $\sigma$ near $1$, the sum of these two residues is of the form
\[ 
\left(\frac{1}{\sigma-1} + c_0 + o(1)\right) \left(1 + (\sigma-1) \log x + o(\sigma-1)\right) + \frac{1}{1-\sigma}
+o(1),\]
with $c_0\in\R$, and so it tends to $\log x + c_0$ as $\sigma\to 1^-$. For $\sigma=1$, 
$\frac{\pi}{T} \coth \frac{\pi (s-\sigma)}{T} A(s) x^{s-1}$ is
\begin{equation}\label{eq:kelkap}\left(\frac{1}{s-1} + O(s-1)\right)
\left(\frac{1}{s-1} + c_0 + O(s-1)\right) \left(1 + (s-1) \log x +o(s-1)\right),
\end{equation}
whose residue at $s=1$ is $\log x + c_0$, so all is well.
\begin{comment}
is $A(s) x^{\sigma-1}$
Write $A(s) = 1/(s-1) + c_0 + O(s-1)$. Since $\lambda = \lambda(\sigma) = 2\pi (\sigma-1)/T$,
and $x\coth x\to 1$, $(x \coth x)' \to 0$ as $x\to 0$,
\begin{equation}\label{eq:amipo}\begin{aligned}
    \frac{x^{1-\sigma}}{1-\sigma} \cdot \frac{\lambda}{2} \coth \frac{\lambda}{2} + A(\sigma)
&= \frac{x^{1-\sigma}-1}{1-\sigma}\cdot \frac{\lambda}{2} \coth \frac{\lambda}{2} 
+ \frac{1}{1-\sigma} \left(\frac{\lambda}{2} \coth \frac{\lambda}{2} -1\right)
+ c_0 + O(\sigma-1)\\
&\to \left. \frac{d x^t}{d t} \right|_{t=0} + c_0 = \log x + c_0
\end{aligned}\end{equation}
as  $\sigma\to 1^+$. At the same time, for $\sigma=1$ and $s\to 1$,
$\cot(\pi(s)) A(s) x^s$ is of the form
$$\begin{aligned}\left( \frac{i T}{\pi (s-1)} + o(1)\right) &\left(\frac{1}{s-1}
+ c_0 + o(1)\right) x \left(1 + (s-1) \log x + O((s-1)^2)\right) \\
&= \frac{i T}{\pi} (\log x + c_0) + O(),
\end{aligned}
$$
and so $\frac{2\pi x^{-\sigma}}{T}\cdot \frac{1}{2} \Im \Res_{s=1} \cot(\pi(s)) A(s) x^s = 
 \log x + c_0$, matching \eqref{eq:amipo} exactly.
\end{comment}
\end{proof}

%$(\lambda/4\pi) \coth(\lambda/2) \to 1/2\pi$,

\section{The case of $\Lambda(n)$: sums over zeros of $\zeta(s)$}\label{sec:zetres}

We now wish to apply Theorem~\ref{thm:mainthmB} to estimate sums $\sum_n \Lambda(n) n^{-\sigma}$.
That means we will want to estimate the sums in the statement of Thm.~\ref{thm:mainthmB}
in the special case $A(s) = -\zeta'(s)/\zeta(s)$. 

%\textcolor{red}{Una oración explicando que queremos estimar las sumas que aparecen en el Teorema 1.1 en el caso especial de $A(s) = -\zeta'(s)/\zeta(s)$.}

We will be working with $\cot$ and $\coth$, using the Laurent series 
\begin{equation}\label{eq:mireio}
\pi \cot \pi z = \frac{1}{z} - 2 \sum_n
\zeta(2 n) z^{2 n - 1},
\end{equation}
immediate from \cite[(4.19.6)]{zbMATH05765058} and \cite[\S 1.5(2)]{zbMATH03492781}.
We will also use Euler's expansion % \cite[(4.22.3)]{zbMATH05765058}
\begin{equation}\label{eq:eulercot}
\cot z = \frac{1}{z} + 2 z \sum_n \frac{1}{z^2 - n^2\pi^2}
\quad\text{and so}\quad \coth z = \frac{1}{z} + 2 z \sum_n \frac{1}{z^2 + n^2\pi^2}.
\end{equation}
In particular, for $y\geq 0$, $\coth y \leq \frac{1}{y} + 2 y \sum_n \frac{1}{n^2\pi^2} 
\leq \frac{1}{y} + \frac{y}{3}$, while also $\coth y = 1 + \frac{2}{e^{2 y} - 1} \leq 1 + \frac{1}{y}$.

\subsection{Trivial zeros}
\begin{comment}
As we already know,
the contribution of the pole at $s=1$ to the first term in \eqref{eq:quentino} is
$\frac{\pi}{T} \coth \frac{\pi (1-\sigma)}{T}$ if $\sigma\ne 1$,
$\log x + \gamma$ if $\sigma=1$. Here we can bound
$\frac{\pi}{T} \coth \frac{\pi (1-\sigma)}{T} \leq 
\frac{1}{1-\sigma} +
\frac{(1-\sigma) \pi^2}{3 T^2}$. The contribution of $s=1$ to the third term
in \eqref{eq:quentino} is just $\frac{\pi}{T}$.

The contribution of $s=\sigma$ to the first term is $-\frac{\zeta'(\sigma)}{\zeta(\sigma)} x^{\sigma-1}$, unless
$\sigma=1$, in which case it is what we have already said, or $\sigma = - 2n$, in which
case it is $\log x - \frac{\zeta''(-2 n)}{2 \zeta'(-2 n)}$.
\end{comment}
%The contribution of the trivial zeros will be minuscule. 
%; we will estimate it with some care
%mainly to illustrate an idea that we will use again soon. Notice the real $\Re$ and 
%imaginary parts $\Im$ in \eqref{eq:quentino}. For $a,b$ real and $z\in \mathbb{C}$ with $|z|\leq 1$,
%Cauchy-Schwarz applied to the vectors $(a,b)$, $(\Im z,\Re z)$ yields
%\[\left|\Im(a z)\right| + \left|\Re (b z)\right| 
%\leq \sqrt{a^2+b^2}.\]

%   $$\omega^+_{T,\sigma}(s) = - \theta_{T,\sigma}(s) \cot \pi \theta_{T,\sigma}(s) + c_{T,\sigma},\;\;
%   \theta_{T,\sigma}(s)=1-\frac{s-\sigma}{i T},\;\;
%   c_{T,\sigma} = \theta_{T,\sigma}(1+iT) \cot \pi \theta_{T,\sigma}(1+ i T),$$

\begin{lemma}\label{lem:trivialzer}
Let $T>0$,  $x>1$, $\sigma>-2$. Let $A(s) = -\zeta'(s)/\zeta(s)$. Then
\[\left|\frac{\pi}{T} \sum_n \Res_{s=-2 n} \coth \frac{\pi (s-\sigma)}{T} A(s) x^{s-1}\right| +
\left|\frac{\pi}{T} \sum_n \Res_{s=-2 n}A(s) x^{s-1}\right|\leq 
\frac{\frac{1}{2+\sigma} + \frac{2\pi}{T}}{x^3 (1 - x^{-2})}.
\]
\end{lemma}
\begin{proof}
The residue of $A(s)$ at $s=-2, -4,\dotsc$ is $-1$. Thus
\[\left|\sum_n \Res_{s=-2 n}A(s) x^{s-1}\right|\leq  \sum_n x^{-2 n - 1} = 
\frac{1}{x^3 (1 - x^{-2})},\]
\[\left|\sum_n \Res_{s=-2 n} \coth \frac{\pi (s-\sigma)}{T} A(s) x^{s-1}\right| =
\sum_n \coth\left(\pi \frac{2 n + \sigma}{T}\right) x^{- 2 n - 1}
\leq \coth \pi \frac{2+ \sigma}{T} \cdot \sum_n x^{-2 n - 1},
\]
since $\coth y$ is decreasing for $y>0$. By $\coth y \leq 1/y + 1$, we are done.
\end{proof}

\subsection{Non trivial zeros}\label{subs:nontrivz}
\subsubsection{Estimates on weights}

We have two sums over non-trivial poles of $A(s)$ in Theorem \ref{thm:mainthmB}: the sum with
weight $\omega_{T,\sigma}^+(s)$ and the sum with weight $\theta_{T,1}(s)$. 
%As we will soon see,
The fact that we are taking real $\Re$ and imaginary parts $\Im$ in Theorem \ref{thm:mainthmB} means
we need not work with $|\omega_{T,\sigma}^+(\rho)| + |\theta_{T,1}(\rho)|$: we may rather work with
$|\omega_{T,\sigma}^+(\rho) + \xi\theta_{T,1}(\rho)i|$, with $\xi\in [-1,1]$, which is smaller. Of course this is just the same as working with the norm $|\Phi_\lambda^\pm(z)|$ of our original weight function.
Our aim will be to show that, on average, this weight will contribute less than the classical weight $\frac{1}{\pi |\gamma|}$ would.

%Let us begin with some estimates on our weight functions. 
We will approximate our
weight $\omega_{T,\sigma}^+(\rho) +  \xi\theta_{T,1}(\rho)i$ in two ways:
(a) by a simplified weight depending only on $\frac{\gamma}{T} = \frac{\Im \rho}{T}$, and (b) by
the classical weight $\frac{1}{\rho-\sigma}$. The former
approximation is closer for $\gamma$ large, the latter for $\gamma$ small.

\begin{comment}
We will use the Laurent series 
\begin{equation}\label{eq:mireio}
\pi \cot \pi z = \frac{1}{z} - 2 \sum_n
\zeta(2 n) z^{2 n - 1},
\end{equation}
immediate from \cite[(4.19.6)]{zbMATH05765058} and \cite[\S 1.5(2)]{zbMATH03492781}.
\end{comment}

%We will bound  the ``cost'' of evaluating our weights $\omega_{T,\sigma}^+$,
%$\theta_{T,\sigma}(\rho)$ on the critical line $\Re s = 1/2$, as we have to, as
%opposed to $\Re s = \sigma$, as would be simpler.

\begin{comment}
\textcolor{blue}{
\begin{lemma}\label{lem:sibelius}
Let $F(z) = \frac{1}{\pi}- (1-z) \cot \pi (1-z)$. For any $x\in \left[\frac{1}{2},1\right]$,
$y\in \left[-\frac{1}{2},\frac{1}{2}\right]$, 
\begin{equation}\label{eq:solucky}|F(x+iy)-F(x)| \leq 1.81\,|y| .\end{equation}
Let $A(z) = F(z) -\frac{1}{\pi z}$. Then, 
 for $x\in \left(0,\frac{1}{2}\right]$, $y\in \left[-\frac{1}{2},\frac{1}{2}\right]$,
\begin{equation}\label{eq:garmenio}
|A(x)|\leq \frac{\pi}{3} x,\quad |A(x+i y) - A(x)|\leq 1.78 |y|,\end{equation}
%and so
\begin{equation}\label{eq:astrud}|F(x+i y) - F(x)|\leq \frac{|y|}{\pi x |x+i y|} + 1.78 |y|.
\end{equation}
\end{lemma}}
\end{comment}

We can now approximate our weight by a relatively simple function on the reals. Define
%\textcolor{red}{A key function in our calculations is defined by
\begin{align} \label{eq:kullervo}
F(z) = \frac{1}{\pi}- (1-z) \cot \pi (1-z).
\end{align}
%}

\begin{lemma}\label{cor:thonny}
Let $T>0$, $\sigma\in\mathbb{R}$ with $|\sigma-\frac{1}{2}|\leq \frac{T}{2}$.
 Let $\omega_{T,\sigma}^+(s)$ and $\theta_{T,1}(s)$ be as in Theorem \ref{thm:mainthmB}. Let $F$
 be as in \eqref{eq:kullervo}. %Lemma \ref{lem:sibelius}.
 Then, for $s = \frac{1}{2} + i t$ with $0< t\leq T$ and any $\xi\in [-1,1]$,
%\[\left|\omega_{T,\sigma}^+(s)\right| =  F\left(\frac{t}{T}\right) + \frac{|\sigma-1|+ 1.81 |%%\sigma-1/2|}{T}.\]
% Moreover, for $\theta_{T,\sigma}(s)$ as in Theorem \ref{thm:mainthmB},
\begin{equation}\label{eq:witdim}\omega_{T,\sigma}^+(s) + \xi \theta_{T,1}(s) i = 
 F\left(\frac{t}{T}\right)  +\xi \cdot \left(1 - \frac{t}{T}\right) i + O^*\left(
 \frac{\left|\sigma-\frac{1}{2}\right|}{\pi}  \frac{T}{t^2} + 
\frac{2.78 \left|\sigma-\frac{1}{2}\right| + 1}{T}\right).\end{equation}
Moreover, for $s=\frac{1}{2}+it$ with $0<t\leq \frac{T}{2}$
 and any $\xi\in [-1,1]$,
 \begin{equation}\label{eq:demoscen}
 \omega_{T,\sigma}^+(s) + \xi \theta_{T,1}(s) i = 
 \frac{i T}{(s-\sigma) \pi}+ O^*\left( 1 + 
\frac{%\frac{\pi-3}{3} t + 
2.78 \left|\sigma-\frac{1}{2}\right| + 1}{T}\right).
 \end{equation}
 \end{lemma}
 The condition $\left|\sigma-\frac{1}{2}\right|\leq \frac{T}{2}$ is of course very loose; it is all that is needed to apply Lemma 
 \ref{lem:sibelius}.
% Otherwise put: we are approximating $\Phi_{\lambda}(1/2+ i t)$ (as in Lemma \ref{lem:omlet})
\begin{proof}
Let $c_{T,\sigma}$ be as in Theorem
\ref{thm:mainthmB}.
Then, by the definition of $\omega_{T,\sigma}^+$(s),
\[\omega_{T,\sigma}^+(s)  = 
\frac{1}{\pi} - \theta_{T,\sigma}(s) \cot \big(\pi \theta_{T,\sigma}(s)\big) 
+ O^*\left(\left|c_{T,\sigma} - \frac{1}{\pi}\right|\right).\]
Since
$\theta_{T,\sigma}(1+i T) = \frac{1}{i T} (\sigma-1)$,
\[c_{T,\sigma}-\frac{1}{\pi} = \frac{\sigma-1}{i T} \cot \frac{\pi (\sigma-1)}{i T} 
- \frac{1}{\pi} = 
\frac{1}{\pi}\left(\frac{\pi (\sigma-1)}{T} \coth \frac{\pi (\sigma-1)}{T} - 1\right),
\]
which has absolute value $\leq \frac{|\sigma-1|}{T}$ by Lemma \ref{lem:cothder}. Thus, since
$F\left(\frac{s-\sigma}{i T}\right) = \frac{1}{\pi} - \theta_{T,\sigma}(s) \cot \big(\pi \theta_{T,\sigma}(s)\big)$, 
\[\omega_{T,\sigma}^+(s) = F\left(\frac{s-\sigma}{i T}\right) + O^*\left(\frac{|\sigma-1|}{T}\right).
\]

Assume $0<t\leq T$. Then, by \eqref{eq:solucky},
\begin{align*}
\omega_{T,\sigma}^+(s) + \xi \theta_{T,1}(s) i & = F\left(\frac{t}{T} + \frac{\sigma-\frac{1}{2}}{T} i\right) +\xi \theta_{T,1}(s) i + O^*\left(\frac{|\sigma-1|}{T}\right) \\
& 
= F\left(\frac{t}{T}\right) + \xi \left(1-\frac{t}{T}\right) i + O^*\left(
\frac{\left|\sigma-\frac{1}{2}\right|}{\pi t^2/T} + 
\frac{2.78 \left|\sigma-\frac{1}{2}\right| + 1}{T}\right)
\end{align*}
since $1.78 \left|\sigma-\frac{1}{2}\right| + |\sigma-1| + \frac{1}{2}\leq
2.78 \left|\sigma-\frac{1}{2}\right| + 1$.
%The triangle inequality now yields \eqref{eq:witdim}.

\begin{comment}
\[\omega_{T,\sigma}^+(s)  \pm \theta_{T,1}(s) i =
F\left(\frac{s-\sigma}{i T}\right) \pm \left(1 - \frac{t}{T}\right) i 
+ O^*\left(\frac{|\sigma-1|+1/2}{T}\right).
\]
By \eqref{eq:garmenio}, 
\[
F\left(\frac{s-\sigma}{i T}\right) = \frac{i T}{(s-\sigma) \pi}
+ 1.78 \cdot O^*\left(\frac{s-\sigma}{i T}\right)
\]

By Lemma \ref{lem:sibelius},
$\left| F\left(\frac{t}{T} + \frac{\sigma-\frac{1}{2}}{T} i\right)
- F\left(\frac{t}{T}\right)\right| \leq \frac{\left|\sigma-\frac{1}{2}\right|}{\pi t^2/T} + 
1.81 \frac{\left|\sigma-\frac{1}{2}\right|}{T}$.
 %In general, for $a,b,c$ real with $a,b\geq 0$, $\sqrt{(a+b)^2+c^2} \leq \sqrt{a^2+c^2} + b$, as can
%be verified by squaring both sides. 
%By definition, %, for $s=1/2+it$, 

%The triangle inequality
%now yields
\end{comment}
Let us now consider $0< t\leq \frac{T}{2}$. For $A(z)$ as in Lemma \ref{lem:sibelius}, since $s= \frac{1}{2}+ i t$, \eqref{eq:garmenio} gives us that
\[F\left(\frac{s-\sigma}{i T}\right) = \frac{i T}{(s-\sigma) \pi} + A\left(\frac{s-\sigma}{i T}\right) = \frac{i T}{(s-\sigma) \pi} + \frac{\pi}{3} O^*\left(\frac{t}{T}\right) +
1.78\cdot O^*\left(\frac{\left|\sigma-\frac{1}{2}\right|}{T}\right)
,\]
where the term $O^*(\frac{t}{T})$ is real-valued.
%\[
%F\left(\frac{t}{T}\right) + 1.78\cdot O^*\left(\frac{\sigma-\frac{1}{2}{T}\right) = 
%\frac{i T}{r}\]
%\[= F\left(\frac{t}{T} + \frac{\sigma-\frac{1}{2}}{T} i\right).
%\]
Since $\theta_{T,1}(s) = 
\left(1-\frac{t}{T}\right) - \frac{i}{2 T}$, it follows that
\[\omega_{T,\sigma}^+(s) + \xi \theta_{T,1}(s) i 
= \frac{i T}{(s-\sigma) \pi} + O^*\left(\alpha\left(\frac{t}{T}\right) +
\frac{1.78 \left|\sigma-\frac{1}{2}\right| + |\sigma-1|+\frac{1}{2}}{T}\right),\]
where $\alpha(r) = \sqrt{(1-r)^2 + (\frac{\pi}{3})^2 r^2} = 
\sqrt{1- (2 - (1+(\frac{\pi}{3})^2) r) r} \leq 1$ for
$0\leq r\leq \frac{1}{2}$.
\end{proof}

%\subsubsection{The integral of the weight}

Now we can compare the integral of the norm of a simplified version of our weight with the integral of the norm of the classical weight.
\begin{proposition}\label{prop:tritura}
Let $F$ be as in \eqref{eq:kullervo}. %Lemma \ref{lem:sibelius}. 
Let $T\geq t_0\geq 2\pi$. Then
\begin{equation}\label{eq:simplon}\frac{2 \pi}{T} \int_{t_0}^T \sqrt{F\left(\frac{t}{T}\right)^2 + \left(1- \frac{t}{T}\right)^2}
\log \frac{t}{2\pi} dt \leq  \log^2 \frac{T}{2\pi} - \log^2 \frac{t_0}{2\pi}.\end{equation}
Furthermore, if $t_0\geq 2\pi e$ and $T\geq 3 t_0$,
\begin{equation}\label{eq:adago}\frac{2 \pi}{T} \int_{t_0}^T \sqrt{F\left(\frac{t}{T}\right)^2 + \left(1- \frac{t}{T}\right)^2}
\log \frac{t}{2\pi} dt \leq  \log^2 \frac{T}{2\pi} - \log^2 \frac{t_0}{2\pi} - 
2 C_1 \log \frac{e T}{2\pi} + 2 C_2,
\end{equation}
where $C_k = \sum_n \zeta(2 n) \left(\frac{1}{(2 n)^k} - \frac{2}{(2 n+1)^k} + \frac{1}{(2 n + 2)^k}\right)$.
\end{proposition}
The classical weight $\frac{1}{t}$ would contribute precisely $2\int_{t_0}^T \frac{1}{t} \log \frac{t}{2\pi} dt = \log^2 \frac{T}{2\pi} - \log^2 \frac{t_0}{2\pi}$.
\begin{proof}
%Change variables to $u=t/T$. 
Since $x\to \sqrt{x}$ is concave, $\sqrt{a+b}\leq \sqrt{a} + \frac{b}{2\sqrt{a}}$ for any $a>0$, $b>-a$, and so
\begin{equation} \label{1_55pm}
\sqrt{F(u)^2 + (1-u)^2} \leq \frac{1}{\pi u} + \frac{F(u)^2 + (1-u)^2 - \frac{1}{(\pi u)^2}}{2/\pi u}
\end{equation}
for $u>0$ arbitrary. Write $w(u) = \log \frac{u T}{2\pi}$. Then, for $\varepsilon = \frac{t_0}{T}$,
\begin{equation}\label{eq:synthetica}\begin{aligned}\int_{t_0}^T &\sqrt{F\left(\frac{t}{T}\right)^2 + \left(1- \frac{t}{T}\right)^2}
\log \frac{t}{2\pi} dt = T \int_{\varepsilon}^1 \sqrt{F(u)^2 + (1-u)^2 } w(u) du\\
&\leq
T \left(\int_{\varepsilon}^1 \frac{1}{\pi u} \log \frac{u T}{2\pi} du + 
\frac{\pi}{2} \int_{\varepsilon}^1 \left(F(u)^2 + (1-u)^2 - \frac{1}{(\pi u)^2}\right) u w(u) du\right).\end{aligned}\end{equation}
Here $\int_{\varepsilon}^1 \frac{1}{\pi u} \log \frac{u T}{2\pi} du = 
\left. \frac{1}{2\pi} \log^2 \frac{u T}{2\pi}\right|_{u=\varepsilon}^1$.
It is easy to verify that $G(u) = \frac{F(u) (u-1)}{\pi}$
is an antiderivative of $F(u)^2+(1-u)^2$. Since 
\begin{equation}\label{eq:bossa}\frac{1}{\pi^2 u} + G(u) = \frac{1}{\pi^2 u} + \frac{u-1}{\pi} \left( 
 \frac{1}{\pi} + (1-u) \cot \pi u\right) = \frac{1}{\pi^2} + 
 \frac{(1-u)^2}{\pi}\left(\frac{1}{\pi u} - \cot \pi u\right),\end{equation}
 we see that $\frac{(1-u)^2}{\pi}\left(\frac{1}{\pi u} - \cot \pi u\right)$ is
 an antiderivative of $F(u)^2+(1-u)^2 - \frac{1}{(\pi u)^2}$. 
Hence, by integration by parts and $(u w(u))' = w(u) + u w'(u) = w(u)+1$, the last integral
in \eqref{eq:synthetica} equals
\begin{equation}\label{eq:nolights}\begin{aligned}
\left. \frac{(1-u)^2}{\pi}\left(\frac{1}{\pi u} - \cot \pi u\right) u w(u) \right|_{\varepsilon}^1
- \int_{\varepsilon}^1 \frac{(1-u)^2}{\pi}\left(\frac{1}{\pi u} - \cot \pi u\right) (w(u)+1)\, du.
\end{aligned}\end{equation}
Recall that $\frac{1}{\pi u} - \cot \pi u>0$
for all $0<u<1$. We will estimate the quantity in \eqref{eq:nolights}. We already see that it
is negative: because $1-u =0$ for $u=1$ and
$w(u)\geq 0$ for $u\geq \varepsilon$ by $t_0\geq 2\pi$, the term on the left is $\leq 0$,
and the integral is $\geq 0$.  Thus, \eqref{eq:simplon} holds. Let us now
aim for \eqref{eq:adago}.

We can write the integral in \eqref{eq:nolights} as $(I_1(1)-I_1(\varepsilon)) \log \frac{e T}{2\pi} - (I_0(1)-I_0(\varepsilon))$, where
\[I_1(t) = \int_0^t  \frac{(1-u)^2}{\pi}\left(\frac{1}{\pi u} - \cot \pi u\right) du,\quad
I_0(t) = \int_0^t  \frac{(1-u)^2}{\pi}\left(\frac{1}{\pi u} - \cot \pi u\right) (-\log u) du.\]
By \eqref{eq:mireio},
  $ \frac{1}{\pi u} - \cot \pi u  =  \frac{2}{\pi} \sum_n \zeta(2 n)\,u^{2 n-1}$, and so
$$I_{1}(t) = \frac{2}{\pi^2} \sum_n \zeta(2 n) \int_0^1 (1-u)^2 u^{2 n - 1}\, du = \frac{2}{\pi^2} \sum_n \zeta(2 n) \left(\frac{t^{2 n}}{2 n} - \frac{2 t^{2 n+1}}{2 n+1} + \frac{t^{2 n +2}}{2 n + 2}\right),$$
whereas
$I_{0}(t) = \frac{2}{\pi^2} \sum_n \zeta(2 n) \int_0^t (1-u)^2 u^{2 n - 1} (-\log u)\, du$ equals
\[\frac{2}{\pi^2} \sum_n \zeta(2 n) \left(\left(\frac{t^{2 n}}{(2 n)^2} - \frac{2 t^{2 n+1}}{(2 n+1)^2}
+ \frac{t^{2 n +2}}{(2 n + 2)^2}\right) - \left(
\frac{t^{2 n}}{2 n} - \frac{2 t^{2 n+1}}{2 n+1} + \frac{t^{2 n +2}}{2 n + 2}
\right)\log t\right).\]
Thus, $I_1(\varepsilon) \log \frac{e T}{2\pi} - I_0(\varepsilon)$, i.e., $
I_1(\varepsilon) \log \frac{\varepsilon T}{2\pi e} + I_1(\varepsilon) (2-\log \varepsilon) - I_0(\varepsilon)$, equals
\[\begin{aligned}
&\frac{2}{\pi^2} \sum_n \zeta(2 n) 
\left(\frac{\varepsilon^{2 n}}{2 n} - \frac{2 \varepsilon^{2 n+1}}{2 n+1} + \frac{\varepsilon^{2 n +2}}{2 n + 2}\right)
\log \frac{\varepsilon T}{2\pi e} \\ + &\frac{2}{\pi^2} \sum_n \zeta(2 n) 
\left(2 \left(\frac{\varepsilon^{2 n}}{2 n} - \frac{2 \varepsilon^{2 n+1}}{2 n+1} + 
\frac{\varepsilon^{2 n +2}}{2 n + 2}\right)
- \left(\frac{\varepsilon^{2 n}}{(2 n)^2} - \frac{2 \varepsilon^{2 n+1}}{(2 n+1)^2}
+ \frac{\varepsilon^{2 n +2}}{(2 n + 2)^2}\right)\right)
\end{aligned}\]
In comparison, the first term in \eqref{eq:nolights} is
 $-\frac{(1-\varepsilon)^2}{\pi} \left(\frac{1}{\pi \varepsilon} - \cot \pi \varepsilon\right) \varepsilon w(\varepsilon)$, which can be written as
\[-\frac{2 (1-\varepsilon)^2}{\pi^2} \sum_n \zeta(2 n) \varepsilon^{2 n} \left(\log \frac{\varepsilon T}{2\pi e} + 1\right).\]
The sum $S$ of that and $I_1(\varepsilon) \log \frac{e T}{2\pi} - I_0(\varepsilon)$ thus equals
$-\frac{2}{\pi^2}\sum_n \zeta(2 n) (a_n(\varepsilon) \log \frac{\varepsilon T}{2\pi e} + b_n(\varepsilon))  \varepsilon^{2 n}$, where $a_n$ and $b_n$ are as in Lemma \ref{lem:tremic},
which assures us that $a_n(\varepsilon), b_n(\varepsilon)\geq 0$, since $\varepsilon\leq \frac{1}{3}$.
By $t_0\geq 2\pi e$, $\log \frac{\varepsilon T}{2\pi e}\geq 0$, and so $S\leq 0$. We
conclude that the last integral in
 \eqref{eq:synthetica} is $\leq I_1(1) \log \frac{e T}{2\pi} - I_0(1)$, and so
 \eqref{eq:adago} follows.
\begin{comment}
\[\begin{aligned}
&\left. \left(\frac{1}{\pi^2 u} + G(u)\right) u w(u) \right|_{\varepsilon}^1 - 
\int_{\varepsilon}^1 \left(\frac{1}{\pi^2 u} + G(u)\right) (w(u) + u w'(u)) du\\
= &\frac{\log \frac{T}{2\pi}}{\pi^2} - \left(\frac{1/\varepsilon}{\pi^2} + \frac{F(\varepsilon) (\varepsilon-1)}{\pi}\right) \varepsilon w(\varepsilon)  - 
\int_{\varepsilon}^1 \left(\frac{1}{\pi^2 u} + G(u)\right) (w(u) + 1) du.
\end{aligned}\]
since $u w'(u) = 1$.
\end{comment}
\end{proof}

Here is an easy variant of Proposition \ref{prop:tritura}, needed for an error term.
\begin{lemma}\label{lem:nimes}
Let $F$ be as in \eqref{eq:kullervo}.%Lemma \ref{lem:sibelius}. 
Let $T\geq t_0\geq 2\pi$. Then
\[\frac{2 \pi}{T} \int_{t_0}^T \sqrt{F\left(\frac{t}{T}\right)^2 + \left(1- \frac{t}{T}\right)^2}
\frac{dt}{t} \leq \frac{2}{t_0} - \frac{2}{T}.
\]
\end{lemma}
\begin{proof} 
Just as in the proof of Prop.~\ref{prop:tritura}, for $\varepsilon = \frac{t_0}{T}$,
\[\int_{t_0}^T \sqrt{F\left(\frac{t}{T}\right)^2 + \left(1- \frac{t}{T}\right)^2}
\frac{dt}{t}\leq \int_{\varepsilon}^1 \frac{1}{\pi u} \frac{du}{u} +
\frac{\pi}{2} \int_{\varepsilon}^1 
\left(F(u)^2 + (1-u)^2 - \frac{1}{(\pi u)^2}\right)u \cdot \frac{du}{u}.\]
Since, as we saw in that proof,  $\frac{(1-u)^2}{\pi}\left(\frac{1}{\pi u} - \cot \pi u\right)$ is
 an antiderivative of $F(u)^2+(1-u)^2 - \frac{1}{(\pi u)^2}$,  and
 $\frac{1}{\pi \varepsilon} > \cot \pi \varepsilon$, we get
 $\int_{\varepsilon}^1 
\left(F(u)^2 + (1-u)^2 - \frac{1}{(\pi u)^2}\right) du \leq 0$. Finally, $\int_\varepsilon^1 \frac{du}{\pi u^2} = \frac{1}{\pi \varepsilon} - \frac{1}{\pi} $.
\end{proof}

\subsubsection{Sums over non-trivial zeros}

We finally come to our estimates on the contribution of non-trivial zeros $\rho$ of $\zeta(s)$.
We will use the following for $\Im \rho$ not too small. In all of the following sums over $\rho$, we consider 
$\rho$ with multiplicity. (Of course all zeros of $\zeta(s)$ are believed to be simple, but we do not know that, and neither do we need to assume it.)

\begin{lemma}\label{lem:adar} Let $t_0\geq 2\pi e$, $T\geq 3 t_0$, $\sigma\in \mathbb{R}$ with
$\left|\sigma-\frac{1}{2}\right|\leq \frac{T}{2}$.
 Let $\omega_{T,\sigma}^+(s)$ and $\theta_{T,1}(s)$ be as in Thm.~\ref{thm:mainthmB}.
Assume RH holds up to height $T$. %and $T\geq 10^5$. 
Then, for any $\xi\in [-1,1]$,
\begin{equation}\label{eq:oumya}\frac{2\pi}{T} \sum_{\substack{\rho\\ t_0<\Im \rho \leq T}} \left|\omega_{T,\sigma}^+(\rho) + \xi 
\theta_{T,1}(\rho) i \right|\end{equation}
is at most
\begin{equation}\label{eq:amyar}
\frac{1}{2\pi }
 \left(\log^2 \frac{T}{2\pi} -\log^2 \frac{t_0}{2\pi} - P_1\left(
\log \frac{T}{2\pi}\right)\right) + \frac{\err_{1,\sigma}(t_0)}{t_0} + 
\frac{\err_{2,\sigma}(t_0,T)}{T},
\end{equation}
where $P_1(y) = 2 C_1 y + 2 (C_1-C_2)$ with
 $C_1$, $C_2$ as in Lemma \ref{lem:konstanz}, and \[\begin{aligned}
    \err_{1,\sigma}(t_0) &= 2 \left(\frac{2}{5}\log t_0 + \frac{21}{5}\right)
+ \left(\frac{1}{\pi} \log \frac{e t_0}{2\pi}   + \frac{4}{5 t_0} \left(\log t_0 +
\frac{41}{4}\right)\right) \left|\sigma-\frac{1}{2}\right|,\\
\err_{2,\sigma}(t_0,T) &= \left(2.78 \left|\sigma - \frac{1}{2}\right| + 1\right) \log \frac{T}{2\pi} - \frac{2}{5} +
\frac{\pi^2 t_0}{T} \left(\frac{2}{5}\log t_0 + 4\right). \end{aligned}\]
%Solo queremos $T>t_0$, $t_0\geq 14$, $T\geq 2\pi$
\end{lemma}
%Technically, the absolute values around $\Re \sum_\rho \dotsc$ in \eqref{eq:oumya} are not needed, as that
%real part is always non-negative, due to $\theta_{T,\sigma}$ being essentially a triangle function.
\begin{proof}
%First of all, the  absolute value in \eqref{eq:oumya} equals
%\[\left|\Im \sum_{\substack{\rho\\ t_0<\Im \rho \leq T}} \left(\omega_{T,\sigma}^+(\rho) 
%\pm \theta_{T,\sigma}(\rho) i\right) x^{\rho-1}\right| \leq 
%\frac{1}{\sqrt{x}} 
%\sum_{\substack{\rho\\ t_0<\Im \rho \leq T}} \left|\omega_{T,\sigma}^+(\rho) 
%\pm  \theta_{T,\sigma}(\rho) i\right| 
%\]
%since $\Re \rho = 1/2$.
    Define $\phi(t) = \left|F\left(\frac{t}{T}\right) + \xi \left(1 -\frac{t}{T}\right) i\right|$ for $F$ as in \eqref{eq:kullervo}
    Then, by \eqref{eq:witdim} in Lemma~\ref{cor:thonny},
%    , i.e.,
%\[\phi(z) = \frac{1}{\pi} - \left(1 - \frac{z}{T}\right) \cot \pi \left(1 - \frac{z}{T}\right)
%=\frac{1}{\pi} + \left(1 - \frac{z}{T}\right) \cot \frac{\pi z}{T}.\]
\begin{equation}\label{eq:tricme}\begin{aligned}
\sum_{\substack{\rho\\ t_0<\Im \rho \leq T}} \left|\omega_{T,\sigma}^+(\rho) 
+ \xi\theta_{T,1}(\rho) i\right| 
%\left|\sum_{\substack{\rho\\ t_0<\Im \rho \leq T}} \left|\omega_{T,\sigma}^+(\rho) 
%\pm i \theta_{T,\sigma)}
%x^{\rho-1}\right|
&\leq \sum_{\substack{\rho\\ t_0<\Im \rho \leq T}}
\left(\phi(\Im \rho) + \frac{\left|\sigma-\frac{1}{2}\right|T}{\pi (\Im \rho)^2} + \frac{ 2.78 \left|\sigma-\frac{1}{2}\right| + 1}{T}\right)
\\ & \leq   \sum_{t_0<\gamma\leq T} \phi(\gamma) + 
\frac{T}{\pi} \sum_{t_0<\gamma\leq T} \frac{\left|\sigma-\frac{1}{2}\right|}{\gamma^2}
+ 
c_\sigma \frac{N(T)}{T},\end{aligned}\end{equation}
where $c_\sigma = 2.78 \left|\sigma-\frac{1}{2}\right| + 1$ and $N(T)$ is the number of zeros of $\zeta(s)$ with imaginary part $0<\gamma\leq T$. We bound $N(T)$ by Cor.~\ref{cor:brut}
and the last sum in \eqref{eq:tricme} by Lemma \ref{lem:cathinv}. Their contribution to
\eqref{eq:amyar} is thus at most
\[\left|\sigma-\frac{1}{2}\right|
\left(\frac{\log \frac{e t_0}{2\pi}}{\pi t_0} + \frac{4}{5 t_0^2}
\left(\log t_0 + \frac{41}{4}\right)\right)
+ \frac{c_\sigma}{T} \log \frac{T}{2\pi}.\]

%Let us proceed similarly for the main sum $\sum_{t_0<\gamma\leq T} \phi(t)$. 
Since $F(\frac{t}{T})$ is decreasing on $(0,T]$, so is $\phi(t)$. By Lemma \ref{lem:Lehmanmodern}, %and $\phi(T)=0$,
\begin{equation}\label{eq:espresso}\sum_{t_0<\gamma\leq T} \phi(\gamma) = 
\dfrac{1}{2\pi}\int_{t_0}^{T}\phi(t)\log \frac{t}{2\pi}\, dt +
 \phi\left(t_0\right) \left(\frac{2}{5} \log t_0 +4\right) + \frac{1}{5} \int_{t_0}^T \phi(t) \frac{dt}{t}.\end{equation}
Proposition \ref{prop:tritura} tells us that 
 \[\frac{1}{2\pi} \int_{t_0}^{T}\phi(t)\log \frac{t}{2\pi}\, dt = 
\frac{T}{(2\pi)^2} \left(\log^2 \frac{T}{2\pi} -\log^2 \frac{t_0}{2\pi} - P_1\left(
\log \frac{T}{2\pi}\right)\right),\]
%\[\int_{t_0}^{T}\phi(t)\log \frac{t}{2\pi}\, dt\leq 
%\left(\log^2 \frac{T}{2\pi^2} + c(t_0)\right) \frac{T}{2\pi},\]
for %$c(t_0) = 2\pi C_0 - \log^2 \frac{t_0}{2\pi} - \log^2 \frac{\pi}{e}$
$P_1(y) = 2 C_1 y + 2 (C_1-C_2)$, where 
 $C_1$, $C_2$ are as in Lemma \ref{lem:konstanz}.
%By Lemma \ref{lem:oldie}, for
%$t_0\leq t_1$, since $\phi$ is decreasing,
%\[\begin{aligned}\left|\int_{t_0}^{T} Q(t) d\phi(t)\right| \leq
%\left|\int_{t_0}^{t_1} d\phi(t)\right|
%+ \left|\int_{t_1}^{t_2} 2 \,d\phi(t)\right| +
%\int_{t_2}^T \left(\frac{2}{9} \log t+ 2\right) \left|d\phi(t)\right|\\
%&%= \phi(t_0) + \phi\left(t_1\right) - \frac{2}{9}
%- \int_{t_0}^T \left(\frac{2}{9} \log t + 2\right) d\phi(t) \leq
% \phi\left(t_0\right) \left(\frac{2}{9} \log t_0 +2\right) + \int_{t_0}^T \phi(t) \frac{dt}{t}
%\end{aligned}\]
%by integration by parts. 
%If $t_1<t_0\leq t_2$, $\phi(t_0)+\phi(t_1)$ is replaced by $2\phi(t_0)$; if $t_0>t_2$,
%we have $\left|\int_{t_0}^{T} Q(t) d\phi(t)\right|

% We remember to add the contribution $\left(\frac{T}{2 \pi^2 t_0} \log \frac{e t_0}{2\pi} + 
%\frac{4 T}{9\pi t_0^2} \left(\log t_0 + \frac{37}{4}\right)\right)
%|\sigma-\frac{1}{2}|$ from  Lemma \ref{lem:cathinv} and also 
%$c_\sigma \frac{N(T)}{T} \leq \frac{c_\sigma}{2\pi } \log \frac{T}{2\pi}$.

%Let $0<u<1$.
%Since $\frac{1}{\pi u}-\cot \pi u>0$ and $\cot \pi (1-u) = -\cot \pi u$, we have $0\leq F(u) <\frac{1}{\pi}+(1-u)\frac{1}{\pi u} = \frac{1}{\pi u}$. Using this 

Using part (a) of Lemma \ref{lem:sibelius} in \eqref{1_55pm}, we obtain $|F(u) + \xi(1-u)i| \leq \frac{1}{\pi u} + \frac{\pi(1-u)^2u}{2}<\frac{1}{\pi u} + \frac{\pi u}{2}$. Hence, $\phi(t_0)\leq \frac{T}{\pi t_0} + \frac{\pi t_0}{2 T}.$ 
On the other hand, by Lemma \ref{lem:nimes}, $\int_{t_0}^T \phi(t) \frac{dt}{t} < \frac{T}{\pi t_0} - \frac{1}{\pi}$.
%Lastly, $\phi(t)- \frac{T}{\pi t}\leq \frac{1}{8\pi}$ (by the bisection method, coded with interval arithmetic). 
Thus \[
 \phi\left(t_0\right) \left(\frac{2}{5} \log t_0 +4\right) + \frac{1}{5} \int_{t_0}^T \phi(t) \frac{dt}{t} \leq
  \left(\frac{T}{\pi t_0} + \frac{\pi t_0}{2 T}\right) \left(\frac{2}{5} \log t_0 + 
  4\right) + \frac{1}{5} \frac{T}{\pi t_0} - \frac{1}{5\pi}
% \left(\frac{T}{\pi t_0} + \frac{1}{8\pi} \right) \left(\frac{4}{9} \log t_0 + \frac{38}{9}\right) - \frac{2/9}{\pi}
 .\]

\begin{comment}
\textcolor{red}{
Since $\frac{1}{\pi u}-\cot \pi u = \frac{2}{\pi} \sum_n \zeta(2 n) u^{2 n-1}\geq \frac{2}{\pi}\zeta(2)u$, for $0<u<1$, we have 
\begin{align*}
0\leq F(u) <\frac{1}{\pi}+(1-u)\left(\frac{1}{\pi u} -\frac{2}{\pi}\zeta(2)u\right) = \frac{1}{\pi u} - \frac{\pi u(1-u)}{3}
\end{align*}
Using this in \eqref{1_55pm}, we get for $0<u<1$,
\begin{equation*} 
|F(u) + (1-u)i| \leq \frac{1}{\pi u} + \frac{\pi u(1-u)}{2}\left(-\frac{\pi^2u}{9}\left(u^2-u+\frac{9}{\pi^2}\right)+\frac{1}{3}\right)<\frac{1}{\pi u} + \frac{\pi(1-u)}{6}u<\frac{1}{\pi u} + \frac{\pi}{6}u.
\end{equation*}}
\end{comment}
\end{proof}
%For $\Im \rho$, we will approximate by the classical weight, which is easy to compute.

\begin{lemma}\label{lem:salmon}
Let $T\geq 4 \pi$, $2\pi\leq t_0\leq \frac{T}{2}$, $\sigma\in \mathbb{R}$ with $|\sigma-\frac{1}{2}|\leq \frac{T}{2}$.
 Let $\omega_{T,\sigma}^+(s)$ and $\theta_{T,1}(s)$ be as in Theorem \ref{thm:mainthmB}. 
 Assume RH holds up to height $T$. Then, for any $\xi \in [-1,1]$,
 \[\frac{2\pi}{T} \sum_{\substack{\rho\\ 0<\Im \rho \leq t_0}} \left|\omega_{T,\sigma}^+(\rho) + \xi 
\theta_{T,1}(\rho) i \right| \leq  
 2  \sum_{\substack{\rho\\ 0<\Im \rho \leq t_0}} \frac{1}{|\rho-\sigma|} +
 \frac{c\,t_0}{T } \log \frac{t_0}{2\pi}, 
 \]
 where $c = 1 + \frac{1}{T} \left(2.78 \left|\sigma-\frac{1}{2}\right| + 1\right)$.
 \end{lemma}
 \begin{proof}
Write $\gamma = \Im \rho$.
By \eqref{eq:demoscen}, for $0<\gamma\leq \frac{T}{2}$,
 $\left|\omega_{T,\sigma}^+(s) + \xi \theta_{T,1}(s) i\right|
 \leq
 \left|\frac{T}{(s-\sigma) \pi}\right| + c$.
We apply Cor.~\ref{cor:brut} to bound
$\sum_{\rho: 0<\Im \rho\leq t_0} c = c N(t_0)$.
\end{proof}
%Clearly $\sum_{0<\gamma\leq t_0} 1 = N(t_0)$,
%$\sum_{0<\gamma\leq t_0} \gamma = \int_{2\pi}^{t_0} N(t) dt$.

Here is our main result on the contribution of non-trivial zeros. We do better with our weight than we would have done with the classical weight $\frac{1}{\rho}$, obtaining the
same main term $\frac{1}{2\pi} \log^2 \frac{T}{2\pi}$. 
%and then, unless great care were
%taken, several error terms, and certainly not the helpful negative term here.
\begin{proposition}\label{prop:vihuela} 
Let $T\geq 10^7$. Let $\omega_{T,\sigma}^+(s)$ and $\theta_{T,1}(s)$ be as in Thm.~\ref{thm:mainthmB}. Assume RH holds up to height $T$. 
For $\sigma\in \mathbb{R}$ with $\left|\sigma-\frac{1}{2}\right|\leq 100$, and any $\xi \in [-1,1]$,
\[\frac{2\pi}{T}
\sum_{\substack{\rho\\ 0<\Im \rho \leq T}} \left|\omega_{T,\sigma}^+(\rho) 
+ \xi  \theta_{T,1}(\rho) i\right|\leq 
\frac{1}{2\pi} \log^2 \frac{T}{2\pi} - \frac{1.01}{6\pi} \log \frac{T}{2\pi}.\]
\begin{comment}
For $\sigma=0$ and $\sigma=1$,
\[ 
\sum_{\substack{\rho\\ t_0<\Im \rho \leq T}} \left|\omega_{T,\sigma}^+(\rho) 
+ \xi  \theta_{T,\sigma}(\rho) i\right|\leq .\]
\end{comment}
\end{proposition}
\begin{proof}
Let $t_0 = 2\cdot 10^4$. Then, for $\err_{1,\sigma}(t_0)$ and $\err_{2,\sigma}(t_0,T)$ as in Lemma \ref{lem:adar},
\[\frac{\err_{1,\sigma}(t_0)}{t_0} \leq 8.162\cdot 10^{-4} + 1.444\cdot 10^{-4} \cdot \left|\sigma-\frac{1}{2}\right|,\]
\[\frac{\err_{2,\sigma}(t_0,T)}{T} \leq 1.404\cdot 10^{-6} + 3.97\cdot 10^{-6}\cdot \left|\sigma-\frac{1}{2}\right|.\]
Since $T\geq 10^7$, we see that
$\frac{t_0}{T} \log \frac{t_0}{2\pi}\leq 0.016132$ and
 \[\frac{1}{T} \left(2.78\left|\sigma-\frac{1}{2}\right| + 1\right) \frac{t_0}{T} \log \frac{t_0}{2\pi}\leq
 1.62\cdot 10^{-9} + 4.5\cdot 10^{-9} \cdot \left|\sigma-\frac{1}{2}\right|.\] 
 
By a brief computation using the location of all $\rho$ with $\gamma = \Im \rho \leq t_0$ (furnished by D.~Platt),
 \[2 \sum_{\substack{\gamma\\0<\gamma\leq t_0}} \frac{1}{\gamma} = 10.319317\dotsc  
 = \frac{1}{2\pi} \log^2 \frac{t_0}{2\pi} - 0.03435\dotsc.\]
We add the bounds from Lemmas \ref{lem:adar} and \ref{lem:salmon}, and conclude that
\begin{equation}\label{eq:amiaro}\begin{aligned}\frac{2\pi}{T}
\sum_{\substack{\rho\\ 0<\Im \rho \leq T}} \left|\omega_{T,\sigma}^+(\rho) 
+ \xi  \theta_{T,1}(\rho) i\right|&\leq
\frac{1}{2\pi }
 \left(\log^2 \frac{T}{2\pi} -\log^2 \frac{t_0}{2\pi} - P_1\left(
\log \frac{T}{2\pi}\right)\right) + \frac{1}{2\pi} \log^2 \frac{t_0}{2\pi}\\
&- 0.03435\dotsc + 0.0169\dotsc + 1.49\cdot 10^{-4} \cdot \left|\sigma-\frac{1}{2}\right|
,\end{aligned}\end{equation}
where $P_1(y)\geq 0.337876 y + 0.0095 \geq \frac{1.01 y}{3} + 0.0095$. By $|\sigma-\frac{1}{2}|\leq 100$, the negative term in the second line of \eqref{eq:amiaro} dominates the positive terms.
\end{proof}

\section{The case of $\Lambda(n)$: Integrals}\label{sec:horint}
\subsection{The integral on the real line} 
By a change of variables $\sigma = 1-t$,
\begin{equation*}
\int_{0}^\infty t|F(1-t+iT)|x^{-t}dt=\int_{-\infty}^1 \left|\frac{\zeta'}{\zeta}(\sigma+ i T)+\frac{1}{\sigma+i T-1}\right| (1-\sigma) x^{-(1-\sigma)} d\sigma
\end{equation*}
We first separate the logarithmic derivative $-\zeta'(s)/\zeta(s)$ of $\zeta(s)$ from the term $\frac{1}{s-1}$. Next, we split the integral of $-\zeta'(s)/\zeta(s)$ into  ranges $(-\infty, -\frac{1}{2}]$ and $[-\frac{1}{2}, 1]$. The estimate over the former follows directly from Lemma \ref{lem:adioso}. The latter requires more work, and we proceed differently. Our starting point is an explicit representation of $\zeta(s)$ in terms of its zeros -- an explicit version (Prop.~\ref{prop:titch96A}) of
\cite[Theorem 9.6(A)]{MR882550}. That expression will help us establish the following result.

\begin{lemma}%[to Prop.~\ref{prop:titch96A} and Lem.~\ref{lem:rameau}]
\label{lem:saghar}
 Let $x\geq e^7$ and $t\geq {1000}$. Write $L=\log x$. Let $a\in \big(0,\frac{1}{\sqrt{2}}\big]$. {If all zeros of $\zeta(s)$ with imaginary parts in $[t-a,t+a]$ have the form $\rho=\frac{1}{2}+i\gamma$},
\begin{equation}\label{eq:kemper}\begin{aligned}&\int_{-\frac{1}{2}}^1 \left|\frac{\zeta'}{\zeta}(\sigma+ i t)\right| (1-\sigma) x^{-(1-\sigma)} d\sigma \leq \frac{1}{\sqrt{x}}
\sum_{{|\gamma-t|\leq a}} \left(\frac{1}{2} \log \frac{1}{|t-\gamma|}+ \frac{1 + \frac{2}{L}}{2 L |t-\gamma|}\right)
\\
&+ \frac{1}{L^2} \left((c_{1,1} \log t + c_{1,0}) \left(\frac{1}{2} + \frac{2}{L}\right) + c_{0,1} \log t + c_{0,0}\right)
+ \sum_{{|\gamma-t|\leq a}}  \left(\frac{2}{L^2} + \frac{8}{L^3} + \frac{320}{3 L^4}\right),
\end{aligned}\end{equation}
where $c_{1,1} = \frac{1}{\pi a} + \frac{4}{5 a^2}$, $c_{1,0} = \frac{1}{\pi a} \log \frac{1}{2\pi} + \frac{8}{a^2}$,
$c_{0,1} = \frac{a}{\pi} - \frac{2}{5}$ and $c_{0,0} = \frac{a}{\pi} \log \frac{1}{2\pi} + 1.508 - 4$.
\end{lemma}
\begin{proof}
Let $\sigma_+ = \frac{3}{2}$. By Proposition~\ref{prop:titch96A}, for $s=\sigma+it$ with $\sigma\in [-\frac{1}{2},1]$,
\[\begin{aligned}
\frac{\zeta'}{\zeta}(s) & = \sum_{\rho:|\Im{\rho}-t|\leq a} \frac{1}{s-\rho} + O^*\left((c_{1,1}\log t + c_{1,0})(\tfrac{3}{2} - \sigma)
+ c_{0,1} \log t + c_{0,0}\right),
\end{aligned}\]
where we note that $\frac{\sigma_+-\sigma}{a^2}\geq \frac{1/2}{a^2} \geq 1 = \frac{1}{\sigma^+-\frac{1}{2}}$ and $\frac{\zeta'}{\zeta}\left(\frac{3}{2}\right) =  -1.50523\dotsc$. Now,
\[
\int_{-\frac{1}{2}}^1 (1-\sigma)x^{-(1-\sigma)}  d\sigma <\frac{1}{\log^2 x},\quad \int_{-\frac{1}{2}}^1 (1-\sigma)^2 x^{-(1-\sigma)} d\sigma < \frac{2}{\log^3 x}
\]
by integration by parts. Thus, {since $(\frac{3}{2}-\sigma)(1-\sigma)=\frac{1}{2}(1-\sigma)+(1-\sigma)^2$}, the contribution of  the
terms involving $c_{i,j}$ to the integral is at most
\[\frac{1}{\log^2 x} \left((c_{1,1} \log t + c_{1,0}) \left(\frac{1}{2} + \frac{2}{\log x}\right) + c_{0,1} \log t + c_{0,0}\right).\]
{By \eqref{eq:arguc2} in Lemma \ref{lem:rameau}}, the contribution of each term $\frac{1}{s-\rho}$ is
\[\int_{-\frac{1}{2}}^1 \frac{(1-\sigma) x^{-(1-\sigma)}}{\sqrt{|t-\gamma|^2 + \left(\sigma - \frac{1}{2}\right)^2}} d\sigma \leq
\frac{2}{L^2} + \frac{8}{L^3} + \frac{320}{3 L^4} +  
\frac{1}{\sqrt{x}} \left(\frac{1}{2} \log \frac{1}{|t-\gamma|}+ \frac{1 + \frac{2}{L}}{2 L |t-\gamma|}\right).
\]
\end{proof}

\begin{proposition}\label{prop:adoro}
Let $x\geq e^7$, $\alpha>0$, $0\leq a\leq \frac{1}{\sqrt{2}}$, $a+2\alpha\leq e$, $T_\circ> 1000+\alpha$. 
Assume all zeros of $\zeta(s)$ with imaginary part in $[T_\circ-\alpha-a,T_\circ+\alpha+a]$ have real part
$\frac{1}{2}$. Write $L = \log x$. Then there is $t\in [T_\circ-\alpha,T_\circ+\alpha]$ such that 
 \begin{equation}\label{eq:kimsch}
 %\frac{N_+ \cdot 2 \log \frac{\alpha}{\Delta} + 2\alpha\cdot (c_1 \log T + c_0)}{\alpha}
 %= 
 %\frac{2 N_+ \log (2 (N_0+1))}{\alpha} + 2(c_1 \log T + c_0) 
 \begin{aligned}
 &\int_{-\frac{1}{2}}^{1} \left|\dfrac{\zeta'}{\zeta}(\sigma+i t)\right|  (1-\sigma) x^{-(1-\sigma)} d\sigma\leq
 \left(\log \frac{e}{\alpha} +
\frac{\left(1+\frac{2}{L}\right) }{\alpha L} \log\left(2 (N_0+1)\right)\right) \frac{N_+}{\sqrt{x}}
\\
  + &\left(\frac{2}{L^2} + \frac{8}{L^3} + \frac{320}{3 L^4}\right) N_+ +
 (c_{1,1} \log (T_\circ+\alpha) + c_{1,0}) \left(\frac{1}{2 L^2} + \frac{2}{L^3}\right) + \frac{c_{0,1} \log (T_\circ+\alpha) + c_{0,0}}{L^2},
\end{aligned}\end{equation}
where $N_0 = N(T_\circ+\alpha)-N((T_\circ-\alpha)^-)$, $N_+ = N(T_\circ+\alpha+a) - N((T_\circ-\alpha-a)^-)$ 
and $c_{0,0}$, $c_{0,1}$, $c_{1,0}$ and $c_{1,1}$ are as in Lemma~\ref{lem:saghar}.
\end{proposition}
\begin{proof}
We will find $t$ such that the integral bounded in \eqref{eq:kemper} is small. (This means mainly controlling  terms
proportional to $\frac{1}{|t-\gamma|}$.)
Let us set up a pigeonhole argument of the continuous kind: we will show that a function has integral at most
$J$ on a set $S$ of measure $|S|$, and conclude that it must attain value $\leq J/|S|$ somewhere. Our set $S$ will be a subset of $[T_\circ-\alpha,T_\circ+\alpha]$; we will define it as the complement of the union of certain subsets of $[T_\circ-\alpha,T_\circ+\alpha]$, which we call ``forbidden''. Our integrand is the sum of the two sums on the right side of \eqref{eq:kemper}, or rather those two sums extended to all $\gamma\in [T_\circ-\alpha-a,T_\circ+\alpha+a]$, so that which $\gamma$ are in the sums does not depend on $t$.
(Since $a+2\alpha\leq e$, the contribution of each such $\gamma$ to the sums in \eqref{eq:kemper} is non-negative, simply
because the expression in \eqref{eq:arguc2} has to be non-negative, or else Lemma \ref{lem:rameau} would not hold.)

Let each $\gamma$ in $[T_\circ-\alpha-a,T_\circ+\alpha+a]$ forbid an interval $(\gamma-\Delta,\gamma+\Delta)
\cap [T_\circ-\alpha,T_\circ+\alpha]$. Each $\gamma$ counted by $N_0$
forbids an interval of width at most $2 \Delta$, and thus such $\gamma$ forbid a subset of measure $\leq 2\Delta N_0$. The intervals forbidden by all other $\gamma$ are contained in $[T_\circ-\alpha,T_\circ-\alpha+\Delta]$ and $[T_\circ+\alpha-\Delta,T_\circ+\alpha]$, and so their union has area at most $2 \Delta$. We let $\Delta = \alpha/(2 (N_0+1))$. Then the non-forbidden zone $S$ has area at least $2\alpha - 2\Delta (N_0 + 1) = \alpha$.

For given $\gamma\in [T_\circ-\alpha+\Delta,T_\circ+\alpha-\Delta]$, the integral of 
$\frac{1}{|t-\gamma|}$ for $t$ ranging on $S$ is
\[\leq \int_{[T_\circ-\alpha,T_\circ+\alpha]\setminus [\gamma-\Delta,\gamma+\Delta]} \frac{dt}{|t-\gamma|} \leq
\int_{[T_\circ-\alpha,T_\circ+\alpha]\setminus [T_\circ-\Delta,T_\circ+\Delta]} \frac{dt}{|t-T_\circ|} = 2 \log \frac{\alpha}{\Delta}.\]
(The second inequality holds by a simple argument we will use time and again: because our integrand is increasing on $\gamma$ for $\gamma<0$ and decreasing for $\gamma>0$, when we bound our integral by that for the central value $\gamma=T_0$,
the tail we lose is smaller than the tail we gain.)
Each of the other $\gamma\in [T_\circ-\alpha-a,T_\circ+\alpha+a]$ contributes at most $\int_{\Delta}^{2\alpha}  \frac{dt}{t} = \log \frac{2\alpha}{\Delta}$. Since $\Delta\leq \frac{\alpha}{2}$, we know that
$\log \frac{2\alpha}{\Delta} \leq 2 \log \frac{\alpha}{\Delta}$. Thus, every $\gamma$ contributes at most
$2 \log \frac{\alpha}{\Delta}$.

As for $\int_S \log \frac{1}{|t-\gamma|} dt$, we just bound it by
$\int_{T_\circ-\alpha}^{T_\circ+\alpha} \log \frac{1}{|t-\gamma|} dt$, which is at most $2 \int_0^\alpha \log \frac{1}{y} dy = 
2 \alpha \log \frac{e}{\alpha}$ for $\gamma$ arbitrary. %, and at most $\int_0^{2\alpha} \log \frac{1}{y} dy = 
%2 \alpha \log \frac{e}{2\alpha}$ for $\gamma\notin [T-\alpha,T+\alpha]$. 
Hence, the integral of the two sums in \eqref{eq:kemper}
for $t$ in $S$ is at most 
\[ \left(\left(\alpha \log \frac{e}{\alpha} +
\frac{1+\frac{2}{L}}{L} \log \frac{\alpha}{\Delta}\right) \frac{1}{\sqrt{x}} + |S| \left(\frac{2}{L^2} + \frac{8}{L^3} + \frac{320}{3 L^4}\right)\right)\cdot N^+.\]
By the argument we explained at the beginning, the statement follows.
\end{proof}

%Now we will apply
\begin{corollary}\label{cor:adiaro} Let $T\geq 10^6$. Assume all zeros of $\zeta(s)$ with imaginary part in $[T-\frac{3}{4},T]$ have real part $\frac{1}{2}$.
Let $x\geq 10^6$. Then there is $t\in [T-\frac{1}{2}, T-\frac{1}{4}]$ such that 
\[\int_{-\frac{1}{2}}^{1} \left|\dfrac{\zeta'}{\zeta}(\sigma+i t)\right|  (1-\sigma) x^{-(1-\sigma)} d\sigma\leq 
\frac{\frac{39}{5} \log T + 68}{\log^2 x} + \frac{37 \log T + 311}{\log^3 x}
+ \frac{\kappa(\log T,\log x)}{\sqrt{x}},\]
where $\kappa(R,L) = (R+7.3) \left(5\frac{\log R}{L}+2\right)$.
\end{corollary}
\begin{proof}
We apply Proposition \ref{prop:adoro} with $T_\circ = T-a-\alpha$ and
$\alpha,a>0$  to be chosen soon.
%$$
%\left|\dfrac{\zeta'}{\zeta}(\sigma+it_1)\right|\leq 3.3\log^2t_1 + 5.68\log t_1 + 37,
%$$
%uniformly for $-\frac{1}{4}\leq \sigma \leq 2$.
%\end{proposition}
\begin{comment}
 The integral of the rest of \eqref{eq:kombucha} over $[T-\alpha,T+\alpha]$ is at most 
 $2\alpha\cdot (c_1 \log T + c_0)$ for 
 \[c_1 = \kappa_1 + \frac{2}{5}\kappa_2,\quad c_0 = 4\kappa_2 - \kappa_1 \log 2\pi + \epsilon
+ \frac{\zeta'}{\zeta}(\sigma_+),\]
where we use the  concavity of $\log$, and where $\kappa_1$, $\kappa_2$ are as in Prop.~\ref{prop:titch96A},
and $\epsilon = 1.01\cdot 10^{-3}$.
\end{comment}
%Let $N_0 = N(T_\circ+\alpha)-N((T_\circ-\alpha)^-)$. Each zero counted by $N_0$ creates a forbidden interval of width $2\Delta$; the zones
%forbidden by other zeros are contained in $[T_\circ-\alpha,T_\circ-\alpha+\Delta]$ and $[T_\circ+\alpha-\Delta,T_\circ+\alpha]$.
% Let $\Delta = \alpha/(2 (N_0+1))$ (say).
% Since the non-forbidden zone has area at least $2\alpha - 2\Delta (N_0 + 1) = \alpha$, 
By Corollary~\ref{cor:zeroinbox}, $N_0\leq \frac{2}{5}\log T_\circ + \frac{\alpha}{\pi} \log \frac{T_\circ}{2\pi} + 4$ and
$N_+\leq \frac{2}{5} \log T_\circ + \frac{\alpha+a}{\pi} \log \frac{T_\circ}{2\pi} + 4$.
Since $T_\circ\geq 10^6- 2$, $N_0+1\leq \frac{2}{5} \log T_\circ + 5 + \frac{\alpha}{\pi}\log \frac{T_\circ}{2\pi}\leq
c_\alpha \log T_\circ$, where $c_\alpha = \frac{2}{5} + \frac{\alpha}{\pi} + \frac{5 - \frac{\alpha}{\pi}\log 2\pi}{\log(10^6-2)}$,
\[\log \frac{e}{\alpha} +
\frac{1+\frac{2}{L}}{\alpha L} \log(2 (N_0+1))\leq \frac{\kappa_1 \log \log T_\circ + \kappa_0}{\alpha}\]
for $\kappa_1 = (1+\frac{2}{L})\frac{1}{L}$, 
$\kappa_0= \kappa_1  \log 2 c_\alpha + \alpha \log \frac{e}{\alpha}$.
Write $\beta_1 = \frac{2}{5} +\frac{\alpha+a}{\pi}$,
$\beta_0 = 4 - \frac{\alpha+a}{\pi} \log 2\pi$, so that $N_+\leq \beta_1 \log T_\circ + \beta_0$. Write $R = \log (T_\circ+\alpha)$,
$L=\log x$. Then the expression in \eqref{eq:kimsch} is at most
$$\begin{aligned}
\frac{1/\alpha}{\sqrt{x}} (\beta_1 R+ \beta_0)
\left(\kappa_1 \log R + \kappa_0\right) + \frac{k_{2,1} R + k_{2,0}}{L^2} + \frac{k_{3,1} R + k_{3,0}}{L^3},
\end{aligned}
$$
where $k_{2,1} = 2\beta_1 +\frac{c_{1,1}}{2} + c_{0,1}$, $k_{2,0} =  2\beta_0 + \frac{c_{1,0}}{2}+c_{0,0}$, 
$k_{3,1} = \left(8 + \frac{320}{3 L}\right) \beta_1 + 2 c_{1,1}$,
$k_{3,0} = \left(8 + \frac{320}{3 L}\right) \beta_0 + 2 c_{1,0}$. We may write
$\frac{1}{\alpha} (\beta_1 R + \beta_0) (\kappa_1 \log R + \kappa_0)$ as 
$\left(R + \frac{\beta_0}{\beta_1}\right) \left(\frac{\kappa_1 \beta_1}{\alpha} \log R + \frac{\kappa_0 \beta_1}{\alpha}\right)$.

We choose $a=\frac{1}{4}$, $\alpha=\frac{1}{8}$. Then $\frac{\beta_0}{\beta_1} =  7.279\dotsc$, $\frac{\kappa_1 \beta_1}{\alpha} \leq \frac{4.756\dotsc}{L}$,
$\frac{\kappa_0 \beta_1}{\alpha} \leq 1.76$,
$k_{2,1} = 7.754\dotsc$, $k_{2,0} = 67.752\dotsc$, $k_{3,1} = 36.311\dotsc$ and
$k_{3,0} = 310.75\dotsc$.

\begin{comment}
We choose $a=1/2$, $\alpha=1/3$. 
Then $\beta_1 = 0.73118\dotsc$, $\beta_0 = 3.39132\dotsc$,
$c_{1,0} = 15.17266\dotsc$,
 $k_{2,1} = 3.774\dotsc<19/5$,
$k_{2,0}\leq 18.245\dotsc<18.25$, and, since $L\geq \log 10^6$, $k_{3,1}= 27.089\dotsc < 28$, 
$\kappa_1 \leq 1.144764\dotsc /L$, $\kappa_0\leq 0.74674\dotsc$; thus,
$\beta_0/\beta_1 \leq 4.638\dotsc$,  $\kappa_1 \beta_1/\alpha \leq 2.511\dotsc/L$ and $\kappa_0 \beta_1/\alpha\leq 1.638\dotsc $. 
\end{comment}
%We see, then, that $\frac{1}{\alpha} (\beta_1 R + \beta_0) (\kappa_1 \log R + \kappa_0) \leq \kappa(R,L)$, and so we are done.
\end{proof}

\begin{remark}
We could prove a version of Prop.~\ref{prop:adoro} and Cor.~\ref{cor:adiaro} without the condition that the zeros of $\zeta(s)$ with imaginary part in
$[T-1,T]$ obey RH. We would then obtain a bound proportional to $\frac{\log T \log \log T}{\log^2 x}$, which would be acceptable to us.
However, we assume RH elsewhere up to height $T$ anyhow, and using the RH assumption barely takes more work (Lemmas \ref{lem:bettnist}--\ref{lem:rameau}).
\end{remark}

\subsection{The integral over $\mathcal{C}$}
%\section{The contribution of $\mathcal{C}$ for $A(s) = -\zeta'(s)/\zeta(s)$}
\label{subs:tripa}
We will now estimate the contribution of the contour $\mathcal{C}$ in Thm.~\ref{thm:mainthmB} for $A(s) = - \zeta'(s)/\zeta(s)$. As one can tell
from the statement of Thm.~\ref{thm:mainthmB}, we can choose $\mathcal{C}$ rather
freely; it is just about any contour from $1$ to $\Re s = -\infty$ close enough to the $x$-axis to go under the non-trivial zeros of $\zeta(s)$ yet above the trivial
zeros of $\zeta(s)$. 

We choose the initial segment of $\mathcal{C}$ to go from $1$ to $-1$.
Then we split the integrand $F(s) = A(s)-1/(s-1)$ into two parts, one being
$\frac{\pi}{2} \cot \frac{\pi s}{2}$, and the other one being
the remainder $F(s) - \frac{\pi}{2} \cot \frac{\pi s}{2}$, which, by the
functional equation, is holomorphic on the left half of the plane. 
We shift the contour for the remainder integral
to the straight line from $-1$ to $-\infty$. The contour $\mathcal{C}_<$ for the first part
has to stay away from the $x$-axis; we will take it to be a straight segment from $-1$ to $-2 + i$, followed by a half-line.

\begin{comment}
\begin{figure}[ht]
  \centering
\begin{tikzpicture}[scale=1]
  % Axes
  \draw[gray,->] (-7,0) -- (1.5,0) node[below right] {$\Re s$};
  \draw[gray,->] (0,-0.5) -- (0,1.5) node[left] {$\Im s$};

  % Poles at -6, -4, -2
  \foreach \x in {-6,-4,-2} {
    \draw[thick] (\x-0.1,-0.1) -- (\x+0.1,0.1);
    \draw[thick] (\x-0.1,0.1)  -- (\x+0.1,-0.1);
  }

  % Blue contour (reversed, with mid arrows)
  \draw[blue,thick,postaction={decorate},
        decoration={markings, mark=at position 0.5 with {\arrow{>}}}]
        (1,0) -- (-1,0);
  \draw[blue,thick,postaction={decorate},
        decoration={markings, mark=at position 0.5 with {\arrow{>}}}]
        (-1,0) -- (-1.5,{sqrt(3)/2});
  \draw[blue,thick,->,postaction={decorate},
        decoration={markings, mark=at position 0.5 with {\arrow{>}}}]
        (-1.5,{sqrt(3)/2}) -- (-7,{sqrt(3)/2}) node[above right] {$\mathcal{C}_>$};

  % Red contour (reversed, with mid arrows)
  \draw[red,thick,->,postaction={decorate},
    decoration={markings, mark=at position 0.4 with {\arrow{>}},
      mark=at position 0.65 with {\arrow{>}}}]
        (-1,0) -- (-7,0);

  % Points
  \fill (-1,0) circle (.1) node[below right] {$-1$};
  \fill (1,0) circle (.05) node[below right] {$1$};
  \fill (-1.5,{sqrt(3)/2}) circle (.05)
        node[above] {$\omega$};

  % 60-degree angle marker at (-1,0) between the negative real axis and the slanted segment
  \coordinate (B) at (-1,0);
  \coordinate (A) at (-2,0);
  \coordinate (C) at (-1.5,{sqrt(3)/2});
\pic[
  draw,
  angle radius=10pt,
  angle eccentricity=1.2
] {angle = C--B--A};

% place the label manually: from B, at direction 150° (mid-arc between 180° and 120°),
% radius 12pt; tweak the number or add xshift/yshift to nudge left/right.
\node at ($(B)+(160:16pt)$) {\tiny $60^\circ$};

\end{tikzpicture}

\end{figure}
\end{comment}

\begin{figure}[ht]
  \centering
\begin{tikzpicture}[scale=1]
  % Axes
  \draw[gray,->] (-7,0) -- (1.5,0) node[below right] {$\Re s$};
  \draw[gray,->] (0,-0.5) -- (0,1.5) node[left] {$\Im s$};

  % Poles at -6, -4, -2
  \foreach \x in {-6,-4,-2} {
    \draw[thick] (\x-0.1,-0.1) -- (\x+0.1,0.1);
    \draw[thick] (\x-0.1,0.1)  -- (\x+0.1,-0.1);
  }

  % Blue contour (reversed, with mid arrows)
  \draw[blue,thick,postaction={decorate},
        decoration={markings, mark=at position 0.5 with {\arrow{>}}}]
        (1,0) -- (-1,0);
  \draw[blue,thick,postaction={decorate},
        decoration={markings, mark=at position 0.5 with {\arrow{>}}}]
        (-1,0) -- (-2,1);
  \draw[blue,thick,->,postaction={decorate},
        decoration={markings, mark=at position 0.5 with {\arrow{>}}}]
        (-2,1) -- (-7,1) node[above right] {$\mathcal{C}_<$};

  \draw[red,thick,->,postaction={decorate},
    decoration={markings, mark=at position 0.4 with {\arrow{>}},
      mark=at position 0.65 with {\arrow{>}}}]
        (-1,0) -- (-7,0);

  \fill (-1,0) circle (.1) node[above right] {$-1$};
  \fill (1,0) circle (.05) node[above right] {$1$};
  \fill (-2,1) circle (.05) node[above] {$-2+i$};

\end{tikzpicture}
\end{figure}

The choice of contours is motivated by convenience.  We integrate $F(s) - \frac{\pi}{2} \cot \frac{\pi s}{2}$ horizontally because it seems simplest; the only complication is the
fact that the integrand is unbounded, but that would have been the case at any rate, as
the digamma function
$\digamma(s)$ is unbounded as $\Re s\to \infty$. As for the contribution of
$\frac{\pi}{2} \cot \frac{\pi s}{2}$: we want the angle
to be acute for $x^s$ to decay, and 
$45^\circ$ is the smallest angle for which a bound (Lemma \ref{lem:adamant}) holds. Once we
are far enough from the pole at $-2$, we continue horizontally.

\subsubsection{The initial segment of $\mathcal{C}$}
The integral on the segment from $1$ to $-1$ is easy; following a pattern that will repeat,
we will %first prove a quick 
use a technical lemma (Lem.~\ref{lem:convexistan}), and use it to prove the integral estimate we need in the next lemma (Lem.~\ref{lem:arles}). We could
refine Lemma \ref{lem:arles} to give as many ``correct'' terms $(n+1)! \frac{a_n x}{\log^{n+2} x}$ as requested, but what we give will do nicely.

\begin{lemma}\label{lem:arles}
Let $\tilde{A}(s) = -\frac{\zeta'(s)}{\zeta(s)} - \frac{1}{s-1}$. Let $x>1$. Then
 $\tilde{A}(s)<0$ for all $-2<s\leq 1$, and
\[- \int_{-1}^1 \tilde{A}(s) (1-s) x^s ds\leq \frac{\gamma x}{\log^2 x} +
\frac{c-\gamma}{2} \frac{x}{\log^3 x} - \frac{c+\gamma}{x \log x} - \frac{c}{x \log^2 x} - \frac{c-\gamma}{2 x \log^3 x},
\]
where $\gamma=0.577215\dotsc$ is Euler's constant and
%$c_{1} = - 2\cdot \tilde{A}(-1) =2.97010\dotsc$,
$c =  \frac{\zeta'}{\zeta}(-1) - 2 \left(\frac{\zeta'}{\zeta}\right)'(-1) = 
3.86102\dotsc$. %, $c_1 = $ and 
%$c_3 = - \left(\frac{\zeta'}{\zeta}\right)'(-1) + 2
%\left(\frac{\zeta'}{\zeta}\right)''(-1) = 6.58446\dotsc$.
%Moreover,
%= 2 \frac{\zeta'}{\zeta}(-1)-1 
\end{lemma}
\begin{proof}
  Write $-\int_{-1}^1 \tilde{A}(s) (1-s) x^s ds = - \int_0^2 \tilde{A}(1-t) t x^{1-t} dt$.
  By Lemma \ref{lem:kalmynin}, $\tilde{A}(s)$ is of the form $\sum_{n=0}^\infty (-1)^{n+1} a_n (s-1)^n$, $a_n> 0$. Hence, $- \tilde{A}(1-t) = \sum_{n=0}^\infty a_n t^n$, and so, for all $t\geq 0$ within the radius of convergence, i.e., 
  $0\leq t<3$, all derivatives of
  $- \tilde{A}(1-t) t$ are increasing; moreover,
  $\tilde{A}(1-t)<0$.
  Apply Lemma \ref{lem:convexistan} with $G(t) = - t \tilde{A}(1-t)$, $a=2$.
    Note that
    $G'(0) = - \tilde{A}(1) = a_0 = \gamma$ and 
%    $$G(2) = 2 \frac{\zeta'}{\zeta}(-1)-1,\;\;
    $G'(2) = \frac{\zeta'}{\zeta}(-1) - 2 \left(\frac{\zeta'}{\zeta}\right)'(-1)$.
%    G''(2) = -\left(\frac{\zeta'}{\zeta}\right)'(-1) + 2 \left(\frac{\zeta'}{\zeta}\right)''(-1).$$
\end{proof}

\subsubsection{The horizontal contour from $-1$ to $-\infty$}
We take $-1$ as the point where our contour forks because a term coming from the functional equation then vanishes, due to $\cos\left(-\frac{\pi}{2}\right) = 0$.

\begin{lemma}\label{lem:moruno}
Let $\tilde{A}(s) = -\frac{\zeta'}{\zeta}(s) - \frac{1}{s-1}$. Let $x\geq 2$. Then,
for any $\Phi:(-\infty,-1]\to \mathbb{R}$ such that $-(1-s)\leq \Phi(s)\leq 0$ for all $s\leq -1$,
\begin{equation}\label{eq:avignon1}
\left|\int_{-\infty}^{-1} \left(\tilde{A}(s) + \frac{\pi}{2} \cot \frac{\pi s}{2}\right) \Phi(s) x^s ds
\right|\leq \frac{-2  \tilde{A}(-1)}{x \log x},
 %+
%\frac{-\tilde{A}(-1)}{x \log^2 x}.
%= \frac{2.97010\dotsc}{x \log x}
\end{equation}
%Moreover, if
%$|\Phi(s)-\Phi(-4)|\leq (-4-s)$ for $s\leq -4$,
\begin{comment}
\begin{equation}\label{eq:avignon2}
\int_{-\infty}^{-1} \left(\tilde{A}(s) + \frac{\pi}{2} \cot \frac{\pi s}{2}\right) \Phi(s) x^s ds\geq
-\frac{1.02}{x^4 \log^2 x}
-\frac{0.41}{x^4 \log^3 x}.\end{equation}
\begin{equation}\label{eq:avignon2}
\int_{-\infty}^{-1} \left(\tilde{A}(s) + \frac{\pi}{2} \cot \frac{\pi s}{2}\right) x^s ds\leq \frac{\tilde{A}(-1)}{x \log x}+ \frac{-\tilde{A}'(-1)+\pi/2}{x \log^2 x} =
\frac{-1.48505\dotsc}{x \log x} +\frac{0.38281\dotsc}{x \log^2 x},
\end{equation}
\begin{equation}\label{eq:avignon3}\int_{-\infty}^{-1} \left(\tilde{A}(s) + \frac{\pi}{2} \cot \frac{\pi s}{2}\right) (-1-s) x^s ds\leq \frac{\tilde{A}(-1)}{x \log^2 x} + \frac{- 2 \tilde{A}'(-1) + \pi}{x \log^3 x} = 
\frac{-1.48505\dotsc}{x \log^2 x} +\frac{0.76563\dotsc}{x \log^3 x}\end{equation}
\end{comment}
\end{lemma}
%The coefficient of $\frac{1}{x\log^2 x}$ in \eqref{eq:avignon1} is not optimal.
\begin{proof}
By the functional equation (Lemma \ref{lem:derivbound}),
\begin{equation}\label{eq:guruno}\tilde{A}(s) + \frac{\pi}{2} \cot \frac{\pi s}{2} = 
\frac{\zeta'}{\zeta}(1-s) +
\digamma(1-s) + \frac{1}{1-s} - \log 2\pi. 
\end{equation}
%Now, $\zeta'(t)/\zeta(t)$ is an increasing function of $t$ for $t>1$
%(because $-\zeta'(t)/\zeta(t) = \sum_n \Lambda(n) n^{-t}$ is a decreasing function); $\digamma(t)$ is also an increasing function for $t>0$
%(because $\digamma'(t) = \sum_{n=0}^\infty \frac{1}{(n+t)^2} > 0$) and of course $1/t$ is also increasing for $t>0$.
Let $f(t)$ be as in Lemma \ref{lem:badabook}.
Then $\tilde{A}(s) + \frac{\pi}{2} \cot \frac{\pi s}{2} = f(1-s) + \frac{1}{1-s}$. 
In particular, for $s=-1$, $\tilde{A}(-1) = f(2)+1/2$. %Note that
%$\tilde{A}(-1) = -1.48505\dotsc<0$. 
Write $g(t)= f(t)+1/t$ and  $\tilde{\Phi}(t) = - \Phi(1-t)$. Our task is to bound 
\begin{equation}\label{eq:omelette}
I = \int_{-\infty}^{-1} 
\left(\tilde{A}(s) + \frac{\pi}{2} \cot \frac{\pi s}{2}\right) (-\Phi(s)) x^s ds
= \int_2^\infty g(t) \tilde{\Phi}(t) x^{1-t} dt\end{equation}
from below and from above.
%We will first prove the upper bound \eqref{eq:avignon1}. The integrand 
%Here we note
%the obvious fact $\cot\left(-\frac{\pi}{2}\right) = 0$, since it is the reason why we have taken $-1$ as an endpoint of the integral in this lemma to begin with).
%By \eqref{eq:guruno}, it is
%enough to show that
%$$\leq \int_2^\infty t \left(f(t)+ \frac{1}{t}\right) x^{1-t} dt \leq .$$
By Lemma \ref{lem:badabook} again,  
$t g(t) = t (f(t)+1/t)\geq 2 f(2) + 1 = 2 \tilde{A}(-1)$
for any $t\geq 2$. (Note that
$\tilde{A}(-1) = -1.48505\dotsc<0$.)
%We know from  \eqref{eq:guruno} with $s = -1$
%that $f(2) + 1/2= \tilde{A}(-1) < 0$, since $\cot(-\pi/2) = 0$. 
%At the same time, by the concavity proved in Lemma \ref{lem:badabook},
%$f(t)+1/t \leq m (t-a)$ for all $t>1$, where $a$ is any real $>1$ such that
%$f(a)+1/a < 0$, and $m$ is the value of $(f(t)+1/t)'$ at $t=a$. We can take
%$a=5$, and then $m = 0.203\dotsc$.
Hence, by 
$\tilde{\Phi}(t) = - \Phi(1-t)\leq t$ for $t\geq 2$,
$$\int_2^\infty \min\left(g(t),0\right) \tilde{\Phi}(t)x^{1-t} dt \geq
\int_2^\infty \min(g(t),0) t x^{1-t} dt \geq
2 \tilde{A}(-1) \int_2^\infty x^{1-t} dt = \frac{2 \tilde{A}(-1)}{x \log x},$$
and so, by $\tilde{\Phi}(t)\geq 0$ for $t\geq 2$, 
$I\geq \frac{2 \tilde{A}(-1)}{x \log x}$.

Let us now prove an upper bound. By the concavity proved in Lemma \ref{lem:badabook},
$g(t) \leq g(a) + m (t-a)$ for all $t>1$, where $a$ is any real $>1$ 
and $m=g'(a)$. We will choose $a> 2$ such that $g(a)<0$; then
$g(t)<0$ for all $2\leq t\leq a$. Since $\tilde{\Phi}(t)\geq 0$ for all $t\geq 2$,
$\int_2^a g(t) \tilde{\Phi}(t) x^{1-t} dt \leq 0$.
%We can take $a=5$, and then $m = %0.203\dotsc$.
By $0\leq \tilde{\Phi}(t)\leq t$,
$$\int_a^\infty g(t) \tilde{\Phi}(t) x^{1-t} dt \leq\int_a^\infty m (t-a) \tilde{\Phi}(t) x^{1-t} dt \leq
m \int_a^\infty (t-a) t x^{1-t} dt = \frac{m a + \frac{2 m}{\log x}}{x^{a-1} \log^2 x} .
$$
We can take $a=5$, and so $m = 0.203\dotsc$. Then, for
$x\geq 2$, $\frac{m a + \frac{2 m}{\log x}}{x^{a-1} \log^2 x}<
\frac{1.61}{x^4 \log^2 x} < -\frac{2 \tilde{A}(-1)}{x \log x}$.
\end{proof}

\subsubsection{The integral over $\mathcal{C}_<$} 
%The reason for choosing $60^\circ$ will now be clear.
\begin{lemma}\label{lem:adamant}
Let $g(t) = \tan\left(e^{\frac{3\pi i}{4}} t\right)$. Then $g(0)=0$ and
$|g'(t)|\leq 1$ for all real $t$, and so $|g(t)|\leq |t|$ for all
real $t$.
\end{lemma}
\begin{proof}
Since $\tan' z = \sec^2 z$ and
$\left|\cos z\right|^2 = \frac{1}{2} (\cos(2 x) + \cosh(2 y))$ for $z= x+i y$,
%(vd., e.g., \cite[(4.21.42)]{zbMATH05765058}), 
the statement follows from
$\cos(-2 y) + \cosh 2 y= 2+ 2 \sum_n (2 y)^{4 n}/(4 n)!\geq 2$.
%the fact that 
%the Taylor expansion of $\cos 2 x + \cosh 2 x$ has constant
%coefficient $2$ and all its coefficients are $\geq 0$,
%as the coefficients of $x^{4n+3}$ in $\cosh$ cancel
%out the negative
%coefficients of $x^{4n+3}$ in $\cos$.
\end{proof}

\begin{comment}
Let $g_1 = \Re g(t)$, $g_2 = \Im g(t)$. It is clearly enough to show that
$|g_i(t)|\leq |g_i'(0)| t$ for $i=1,2$; then $|g(t)| = \sqrt{|g_1(t)|^2+|g_2(t)|^2}\leq t \sqrt{|g_1'(0)|^2 + |g_2'(0)|}
= |g'(0)| t = \tan'(0) t= t$. 

By the classical formula $\tan(x+i y) = \frac{\sin 2 x + i \sinh 2 y}{\cos 2 x + \cosh 2 y}$ \cite[4.21.39]{zbMATH05765058}, 
$$g_1(t) = - \frac{\sin t}{\cos t + \cosh \sqrt{3} t},\quad\quad
g_2(t) = \frac{\sinh \sqrt{3} t}{\cos t + \cosh \sqrt{3} t},
$$
and so $g_1'(0) = -1/2$, $g_2'(0) = \sqrt{3}/2$. We take
a look at the Taylor series
$$\sin t = \sum_{n=0}^\infty \frac{(-1)^n t^{2 n +1}}{(2 n+1)!}
,\quad \sinh \sqrt{3} t = \sum_{n=0}^\infty \frac{(\sqrt{3} t)^{2 n + 1}}{(2 n + 1)!}
,\quad
\cos t + \cosh \sqrt{3} t = \sum_{n=0}^\infty
((-1)^n + 3^n)
\frac{t^{2 n}}{(2 n)!},$$
and an easy comparison gives us $|\sin t| \leq 
\frac{t}{2} (\cos t + \cosh\sqrt{3} t)$ and $\sinh \sqrt{3} t\leq \frac{\sqrt{3}}{2}
(\cos t + \cosh \sqrt{3} t)$, as well as $\cos t + \cosh \sqrt{3} t > 0$ for all $t\geq 0$. Then $|g_i(t)|\leq |g_i'(0)| t$ follows.
\end{comment}
\begin{lemma}\label{lem:ranadi}
Let $\mathcal{C}_{<}$ be the contour going on straight lines from
$-1$ to $-2 + i $ and from there to 
$- \infty + i$. Let $\Phi(s)$ be 
a holomorphic function in a neighborhood of $\mathcal{C}_{<}$,
satisfying $|\Phi(s)|\leq |s-1|$ and $|\Phi'(s)|\leq 1$
on $\mathcal{C}_{<}$.
Let $x>1$. Then
\begin{equation}\label{eq:amira}\left|\int_{\mathcal{C}_{<}} \frac{\pi}{2} \cot \frac{\pi s}{2} \cdot \Phi(s) x^s ds\right|\leq \frac{\pi^2}{4 x} \left(\frac{2 \sqrt{2}}{\log^2 x} +
\frac{2 + \sqrt{2}}{\log^3 x} + \frac{1/\sqrt{2}}{\log^4 x}\right).
\end{equation}
\end{lemma}
In fact, it would be enough for $\Phi$ to be defined on $\mathcal{C}_{<}$
as a $C^1$ function.
On another matter: \eqref{eq:amira} could be improved by a factor of about $\sqrt{2}$ if we
made assumptions on $\Phi''$.
\begin{proof}
%Let $\omega = e^{2\pi i/3} = -1/2 + i \sqrt{3}/2$. 
Denote by $\mathcal{C}_1$ the first segment of $\mathcal{C}_{<}$,
going from $-1$ to $-2 + i$. By $\cot(z-\pi/2) = 
- \tan z$ and integration by parts,
\begin{equation}\label{eq:habeni}\begin{aligned} I_1 &:= \int_{\mathcal{C}_1} \frac{\pi}{2} \cot \frac{\pi s}{2} \cdot \Phi(s) x^s ds = 
\frac{\pi}{2} \int_0^{-1+i} -\tan \frac{\pi s}{2}\cdot 
\Phi(s-1) x^{s-1} ds\\
&= \frac{\pi}{2} \cot \frac{\pi (-2+i)}{2}\cdot 
\Phi(-2+i) \frac{x^{-2+i}}{\log x} +
\frac{\pi}{2} 
\int_0^{-1 + i} \left(\tan \frac{\pi s}{2}\cdot 
\Phi(s-1)\right)' \frac{x^{s-1}}{\log x} ds
.\end{aligned}\end{equation}
For $s$ of the form $e^{\frac{3\pi}{4} i} t$, $t\geq 0$,
 Lemma \ref{lem:adamant}, gives us $\left|\left(\tan \frac{\pi s}{2}\right)'\right| \leq \frac{\pi}{2}$
and $\left|\tan \frac{\pi s}{2}\right| \leq 
\frac{\pi |s|}{2}$. Thus,
$\left|\left(\tan \frac{\pi s}{2}\cdot 
\Phi(s-1)\right)'\right|\leq \frac{\pi}{2} (|\Phi(s-1)| +
|s| |\Phi'(s-1)|) \leq \frac{\pi}{2} (|s-2|+|s|)$.
Clearly
$$\left|2- e^{\frac{3\pi i}{4}} t\right| = \sqrt{\left(2+\frac{t}{\sqrt{2}}\right)^2 + \left(\frac{t}{\sqrt{2}}\right)^2}
=\sqrt{4 + \sqrt{8} t + t^2} \leq 2  + \frac{t}{\sqrt{2}} + \frac{t^2}{8},$$
and so, by a change of variables followed by repeated
integration by parts,
\begin{equation}\label{eq:benidorm}\begin{aligned}
&\left|\int_0^{-1 + i} \left(\tan \frac{\pi s}{2}\cdot 
\Phi(s-1)\right)' \frac{x^{s-1}}{\log x} ds\right|\leq
\frac{\pi}{2} \int_0^{\sqrt{2}}
\left(2 + \left(1 + \frac{1}{\sqrt{2}}\right) t + \frac{t^2}{8}\right)
\frac{x^{-1-\frac{t}{\sqrt{2}}}}{\log x} dt\\
&< \frac{\pi}{2 x} \left(\frac{2 \sqrt{2}}{\log^2 x} +
\frac{2 + \sqrt{2}}{\log^3 x} + \frac{1/\sqrt{2}}{\log^4 x}\right)- \frac{\pi}{2 x^2} \left(\frac{2\sqrt{2} + 2}{\log^2 x} +\frac{2}{\log^3 x}\right).
\end{aligned}\end{equation}

\begin{comment}
$I_1\leq \frac{\pi^2}{2} \int_0^1 
\left(t + \frac{t^2}{4} + \frac{3 t^3}{32}\right) x^{-1-t/2} dt$. 
By repeated integration by parts,
\begin{equation}\label{eq:benidorm}\begin{aligned}\int_0^1 
\left(t + \frac{t^2}{4} + \frac{3 t^3}{32}\right) x^{-1-t/2}dt
&< \left(\left(\frac{2}{\log x}\right)^2 +\frac{1}{2} \left(
\frac{2}{\log x}\right)^3 + \frac{18}{32} \left(\frac{2}{\log x}\right)^4\right) \frac{1}{x} \\
&-  \left(1 + \frac{1}{4} + \frac{3}{32}\right) \frac{2}{x^{3/2} \log x} - \left(\frac{1}{2} + \frac{9}{32}\right) \frac{4}{x^{3/2}
\log^2 x}.
\end{aligned}\end{equation}
\end{comment}

Let $\mathcal{C}_2$ be the second part of $\mathcal{C}_<$, that
is, a segment from $-2+i$ to $-\infty + i$,
\begin{equation}\label{eq:hadorm}\int_{\mathcal{C}_2} \frac{\pi}{2}
 \cot \frac{\pi s}{2} \cdot \Phi(s) x^s ds =
 -\frac{\pi}{2} \cot \frac{\pi (-2+i)}{2}\cdot
\Phi(-2+i) \frac{x^{-2+i}}{\log x} - \frac{\pi/2}{ \log x}
\int_{\mathcal{C}_2} %\frac{\pi}{2}
 \left(\cot \frac{\pi s}{2}\cdot \Phi(s)\right)' x^s ds 
\end{equation}

In general, for $x$, $y$ real with $y>0$, $|\cot(x+i y)|\leq \coth y$
and $|\cot'(x+i y)| = |-\csc^2(x+i y)|\leq \csch^2 y$. Hence, for $y=1$,
$\left|\left(\cot \frac{\pi s}{2}\cdot 
\Phi(s)\right)'\right|\leq 
\frac{\pi}{2}|\Phi(s)| \csch^2 \frac{\pi}{2} +
|\Phi'(s)| \coth \frac{\pi}{2} \leq \frac{\pi}{2} |s-1| \csch^2 \frac{\pi}{2} +
\coth \frac{\pi}{2}$.
By the triangle inequality,  
$|s-1|\leq |t| + |-2 + i -1| = -t + \sqrt{10}$
for $s = t -2 + i$ with $t\leq 0$. Therefore, $\int_{\mathcal{C}_2} \left| 
 \left(\cot \frac{\pi s}{2}\cdot \Phi(s)\right)' x^s ds 
\right|$ is bounded by
$$\begin{aligned}
 \frac{\pi}{2} \csch^2 \frac{\pi}{2}
\int_{-\infty}^0 (-t+\sqrt{10}) x^{t-2} dt + \coth \frac{\pi}{2}
\int_{-\infty}^0 x^{t-2} dt
&= \frac{\frac{\pi}{2} \csch^2 \frac{\pi}{2} \cdot \left(\sqrt{10}+\frac{1}{\log x}\right)
+\coth \frac{\pi}{2}}{x^2 \log x} 
.\end{aligned}$$
Since $\frac{\pi}{2} \cdot \left(2\sqrt{2} + 2\right)
> \frac{\pi}{2} \csch^2 \frac{\pi}{2} \cdot \sqrt{10} + \coth \frac{\pi}{2}$ and
$\frac{\pi}{2} \cdot 2 >
\frac{\pi}{2}\csch^2 \frac{\pi}{2}$, this last contribution is dominated by the negative terms from \eqref{eq:benidorm}.
The first term in the last line of \eqref{eq:habeni} -- that
is, the tail term from $\mathcal{C}_1$ -- cancels out the first
term from \eqref{eq:hadorm}, that is, the ``head'' term from
$\mathcal{C}_2$.
%\left(1+ \frac{1}{4} + \frac{3}{32}\right)
%> \frac{\pi}{2} \coth \frac{\pi \sqrt{3}}{4}\cdot \sqrt{7}$ and
%$\frac{\pi^2}{2}\cdot 4 \left(\frac{1}{2} + \frac{9}{32}\right)
%> \frac{\pi}{2} \coth \frac{\pi \sqrt{3}}{4}$, 
\end{proof}

\subsubsection{Summing up: the integral over $\mathcal{C}$}

\begin{proposition}\label{prop:coronidis}
  Let $\tilde{A}(s) = -\frac{\zeta'(s)}{\zeta(s)} - \frac{1}{s-1}$. Let $x\geq 2$.
  Let $\mathcal{C}$ run along straight lines from $1$ to $-1$, from
  $-1$ to $-2+i$ and from $-2+i$ to $-\infty+ i$.
 Let $\Phi$ be holomorphic in a neighborhood of
  $(-\infty,1]+ i [0,1]$ and satisfying 
  $\Phi(1) = 0$ and $|\Phi'(s)|\leq 1$, with
  the restriction $\Phi|_{(-\infty,1)}$ being real and of constant sign. Then, for any $x\geq 15$,
\begin{equation}\label{eq:adosto}
\left|\int_{\mathcal{C}} \tilde{A}(s) \Phi(s) x^s ds\right|\leq
\frac{\gamma x}{\log^2 x} + \frac{\frac{5}{3} x}{\log^3 x},
\end{equation}
where $\gamma$ is Euler's constant.
\end{proposition}
The region of holomorphicity here is of course larger than needed.
\begin{proof}
Since $|\Phi'(s)|\leq 1$ for all $s$ and $\Phi(1) = 0$, clearly $|\Phi(s)|\leq |s-1|$. Then,
by Lemma \ref{lem:arles},
\begin{equation}\label{eq:adaid}\int_{-1}^1 |\tilde{A}(s) \Phi(s)| x^s ds 
\leq \frac{\gamma x}{\log^2 x} +
\frac{c-\gamma}{2} \frac{x}{\log^3 x} - \frac{c+\gamma}{x \log x} - \frac{c}{x \log^2 x} - \frac{c-\gamma}{2 x \log^3 x},
\end{equation}
where $c$ is as in Lemma \ref{lem:arles}.
%\gamma=0.577215\dotsc$ is Euler's constant and
%$c =  \frac{\zeta'}{\zeta}(-1) - 2 \left(\frac{\zeta'}{\zeta}\right)'(-1) = 
%3.86102\dotsc$.
% \frac{\gamma x}{\log^2 x} +
%\frac{c_3 x}{\log^3 x} - \frac{-2\cdot F(-1)}{x \log x} - \frac{c_2}{x \log^2 x} - \frac{c_3}{x \log^3 x},\end{equation}
%where $c_2$ and $c_3$ are as in Lemma \ref{lem:arles}.
Let $\mathcal{C}_<$ run along straight lines from $-1$ to $-2 + i$ and
from $-2+i$ to $-\infty + i$. We separate a regular term
 and shift its contour to the $x$-axis:
$$\int_{\mathcal{C}_<} \tilde{A}(s) \Phi(s) x^s ds = 
-\int_{\mathcal{C}_<} \frac{\pi}{2} \cot \frac{\pi s}{2}\cdot \Phi(s) x^s ds
+ \int_{-1}^{-\infty} \left(\tilde{A}(s) + 
\frac{\pi}{2} \cot \frac{\pi s}{2}\right) \Phi(s) x^s ds,
$$
where we know that $\tilde{A}(s) + \frac{\pi}{2} \cot \frac{\pi s}{2}$ has no poles with $\Re s < 0$ by the functional equation, as in \eqref{eq:guruno}.
By Lemma \ref{lem:ranadi},
$$\left|\int_{\mathcal{C}_{<}} \frac{\pi}{2} \cot \frac{\pi s}{2} \cdot \Phi(s) x^s ds\right|\leq \frac{\pi^2}{4 x} \left(\frac{2 \sqrt{2}}{\log^2 x} +
\frac{2 + \sqrt{2}}{\log^3 x}+ \frac{1/\sqrt{2}}{\log^4 x}\right).$$
We know $\Phi|_{(-\infty,1)}$ is of constant sign; we
can assume that sign to be $-1$. Then, by Lemma \ref{lem:moruno},
\begin{equation}\label{eq:adoid} \left|\int_{-\infty}^{-1} \left(\tilde{A}(s) + \frac{\pi}{2} \cot \frac{\pi s}{2}\right) \Phi(s) x^s ds\right|\leq \frac{-2\cdot  \tilde{A}(-1)}{x \log x}.
% +
%\frac{-\tilde{A}(-1)}{x \log^2 x}.
\end{equation}
%The upper bound here is clearly larger in absolute value than the lower bound. 

We take totals. The term proportional to $\frac{1}{x\log x}$ will be negative:
its coefficient is $-(c+\gamma)-2\cdot \tilde{A}(-1) = -1.468\dotsc$. As for the other terms,
$-c + \frac{\pi^2}{4} 2\sqrt{2} < 3.118$, and
%- \tilde{A}(-1) <4.603$, and
$- \frac{c-\gamma}{2} + \frac{\pi^2}{4} (2+\sqrt{2}) < 6.79$,
$\frac{\pi^2}{4\sqrt{2}} < 1.75$. Note that $\frac{c-\gamma}{2} = 1.6419\dotsc$.
For $x\geq 15$,
$$\left(\frac{c-\gamma}{2} - \frac{5}{3}\right) \frac{x^2}{\log^2 x} 
-1.468 + \frac{3.118}{\log x} + \frac{6.79}{\log^2 x} + \frac{1.75}{\log^3 x} < 0,$$
as the inequality holds for $x=15$, and the left side increases for $x\geq e$.
\end{proof}

\subsection{Bounding the total $I_{+,\mathcal{C}}$}
%\subsection{The horizontal integrals}
%We leave most of the work of bounding the horizontal integrals and the integral 
%on $\mathcal{C}$ to the appendices. Here we
%just gather some of our bounds.

\begin{lemma}\label{lem:hardin}
    Let $T\geq 10^6$. 
     Assume all zeros of $\zeta(s)$ with imaginary part in 
     $[T-\frac{3}{4},T]$ have real part $\frac{1}{2}$.
Let $x\geq T$. Let $I_{+,\mathcal{C}}$ be as in Theorem~\ref{thm:mainthmB}.
Then there is $t\in [T-\frac{1}{2}, T]$ such that
\[I_{+,\mathcal{C}}(t)\leq \left(13 + \frac{60}{\log x}\right) \frac{\log T}{\log^2 x} 
+ \frac{5 \log T}{\sqrt{x}}.\]
\end{lemma}
\begin{proof}
By Corollary~\ref{cor:adiaro}, there is a $t\in [T-\frac{1}{2},T]$ such that
\[\int_{-\frac{1}{2}}^{1} \left|\dfrac{\zeta'}{\zeta}(\sigma+i  t)\right|  (1-\sigma) x^{-(1-\sigma)} d\sigma\leq 
\frac{\frac{39}{5} \log T + 68}{\log^2 x} + \frac{37 \log T + 311}{\log^3 x} + \frac{\kappa(\log T,\log x)}{\sqrt{x}},
\]
where $\kappa(R,L) = (R+7.3) \left(5\frac{\log R}{L}+2\right)$. Here we may bound $\frac{39}{5} + \frac{68}{\log T}<
\frac{51}{4}$, $37 + \frac{311}{\log T}<60$
and $\kappa(\log T,\log x)< \frac{32}{7} \log T$.
Write $L = \log x$. By Lemma \ref{lem:adioso},
\begin{align*}
\int_{-\infty}^{-\frac{1}{2}}\left|\dfrac{\zeta'}{\zeta}(\sigma+i t)\right| (1-\sigma)x^{-(1-\sigma)} d\sigma  &\leq 
 \frac{4}{3} \left(\log t + c_{\frac{1}{2}} + \frac{3}{2 t^2}\right) \frac{1}{x \log x}
 \leq \frac{2.02}{x},
\end{align*}
where
$c_{1/2} = \left|\frac{\zeta'}{\zeta}(3/2)\right|+4+\frac{\pi}{2} = 7.076\dotsc$.
By integration by parts,
\[\int_{-\infty}^{1} \left|\frac{1}{1-(\sigma+i t)}\right|  (1-\sigma) x^{-(1-\sigma)} d\sigma \leq
\frac{1}{t} \int_{-\infty}^{1} (1-\sigma) x^{-(1-\sigma)} d\sigma = \frac{1}{t \log^2 x}.\]
We conclude that, for $F(s) = -\frac{\zeta'(s)}{\zeta(s)} - \frac{1}{s-1}$,
\[\int_{-\infty}^{1} \left|F(\sigma+i t)\right|  (1-\sigma) x^{-(1-\sigma)} d\sigma\leq 
\left(\frac{51}{4} + \frac{60}{\log x}\right) \frac{\log T}{\log^2 x} + \frac{1}{(T-\frac{1}{2})\log^2 x} + \frac{\frac{14}{3} \log T}{\sqrt{x}} + \frac{2.02}{x}.\]
By Proposition \ref{prop:coronidis}, whose assumptions on $\Phi(s)$ are fulfilled by Thm.~\ref{thm:mainthmB},
\[\left|\int_{\mathcal{C}} F(s) \Phi(s) x^{s-1} ds\right|\leq
\frac{\gamma}{\log^2 x} + \frac{5/3}{\log^3 x}.\]

\end{proof}

\section{The case of $\Lambda(n)$: conclusion}\label{sec:lambconcl}

We come to our main bounds for the sums $\sum_{n\leq x} \Lambda(n) n^{-\sigma}$.

%\textcolor{red}{Aquí mencionar que daremos las pruebas finales y la prueba de los corolarios}

\begin{comment}
\begin{lemma}
    Let $A(s) = -\zeta'(s)/\zeta(s)$; let
    $\mathcal{Z}_{A,\mathbb{R}}$ be its set of real poles.
    Then, for any $\sigma\in \mathbb{R}$ and any $x>1$,
    $$\sum_{\rho\in \mathcal{Z}_{A,\mathbb{R}}}
  \Res_{s=\rho} \coth \frac{\pi (s-\sigma)}{T} A(s) x^{s-1} $$
    and, for any $\sigma > -2$ and any $x>1$,
    $$\sum_{\rho\in \mathcal{Z}_{A,\mathbb{R}}}
  \Res_{s=\rho} \coth \frac{\pi (s-\sigma)}{T} A(s) x^{s-1}.$$
\end{lemma}
\begin{proof}

 \end{proof}   
\end{comment}
% Here we can bound
%$\frac{\pi}{T} \coth \frac{\pi (1-\sigma)}{T} \leq 
%\frac{1}{1-\sigma} +
%\frac{(1-\sigma) \pi^2}{3 T^2}$. 

\begin{proposition}\label{prop:sagaro}
 Assume the Riemann hypothesis holds up to height $T\geq 10^7$. Then, for any $x> \max(T,10^9)$, and any 
 $-1.999\leq\sigma\leq 100$,
   \[\left|\frac{1}{{x^{1-\sigma}}}\,{\sum_{n\leq x} \Lambda(n) n^{-\sigma}}  - \mathrm{Main}(x,\sigma) \right|\leq
   \frac{\pi}{T-1} + \left(\frac{1}{2\pi} \log^2 \frac{T}{2\pi} - \frac{1}{6\pi} \log \frac{T}{2\pi}\right) \frac{1}{\sqrt{x}}
\]
   %\frac{\pi x}{3\cdot 10^{12}} + \dotsc \sqrt{x},$$
where
\[\mathrm{Main}(x,\sigma) =
\begin{cases} \frac{\pi}{T} \coth \frac{\pi (1-\sigma)}{T} - \frac{\zeta'(\sigma)}{\zeta(\sigma)}x^{\sigma-1}&\text{if $\sigma\ne 1$,}\\
\log x - \gamma &\text{if $\sigma=1$.}\end{cases}\]
If $\sigma=0$, the term $ - \frac{\zeta'(\sigma)}{\zeta(\sigma)} x^{\sigma-1}$ can be omitted.
\end{proposition}
\begin{proof}
Let $T'=t$, where $t\in [T-\frac{1}{2},T]$ is as in Lemma~\ref{lem:hardin}.
Apply Thm.~\ref{thm:mainthmB} with $A(s) = - {\zeta'(s)}/{\zeta(s)}$ and $T'$ instead of $T$.
The poles of $A(s)$ are the zeros of $\zeta(s)$ and the pole of $\zeta(s)$ at $s=1$.
The residue of $A(s)$ at a zero of $\zeta(s)$ is $-1$ times the zero's multiplicity, and its residue at $s=1$ is $1$.

The real poles $\mathcal{Z}_{A,\mathbb{R}}$ of $A(s)$ lie at the trivial zeros
$\rho = - 2 n$ of $\zeta(s)$, and of course at the pole at $s=1$. 
If $\sigma\ne 1$, then the contribution of the pole at $s=1$ to the first and third 
sums in \eqref{eq:quentino} is
\[\frac{\pi}{T'} \left(\coth \frac{\pi (1-\sigma)}{T'} + O^*(1)\right),\]
whereas the contribution to the first sum of the pole at $s=\sigma$ coming from the weight
$\coth \frac{\pi(s-\sigma)}{T'}$ is simply $A(\sigma) x^{\sigma-1}$. If $\sigma=1$,
then $\coth \frac{\pi (s-\sigma)}{T'} A(s) x^{s-1}$ has a double pole at $s=1$; as we already saw in \eqref{eq:kelkap},
its Laurent series at $s=1$ is
\[\begin{aligned}\left(\frac{T'}{\pi (s-1)} + O(s-1) \right) &\left(\frac{1}{s-1} - \gamma + \dotsc\right) \left(1 + (s-1) \log x + \dotsc\right)\\
&= \frac{T'}{\pi} \left(\frac{1}{(s-1)^2} + \frac{\log x - \gamma}{s-1}  + \dotsc\right),\end{aligned}\] and so the contribution of $\rho=1$ to the first and third sums in 
\eqref{eq:quentino} is $\log x - \gamma + O^*\left(\frac{\pi}{T'}\right)$.

By Lemma \ref{lem:trivialzer}, the rest of the first and third sums is at most 
$\frac{\frac{1}{2+\sigma} + \frac{2\pi}{T'}}{x^3 (1 - x^{-2})}$.
By Proposition~\ref{prop:vihuela}, the second and the fourth sums add up to at most $\frac{1}{2\pi} \log^2 \frac{T'}{2\pi} - \frac{1.01}{6\pi} \log \frac{T'}{2\pi}$.  
Finally, we bound $I_{+,\mathcal{C}}$ as in Lemma \ref{lem:hardin}.
We conclude that
\[\begin{aligned}
    \frac{\sum_{n\leq x} \Lambda(n) n^{-\sigma}}{x^{1-\sigma}} &=
\begin{cases} \frac{\pi}{T'} \coth \frac{\pi (1-\sigma)}{T'} - \frac{\zeta'}{\zeta}(\sigma) x^{\sigma-1}&\text{if $\sigma\ne 1$,}\\
\log x - \gamma &\text{if $\sigma=1$}\end{cases}\\
&+  O^*\left(\frac{\pi}{T'}\right) 
+ O^*\left(26 + \frac{120}{\log x}\right) \frac{\log T'}{(T')^2 \log ^2 x}\\
&+  O^*\left(\left(\frac{1}{2\pi} \log^2 \frac{T'}{2\pi} - \frac{1.01}{6\pi} \log \frac{T'}{2\pi} + \frac{10 \log T'}{(T')^2} + \frac{\frac{1}{2+\sigma} + \frac{2\pi}{T'}}{x^{\frac{5}{2}} (1 - x^{-2})}\right) \frac{1}{\sqrt{x}}\right).
\end{aligned}\]
Both $\frac{10 \log T'}{(T')^2}$ and $\frac{\frac{1}{2+\sigma} + \frac{2\pi}{T'}}{x^{5/2} (1 - x^{-2})}$ 
are less than $\frac{0.0001}{6\pi} \log \frac{T'}{2\pi}$.
Note $\frac{\zeta'}{\zeta}(0) = \log 2\pi < \frac{0.0098}{6\pi} \log \frac{T'}{2\pi} \cdot \sqrt{x}$.

Clearly, $\frac{\pi}{T'}\leq \frac{\pi}{T-\frac{1}{2}}$  and 
$\frac{1}{2\pi} \log^2 \frac{T'}{2\pi} - \frac{1}{6\pi} \log \frac{T'}{2\pi}\leq
\frac{1}{2\pi} \log^2 \frac{T}{2\pi} - \frac{1}{6\pi} \log \frac{T}{2\pi}$. 
By $x\geq T\geq 10^7$, $x\geq 10^9$ and $T'\in [T-\frac{1}{2},T]$, 
$\left(26 + \frac{120}{\log x}\right) \frac{\log T'}{(T')^2 \log ^2 x}$ is at most $\frac{1.535}{T^2}$. Since $|(y\coth y)'|\leq|y|$ for all real $y$ (Lemma \ref{lem:cothder}),
we know that, for $\Delta\geq 0$, $(y+\Delta) \coth (y+\Delta) = y \coth y + O^*(\Delta (y+\Delta))$.
If $\sigma<1$, we let $\Delta = \pi (1-\sigma) \left(\frac{1}{T'}-\frac{1}{T}\right) \leq   \frac{\frac{\pi}{2} (1-\sigma)}{T (T-\frac{1}{2})}$, and so, since $y\coth y$ is increasing for $y>0$,
\[
0 \leq \frac{\pi}{T'} \coth \frac{\pi (1-\sigma)}{T'} - \frac{\pi}{T} \coth \frac{\pi (1-\sigma)}{T} \leq
\frac{1}{1-\sigma}\cdot \Delta \frac{\pi (1-\sigma)}{T'}\leq \frac{\frac{\pi^2}{2} (1-\sigma)}{T (T-\frac{1}{2})^2}.
\]
If $\sigma>1$, we let $\Delta = \pi (\sigma-1) \left(\frac{1}{T'}-\frac{1}{T}\right) = 
 \frac{\frac{\pi}{2} (\sigma-1)}{T (T-\frac{1}{2})}$, and proceed likewise, with $y = \pi (\sigma-1)/T$:
 \[
0 \leq \frac{\pi}{T'} \coth \frac{\pi (\sigma-1)}{T'} - \frac{\pi}{T} \coth \frac{\pi (\sigma-1)}{T} \leq \frac{\frac{\pi^2}{2} (\sigma-1)}{T (T-\frac{1}{2})^2}.
\]
In any event, by $|\sigma-1|\leq 100$, the difference is bounded by $\frac{0.001}{T^2}$, say. Since $1.2+0.001<\frac{\pi}{2}$ and  $\frac{{\pi}/{2}}{T^2} + \frac{\pi}{T-\frac{1}{2}}< \frac{\pi}{T-1}$, we are done.
\begin{comment}
Hence
\[
    \frac{\sum_{n\leq x} \Lambda(n) n^{-\sigma}}{x^{1-\sigma}} = 
    \begin{cases} \frac{\pi}{T'} \coth \frac{\pi (1-\sigma)}{T'} - \frac{\zeta'(\sigma)}{\zeta(\sigma)} x^{\sigma-1}&\text{if $\sigma\ne 1$,}\\
\log x + \gamma &\text{if $\sigma=1$}\end{cases} + O^*\left(\frac{1}{2\pi} \log^2 \frac{T}{2\pi} - \frac{1}{6\pi} \log \frac{T}{2\pi}
    \right) \frac{1}{\sqrt{x}}
    \]
We may bound
$\frac{1}{1-\sigma} \leq\frac{\pi}{T} \coth \frac{\pi (1-\sigma)}{T} \leq 
\frac{1}{1-\sigma} +
\frac{(1-\sigma) \pi^2}{3 T^2}$, 
\end{comment}

\end{proof}

\begin{proof}[Proof of Corollary~\ref{cor:psi}]
This is just cases $\sigma=0$ and $\sigma=1$ of Proposition~\ref{prop:sagaro}.
\end{proof}

\begin{lemma}\label{lem:pernic}
For $1\leq x\leq 10^{13}$, 
\[-\sqrt{2} <\frac{\psi(x)-x}{\sqrt{x}}\leq 0.79059275\dotsc,\]
with the extrema being reached at $x=2^-$ and $x=110102617$, respectively. On the range
$10^4\leq x\leq 10^{13}$, the minimum is 
$-0.7509024438\dotsc$, which is reached at 
$36917099^-$.

For $1\leq x\leq 10^{12}+3$,
\begin{equation}\label{eq:malina}
-0.7585825520\dotsc \leq \sum_{n\leq x} \frac{\Lambda(n)}{n} - (\log x - \gamma) \leq 0.787\dotsc,
\end{equation}
with the extrema being reached at $1423^-$ and $110102617$, respectively.
\end{lemma}
We could use \cite{zbMATH06864192}, but choose to keep matters self-contained.
\begin{proof}
These are medium-small brute-force computations, of the kind carried out over a weekend on a laptop.
We used {\em primesieve} for sieving, and CRLibm and Dave Platt's header file \texttt{int\_double14.2.h} for interval arithmetic. 

Some care is needed for \eqref{eq:malina} -- we must
avoid a catastrophic loss of accuracy due to cancellation.
For $x\leq 10^{10}$, we compute $\left(\sum_{n\leq x} \Lambda(n)/n - (\log x - \gamma)\right)\cdot \sqrt{x}$ directly. For larger $x$, we keep track of $2\gamma + \sum_{n\leq x} \frac{\Lambda(n)-1}{n}$ times $\sqrt{x}$ instead:
since $\sum_{n\leq x} 1/n = \log x + \gamma + O^*(1/x)$,  
\[\sum_{n\leq x} \frac{\Lambda(n)}{n} - (\log x - \gamma) = 2\gamma + \sum_{n\leq x} \frac{\Lambda(n)-1}{n}  + O^*\left(\frac{1}{x}\right),
\] In any event, the extrema in the range $[1,10^{12}+3]$ are reached within $[1,10^{10}]$: the minimum in $[10^{10},3\cdot 10^{12}+3]$ (at $x= 110486344211^-$) is higher, and the maximum in that range (at $x = 330957852107$) is lower.
\end{proof}
\begin{proof}[Proof of Corollary~\ref{cor:psiexpl}]
We can assume $x\geq 3\cdot 10^{12}+3$, as otherwise the stronger bounds in Lemma~\ref{lem:pernic} hold.

To bound $\psi(x) - x$, 
we can apply Corollary~\ref{cor:psi} with $T = 3\cdot 10^{12} + 1 + \frac{\pi}{3}$, since RH holds up to that height
\cite{zbMATH07381909}. We obtain
   \begin{equation}\label{eq:zartor}\left|\psi(x)-x\right|\leq \left|1 - \frac{\pi}{T} \coth \frac{\pi}{T}\right| \cdot x + \frac{\pi x}{T-1} + \left(\frac{1}{2\pi} \log^2 \frac{T}{2\pi} - \frac{1}{6\pi} \log \frac{T}{2\pi}\right) \sqrt{x}.\end{equation}
   Since $\frac{1}{y} \leq \coth y \leq \frac{1}{y} + \frac{y}{3}$ for $y>0$, we have
   $1\leq \frac{\pi}{T} \coth\frac{\pi}{T} \leq 1 + \frac{\pi^2}{3 T^2}$. Clearly $\frac{\pi^2}{3 T^2} + \frac{\pi}{T-1} \leq \frac{\pi}{T-1 - \frac{\pi}{3}}$. 

   To bound $\sum_{n\leq x} \Lambda(n)/n - (\log x - \gamma)$, simply apply  Corollary~\ref{cor:psi} with $T = 3\cdot 10^{12} + 1$.

%Much stronger bounds can be proved for $x\leq 3\cdot 10^{12}+3$ by brute force: $|\psi(x)-x|\leq \sqrt{2 x}$ and
%$\left|\sum_{n\leq x} \Lambda(n)/n - (\log x -\gamma)\right|\leq \dotsc$, as a few hours of computation shows.

%that is, noth extrema in the range occur well before $x=10^{10}$.
\end{proof}

Let us finish by discussing what is the best value to take for $T$ moderate. It is easy to show that
\[\frac{\pi}{T} + \left(\frac{1}{2\pi} \log^2 \frac{T}{2\pi} - \frac{1}{6\pi} \log \frac{T}{2\pi}\right) \frac{1}{\sqrt{x}}\]
reaches its minimum when $T = 2 \pi e^{1/6} e^{W_0\left(\frac{\pi \sqrt{x}}{2 e^{1/6}}\right)}$, where $W_0$ is the principal branch of the Lambert function. To simplify, we may work with an approximation
$\frac{1}{\log \sqrt{x}} \frac{\pi \sqrt{x}}{2 e^{1/6}}$ to $e^{W_0\left(\frac{\pi \sqrt{x}}{2 e^{1/6}}\right)}$.

\begin{corollary}
Let $x>1$, $T= 2\pi^2 \frac{\sqrt{x}}{\log x}\geq 10^7$.
Assume the Riemann hypothesis holds up to height $T$. Then
\begin{equation}\label{eq:schoen}|\psi(x)-x|\leq \frac{\sqrt{x}}{8\pi} \log^2 x.\end{equation}
In fact,
\begin{equation}\label{eq:leenosal}|\psi(x)-x|\leq \frac{\sqrt{x}}{8\pi} \left(\left(\log x - 2 \log \frac{\log x}{\pi e}\right)^2 - 4\right).\end{equation}
\end{corollary}
Here \eqref{eq:schoen} is a bound proved in \cite{zbMATH03510407} assuming full RH, and in \cite{zbMATH06596195} assuming RH up to height 
$4.92 \frac{\sqrt{x}}{\sqrt{\log x}}$, which is of course a stronger assumption for $x\geq 10^7$. The bound \eqref{eq:leenosal} is somewhat 
stronger than the bound proved in \cite[Thm.~1.1]{leenosal2023} under full RH.

We have $T<10^7$ if and only if $x< 2.8427\dotsc \cdot 10^{14}$, which is still within brute-force territory, and of course
   well within what is covered by \cite{zbMATH06864192}.
\begin{proof}
Assume first that $T\geq 10^7$. Then we can apply Corollary~\ref{cor:psi}. We start as in \eqref{eq:zartor}:
   \[\begin{aligned}\left|\psi(x)-x\right|&\leq \frac{\pi x}{T}+ \left(\log \frac{T^2}{(2\pi)^2} - \frac{2}{3}\right) \log \frac{T^2}{(2\pi)^2}\cdot \frac{\sqrt{x}}{8\pi}+ \frac{\pi x}{T (T-1)} + \frac{\pi^2 x}{3 T^2} ,\\
   &\leq 4\log x\cdot  \frac{\sqrt{x}}{8\pi} + \left(\log x - 2\log \frac{\log x}{\pi} - \frac{2}{3}\right)
   \left(\log x - 2\log \frac{\log x}{\pi}\right)\cdot \frac{\sqrt{x}}{8\pi}+ c \log^2 x\end{aligned}\]
   for $c =  \frac{\frac{\pi}{3} + \frac{1}{1-10^{-7}}}{4 \pi^3}$. We can simplify:  we write $y = \log x$, $w = \log(y/\pi)$, and note that
\[4 y + \left(y - 2 w -\frac{2}{3}\right) (y-2 w) = (y - 2 w + 2)^2 -  \frac{2}{3} (y-14 w) - 4.
\]
Now, $y- 14 \log(y/\pi)$ is increasing for $y>14$, and so, for $y>33$, it is greater than its value at $y=33$, namely,
$0.075\dotsc$. We can assume $x>e^{33}= 2.14\dotsc \cdot 10^{14}$ and so $y>33$, as otherwise $T\geq 10^7$ does not hold.  By
$x>10^{12}$, we have $\frac{2}{3} \cdot 0.075 \frac{\sqrt{x}}{8\pi} > c \log^2 x$, since that holds for $x=10^{12}$. Hence \eqref{eq:leenosal} follows, and so does the weaker statement
\eqref{eq:schoen}.
%   \[\frac{\sqrt{x}}{8\pi}\cdot \left(\log x - 2 \log \frac{\log x}{\dotsc}\right)^2\]
\end{proof}

  \section{Final remarks}\label{sec:finremark}

The estimates and methods here affect all sorts of bounds in explicit analytic number theory; for
instance, most of the first chapter of \cite{TMEEMT} will have to be updated, in view
of both the results here and those in the companion paper \cite{Moebart}. Since what we do is provably optimal in a rather precise sense (\S \ref{sec:asideopt}), there is little point in waiting for further improvements (though see \S \ref{subs:companal}). 

What should make sense is to base the explicit theory on (a) Theorem~\ref{thm:mainthmB} and Cor.~\ref{cor:psi}, with the parameter $T$ being updated as the Riemann hypothesis is checked to further heights, (b) estimates relying on zero-free regions such as the bound in \cite{zbMATH07723301} (see \eqref{eq:dodo1}). Here the dependency on (b) should be encoded in as flexible a way as possible, since those results, while proved with great care, have not been proved to
use information on $\zeta(s)$ optimally.

We are at a point where the formalization of explicit results is becoming a realistic possibility and a priority. We have kept this paper relatively self-contained in part for the sake of reliability even in the absence of formalization, but also with a future formalization in mind.
%Let us first briefly go over existing work on sums of $\mu$ and $\Lambda$. 
%We will then discuss
%how to generalize and apply what we have done.

  \subsection{Prior work on $\psi(x)$}\label{subs:compmeth}
There is a series of bounds of the form $|\psi(x)-x|\leq \epsilon x$ in the literature:
see Tables \ref{tab:psi-bounds} and \ref{tab:psi-bounds2}. All work there, except for Büthe's \cite{zbMATH06596195}
and ours, uses not only verifications of RH up to a given height $T$ but also 
zero-free regions of the form $\sigma > 1 - c/\log t$. Both \cite{zbMATH06417200} and its update \cite{Dusart2018} used zero-density results as well.

The reason why several bounds are significantly better in Table \ref{tab:psi-bounds2} than in Table \ref{tab:psi-bounds} is second-order terms. In fact, for the very high $T$ in the second half of Table \ref{tab:psi-bounds}, 
the terms coming from zeros on $\Re s = 1/2$ can overwhelm the other terms: for instance, for us, for
$x = e^{60}$, the terms coming from such zeros contribute $1.06367\dotsc \cdot 10^{-11}\cdot x$, whereas the rest
-- what we think of as the leading term -- contributes only $1.04719\dotsc \cdot 10^{-12}\cdot x$.

%In fact, for $x=e^{65}$, our term
%$113.67 \sqrt{x}$, coming from zeros on the line $\Re s = \frac{1}{2}$, is very roughly as large as the main term. 

Thus, it should not be a surprise that the improvement our results represent is much more marked in Table \ref{tab:psi-bounds2} than in Table \ref{tab:psi-bounds}: for us, the contribution of zeros on 
$\Re s = \frac{1}{2}$ is somewhat smaller than for other authors (see the negative term in Cor.~\ref{cor:psi}, which is new), but the only way
to reduce that contribution more substantially would be cancellation (\S \ref{subs:companal}).

\begin{table}[htbp]          % <- floating environment
  \centering                 % keeps the contents centred
  \small                     % font size for the table
  \caption{For all $x\geq e^{60}$, $|\psi(x)-x|\leq \epsilon x$, assuming
RH holds up to $T$}  % legend
  \label{tab:psi-bounds}     % reference handle
  \begin{tabular}{@{}lrrr@{}}
    \toprule
    $\epsilon$ & reference & $T$ & For that $T$, our $\epsilon$ would be\dots\\
    \midrule
     $0.0101$    & \cite{zbMATH03039120} & $1{\,}468$ & $0.0021431$\\
     $1.3740\cdot 10^{-3}$     & \cite{RosserSchoenfeld}  & $21{\,}808$ & $1.44071\cdot 10^{-4}$\\
     $1.7583\cdot 10^{-5}$   & \cite{RosserSchoenfeldSharper} & $1{\,}894{\,}439$ & $1.65833\cdot 10^{-6}$\\
     $9.04993\cdot 10^{-8}$      & \cite{Dusart1998}  & $545{\,}439{\,}824$ & $5.76463\cdot 10^{-9}$\\ 
     $3.1732\cdot 10^{-11}$   &\cite{zbMATH06417200}, \cite{MR3766393} & $2{\,}445{\,}999{\,}556{\,}030$ & $1.17592 \cdot 10^{-11}$\\
     $2.978\cdot 10^{-11}$    & \cite{Dusart2018}  & $2{\,}445{\,}999{\,}556{\,}030$ & $1.17592 \cdot 10^{-11}$\\
     $1.23991\cdot 10^{-11}$   & \cite{zbMATH06596195}, \cite{Bhattacharjee}  & $2{\,}445{\,}000{\,}000{\,}000$ & $1.17592 \cdot 10^{-11}$\\
     $1.16840\cdot 10^{-11}$ & our bound & $3{\,}000{\,}000{\,}000{\,}003$ & $1.16840\cdot 10^{-11}$\\
    \bottomrule
  \end{tabular}
\end{table}

\begin{table}[htbp]          % <- floating environment
  \centering                 % keeps the contents centred
  \small                     % font size for the table
  \caption{For all $x\geq e^{100}$, $|\psi(x)-x|\leq \epsilon x$, assuming
RH holds up to $T$}  % legend
  \label{tab:psi-bounds2}     % reference handle
  \begin{tabular}{@{}lrrr@{}}
    \toprule
    $\epsilon$ & reference & $T$ & For that $T$, our $\epsilon$ would be\dots\\
    \midrule
     $0.00932$    & \cite{zbMATH03039120} & $1{\,}468$ & $0.00214304$\\
     $9.9653\cdot 10^{-4}$     & \cite{RosserSchoenfeld}  & $21{\,}808$ & $1.44071\cdot 10^{-4}$\\
     $1.6993\cdot 10^{-5}$   & \cite{RosserSchoenfeldSharper} & $1{\,}894{\,}439$ & $1.65833\cdot 10^{-6}$\\
     $8.84263\cdot 10^{-8}$      & \cite{Dusart1998}   & $545{\,}439{\,}824$ & $5.75975\cdot 10^{-9}$\\ 
     $2.4178\cdot 10^{-11}$   &\cite{zbMATH06417200}, \cite{MR3766393} & $2{\,}445{\,}999{\,}556{\,}030$ & $1.28438 \cdot 10^{-12}$\\
     $1.815\cdot 10^{-11}$    & \cite{Dusart2018}  & $2{\,}445{\,}999{\,}556{\,}030$ & $1.28438 \cdot 10^{-12}$\\
     $2.63677\cdot 10^{-12}$   & \cite{zbMATH06596195}, \cite{Bhattacharjee}  & $2{\,}445{\,}000{\,}000{\,}000$ & $1.28438 \cdot 10^{-12}$\\
$1.04720\cdot 10^{-12}$ & our bound & $3{\,}000{\,}000{\,}000{\,}003$ & $1.04720\cdot 10^{-12}$\\
    \bottomrule
  \end{tabular}
\end{table}

Here \cite{Dusart1998} simply implements the method in \cite{RosserSchoenfeldSharper} (non-rigorously: an incomplete modified Bessel function is dealt with by non-verified numerical integration) and runs it with a higher
value of $T$, viz., $T = 545{\,}439{\,}824$. Incidentally, most bounds in both tables are given by formulas far
more complicated than those in Corollary~\ref{cor:psi}, often involving half-page expressions and/or special functions for which a rigorous implementation is not trivial to find.

All work before \cite{zbMATH06417200} followed \cite{zbMATH03039120} in using repeated integration, which amounts to multiplying $\Lambda(n)$ by a fixed polynomial weight $P(n)|_{I}$, where $I$ is a compact interval. In contrast, \cite{zbMATH06417200} tried
to find an optimal weight, but the way that the optimization problem was set up hampered the approach. The Mellin transform was bounded in terms of a derivative $g^{(k)}$ at the very beginning (and so the task became to minimize that {\em bound}), not to mention that an application of Cauchy-Schwarz biased the optimization \cite[\S 3.1]{zbMATH06417200}; as a result, the ``optimal'' weight
found was again a truncated polynomial $P(n)|_I$, though a better one than before.

Büthe (\cite{zbMATH06596195}, corrected in \cite{Bhattacharjee}) defined his weight on $n$
to be of the form $(1_I\ast\,\widehat{\ell_{c,\epsilon}})(\log n)$, where $\ell_{c,\epsilon}$ is the
Logan function (\cite{logan1971optimal}, \cite{zbMATH04032305}). The Logan function is a clever choice, but it is, again, the
solution to an optimization problem with the wrong constraint, viz., compact support 
{\em in physical space} (that is, on $n$); that may have made sense in \cite{zbMATH04008495}, which 
was computational, but, as our work serves to show, what should have been sought in the problem at hand is compact support in Fourier space. Now, in Fourier space, 
Büthe's tails are light enough (thanks to the Logan function) that he does not need
a zero-free region, but the contribution of zeros with $|\Im s|>T$ is still there, and so his 
approach works only for $\psi(x)$, not for $M(x)$.

The best bound known to date based on the classical zero-free region is the one in \cite{zbMATH07723301}, viz.,
\begin{equation}\label{eq:dodo1} 9.22106 x (\log x)^{\frac{3}{2}} \exp\left(-0.8476836 \sqrt{\log x}\right).\end{equation}
It is better than our bound \begin{equation}\label{eq:dodo2}\frac{\pi}{3\cdot 10^{12}} x + 113.67 \sqrt{x}\end{equation} only for $x\geq e^{2394.19\dotsc}$.

We can obviously put \eqref{eq:dodo1}, \eqref{eq:dodo2} and a brute-force verification for $x\leq 10^{13}$ together, and obtain
\begin{equation}\label{eq:dada}|\psi(x)-x|\leq \frac{34.409 x}{\log^4 x}\quad \text{for all $x\geq 69991$},\end{equation}
for instance, but that is just wrapping paper. It is a rather different situation from that of $M(x)$, where we do not
have a bound like \eqref{eq:dodo1}, and bounds of type $|M(x)|\leq x/\log^k x$ are often crucial -- indeed the only way known to obtain bounds of the form $|M(x)| = o(x)$ is to start from bounds $|M(x)|\leq \epsilon x$ and bounds on $\psi(x)$ to obtain bounds of the form $|M(x)|\leq x/(\log x)^\alpha$.

%(As we discuss in 
%\cite{Moebart}, the fact that we actually give an explicit formula in Thm.~\ref{thm:mainthmB} here -- for $\Lambda(n)$ and other
%non-negative functions --is actually very useful in improving such bounds.)

{\em Note.} The reader may object that we have assumed $T\geq 10^7$ in Cor.~\ref{cor:psi} and Cor.~\ref{cor:psiexpl}; why are the first comparisons in Tables \ref{tab:psi-bounds} and \ref{tab:psi-bounds2} valid? That was just a simplifying assumption, absent from Thm.~\ref{thm:mainthmB}; for low $T$, we can just literally compute the sum bounded in Prop.~\ref{prop:vihuela}, instead of 
using Prop.~\ref{prop:vihuela} to bound it. We have actually computed these sums, and found
the bound in Prop.~\ref{prop:vihuela} still holds, with some room to spare; e.g., the sum for $T=1468$
is $4.281\dotsc$ rather than $4.441\dotsc$, while the sum for $T = 1894439$ is $24.318\dotsc$ rather than $24.657\dotsc$.
Bounding the integrand in Cor.~\ref{cor:adiaro} computationally is also a light task, compared to the task of finding zeros up to $T$. 
\begin{comment}

In the end, a simple bell-shaped compactly-supported weight in Fourier space (that is, on the vertical line 
$\Re s = 1$), even a suboptimal one found by guesswork (say, $(\cos(\pi t/2)/t)|_{[-1,1]}$, instead of our cotangent weight) would have been better than any of the above. 
\end{comment}

%Had we chosen, say, $(\cos(\pi t/2)/t)|_{[-1,1]}$, by guesswork, rather than our optimal weight, we would %have done very well.

\subsection{Generalizations}
%Let us sketch several matters as an aid to future work.
%\subsubsection{Sums of other Dirichlet series}
Theorem \ref{thm:mainthmB} is already quite general; it should be applicable, say, to functions
in Selberg $S$-class, or even more broadly, since we do not assume the existence of a functional equation, though we find it helpful for $A(s) = \zeta'(s)/\zeta(s)$ (\S \ref{subs:tripa}). 

\subsubsection{Dirichlet $L$-functions}
Let $\chi$ be a Dirichlet character.
%We have chosen $\mu$ and $\Lambda$ as our examples, but there are other immediate applications of our main results.
%For instance, take 
%Dirichlet characters $\chi(n)$.
We can apply Thm.~\ref{thm:mainthmB} to 
$a_n = (1 + \Re(\chi(n))) \Lambda(n) = (1 + (\chi(n) + \overline{\chi}(n))/2) \Lambda(n)$ and $a_n = (1 + \Im(\chi(n))) \Lambda(n) = (1 + (\chi(n) - \overline{\chi}(n))/2i) \Lambda(n)$ (since Thm.~\ref{thm:mainthmB} works
for non-negative real coefficients). Then Platt's
verification of $GRH$ for $\chi \bmod q$ for $q\leq 400\,000$ up to height $H_q=10^8/q$ \cite{zbMATH06605790} will just need to be supplemented with a smaller
computation, viz.,  on $\max_{\sigma\in [-1/64,1]} 1/\zeta(\sigma + i H_q)$
and $\max_{\sigma\in [-1/64,1]} \zeta'(\sigma + i H_q)/\zeta(\sigma + i H_q)$ for each $\chi$.

If one aims at sums of $\Lambda(n)$ on arithmetic progressions $a + q \mathbb{Z}$, one should define
$A(s)$ to be $- \frac{1}{\phi(q)} \sum_\chi \overline{\chi(a)}\frac{L'}{L}(s,\chi)$, and then apply Thm.~\ref{thm:mainthmB} to 
$A(s)$, rather than obtain bounds for $A(s) = L(s,\chi)$ and combine them. The residue of 
$A(s) = -\frac{1}{\phi(q)} \sum_\chi \overline{\chi(a)} \,\frac{L'}{L}(s,\chi)$ at $s=1$ is $1/\phi(q)$, and so 
the main error term from Thm.~\ref{thm:mainthmB} will be proportional to 
$O^*\left(\frac{\pi}{\phi(q) H_q}\right)$; thus, we
will have bounds  far stronger than those in \cite{zbMATH07036793} (for $x\leq \exp(10\,000)$, say)
or those in the older source \cite[Table 1]{Ramrum}.

\begin{comment}
Much the same goes for $L$ functions of higher degree (in the sense of
Selberg class $S$). In particular: if $L(s) = \sum_n a_n n^{-s}$ is a primitive function in $S$ such that (i) $a_{p^k}=O(1)^k$ for all $p$, $k$ and (ii) Selberg's
first conjecture ($\sum_{p\leq x} |a_p|^2/p = \log \log x + O(1)$) holds,
it is easy to see that, for $1/L(s) = \sum_n b_n n^{-s}$,
conditions (i) and (ii) are also satisfied for $b_n$ instead of $a_n$,
and so $b_n$ will be bounded on short intervals (by \cite{zbMATH03639723}\footnote{We thank O. Gorodetsky for this last observation.}); thus, Thms.~\ref{thm:mainthmA}--\ref{cor:mertensimple}
should generalize easily to $\{b_n\}$.
\end{comment}

\subsubsection{Poles of higher order at $s=1$.} One sense in which Theorem \ref{thm:mainthmB} is not
stated as generally as it could be is that it has the condition that the pole at $s=1$ be simple.
Dropping that condition seems to entail no great complications; we have kept the condition for
the sake of simplicity. A relevant test case is that of the sum $\sum_{n\leq x} \Lambda(n) \log(x/n)$.
Of course one can estimate it by integrating $\psi(y)$, but one will do much better by 
proving Thm.~\ref{thm:mainthmB} in the case of a multiple pole at $s=1$. The approximation result to use then is
\cite[Thm.~2.6]{zbMATH05639041}.

%{\bf Say something about $\Lambda(n) = \mu(n) \log(x/n)$; pole of higher order}
%Mencionar que hay otras aplicaciones evidentes, e.g., a funciones
%  $L$ (deberia reemplazar las cotas sobre PNT en progresiones aritméticas
%  para $q$ acotado de
%  Bennett--Martin--O'Bryant--Rechnitzer, por ejemplo - ah, cuidado, alli hay un factor de $\log x$ en el denominador; por otra parte, no tienen
%  $q$ en el denominador).
  
%\subsubsection{Weighted sums}
%$\check{m}(x)$, $m_1(x) = m(x)-M(x)/x$ (or whatever it was). Maybe work out
%the latter (in sketch)?

  \subsection{Computational-analytic bounds}\label{subs:companal}
%, based
  %on an identity related to that used to prove elementarily that
  %$\psi(x) = o(x)$ implies $M(x) = o(x)$
  %(cf. \S \ref{subs:iterprod}).

%Also:  Laplace transforms? Applications outside number theory?

The constant $C = \frac{1}{2\pi} \log^2 \frac{T}{2\pi} - \frac{1}{6\pi} \log \frac{T}{2\pi}$ in Cor.~\ref{cor:psi} is both a little
bothersome and really there: for some very rare, extremely large $x$ 
the arguments of zeros $x^{\frac{1}{2}+i\gamma}$ for $|\gamma|\leq T$ will line up, and give
us a sum of size $C \sqrt{x}$. We do not, however, expect this
to happen for moderate $x$ (say, $x\leq 10^{30}$), beyond which point the leading term dominates.

How do we find cancellation in the sums over non-trivial zeros in
Thm.~\ref{thm:mainthmB} in a range $x_0\leq x\leq x_1$, then? Here $x_0$ would be the end of the brute-force range (currently $x_0 = 10^{16}$).

The basic strategy is known (\cite[\S 4.4]{odlyzko1020}; see also the implementation in \cite{zbMATH06864192}): we can see the finite sum $\sum_\gamma x^{i \gamma}$ as the Fourier
transform of a linear combination of point measures
$\delta_{\frac{\gamma}{2\pi}}$, evaluated at $\log x$.
To bound that transform throughout the range
$[\log x_0,\log x_1]$, it is enough, thanks to a
Fourier interpolation formula (Shannon-Whittaker\footnote{\cite{odlyzko1020} recommends \cite{zbMATH03895606} for a historical overview.}), to evaluate
it at equally spaced points. Actually, we first split the range of $\gamma$ into segments of
length $L$; then we need evaluate the transform only at integer multiples of $2\pi/L$. That one does by
applying a Fast Fourier Transform.

We propose what seems to be an innovation: do not split
the range brutally into segments; rather,
express the constant function as a sum of triangular
functions $n\to \tri(\frac{t}{L}+n)$, where $\tri =  1_{[-\frac{1}{2},\frac{1}{2}]}\ast 1_{[-\frac{1}{2},\frac{1}{2}]}$. Since 
$\widehat{\tri}(x) = \frac{\sin^2 \pi x}{(\pi x)^2}$,
it is not hard to obtain, in effect, an interpolation
formula with weights $\widehat{\tri}(\frac{x-n}{k})$, which
are non-negative and have fast decay.\footnote{D. Radchenko suggests partitioning the constant function using $1_{[-\frac{1}{2},\frac{1}{2}]}^{\ast 2 m}$ instead, as then, for $m>1$, decay is even faster than
for $m=1$.}
Work becomes cleaner and faster as a result.
(It may be best to consider low-lying zeros separately.)

Since there are $O(L \log T)$ zeros in a segment
of length $L$ at height up to $T$, precomputing
a fast Fourier transform should take time
$O(L \log T \log(L \log T))$ per segment; then
we will evaluate it at $O((\log x_1-\log x_0)/L)$ points, at constant cost each.
Total computation time should then be $O(T)\cdot\max(\log T \log L, \log(x_1/x_0))$ or thereabouts.

  \appendix

\begin{comment}
\begin{corollary}\label{cor:boundleftline}
	For $\sigma_0>0$ and $x,t>0$ such that $t\geq 1$ and $x t > 2\pi e$,
	\begin{equation}\label{eq:tretet}
		\int_{-\infty}^{-\sigma_0} \frac{(1-\sigma)x^\sigma}{|\zeta(\sigma+i t)|} d\sigma
		\leq
		\dfrac{\frac{\sqrt{e}}{1-e^{-\pi t}} \frac{\zeta(1+\sigma_0)}{\zeta(2+2\sigma_0)}}{ 
			\left(\frac{t}{2\pi e}\right)^{\sigma_0 +1/2}}
	\left(1+\sigma_0+\frac{1}{
		\log \frac{t x}{2\pi e}}\right)\cdot \frac{x^{-\sigma_0}}{
			\log \frac{t x}{2\pi e}}. 
\end{equation}
\end{corollary}
\begin{proof}
	By Lemma \ref{9_2_23_45pm},
	$$\int_{-\infty}^{-\sigma_0} \frac{(1-\sigma)x^\sigma}{|\zeta(\sigma+i t)|} d\sigma
	\leq
	\dfrac{\sqrt{e}}{ 1-e^{-\pi t}}\cdot \frac{\zeta(1+\sigma_0)}{\zeta(2+2\sigma_0)}
	\sqrt{\frac{2\pi e}{t}}
	\int_{-\infty}^{-\sigma_0}
	(1-\sigma)\left(\frac{t}{2\pi e}\right)^\sigma x^\sigma d\sigma
	.$$
	Of course
	$\int_{-\infty}^{-\sigma_0}
	(1-\sigma)R^\sigma d\sigma
	= \left(1+\sigma_0+\frac{1}{\log R}\right)\frac{R^{-\sigma_0}}{\log R}$ for any $R>1$, and so
	\eqref{eq:tretet} follows.
\end{proof}
\end{comment}

%\section{Norms and expressions for extremal functions}\label{sec:appnorms}

\section{Explicit estimates on $\zeta(s)$}\label{sec:estzeta}

%\subsection{Basic results}
Here we prove some basic quantitative results on the Riemann zeta function. 
Let us first derive clean bounds for the $\Gamma$ function and for the digamma function\footnote{We hope the digamma function will stop being denoted by
$\psi$, particularly in papers in number theory. In these modern times, it makes far more sense to use the archaic Greek letter digamma, which is available in \LaTeX: $\digamma$.} $\digamma(z) = \frac{\Gamma'(z)}{\Gamma(z)}$.
\begin{lemma}\label{lem:digam}
For $z\in \mathbb{C}$ with $\Re z\geq \frac{1}{2}$,
\[\Re \digamma(z) \leq \log |z|.\]
\end{lemma}
The same inequality (for $\Re z\geq \frac{1}{4}$) is also in \cite[\S 5]{zbMATH05638163}.
\begin{proof}
 By, e.g., \cite[(5.9.13)]{zbMATH05765058}, $\digamma(z) = \log z - \frac{1}{2 z} - \int_0^\infty \left(\frac{1}{2} - \frac{1}{t} + \frac{1}{e^{t}-1}\right) e^{-t z} dt$, and so, 
  since $\Re\left(\frac{1}{2z}\right) = \frac{x}{2|z|^2}$, all we have to show is that $\Re I \leq \frac{x}{2|z|^2}$ for
$I = - \int_0^\infty g(t)\,e^{-t z} dt$ and $g(t) = \frac{1}{2} - \frac{1}{t} + \frac{1}{e^{t}-1}$.
Write $z = x+ i y$.
By integration by parts applied twice,
$$I = -\frac{1}{z} \int_0^\infty g'(t) e^{- t z} dt = - \frac{1}{12 z^2} - \frac{1}{z^2} \int_0^\infty g''(t) e^{-t z} dt =
- \frac{1}{12 z^2} - \frac{1}{z^2} \int_0^\infty g''(t) e^{-t x} e^{-i t y} dt 
.$$
It is enough to prove that $|g''(t)|\leq \frac{1}{12}$ for all $t\geq 0$, since then $\Re I \leq \frac{1}{12 |z|^2} + \frac{1}{12|z|^2 x} \leq \frac{1}{4 |z|^2}\leq \frac{x}{2 |z|^2}$ by the assumption $x\geq 1/2$. Since $g(t) = \sum_{n} \frac{B_{2 n} t^{2 n - 1}}{(2 n)!}$,
where $\sum_n$ means $\sum_{n\geq 1}$ as always,
$$g''(t) = \sum_{n} \frac{B_{2 n+ 2} t^{2 n - 1}}{(2 n-1)! (2 n+ 2)} = \sum_{n} \frac{(-1)^n 2 \cdot 2 n (2 n+1)}{(2\pi)^{2 n+2}}\zeta(2 n + 2) t^{2 n - 1},$$ with leading coefficient $\frac{B_4}{4} = -\frac{1}{120}$. We see that this is an alternating sequence with decreasing terms for $t\leq \pi$ (since then $\frac{(2 n+2) (2 n+3)}{2 n (2 n+1)} \cdot \frac{t^2}{(2 \pi)^2} \leq \frac{10/3}{4}<1$) and so, for $t\leq \pi$, $|g''(t)|\leq \frac{t}{120} \leq \frac{\pi}{120}$. We note that $g''(t) = - \frac{2}{t^3} + \frac{1}{e^t-1} + \frac{3}{(e^t-1)^2} + \frac{2}{(e^t-1)^3}$, and so, for $t\geq \pi$,
$$|g''(t)|\leq \max\left(\frac{2}{\pi^3}, \frac{1}{e^\pi-1} + \frac{3}{(e^\pi-1)^2} + \frac{2}{(e^\pi-1)^3}\right) = \frac{2}{\pi^3}.$$ Since $\frac{\pi}{120}$ and $\frac{2}{\pi^3}$ are both smaller than $\frac{1}{12}$, we are done.
\end{proof}

\begin{lemma}\label{lem:distur} Let $z=x+i y$, $x\geq \frac{1}{2}$, $y\in \mathbb{R}$. Then
$$|\Gamma(z)|\leq \sqrt{\frac{\pi}{\cosh \pi y}} \cdot |z|^{x-\frac{1}{2}} \leq 
\sqrt{2\pi} |z|^{x-\frac{1}{2}} e^{-\frac{\pi}{2} |y|}.$$
\end{lemma}
The versions we have found in the literature (e.g., \cite[5.6.9]{zbMATH05765058}) have a 
fudge factor that is unnecessary for $x\geq \frac{1}{2}$.
\begin{proof}
For $x=\frac{1}{2}$, the inequality is true, as $|\Gamma(\frac{1}{2}+i y)|= \sqrt{\frac{\pi}{\cosh \pi y}}$ (as in, e.g., \cite[(5.4.4)]{zbMATH05765058}). For $x>\frac{1}{2}$, by Lemma \ref{lem:digam},
\[\log \frac{|\Gamma(z)|}{|\Gamma(\frac{1}{2}+i y)|} = \int_{\frac{1}{2}}^x \Re \digamma(u+ i y) du \leq \int_{\frac{1}{2}}^x \log |u+ i y| du \leq
\left(x-\frac{1}{2}\right)\cdot \log |z|.\]
\end{proof}

Let us make explicit two well-known applications of the functional equation.
\begin{lemma} \label{lem:zetbound}
For $s\in \mathbb{C}$ with $\Re s\leq \frac{1}{2}$,     
     	\begin{equation}\label{eq:fromfunco} %\label{08_02_21_01pm}
     |\zeta(s)|\leq   
     \sqrt{\frac{1}{1 - \frac{1}{2}\operatorname{sech}^2 \frac{\pi \Im s}{2}}} \cdot \left(\frac{|1-s|}{2\pi}\right)^{\frac{1}{2}-\Re s} |\zeta(1-s)|.\end{equation}
 %   If $\Re s \leq 0$ and $|\Im s|\geq 1$, then
%\begin{equation}\label{eq:fromfunceq2}
%\bigg|\dfrac{1}{\zeta(s)}\bigg|\leq\left(\frac{2\pi e}{|\Im s|}\right)^{\frac{1}{2} - \Re s} 
 %   \frac{\sqrt{e}}{|\zeta(1-s)|}.\end{equation}
\end{lemma}
Note that $\sech^2 y\leq 1$ for all $y\in \mathbb{R}$, with $\sech^2 y \to 0$ rapidly as $|y|\to \infty$.
\begin{proof}
	%We may consider, more generally, 
    %$s\in \mathbb{C}\setminus [1,\infty)$. 
    Recall the functional equation
	\begin{equation}\label{eq:funceq}
    \zeta(s) = (2\pi)^{s-1}\cdot 2 \sin \left(\frac{\pi s}{2}\right) \Gamma(1-s) \zeta(1-s).\end{equation}
    By Lemma \ref{lem:distur} applied to $z=1-s$, together with $\cosh(-\pi y) = \cosh(\pi y)$,
$$|\Gamma(1-s)|\leq \sqrt{\frac{2\pi}{2\cosh (\pi \cdot\Im s)}} \cdot |1-s|^{\frac{1}{2} - \Re s} ,$$
and so
\[ |\zeta(s)|\leq \left(\frac{|1-s|}{2\pi}\right)^{\frac{1}{2}-\Re s} \cdot \frac{2 \left|\sin \frac{\pi s}{2}\right|}{\sqrt{2\cosh(\pi\cdot \Im s)}} |\zeta(1-s)|.\]
Since $\left|\sin \frac{\pi s}{2}\right|\leq \cosh \frac{\pi\cdot \Im s}{2}$,
and $\cosh  y = 2\cosh^2(y/2)-1$ for all $y$,
\eqref{eq:fromfunco} holds.
%By $e^y+e^{-y}\geq \frac{1}{2} (e^{y/2}+e^{-y/2})^2$, \eqref{eq:fromfuncor} follows.
%\leq
%\sqrt{2\pi} |1-s|^{\frac{1}{2} - \Re s} e^{-\frac{|\pi|}{2} \Im s}.$$
    
    %by \eqref{eq:funceq}, and
    %\eqref{eq:stirling}, respectivelny,
\begin{comment}    
\begin{equation}\label{eq:relogzet}    \Re \log \zeta(s) = 
		(\log 2\pi) (\Re s - 1) +  \log \left(2 \left|\sin \frac{\pi s}{2}\right|\right) + 
		\Re \log \Gamma(1-s) + \Re \log \zeta(1-s).\end{equation}
%\begin{equation}\label{eq:gerontion}\Re \log \Gamma(z) = 
%\left(\Re z -
%		\frac{1}{2}\right) \log |z| - \Im z\cdot\arg z - \Re z + \log \sqrt{2 \pi} +
%        \frac{\Re z}{12 |z|^2} + 
%        O^*\left(\frac{\sqrt{2}}{180 |z|^3}\right).
%\end{equation}
By Lemma \ref{lem:distur} applied to $z=1-s$, followed by taking logarithms,
\[\Re \log \Gamma(z) \leq \left(\frac{1}{2} - \Re s\right) \log |1-s| - \frac{\pi |\Im s|}{2}  + \log \sqrt{2 \pi}\]
and so
%quence,
$$\begin{aligned}\Re \log \zeta(s)
%&= 
%(\log 2\pi) (\Re s - 1) + \frac{\pi |\Im{s}|}{2} + \log \frac{2 \left|\sin \frac{\pi s}{2}\right|}{e^{\pi |\Im s|/2}} + 
%\left(\frac{1}{2} - \Re s\right) \log |1-s| - \frac{\pi |\Im{s}|}{2} + \log %\sqrt{2\pi}
%+ \Re \log \zeta(1-s) + r_0(1-s)\\
&\leq \left(\frac{1}{2}-\Re s\right) \log \frac{|1-s|}{2\pi} + \log \frac{2 \left|\sin \frac{\pi s}{2}\right|}{e^{\pi |\Im s|/2}} 	 + \Re \log \zeta(1-s),\end{aligned}$$ or, what is the same,
    \begin{equation}\label{eq:foncec}|\zeta(s)| \leq 
 \left(\frac{|1-s|}{2\pi}\right)^{\frac{1}{2}-\Re s} \cdot \frac{2 \left|\sin \frac{\pi s}{2}\right|}{e^{\pi |\Im s|/2}} |\zeta(1-s)|.\end{equation}
%$$r_0(s) = |\Im s| \left(\arctan \frac{\Re s}{|\Im s|} - \frac{\Re s}{|\Im s|}\right) + \frac{\Re s}{12 |s|^2} + O^*\left(\frac{1}{90 |s|^3}\right).$$
\end{comment}
\end{proof}

\begin{lemma} \label{lem:derivbound}
%\label{9_2_23_45pm}
	For $s\in \mathbb{C}$,
    %-\sigma_0<0$ and $|\Im{s}|\geq 1$,
	\begin{equation}\label{eq:fromfunceqB} %\label{08_02_21_01pm}
		\frac{\zeta'}{\zeta}(s) = -  \frac{\zeta'}{\zeta}(1-s) - \digamma(1-s) + 
        \frac{\pi}{2} \cot \frac{\pi s}{2} + \log 2\pi,
	\end{equation}
    where $\digamma(s) = \Gamma'(s)/\Gamma(s)$.  If $\Re s \leq 0$ and $|\Im s|\geq 1$, then
\begin{equation}\label{eq:fromfunceqB2}
		\frac{\zeta'}{\zeta}(s) = - \frac{\zeta'}{\zeta}(1-s) - \log(1-s) + O^*(4)
.\end{equation}
\end{lemma}
One can obviously give a more precise constant than $4$, but there is no
reason to bother.
\begin{proof}
Equation \eqref{eq:fromfunceqB} follows immediately from the functional equation \eqref{eq:funceq}
once we take logarithmic derivatives. To obtain \eqref{eq:fromfunceqB2}, we first note that, for
$s = \sigma + it$,
$$\left|\cot \frac{\pi s}{2}\right|  = 
\left|\frac{e^{\frac{\pi i s}{2}} +
    e^{\frac{-\pi i s}{2}}}{
   e^{\frac{\pi i s}{2}} -
    e^{\frac{-\pi i s}{2}}}\right|\leq
    \left|\frac{e^{\frac{\pi}{2} t} +
    e^{\frac{-\pi}{2} t}}{
   e^{\frac{\pi}{2} t} -
    e^{\frac{-\pi}{2} t}}\right| \leq 
\coth \frac{\pi}{2}
    = 1.09033\dotsc$$ if $|t|\geq 1$.
By \cite[(5.11.2) and \S 5.11(ii)]{zbMATH05765058} (a bound proved by a trick due to
Stieltjes \cite{zbMATH02691938}),
\begin{equation} \label{eq:digamma}
\digamma(s) = \log s - \frac{1}{2 s} + O^*\left(\frac{1}{3\sqrt{2} |s|^2}\right)
\end{equation}
when $\Re s\geq 0$. Now, the image of the region
$R = \{z: \Re z\geq 1, \Im z\geq 1\}$ under the map $z\to 1/z$ is the 
intersection (``lens'') of two disks with diameter $1$, one centered at $\frac{1}{2}$ and one centered at $-\frac{i}{2}$; hence, $\max_{z\in R} |\Re z| = \max_{z\in R} |\Im z|=\frac{1}{2}$.
Thus, for $\Re s\leq 0$ and $\Im s\geq 1$,
$$\left|\log 2\pi + \frac{1}{2 (1-s)}\right|\leq \left|\log 2\pi + \frac{1-i}{4}\right|\leq 2.10279\dotsc
$$
and so
$$\begin{aligned}\left|
\frac{\zeta'}{\zeta}(s) + \frac{\zeta'}{\zeta}(1-s) + \log(1-s)\right|&\leq \left|\frac{\pi}{2} \cot \frac{\pi s}{2}\right| + 
\left|\log 2\pi + \frac{1}{2 (1-s)}\right| + \frac{1}{3\sqrt{2} |1-s|^2}
\\&\leq \frac{\pi}{2}\cdot 1.091 +
 2.103 + \frac{1}{3\cdot 2^{3/2}} \leq 3.935 < 4.
%\frac{1}{2 |1/2+i|} + \frac{1/(3\sqrt{2})}{|1/2+i|^2}\leq 4.187.
\end{aligned}$$
\end{proof}

\begin{lemma}\label{lem:adioso} For $\sigma_0>0$, $T\geq 1$ and $x\geq e^3$,
\begin{align*}
\int_{-\infty}^{-\sigma_0}\left|\dfrac{\zeta'}{\zeta}(\sigma+iT)\right| (1-\sigma) x^{-(1-\sigma)} d\sigma 
&\leq \big( \left(\log T + c_{\sigma_0}\right) \left(1 + \frac{1}{L} +\sigma_0\right)  + \dfrac{3}{2T^2} (1+\sigma_0)^3\big) \frac{x^{-1-\sigma_0}}{L}\\
 &\leq \frac{4}{3} \left(\log T + c_{\sigma_0} + \frac{3}{2 T^2}\right) \frac{1}{x L}
\end{align*}
for $L = \log x$ and 
$c_{\sigma_0} = \left|\frac{\zeta'}{\zeta}(1+\sigma_0)\right|+4+\frac{\pi}{2}$.
\end{lemma}
\begin{proof}
%By a change of variable, the above integral is
%\begin{align*}
%\int_{-\infty}^{-\sigma_0}(1-\sigma)\left|\dfrac{\zeta'}{\zeta}(\sigma+iT) + \dfrac{1}{\sigma-1+iT}\right| x^{\sigma-1} d\sigma 
%\leq & \int_{-\infty}^{-\sigma_0}(1-\sigma)\left|\dfrac{\zeta'}{\zeta}(\sigma+iT)\right| x^{\sigma-1} d\sigma  \\
%& + \int_{-\infty}^{-\sigma_0}(1-\sigma)\left|\dfrac{1}{\sigma-1+iT}\right| x^{\sigma-1} d\sigma.
%\end{align*}
By \eqref{eq:fromfunceqB2} we see that for $\sigma\leq -\sigma_0<0$ (note that $1-\sigma>1$),
\begin{equation*}
		\left|\frac{\zeta'}{\zeta}(\sigma+iT) \right|\leq \left|\frac{\zeta'}{\zeta}(1-\sigma-iT)\right|+   |\log(1-(\sigma+iT))|+4 .
\end{equation*}
Clearly $\Big|\frac{\zeta'}{\zeta}(1-\sigma-iT)\Big|\leq \Big|\frac{\zeta'}{\zeta}(1-\sigma)\Big|\leq \Big|\frac{\zeta'}{\zeta}(1+\sigma_0)\Big|$. Moreover, it is not hard to see that $|\log(1-(\sigma+iT))|\leq \log T + \frac{(1-\sigma)^2}{2T^2} + \frac{\pi}{2}$. We obtain our first bound from
%Finally, using the fact that 
\[
\int_{-\infty}^{-\sigma_0}(1-\sigma)x^{\sigma-1} d\sigma  = \left(\frac{1+\sigma_0}{\log x}+ \frac{1}{\log^2 x}\right)x^{-1-\sigma_0},\]
\[
\int_{-\infty}^{-\sigma_0}(1-\sigma)^3x^{\sigma-1} d\sigma  = \left(\frac{(1+\sigma_0)^3}{\log x}+ \frac{3(1+\sigma_0)^2}{\log^2 x} + \frac{6(1+\sigma_0)}{\log^3 x} + \frac{6}{\log^4 x}\right)x^{-1-\sigma_0}
\]
and $1 + \frac{3}{3}+\frac{6}{3^2}+\frac{6}{3^3} < 3$. Use $(t x^{-t})' = (1- t\log x) x^{-t}<0$ and
$(t^3 x^{-t})' = (3 t^2 - t^3 \log x) x^{-t} \leq 0$ for $t\geq 1$ to obtain the second bound.

%.
%\leq \frac{26}{9} \frac{(1+\sigma_0)^3}{\log x} x^{-1-\sigma_0}$$
%\begin{align*}
%|\log(1-(u+iT))| & = \sqrt{(\log|1-u-iT|)^2+(\arg(1-u-iT)^2} \\
%& \leq \sqrt{\left(\log T + \dfrac{(1-u)^2}{2T^2}\right)^2+\left(\frac{\pi}{2}\right)^2}  \\
%& \leq 
%\end{align*}
\end{proof}

%Let us start with a small, helpful lemma.
%We need a few properties before we estimate the integral from $-1$ to $-\infty$.
\begin{lemma}\label{lem:badabook}
Let $f(t) = \frac{\zeta'}{\zeta}(t) + \digamma(t) - \log 2\pi$. Then 
$t f(t)$ and $f(t)+\frac{1}{t}$ are increasing functions on $[2,\infty)$. Moreover, $f(t)+\frac{1}{t}$ is concave for $t>1$.
\end{lemma}
%Note that $f(t) = -\frac{\zeta'(1-t)}{\zeta(1-t)} + \frac{\pi}{2} \cot \frac{\pi (1-t)}{2}$ by the functional equation (Lemma
%\ref{lem:derivbound})+
\begin{proof}
We know $\digamma(t)+\frac{1}{t}$ is increasing for
$t>0$ because $\digamma'(t)+ (\frac{1}{t})' = 
\sum_n \frac{1}{(n+t)^2} > 0$.
Since $-\zeta'(t)/\zeta(t) = \sum_n \Lambda(n) n^{-t}$ is decreasing for $t>1$,
$\zeta'(t)/\zeta(t)$ is increasing for $t>1$.
Hence, $f(t)+\frac{1}{t}$ is increasing on $(1,\infty)$, and thus so is $f(t)$.
We see that $\digamma''(t) + (\frac{1}{t})'' = -\sum_n \frac{2}{(n+t)^3}<0$, and so $\digamma(t)+\frac{1}{t}$ is concave.
Since
$n^{-t}$ is convex, $\zeta'(t)/\zeta(t)$ is also concave. Therefore, $f(t)+\frac{1}{t}$ is concave.

By 
$(t f)'(t) = f(t) + t f'(t)$, we see that, if $f(t_0)\geq 0$ for some
$t_0>1$, then $f(t)\geq f(t_0)\geq 0$ for all $t\geq t_0$, and hence 
$(t f(t))' \geq 0$ for all $t\geq t_0$. Here $t_0 = 7$ will do.
On the interval $[2,7]$, we prove $(t f)'(t)>0$ by the bisection method, implemented by means of interval arithmetic.
\end{proof}

The following Lemma ought to be standard, but seems hard to find in the literature.

\begin{lemma}\label{lem:kalmynin}
    The Laurent expansion of $-\zeta'(s)/\zeta(s)$ at $s=1$
    has alternating signs: $$-\frac{\zeta'}{\zeta}(s) = \frac{1}{s-1} + \sum_{n=0}^\infty (-1)^{n+1} a_n (s-1)^n,\quad\quad
    a_n>0.$$ 
\end{lemma}
The argument below was provided by A. Kalmynin \cite{501199}.
\begin{proof}
We know $\zeta(s)$ has no non-trivial zeros with $|\Im s|\leq 4$. Hence,
$$G(s) = -\frac{\zeta'}{\zeta}(s) - \frac{1}{s-1} + \frac{1}{s+2}$$
is regular on an open neighborhood of the region $V = [-3, 5] + i [-4,4]$. We check
that $|G(s)|\leq 1.396$ on $\partial V$ by the bisection method, run using
interval arithmetic (FLINT/Arb); hence, $|G(s)|\leq 1.396$ on $V$, by
the maximum modulus principle. Write $G(s) = \sum_{n=0}^\infty b_n (s-1)^n$. Then
$$b_n = \frac{1}{2\pi i} \int_{|s-1|=4} \frac{G(s)}{(s-1)^{n+1}} ds$$
and so $|b_n|\leq 4\cdot 1.396/ 4^{n+1} < 1.4/4^n$. The coefficient of $(s-1)^n$ in the Laurent expansion of $-\zeta'(s)/\zeta(s)$ is $b_n - (-1)^n 3^{-(n+1)}$. 
Since $1.4\cdot 3^6 < 4^5$,
we see that the second term dominates for $n\geq 5$.
For $n\leq 4$, we calculate the coefficients of $(s-1)^n$ by a simple
symbolic computation starting from the Laurent expansion of $\zeta(s)$, followed by a computation in FLINT/Arb.
\end{proof}

\section{Explicit estimates related to the zeros of $\zeta(s)$}\label{sec:zeroszeta}
\subsection{Basic estimates on zeros}
Write $N(T)$ for the number of zeros $\rho$ of $\zeta(s)$ with $0<\Im(\rho)\leq T$, counting multiplicity.
Define
\begin{align} \label{eq:remaind}
Q(t)=N(t)-\dfrac{t}{2\pi}\log \dfrac{t}{2\pi e} - \dfrac{7}{8}.
\end{align}

%\textcolor{red}{We recall that the first non-trivial zeros $\rho$ of $\zeta(s)$ have imaginary parts
%$\gamma_0 = 14.134725\dotsc$ and $\gamma_1 = 21.022040\dotsc$ and are simple.}

We recall that the first non-trivial zero $\rho$ of $\zeta(s)$ has
$\Im \rho = 14.13472514\dotsc$.
\begin{lemma}\label{lem:oldie}
For $0<t\leq 280$, $|Q(t)|<1$. %For $t\leq 1467$, $|Q(t)|<2$.
For $t\geq 1$, $|Q(t)|\leq \frac{1}{5} \log t +2$. %Moreover,
%$|Q(t)|\leq 1$ for $T\leq 280$
\end{lemma}
One can prove better bounds nowadays. We use an older result here to minimize dependencies.
\begin{proof}
The first bound is as in \cite[Thm.~17--18]{zbMATH03039120}.
By \cite[Thm.~19]{zbMATH03039120}, $|Q(t)|\leq 0.137 \log t + 0.443 \log\log t + 1.588$ for $t\geq 2$. 
We know that
$0.137\log t + 0.443 \log\log t + 1.588<\frac{1}{5}\log t + 2$ for $t\geq 5400$ because
$\left(\frac{1}{5} - 0.137\right) y > 0.443 \log y - 0.412$ for $y\geq \log 5400$.
For $\gamma_0\leq t\leq 5400$, where $\gamma_0 = 14.13472514\dotsc$ is the ordinate of the first non-trivial zero, we do a direct computational check: for each ordinate $\gamma$ of a
non-trivial zero up to $5400$,  we check that $Q(t)\leq \frac{1}{5} \log t + 2$ for $t = \gamma^-$ and
$t=\gamma$; this is enough because $\left(\frac{t}{2\pi} \log \frac{t}{2\pi e}\right)'>
\left(\frac{1}{5} \log t\right)'$ for $t\geq \gamma_0$. 
For $1\leq t<\gamma_0$, $N(t)=0$; since $\frac{1}{5} \log t - \frac{t}{2\pi} \log \frac{t}{2\pi e}$ is concave,
it is enough to check that it is positive for $t=1$, as we already know it is positive for
$t=\gamma_0$.
\end{proof}

\begin{corollary}\label{cor:brut}
For $T\geq 2\pi$, $N(T) \leq \frac{T}{2\pi} \log \frac{T}{2\pi}$.
\end{corollary}
\begin{proof}
By Lemma \ref{lem:oldie}, $Q(t)+\frac{7}{8}\leq \frac{t}{2\pi}$ for all $t\geq {14}$, and so the statement follows for $t\geq {14}$ by \eqref{eq:remaind}. We also know that
there are no zeros with $\gamma\leq 14$, and so $N(t)=0$ for $t\leq 14$. 
\end{proof}
%One can prove other bounds nowadays; \cite[Cor.~1]{zbMATH07569752} has the bound $0.28\log t$ in place of $0.25\log t +2$. We use an older result to minimize dependencies.

\begin{corollary}\label{cor:zeroinbox}
Let $T\geq 1$, $0<a<T$. Then
\begin{equation}\label{eq:meredi}\begin{aligned}N({T}+a)-N(({T}-a)^-) & \leq  Q({T}+a)-Q(({T}-a)^-) + \frac{a}{\pi} \log \frac{{T}}{2\pi}\end{aligned}\end{equation}
Moreover, $Q({T}+a)-Q(({T}-a)^-)\leq \frac{2}{5}\log T + 4$.
\end{corollary}
\begin{proof}
By \eqref{eq:remaind},
\[\begin{aligned}&N(T+a)-N((T-a)^{{-}})= Q(T+a)-Q((T-a)^{{-}}) +
\int_{T-a}^{T+a} \frac{1}{2\pi} \log \frac{t}{2\pi} dt.
\end{aligned}\]
By the concavity of $\log$ we get \eqref{eq:meredi}. Moreover, by Lemma \ref{lem:oldie} and the concavity of $\log$ again, $Q(T+a)-Q((T-a)^{{-}})\leq \frac{2}{5}\log T + 4$. (If $T+a\leq 280$, use 
$|Q((T-a)^-)|, |Q(T+a)|\leq 1$; if $T-a<1$ and $T+a>280$, use $|Q((T-a)^-)|\leq 1$
and $\frac{1}{5} \log(T+a) + 3 < \frac{2}{5} \log T + 4$.)
\end{proof}

\begin{lemma}\label{lem:Lehmanmodern} 
Let $\phi:[t_0,t_1]\to \mathbb{C}$ be continuous and of bounded
variation, where $0<t_0\leq t_1$. Let $Q(t)$ be as in \eqref{eq:remaind}, and let 
$\gamma$ denote the imaginary parts of zeros of $\zeta(s)$. Then
\begin{equation}\label{eq:initor}\sum_{t_0<\gamma\leq t_1} \phi(\gamma) = \dfrac{1}{2\pi}\int_{t_0}^{t_1}\phi(t)\log \dfrac{t}{2\pi} dt + 
\phi(t) Q(t)\Big|_{t_0}^{t_1} - \int_{t_0}^{t_1} Q(t) d\phi(t).
\end{equation}
In particular, if $\phi(t)$ is real-valued, non-negative and decreasing, and $t_0\geq 14$,
\begin{equation}\label{eq:seguitor}\sum_{t_0<\gamma\leq t_1} \phi(\gamma) \leq \dfrac{1}{2\pi}\int_{t_0}^{t_1}\phi(t)\log \dfrac{t}{2\pi} dt + 
\phi(t_0) \left(\frac{1}{5} \log t_0 + 2 - Q(t_0)\right) + \frac{1}{5}\int_{t_0}^{t_1} \phi(t) \frac{dt}{t},
\end{equation}
and if $\phi(t)$ is real-valued, non-negative and increasing, and $t_0\geq 14$,
\begin{equation}\label{eq:sagasto}\begin{aligned}
    \sum_{t_0<\gamma\leq t_1} \phi(\gamma) \leq \dfrac{1}{2\pi}\int_{t_0}^{t_1}\phi(t)\log \dfrac{t}{2\pi} dt 
&+ \phi(t_1) \left(\frac{1}{5} \log t_1 + 2 + Q(t_1)\right)
- \frac{1}{5} \int_{t_0}^{t_1} \phi(t) \frac{dt}{t}
\\
&- \phi(t_0) \left(
 \frac{1}{5} \log t_0 + 2 +Q(t_0)\right).
\end{aligned}\end{equation}
\begin{comment}
    \textcolor{blue}{
For $t_\circ=\frac{t_0+t_1}{2}$, if $y\mapsto \phi(t_\circ+y)$ is even,
and $\phi(t)$ is real-valued and decreasing on $[t_\circ,t_1]$, then
\begin{equation}\label{eq:sautero}
    \sum_{t_0<\gamma\leq t_1} \phi(\gamma) \leq 
    \frac{1}{2\pi}\int_{t_0}^{t_1}\phi(t)\log \frac{t}{2\pi} dt +
    \left. \phi(t) Q(t)\right|_{t_0}^{t_1}  + \left(\frac{1}{5} \log t_\circ + 2\right) (\phi(t_\circ) - 
    \phi(t_1)).\end{equation}}
\end{comment}
\end{lemma}
This is basically \cite[Lemma 1]{zbMATH03242284}.
Besides the obvious fact that we are using better bounds on $Q(t)$ (available at the time of \cite{zbMATH03242284}), there are a few small natural improvements that may be useful in the future: we allow a complex-valued $\phi$ in \eqref{eq:initor}, and leave some of the terms $Q(t)$ as they are, instead of bounding them, so that we can obtain cancellation later.

% We could do better than \eqref{eq:seguitor} by using stronger bounds
%on $Q(t).$% Since our functions $\phi$ will be unimodal,
%we could also cite \cite[Lemma 4]{zbMATH07569752}.
\begin{proof}
By the definition \eqref{eq:remaind} of $Q(t)$ and
$\left(\frac{t}{2\pi}\log \frac{t}{2\pi e}\right)' = \frac{1}{2\pi} \log
\frac{t}{2\pi}$,
$$\sum_{t_0<\gamma\leq t_1} \phi(\gamma) =
\int_{t_0^+}^{t_1^+} \phi(t) dN(t) = \frac{1}{2\pi} \int_{t_0}^{t_1} \phi(t) \log \frac{t}{2\pi} dt
+ \int_{t_0^+}^{t_1^+} \phi(t) dQ(t).
$$
We integrate by parts, recalling the continuity of $\phi$ and the right-continuity of $Q$:
$$\int_{t_0^+}^{t_1^+} \phi(t) dQ(t)
= \phi(t) Q(t)\Big|_{t_0}^{t_1} - \int_{t_0}^{t_1} Q(t) d\phi(t). 
$$
Thus \eqref{eq:initor} holds. By Lemma \ref{lem:oldie} and integration by parts, \eqref{eq:seguitor} and \eqref{eq:sagasto} %and \eqref{eq:sautero} 
follow. In particular, for \eqref{eq:sagasto},
the last two terms of \eqref{eq:initor} contribute at most
\[\begin{aligned}\phi(t) Q(t)|_{t_0}^{t_1} + \int_{t_0}^{t_1} \left(\frac{1}{5} \log t + 2\right) d\phi(t) =
\phi(t) Q(t)\Big|_{t_0}^{t_1} + \left. \left(\frac{1}{5} \log t + 2\right) \phi(t) \right|_{t_0}^{t_1} - \frac{1}{5}
\int_{t_0}^{t_1} \phi(t) \frac{dt}{t}.\end{aligned}\]
%\textcolor{blue}{To obtain \eqref{eq:sautero}, we are using the concavity of $\log t$.} 
\end{proof}

\begin{lemma}\label{lem:cathinv}
For $t_0\geq 14$,
\begin{equation}\label{eq:darmorio}\sum_{\gamma>t_0} \frac{1}{\gamma^2}\leq \frac{1}{2\pi t_0} \log \frac{e t_0}{2\pi} + 
\frac{1}{t_0^2} \left(\frac{2}{5} \log t_0 + \frac{41}{10}\right).
\end{equation}
\end{lemma}
\begin{proof}
By Lemma \ref{lem:Lehmanmodern} with $\phi(t) = 1/t^2$, for $t_1\geq t_0$, and Lemma \ref{lem:oldie},
\[\begin{aligned}\sum_{t_0<\gamma\leq t_1} \frac{1}{\gamma^2} &\leq
\dfrac{1}{2\pi}\int_{t_0}^{t_1}\dfrac{1}{t^2}{\log \frac{t}{2\pi}} dt + 2\cdot
\frac{\frac{1}{5} \log t_0 +2}{t_0^2} + \frac{1}{5} \int_{t_0}^{t_1} 
 \frac{dt}{t^3}\\
 &= \left.  -\frac{\log \frac{e t}{2\pi}}{2\pi t} \right|_{t_0}^{t_1} +  
 \frac{\frac{2}{5} \log t_0 +4}{t_0^2} + \frac{1}{10} \left(\frac{1}{t_0^2} - 
 \frac{1}{t_1^2}\right).\end{aligned}\]
Letting $t_1\to \infty$ we are done.
\end{proof}

\subsection{Expressing values of $\zeta(s)$ in terms of zeros of $\zeta(s)$}\label{subs:blomaps} 
Our aim here is to give an explicit version (Prop.~\ref{prop:titch96A}) of a well-known expression for $\zeta'(s)/\zeta(s)$ in terms of nearby zeros of $\zeta(s)$.

%\textcolor{red}{Poner aquí el objetivo de la subsection que es hacer explícito lo de Titchmarsh.}
We start with an auxiliary estimate.
%The following integrals arise from sums of $1/(t-\gamma)^2$
%over ordinates $\gamma$ of zeros of $\zeta(s)$.
\begin{lemma}\label{lem:daremo}
Let $a>0$, $y_0\geq 2\pi e$, $t\geq a+y_0$. Then
\begin{equation}\label{eq:intogru}
\int_{-\infty}^{-y_0} \frac{\log \frac{-y}{2\pi}}{(y-t)^2} dy +
\left(\int_{y_0}^{t-a} + \int_{t+a}^\infty\right) \frac{\log \frac{y}{2\pi}}{(y-t)^2} dy \leq  \frac{2}{a} \log \frac{t}{2\pi} +
\frac{c_2}{t^2} + \frac{c_4}{t^4},
\end{equation}
where $c_2 = a - 2 y_0 \log \frac{y_0}{2\pi e}$,
$c_4 = (2\log 2 - 1) a^3 - \frac{4}{3} y_0^3$.
\end{lemma}
\begin{proof}
The three integrals on the left of \eqref{eq:intogru} equal
\[\frac{\log \frac{y_0}{2\pi}}{t + y_0} + \frac{1}{t} \log\left(1+\frac{t}{y_0}\right),\quad
  \frac{\log \frac{t-a}{2\pi}}{a} - \frac{\log \frac{y_0}{2\pi}}{t-y_0} + \frac{1}{t} \log \frac{y_0 a}{(t-y_0) (t-a)},\quad
 \frac{\log \frac{t+a}{2\pi}}{a} + \frac{1}{t} \log\left(1 +\frac{t}{a}\right),\]
 respectively. Thus, their total is
\[\frac{2}{a} \log \frac{t}{2\pi} + 
 \frac{1}{a} \log \left(1 - \frac{a^2}{t^2}\right) - \frac{2 y_0 \log \frac{y_0}{2 \pi}}{t^2-y_0^2}+
\frac{1}{t}\left(\log \frac{1 +\frac{a}{t}}{1 - \frac{a}{t}} +
\log \frac{1 + \frac{y_0}{t}}{1 - \frac{y_0}{t}}\right) .\]
For $r\in (0,1)$, $\log(1-r^2) + r \log \frac{1+r}{1-r}$ has
the series expansion \[ \sum_n \left(\frac{2}{2 n-1}
- \frac{1}{n} \right) r^{2 n} < r^2 + 2 r^4 \sum_{n\geq 2} \left(\frac{1}{2 n - 1} - \frac{1}{2 n}\right) 
= r^2 + (2\log 2 - 1) r^4.\]
Again by a series expansion, $-\frac{2\rho}{1-\rho^2} + \log \frac{1+\rho}{1-\rho}< -\frac{4}{3} \rho^3$ for $\rho\in (0,1)$. We set 
$r = \frac{a}{t}$, $\rho = \frac{y_0}{t}$.
\end{proof}
\begin{comment}
    \[I_- = \int_{-\infty}^{-t_0} \frac{\log \frac{-y}{2\pi}}{(y-t)^2} dy
= \int_{t_0}^\infty \frac{\log \frac{y}{2\pi}}{(y+t)^2} dy = 
\frac{\log \frac{t_0}{2\pi}}{t + t_0} + \frac{1}{t} \log\left(1+\frac{t}{t_0}\right),
\]
\[I_0 = \int_{t_0}^{t-a}  \frac{\log \frac{y}{2\pi}}{(y-t)^2} dy
= \frac{\log \frac{t-a}{2\pi}}{a} - \frac{\log \frac{t_0}{2\pi}}{t-t_0} + \frac{1}{t} \log \frac{t_0 a}{(t-t_0) (t-a)},
\]
\[I_1 = \int_{t+a}^\infty  \frac{\log \frac{y}{2\pi}}{(y-t)^2} dy
= \frac{\log \frac{t+a}{2\pi}}{a} + \frac{1}{t} \log\left(1 +\frac{t}{a}\right),
\]
Then
\end{comment}

\begin{lemma}\label{lem:janframe}
Let $t\geq 1000$, $a\in (0,4]$. For $\gamma$ going over 
ordinates of non-trivial zeros of $\zeta(s)$,
\[\sum_{|\gamma-t|>a}
 \frac{1}{|t-\gamma|^2} \leq \frac{1}{\pi a}\log \frac{t}{2\pi} + 
 \frac{\frac{2}{5} \log t + 4 + Q((t-a)^-)-Q(t+a)}{a^2}.
 \]
\end{lemma}
\begin{proof}
    By {\eqref{eq:seguitor} and \eqref{eq:sagasto}} with (a) $t_0 = t+a$, $t_1\to \infty$,
$\phi(y) = \frac{1}{(y-t)^2}$,
 (b) $t_0=y_0^-$, $t_1 = (t-a)^-$, $\phi(y) = \frac{1}{(t-y)^2}$,
where $y_0= 2\pi e$, and
(c) $t_0 = y_0^-$, $t_1\to \infty$, $\phi(y) = \frac{1}{(y+t)^2}$,
\[\begin{aligned}
&\sum_{\substack{\gamma: |\gamma|\geq y_0\\|\gamma-t|>a}}
 \frac{1}{|t-\gamma|^2} \leq 
\sum_{\gamma>t+a} \frac{1}{|t-\gamma|^2} + 
\sum_{y_0\leq \gamma<t - a} \frac{1}{|t-\gamma|^2} +
\sum_{y_0 {\leq} \gamma} \frac{1}{|t+\gamma|^2}\\
&\leq\frac{1}{2\pi}\left(
\left(\int_{t+a}^\infty + \int_{y_0}^{t-a}\right) \frac{\log \frac{y}{2\pi}}{(y-t)^2} dy + 
\int_{y_0}^{\infty} \frac{\log \frac{y}{2\pi}}{(y+t)^2} dy 
\right)\\
&+ 
\frac{\frac{1}{5}\log(t+a) + 2 - Q(t+a)}{a^2} 
+ \frac{\frac{1}{5} \log(t-a) + 2 +Q((t-a)^-)}{a^2} + \frac{\frac{1}{5} \log y_0 + 2 - Q(y_0^-)}{(y_0+t)^2}\\
&+ \frac{1}{5} \left(\int_{t+a}^\infty
\frac{dy}{y (y-t)^2} - 
\int_{y_0}^{t-a} \frac{dy}{y (t-y)^2} +
\int_{y_0}^\infty \frac{dy}{y (y+t)^2}\right)
- \frac{\frac{1}{5} \log y_0 + 2 + Q(y_0^-)}{(y_0-t)^2}
%\\ &=\frac{\log \frac{t+a}{2\pi}}{2\pi a} + \frac{\log\left(1 + %\frac{t}{a}\right)}{2\pi t} + 
%\frac{\frac{2}{5}\log(t+a) + 4}{a^2} + \frac{1}{5 a t} -
%\frac{\log\left(1 + \frac{t}{a}\right)}{5 t^2},
\end{aligned}\]
By Lemma \ref{lem:daremo} the total of the first three integrals here is at most $\frac{2}{a} \log \frac{t}{2\pi} +
\frac{a}{t^2} + \frac{c_4}{t^4}$, where $c_4 = (2\log 2 - 1) a^3 - \frac{4}{3} y_0^3$.
Clearly $\log(t+a) + \log(t-a) = 2\log t + \log\left(1+\frac{a}{t}\right) + \log\left(1-\frac{a}{t}\right) \leq 2\log t - \left(\frac{a}{t}\right)^2$; here $\left(\frac{a}{t}\right)^2$ 
contributes $\frac{1}{5t^2}$ in the end.
We use $\log \frac{1+\epsilon}{1-\epsilon} \leq \frac{2\epsilon}{1-\epsilon^2}$ and
$-\log(1-\epsilon) \leq \frac{\epsilon}{1-\epsilon}$ to bound the contribution of the
 last three integrals by $\frac{1}{5}$ times
\[-\frac{2\log \frac{t}{a}}{t^2} + 
\frac{\log \frac{t+y_0}{t-y_0} - \log \left(1 -
\frac{a^2}{t^2}\right)}{t^2} + \frac{2 y_0}{t (t^2-y_0^2)}\leq
-\frac{2\log \frac{t}{a}}{t^2} +
\frac{a^2}{t^2 (t^2-a^2)} + \frac{4 y_0}{t (t^2-y_0^2)}
\]

%Let $\gamma_0 = $, 
%$\gamma_1= 21.022039\dotsc$ be the ordinates of the
%lowest and second-lowest zeros 
%of $\zeta(s)$ in the upper half-plane. 

Since $y_0= 2\pi e$ and $N(y_0^-) = 1$, we know that $Q(y_0^-) = \frac{1}{8}>0$. By convexity,
$\frac{1}{(t+y_0)^2}+\frac{1}{(t-y_0)^2}\geq \frac{2}{t^2}$.
We must still account for
$\frac{1}{2} \pm i \gamma_0$ with 
$\gamma_0 = 14.134725\dotsc < 2\pi e$.
Since $f(\epsilon) = \frac{1}{\epsilon^2} 
\left(\frac{1}{(1-\epsilon)^2} + \frac{1}{(1+\epsilon)^2} - 2\right)=\frac{2(3-\epsilon^2)}{(1-\epsilon^2)^2}$ is increasing on $0<\epsilon<1$,
$\frac{1}{(t-\gamma_0)^2} +
\frac{1}{(t+\gamma_0)^2}= \frac{2}{t^2} +
\frac{\epsilon^2}{t^2} f(\epsilon)\leq
\frac{2}{t^2} + \frac{\gamma_0^2}{t^4} f\left(\frac{\gamma_0}{10^3}\right)$
for $\epsilon = \frac{\gamma_0}{t}$. Hence
\[\begin{aligned}\sum_{|\gamma-t|>a}
 \frac{1}{|t-\gamma|^2} &\leq
 \frac{1}{\pi a}\log \frac{t}{2\pi} + 
 \frac{\frac{2}{5} \log t + 4 + Q((t-a)^-)-Q(t+a)}{a^2}\\
 &+ \frac{2 + \frac{a}{2\pi} - \frac{2}{8} - \frac{1}{5} -\frac{2}{5} \log \frac{t}{a}}{t^2} + \frac{4 y_0}{5 t^3 (1-(y_0/t)^2)} + \frac{\kappa}{t^4}
 \end{aligned}\]
with $\kappa = 
\gamma_0^2 f\left(\frac{\gamma_0}{10^3}\right) + \frac{c_4}{2\pi} + \frac{a^2}{5(1-(a/t)^2)} < 150$.
 The sum of $\frac{7}{4} + \frac{a}{2\pi} - \frac{1}{5} - \frac{2}{5}\log \frac{2\cdot 10^3}{a} = -1.2131 + \frac{a}{2\pi} + \frac{2}{5} \log a <  -0.0219$ and $\frac{4\cdot 2\pi e}{5\cdot 10^3 (1-(2 \pi e/10^3)^2)} + \frac{150}{(10^3)^2}\leq 0.0139$
is negative.
\end{proof}
\begin{comment}
\begin{lemma}
Let $\sigma_+\geq 1$, $t>0$, 
$s_+ = \sigma_+ + i t$, $0<a<t/2$.
For $\rho$ going over zeros of $\zeta(s)$,
\[\left|\sum_{|\gamma-t|\leq a} \frac{1}{s_+-\rho}\right|\leq
\]
\end{lemma}
\begin{proof}
    By Lemma \ref{lem:lehmanmodern} with
    $\phi(y) = 1/(\sigma_+ + i t - (1/2+i y))$
    \begin{equation}\label{eq:initor}\sum_{t_0<\gamma\leq t_1} \phi(\gamma) = \dfrac{1}{2\pi}\int_{t_0}^{t_1}\phi(t)\log \dfrac{t}{2\pi} dt + 
\phi(t) Q(t)\Big|_{t_0}^{t_1} - \int_{t_0}^{t_1} Q(t) d\phi(t).
\end{equation}
\end{proof}  
\end{comment}
%We can now make explicit an argument given in \cite[Theorem 9.6 (A)]{MR882550}.

\begin{proposition}\label{prop:titch96A} Let $a>0$, $\sigma_+>1$. For $s=\sigma+it$ with 
$-2\leq \sigma\leq \sigma_+\leq 2$, $t\geq \max(1000,a)$,
\begin{align} \label{eq:kombucha}
\dfrac{\zeta'}{\zeta}(s) & = \sum_{{\rho: |\Im \rho - t|\leq a }} \dfrac{1}{s-\rho} + O^*\left(\kappa_1 \log \frac{t}{2\pi} + \kappa_2\cdot \left(\frac{2}{5}\log t + 4\right) + \epsilon
+ \left|\frac{\zeta'}{\zeta}(\sigma_+)\right|\right),
\end{align}
where $\kappa_1 = 
\frac{1}{\pi} \left( \frac{\sigma_+-\sigma}{a} + \frac{a}{\sigma_+-1}\right)$, $\kappa_2 = 
 \frac{\sigma_+-\sigma}{a^2} + \left|\frac{\sigma_+-\sigma}{a^2}-\frac{1}{\sigma_+-1}\right|$, and $\epsilon =2.02\cdot 10^{-3}$. If every $\rho$ with $t-a\leq \Im \rho\leq t+a$ satisfies $\Re \rho =\frac{1}{2}$, then both instances of 
$\frac{1}{\sigma_+-1}$ can be replaced by $\frac{1}{\sigma_+-\frac{1}{2}}$.

\begin{comment}
\begin{align} \label{12_59pm}
\dfrac{\zeta'}{\zeta}(s) & = \sum_{|\gamma-t|<1}\dfrac{1}{s-\rho} + O^*\left(\kappa(\sigma)\cdot \log t + \epsilon(\sigma)\right),
\end{align}
\end{comment}

%\em where Andrés's constants are $\kappa(\sigma)=\big(\frac{8}{9}+\frac{1}{\pi}\big)\left((\frac{1}{3}-\frac{\sigma}{6}){\pi^2}+1\right) $, and $\epsilon(\sigma)=12.891(2-\sigma)+7.99$.}

%where $c_1=\frac{4}{9}+\frac{1}{2\pi}$, $c_2=3.709$, and $|\epsilon(\sigma,t)|\leq (2-\sigma)\left((c_1\log 2 + 2c_2)\zeta(2)+ \frac{2c_1+0.5}{t} + \frac{8c_1\log t}{t^2} \right)+ 2c_2 +\left(\frac{19}{2}+\frac{4}{3\sqrt{2}}\right)\frac{1}{t^2}+|\frac{\zeta'}{\zeta}(2)| + \frac{3}{t^2}$.
\end{proposition}
Our aim here is to give a clean version of \cite[Theorem 9.6 (A)]{MR882550}, not to optimize every constant. We have arranged matters carefully so that we do obtain good constants, just by proceeding logically: for instance, the terms $Q(t_0)$, $Q(t_1)$ from \eqref{eq:seguitor} and \eqref{eq:sagasto} will in part cancel -- an excess in zeros just outside the interval $[t-a,t+a]$ necessitates a deficit inside the interval.

One could do better by applying Lemma \ref{lem:Lehmanmodern} with $\phi$ complex-valued to bound
the left side of \eqref{eq:11_01pm}, and perhaps also inside the proof of Lemma \ref{lem:janframe}. Another possible
improvement would be not to replace \eqref{eq:slante} by a bound proportional to $\frac{1}{|t-\gamma|}$ (rather than $\frac{1}{|t-\gamma|^2}$) for $|t-\gamma|<1$, say.
\begin{proof} By \cite[Corollary 10.14]{MR2378655}, for all $s\in\C$:
\begin{align} \label{10_15pm}
\dfrac{\zeta'}{\zeta}(s) = \sum_{\rho}\left(\dfrac{1}{s-\rho}+\dfrac{1}{\rho}\right) - \dfrac{1}{2}\,\digamma\left(\frac{s}{2}+1\right) - \dfrac{1}{s-1} + C,
\end{align}
where $C$ is a constant. Evaluate \eqref{10_15pm} at $s=\sigma+it$ and $s_+=\sigma_++it$, and take the difference:
\[
\dfrac{\zeta'}{\zeta}(s) = \sum_{\rho}\left(\dfrac{1}{s-\rho}-\dfrac{1}{s_+-\rho}\right) - \dfrac{1}{2}\left(\digamma\left(\frac{s}{2}+1\right)-\digamma\left(\frac{s_+}{2}+1\right)\right)+\dfrac{\zeta'}{\zeta}(s_+) + O^*\left(\dfrac{\sigma_+-\sigma}{t^2}\right).
\]
%We set aside terms with $|\gamma-t|<a$. 
Clearly, \begin{equation}\label{eq:slante}\sum_{\rho: |\Im \rho - t|>a}
\left|\frac{1}{s-\rho}-\frac{1}{s_+-\rho}\right|
\leq \sum_{|{\gamma} - t|> a}
\frac{\sigma_+-\sigma}{|t-\gamma|^2},\end{equation} and, by Lemma~\ref{lem:janframe},
\begin{equation}\label{eq:horgoz}\begin{aligned} \sum_{\rho: |\Im \rho - t|> a}
\frac{1}{|t-\gamma|^2}\leq \frac{1}{\pi a}\log \frac{t}{2\pi} + 
 \frac{\frac{2}{5} \log t + 4 + Q((t-a)^-)-Q(t+a)}{a^2}.
\end{aligned}\end{equation}
By $\Re{\rho}\leq 1$ and Corollary \ref{cor:zeroinbox},
\begin{align} \label{eq:11_01pm}
\sum_{\rho: |\Im \rho - t|\leq a }\frac{1}{\left|s_+-\rho\right|}\leq \frac{N(t+a)-N((t-a)^-)}{\sigma_+-1} \leq 
\frac{Q(t+a)-Q((t-a)^-) +\frac{a}{\pi}\log \frac{t}{2\pi}}{\sigma_+-1}.
\end{align}
The terms $Q(t+a)$, $Q((t-a)^-)$ here cancel out partly with those in \eqref{eq:horgoz}.
\begin{comment}
{\textcolor{blue}{
If $\Re \rho = 1/2$ holds for all $\rho$ with $|\Im \rho - t|\leq a$, we can do a little better: by Lemma~\ref{lem:Lehmanmodern},
\[\begin{aligned}
    \sum_{\rho: |\Im \rho - t|\leq a }\frac{1}{|s_+-\rho|} &= \frac{1}{2\pi} \int_{t-a}^{t+a} \phi(y) \log \frac{y}{2\pi} dy + 
    \left. \phi(y) Q(y) \right|_{t-a}^{t+a} + \left(\frac{1}{5}\log t + 2\right) (\phi(t) - \phi(t+a))
\end{aligned}\]
for $\phi(y) = \left(\left(\sigma_+ -\frac{1}{2}\right)^2 + (y-t)^2\right)^{-1/2}$. By $\phi(t+\delta) = \phi(t-\delta)$ and 
the concavity of $\log$,
\[\frac{1}{2\pi} \int_{t-a}^{t+a} \phi(y) \log \frac{y}{2\pi} dy \leq \frac{1}{2\pi} \log \frac{t}{2\pi} \cdot \int_{t-a}^{t+a} \phi(y) dy 
= \frac{1}{\pi} \log \frac{t}{2\pi} \arsinh \frac{a}{\sigma^+-1/2}.
\]
}}
\end{comment}

By \eqref{eq:digamma},
\[\frac{1}{2}\left(\digamma\left(\frac{s}{2}+1\right)-\digamma\left(\frac{s_+}{2}+1\right)\right)
= \frac{1}{2} \log \frac{s+2}{s_++2} - \frac{1}{2} \left(
\frac{1}{s+2} - \frac{1}{s_++2}\right) + O^*\left(
\frac{1}{3\sqrt{2}\cdot t^2}\right).\]
For $z\in \mathbb{C}$ with $|z|\leq \frac{2}{3}$, $\log(1+z) = z + O^*(|z|^2)$. Letting $z = \frac{\sigma-\sigma_+}{s_++2}$, we obtain
\[\frac{1}{2}\left|\digamma\left(\frac{s}{2}+1\right)-\digamma\left(\frac{s_+}{2}+1\right)\right|\leq
 \frac{\sigma_+-\sigma}{2 t} + \frac{1}{t^2}
\left(\frac{(\sigma_+-\sigma)^2}{2} + 
\frac{\sigma_+-\sigma}{2} +\frac{1}{3\sqrt{2}}\right).\]
Lastly, $\left|\frac{\zeta'}{\zeta}(s_+)\right| = \left|\sum_n \Lambda(n) n^{-s_+}\right| \leq \sum_n \Lambda(n) n^{-\sigma_+} =
\left|\frac{\zeta'}{\zeta}(\sigma_+)\right|$. We obtain the statement with
\[\epsilon= \frac{\sigma_+-\sigma}{2 t} + \frac{1}{t^2}
\left(\frac{(\sigma_+-\sigma)^2}{2} + 
\frac{3}{2} (\sigma_+-\sigma) +\frac{1}{3\sqrt{2}}\right)\leq 0.00202.\]
\begin{comment}
On the other hand, since $\log(\frac{\alpha+it}{2}+1) =\log t + i\big(\frac{\pi}{2}-\arctan\frac{\alpha+2}{t}\big) -\log 2 + O^*\Big(\frac{(\alpha+2)^2}{2t^2}\Big)$, for $\alpha\geq -2$, by \eqref{eq:digamma} we can see that 
\begin{align*}
&\digamma\left(\frac{s}{2}+1\right)-\digamma\left(\frac{s_0}{2}+1\right)   \\
& \,\,\,=i\left(\arctan\left(\frac{4}{t}\right)-\arctan\left(\frac{\sigma+2}{t}\right)\right)+\dfrac{\sigma-2}{(\sigma+2+it)(4+it)}+O^*\left(\dfrac{18}{t^2}\right),
\end{align*}
Since $t\geq 10^6$, using the mean value theorem, we obtain that
\begin{align} \label{11_02pm}
\left|-\dfrac{1}{2}\left(\digamma\left(\frac{s}{2}+1\right)-\digamma\left(\frac{s_0}{2}+1\right)\right)+O^*\left(\dfrac{4}{t^2}\right)\right|\leq  0.001.
\end{align}
\end{comment}
%Combining \eqref{11_00pm}, \eqref{11_01pm} and \eqref{11_02pm} in \eqref{10_59pm} and using the fact that $\left|\frac{\zeta'}{\zeta}(2)\right|\leq 0.57$ we are done.

%we conclude that for $s=\sigma+it$, with $-2\leq\sigma\leq %2$ and $t\geq 10^6$,
%\begin{align*}
%\dfrac{\zeta'}{\zeta}(s) & = \sum_{|\gamma-t|<1}\dfrac{1}{s-\rho} + O^*\left(\left(2c_1(2-\sigma)\zeta(2)+\dfrac{2c_1}{\sigma_0-1}+\dfrac{\epsilon(\sigma)}{\log t}\right)\log t\right),
%\end{align*}
%where $|\epsilon(\sigma)|\leq 12.891(2-\sigma)+{2c_2}+\left|\frac{\zeta'}{\zeta}(2)\right|+0.001$. This implies the desired result.
\end{proof}

\section{Series, functions and comparisons}\label{sec:darf}
%\subsection{Horizontal integrals of $-\frac{\zeta'(s)}{\zeta(s)}$}\label{subs:pelican}
\subsection{Estimates}\label{subs:darf1}
\begin{lemma}\label{lem:sibelius}
Let $F(z) = \frac{1}{\pi}- (1-z) \cot \pi (1-z)$. Then \begin{enumerate}[(a)]
\item $F(x)$ is decreasing on $(0,1]$, and $0< F(x)<\frac{1}{\pi x}$\, for $x\in (0,1)$.
\item For any $x\in \left(0,1\right]$, $y\in \left[-\frac{1}{2},\frac{1}{2}\right]$, 
\begin{equation}\label{eq:solucky}|F(x+iy)-F(x)| \leq \frac{|y|}{\pi x |x+i y|} + 1.78 |y| ,\end{equation}
\item Let $A(z) = F(z) -\frac{1}{\pi z}$. Then, 
 for $x\in \left(0,\frac{1}{2}\right]$, $y\in \left[-\frac{1}{2},\frac{1}{2}\right]$,
\begin{equation}\label{eq:garmenio}
|A(x)|\leq \frac{\pi}{3} x,\quad   \mbox{and} \,\,\,\,\,\,\, |A(x+i y) - A(x)|\leq 1.78 |y|.\end{equation}
%and so
\end{enumerate}
\end{lemma}
\begin{proof}
By \eqref{eq:mireio}, $F(z)=\frac{2}{\pi}\sum_{n}\zeta(2n)(1-z)^{2n}$ for $|z-1|<1$. In particular, $F(x)\geq 0$ for $0<x\leq 1$, and $F$ is decreasing on $(0,1]$. Moreover, since for $0<x<1$ we have $\frac{1}{\pi x}-\cot \pi x>0$ and $\cot \pi (1-x) = -\cot \pi x$, it follows that $F(x) <\frac{1}{\pi}+(1-x)\frac{1}{\pi x} = \frac{1}{\pi x}$.

\begin{comment}
By \eqref{eq:mireio}, all coefficients of
the Taylor series of $\frac{1}{\pi} - s \cot \pi s$ at $s=0$ are non-negative, and the radius of convergence is $1$; so, $F(x)\geq 0$ for all $0<x\leq 1$.
\end{comment}

\noindent
{\bf Case $\frac{1}{2}\leq x\leq 1$.} %This is the easy case: we will prove a Lipschitz
%condition $|F(x+i y)-F(x)|\leq M |y|$. 
We can assume $y\geq 0$. First, $F(x+i y) - F(x) = i \int_0^y F'(x+i r) dr$. Since
$F'(z) = i \left(\frac{s}{i} \cot \frac{s}{i} \right)' |_{s = i \pi (1-z)}
= i (s \coth s)'|_{s = i \pi (1-z)}$,
Lem.~\ref{lem:cothder} yields $|F'(z)|\leq |\pi (1-z)|$. Thus,
\[
|F(x+i y) - F(x)|\leq \pi \int_0^y \sqrt{\frac{1}{4}+r^2}\, dr = \pi y \cdot \frac{1}{y}
\int_0^y \sqrt{\frac{1}{4}+r^2}\, dr \leq \pi y \cdot \frac{1}{\frac{1}{2}} \int_0^{\frac{1}{2}} \sqrt{\frac{1}{4}+r^2}\, dr
\]
since $\sqrt{\frac{1}{4}+r^2}$ is increasing. By $\int_0^{\frac{1}{2}} \sqrt{\frac{1}{4}+r^2}\, dr = 
\frac{\sqrt{2} + \sinh^{-1}(1)}{8}$, we conclude that $|F(x+i y)-F(x)|\leq M |y|$ with $M=\frac{\pi}{4}(\sqrt{2} + \sinh^{-1}(1)) = 1.80294\dotsc$, which implies \eqref{eq:solucky} in this case.

\noindent {\bf Case $0<x\leq \frac{1}{2}$.}
 We see that
$A(z) = - (1-z) \left(\frac{1}{\pi z} - \cot \pi z\right)$
and, by \eqref{eq:mireio},  the Taylor series of
$f(z) = \frac{1}{\pi z} - \cot \pi z$ at $z=0$ is
$\frac{2}{\pi} \sum_n \zeta(2 n) z^{2 n-1}$;
write $f(z) = \frac{\frac{2}{\pi} \zeta(2) z - h(z)}{1-z^2}$, where $h(z) =
\frac{2}{\pi} \zeta(2) z - (1-z^2) \left(\frac{1}{\pi z} - \cot \pi z\right) = \frac{2}{\pi}
\sum_n a_n z^{2 n+1}$ with $a_n=\zeta(2 n)-\zeta(2 n +2)$.
% write $f(z) = \frac{2}{\pi} \zeta(2) \frac{z}{1-z^2} - g(z)$, where $g(z) = \frac{2}{\pi}
%\sum_n (\zeta(2)-\zeta(2 n)) z^{2 n-1}$, whose coefficients are all $\geq 0$. 
 %, whose coefficients are all $\geq 0$.
Then
\begin{align} \label{cotaA'}
\!\! A'(z) =  -((1-z) f(z))' = -\left(\frac{2}{\pi} \frac{\zeta(2) z}{1+z} - \frac{h(z)}{1+z}\right)'
= - \frac{2}{\pi} \frac{\zeta(2)}{(1+z)^2} - \frac{h(z)}{(1+z)^2} + \frac{h'(z)}{1+z}.
\end{align}
Since $a_n\geq 0$, it follows that, for 
$z=x+iy$ with $0\leq x\leq \frac{1}{2}$, $|y|\leq \frac{1}{2}$,
\[|A'(z)|\leq\frac{2}{\pi} \zeta(2) + \left|h\left(\frac{1}{\sqrt{2}}\right)\right| +
\left|h'\left(\frac{1}{\sqrt{2}}\right)\right| = 1.77963\dotsc .
\]
So, $|A(x+iy)-A(x)|\leq 1.78 |y|$. 
Now \eqref{eq:solucky} follows by $\left|\frac{1}{x+i y} - \frac{1}{x}\right| = \frac{|y|}{|x (x+i y)|}$. 

Lastly, by \eqref{cotaA'}, 
%by the Taylor series of $f(z)$ we see that $A'(x)=\frac{2}{\pi}\sum_{n}\zeta(2n)\left(2nx^{2n-1}-(2n-1)x^{2n-2}\right)$. Since the
$A'(x)=\frac{2}{\pi(1+x)^2}\left(-\zeta(2)+\sum_{n}a_n g_n(x)\right)$, where $g_n(x)=(2nx+(2n+1))x^{2n}$, which is increasing.  Since $a_n\geq 0$ and $A'(\frac{1}{2})=\frac{8-\pi^2}{2\pi}<0$, we get that
$A'(0)\leq A'(x)<0$ on $[0,\frac{1}{2})$. Since $A'(0) = - \frac{\pi}{3}$ and
$\lim_{z\to 0} A(z) = 0$, this gives $|A(x)|\leq \frac{\pi}{3} x$ for $x\in [0,\frac{1}{2})$.
\begin{comment}
As for $B(z)= (1/z-1)/\pi$: for $z=x+i y$ with $x<1/2$,
\[
\left|\frac{1}{z}-1\right|^2 = \frac{(1-x)^2+y^2}{x^2+y^2}
\]
decreases as $|y|$ increases. Hence $|B(x+i y)|\leq|B(x)|$.

We conclude that $|F(z)|\leq |A(z)|+|B(z)|\leq |A(x)|+|B(x)|+ M |y|$. Since $A'(x) =  \frac{2}{\pi} \sum_n \zeta(2 n) (-(2n-1) x^{2 n-2}
+ 2 n x^{2 n-1})$ and $(2 n-1) x^{2 n-2}\geq 2 n x^{2 n-1}$ for $0\leq x\leq \frac{1}{2}$,
$A(x)$ is decreasing on $\left[0,\frac{1}{2}\right]$; as $A\left(\frac{1}{2}\right) = 0$, it follows that $A(x)\geq 0$
on $\left[0,\frac{1}{2}\right]$. Thus, $|A(x)|+|B(x)| = A(x)+B(x) = F(x)$.
\end{comment}
\end{proof}

We will need a comparison of coefficients.
\begin{lemma}\label{lem:tremic}
For $n\geq 1$, let
\[a_n(t) = (1-t)^2  -
\left(\frac{1}{2 n} - \frac{2 t}{2 n+1} + 
\frac{t^{2}}{2 n + 2}\right),
\]
\[
    b_n(t)  =  (1-t)^2  -
2 \left(\frac{1}{2 n} - \frac{2 t}{2 n+1} + 
\frac{t^{2}}{2 n + 2}\right)
+ \left(\frac{1}{(2 n)^2} - \frac{2 t}{(2 n+1)^2}
+ \frac{t^{2}}{(2 n + 2)^2}\right).\]
Then $a_n(t)\geq 0$ for all $t\in [0,\frac{1}{2}]$ and $b_n(t)\geq 0$ for all $t\in [0,\frac{1}{3}]$.
\end{lemma}
\begin{proof} 
Let $\alpha, \beta, \theta>0$ such that $\beta^2\geq \alpha\theta$. Then $f(t)=\alpha t^2-2\beta t +\theta\geq 0$ for all $0\leq t\leq \frac{\beta-\sqrt{\beta^2-\alpha\theta}}{\alpha}$.
Thus, $f(t)\geq 0$ for all $0\leq t\leq t_0$ if and only if $\frac{\beta}{\alpha} \geq t_0$ and
$(\beta - \alpha t_0)^2\geq \beta^2-\alpha \theta$, i.e., $\frac{2\beta}{t_0} - \frac{\theta}{t_0^2} \leq \alpha$.
In particular, $f(t)\geq 0$ for all $t\in [0,\frac{1}{2}]$ if $\beta\geq \frac{\alpha}{2}$ and $4(\beta-\theta)\leq \alpha$, and $f(t)\geq 0$ for all $t\in [0,\frac{1}{3}]$ if $\beta\geq \frac{\alpha}{3}$ and $6\beta-9\theta\leq \alpha$.

Now,  $a_n(t)=c_{2n+2}t^2 - 2c_{2n+1}t+c_{2n}$ for $c_n = 1-\frac{1}{n}$. Since $c_{2n+1}\geq \frac{c_{2n}}{2}$ and  $4(c_{2n+1}-c_{2n})\leq c_{2n+2}$ we get the result. Moreover, $b_n(t)=c^2_{2n+2}t^2 - 2c^2_{2n+1}t+c^2_{2n}$, and since $c_{2n+1}^2\geq c_{2 n}^2/2$ and $6 c_{2n+1}^2 -9 c_{2n}^2
 = \frac{6}{2 n (2 n + 1)} \frac{8 n^2 - 1}{2 n (2 n+1)} -3 c_{2 n}^2\leq 
 \frac{6}{2\cdot 3} \frac{7}{6} - \frac{3}{4} < c_4^2\leq c_{2n+2}^2$, we are done.
\end{proof}

\begin{lemma}\label{lem:konstanz}
For $k\geq 1$, let $C_k = \sum_n \zeta(2 n) \left(\frac{1}{(2 n)^k} - \frac{2}{(2 n+1)^k} + \frac{1}{(2 n + 2)^k}\right)$. Then
\[C_1 = 0.168938\dotsc, \quad C_2 = 0.164184\dotsc .\]
\end{lemma}
The real task here is to ensure rapid convergence.
\begin{proof}
For $k\geq 2$, set aside $c_k = \sum_n \left(\frac{1}{(2 n)^k} - \frac{2}{(2 n+1)^k} + \frac{1}{(2 n + 2)^k}\right) = \frac{-1}{2^k} +\sum_n \frac{2}{(2 n)^k} - \sum_n \frac{2}{(2 n + 1)^k} = 
2-\frac{1}{2^k} + \sum_n \frac{4}{(2 n)^k} - \sum_m \frac{2}{m^k}$,
%$ = 2-\frac{1}{2^k} + (2^{2-k} - 2) \zeta(k)$, 
which is $\frac{7}{4} - \frac{\pi^2}{6}$ for $k=2$. For $k=1$, let
$c_1 = \sum_n \left(\frac{1}{2 n} - \frac{2}{2 n+1} + \frac{1}{2 n + 2}\right)$, which
equals the limit as $N\to \infty$ of $2 - \frac{1}{2} + \frac{1}{2 N +2} + 2 H_N
- 2 H_{2 N+1}$, where $H_N = \sum_{n\leq N} \frac{1}{n}$. By
$H_N = \log N + \gamma + o(1)$, this limit equals $\frac{3}{2} - 2\log 2$.

The series $\sum_n (\zeta(2 n)-1) \left(\frac{1}{(2 n)^k} - \frac{2}{(2 n+1)^k} + \frac{1}{(2 n + 2)^k}\right)$
converges exponentially:
for $t\geq 2$, $0<\zeta(t)-1 \leq 2^{-t} \frac{\zeta(2)-1}{2^{-2}}< 3\cdot 2^{-t}$.
We compute $50$ terms of the series for $k=1,2$ with Arb/FLINT.
\end{proof}
\begin{comment}
Let $C_0 =\sum_n \frac{(4 n+1) \zeta(2 n)}{n^2 (2 n+1)^2}$,
$C_1 = \sum_n \frac{\zeta(2 n)}{n (2 n + 1)}$,
$C_1' = \sum_n \frac{\zeta(2 n)}{n (2 n- 1)}$. Then
$C_0 = 1.0764105016\dotsc$, $C_1= 0.8378770664\dotsc$, $C_1' = 2.0462774528\dotsc$.
\end{lemma}
\begin{proof}
We set aside $c_0 = \sum_n \frac{4 n + 1}{n^2 (2 n+1)^2} = 4 \left(\sum_n \frac{1}{(2 n)^2} -
\sum_n \frac{1}{(2 n+1)^2}\right)$, $ c_1 = \sum_n \frac{1}{n ( 2 n+1)} =
2 \lim\limits_{N\to\infty} \left(\sum_{n\leq N} \frac{1}{2 n} - \sum_{n\leq N} \frac{1}{2 n+1}\right)$ and
$c_1' = \sum_n \frac{1}{n (2 n-1)} = 2 \lim\limits_{N\to\infty} \left(\sum_{n\leq N} \frac{1}{2 n -1} - \sum_{n\leq N} \frac{1}{2 n}\right)$. Since
$\sum_n \frac{1}{(2 n)^2} - \sum_n \frac{1}{(2 n+1)^2} = 2\sum_n \frac{1}{( 2n)^2} + 1 - \sum_m \frac{1}{m^2} = 1 - \frac{\zeta(2)}{2}$, we know that $c_0 = 4 - \frac{\pi^2}{3}$. Also 
\[\sum_{n\leq N} \frac{1}{2 n} -
\sum_{n\leq N} \frac{1}{2 n+1}   = 2 \sum_{n\leq N} \frac{1}{2 n}  - 
\sum_{1\leq n\leq 2 N+1} \frac{1}{n} +1= 
 H_N - H_{2 N+1} + 1,\]
 \[
\sum_{n\leq N} \frac{1}{2 n-1} - \sum_{n\leq N} \frac{1}{2 n} = 
\sum_{1\leq n\leq 2 N} \frac{1}{n} - 2 \sum_{n\leq N} \frac{1}{2 n}= 
 H_{2 N} - H_N,\]
 where $H_N$ is the harmonic number $\sum_{n\leq N} \frac{1}{n}$. By 
 $H_N = \log N + \gamma + o(1)$, we see that
 \[c_1 = \lim_{N\to \infty} 2 \left(\log N - \log (2 N+1) + 1\right)
= 2 (1 - \log 2),\quad c_1' = \lim_{N\to \infty} 2 \left(\log 2 N - \log N\right)
= 2 \log 2.\]

\end{comment}

\begin{lemma}\label{lem:bettnist}
Let $\Ei(x)$ be the exponential integral. Then, for all $x>0$,
\[\Ei(x) \leq \frac{e^x}{x} \left(1 + \frac{1}{x} +\frac{2}{x^2} +\frac{40/3}{x^3}\right).\]
    \end{lemma}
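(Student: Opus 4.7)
The plan is to set
\[
g(x)=\frac{e^x}{x}\Bigl(1+\frac{1}{x}+\frac{2}{x^2}+\frac{40/3}{x^3}\Bigr)-\Ei(x)
\]
and show that $g(x)\ge 0$ for all $x>0$. Using $\Ei'(x)=e^x/x$ and differentiating each term $e^x/x^k$ via $(e^x/x^k)'=e^x/x^k-k\,e^x/x^{k+1}$, the intermediate contributions telescope: the ``main'' $e^x/x$ from differentiating $e^x/x$ cancels against $\Ei'(x)$, and the pairs at orders $x^{-2}$ and $x^{-3}$ cancel among themselves, leaving only
\[
g'(x)\;=\;\Bigl(\tfrac{40}{3}-6\Bigr)\frac{e^x}{x^4}-\tfrac{4\cdot 40}{3}\frac{e^x}{x^5}\;=\;\frac{e^x(22x-160)}{3\,x^5}.
\]
Hence $g'$ has a unique zero on $(0,\infty)$ at $x_0=80/11$, with $g'<0$ on $(0,x_0)$ and $g'>0$ on $(x_0,\infty)$, so $g$ attains its global minimum on $(0,\infty)$ at $x_0$. (One can observe in passing that the coefficient $40/3$ is the precise value that places the critical point at $x_0=80/11$; any smaller coefficient would push $g(x_0)$ below $0$.)

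It therefore suffices to verify the single numerical inequality $g(80/11)\ge 0$. Directly $g(80/11)\approx 2.07$, which is comfortable. To make this rigorous, I would use either the convergent series $\Ei(x)=\gamma+\log x+\sum_{n\ge 1} x^n/(n\cdot n!)$, summing enough initial terms (say to $n\approx 40$, since $x_0<8$) and adding an elementary geometric tail bound, or the integration-by-parts identity
\[
\Ei(x)=\frac{e^x}{x}\sum_{k=0}^{n-1}\frac{k!}{x^k}+\Bigl(\Ei(a)-\frac{e^a}{a}\sum_{k=0}^{n-1}\frac{k!}{a^k}\Bigr)+n!\int_a^x\frac{e^t}{t^{n+1}}\,dt,
\]
obtained by iterating $\int e^t/t^k\,dt=e^t/t^k+k\int e^t/t^{k+1}\,dt$, choosing $n$ near the minimum term of the asymptotic expansion at $x_0\approx 7.27$ (so $n=7$) to get a tight enclosure of $\Ei(80/11)$. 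Given the $\approx 2$ units of slack, any reasonable interval-arithmetic computation suffices; this is entirely in the spirit of the finite numerical checks already performed in the paper (cf.~Lemma~\ref{lem:kalmynin} and \S\ref{sec:lambconcl}).

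The main obstacle is not analytic but arithmetic: the reduction via monotonicity is elementary, but the final step is an irreducible finite-precision verification of $g(80/11)\ge 0$, typical of optimal-constant lemmas where the constant has been tuned so that the bound is tight at a single interior point.
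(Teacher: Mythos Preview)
Your proof is correct and essentially identical to the paper's: both compute $g'(x)$, locate the unique critical point at $x_0 = 4\alpha/(\alpha-6) = 80/11$, and reduce the lemma to the single numerical check $g(80/11)\ge 0$ (the paper does this via Arb/FLINT). Your parenthetical claim that any smaller coefficient than $40/3$ would make $g(x_0)<0$ is inaccurate---you yourself note $g(80/11)\approx 2.07$, so there is slack---but this side remark does not affect the argument.
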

    In a better world, \cite[\S 6.12]{zbMATH05765058} would give remainder terms for $\Ei(x)$ with optimal constants.
\begin{proof}
 Let $f(x) = \frac{e^x}{x} \left(1+ \frac{1}{x} + \frac{2}{x^2} + \frac{\alpha}{x^3}\right)$ for $\alpha=\frac{40}{3}$.
 By 
 $f'(x) = \frac{e^x}{x} \left(1 + \frac{\alpha - 6}{x^4} - \frac{4\alpha}{x^5}\right)$ and
 $\Ei'(x) = \frac{e^x}{x}$,
 we see that $\sgn(f'(x)-\Ei'(x))=\sgn((\alpha-6) x - 4\alpha)$; thus, the claim is true if and only if it is true at $x=4\alpha/(\alpha-6)$. We check it there numerically, using Arb/FLINT.
\end{proof}

\begin{lemma}\label{lem:rameau} Let $0<\eta\leq e$ and $x\geq e^7$. Write $L=\log x$. Then
\begin{align}
\int_{0}^\infty\dfrac{ux^{-u}}{\sqrt{\eta^2+(\frac{1}{2}-u)^2}}du &\leq
\left(\frac{\frac{1}{2} \Ei(L/2)}{e^{L/2}} - \frac{1}{L}\right) + 
\frac{1}{\sqrt{x}} \left(\frac{1}{2} \log \frac{1}{\eta}+ \frac{1 + \frac{2}{L}}{2 \eta L}\right)
\label{eq:arguc1}\\
&\leq 
\frac{2}{L^2} + \frac{8}{L^3} + \frac{320}{3 L^4} +  
\frac{1}{\sqrt{x}} \left(\frac{1}{2} \log \frac{1}{\eta}+ \frac{1 + \frac{2}{L}}{2 \eta L}\right)
.\label{eq:arguc2}
\end{align}
\end{lemma}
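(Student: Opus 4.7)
My strategy is to first establish the main bound \eqref{eq:arguc1} by splitting the integral at $|u-\tfrac{1}{2}|=\eta$, and then derive \eqref{eq:arguc2} immediately from \eqref{eq:arguc1} using Lemma~\ref{lem:bettnist}. Throughout, the key parameter is $t := L\eta$, and the essential point is that the exponentially large pieces cancel, leaving a quantitatively tractable residue.

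In the essential case $\eta\leq\tfrac{1}{2}$, on the ``near'' region $|u-\tfrac{1}{2}|\leq\eta$ I would use the elementary bound $\sqrt{\eta^2+(\tfrac{1}{2}-u)^2}\geq\eta$ and evaluate $\tfrac{1}{\eta}\int_{1/2-\eta}^{1/2+\eta}ux^{-u}\,du$ in closed form by one integration by parts, obtaining $\tfrac{1}{\sqrt{x}}\bigl[\tfrac{(1+2/L)\sinh(L\eta)}{\eta L}-\tfrac{2\cosh(L\eta)}{L}\bigr]$. On the ``far'' region I would use $\sqrt{\eta^2+(\tfrac{1}{2}-u)^2}\geq|\tfrac{1}{2}-u|$, split at $u=\tfrac{1}{2}$, substitute $v=\pm(\tfrac{1}{2}-u)$, and recognize $\int_\eta^{1/2}\tfrac{e^{Lv}}{v}\,dv = \Ei(L/2)-\Ei(L\eta)$ and $\int_\eta^\infty\tfrac{e^{-Lv}}{v}\,dv = E_1(L\eta)$. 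Summing, the $\pm 2\cosh(L\eta)/L$ contributions cancel exactly, and using $e^{L/2}/(L\sqrt{x}) = 1/L$ one is left with
\[
I \;\leq\; \frac{\Ei(L/2)}{2\sqrt{x}}-\frac{1}{L} + \frac{1}{\sqrt{x}}\!\left[\frac{(1+2/L)\sinh t}{t} - \frac{\Ei(t)-E_1(t)}{2}\right].
\]
The remaining analytic task is the residual inequality $\tfrac{(1+2/L)\sinh t}{t} - \tfrac{\Ei(t)-E_1(t)}{2} \leq \tfrac{1}{2}\log(1/\eta) + \tfrac{1+2/L}{2t}$, with $t \leq L/2$. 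Using the identity $\Ei(t)-E_1(t) = 2\gamma+2\log t+2\sum_{k\geq 1}\tfrac{t^{2k}}{2k(2k)!}$ and the Taylor series of $\sinh t$, this reduces to a power-series comparison: the dominant exponentials cancel because $(1+2/L)\tfrac{\sinh t}{t}-\tfrac{\Ei(t)}{2}\sim\tfrac{e^t}{2t}\bigl(\tfrac{2}{L}-\tfrac{1}{t}\bigr)$, which is $\leq 0$ for $t\leq L/2$, so it suffices to control the sub-leading Taylor coefficients and the two singular terms $-\tfrac{(1+2/L)}{2t}$ and $-\log t$ present as $t\to 0^+$.

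For $\eta\in(\tfrac{1}{2},e]$ I would instead use the uniform bound $\sqrt{\eta^2+(\tfrac{1}{2}-u)^2}\geq\eta$ to obtain $I\leq\tfrac{1}{\eta L^2}$, and verify directly that this is dominated by the right-hand side of \eqref{eq:arguc1}: the positivity of $\tfrac{\Ei(L/2)}{2\sqrt{x}}-\tfrac{1}{L}$ (immediate from $\Ei(x)>e^x/x$), together with the remaining $\tfrac{1+2/L}{2\eta L\sqrt{x}}$ term, comfortably absorbs $\tfrac{1}{\eta L^2}$ and any negative contribution from $\tfrac{1}{2}\log(1/\eta)$ when $\eta>1$, as a short direct check using $L\geq 7$ and Lemma~\ref{lem:bettnist} shows. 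Once \eqref{eq:arguc1} is in hand, \eqref{eq:arguc2} follows by applying Lemma~\ref{lem:bettnist} at $x=L/2$: it gives $\tfrac{1}{2}\tfrac{\Ei(L/2)}{e^{L/2}}\leq\tfrac{1}{L}+\tfrac{2}{L^2}+\tfrac{8}{L^3}+\tfrac{320/3}{L^4}$, and subtracting $\tfrac{1}{L}$ produces exactly the desired $\tfrac{2}{L^2}+\tfrac{8}{L^3}+\tfrac{320}{3L^4}$.

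The main obstacle is the residual inequality flagged at the end of the second paragraph: the leading-order cancellation of exponential growth is transparent and the behaviour at both endpoints $t\to 0^+$ and $t=L/2$ is unambiguous, but making the comparison of power-series coefficients genuinely uniform on the rectangle $t\in(0,L/2]$, $L\geq 7$ requires care at moderate $t$, where no single term of the expansion dominates and one must exploit the negative $-\tfrac{1}{2t}$ and $-\tfrac{1}{2}\log t$ contributions to offset the small positive terms $\tfrac{(1+2/L)t^{2k}}{(2k+1)!} - \tfrac{t^{2k}}{2k(2k)!}$ (which are not uniformly non-positive once $4k>L$).
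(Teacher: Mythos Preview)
Your decomposition at $|u-\tfrac12|=\eta$ is genuinely different from the paper's, and it funnels all the difficulty into the residual inequality you flag. The asymptotic cancellation you describe is correct at both endpoints, but you have not proved the inequality uniformly over $(t,L)\in(0,L/2]\times[7,\infty)$; doing so would be a two-variable verification of roughly the same weight as the lemma itself, and the coefficient comparison you sketch does not close as written (the signs $\tfrac{(1+2/L)}{(2k+1)!}-\tfrac{1}{2k(2k)!}$ are not uniformly negative, as you note). Your side remark that $\Ei(x)>e^x/x$ is ``immediate'' also needs care: it fails for $x$ below about $2$, though it does hold at $x=L/2\geq 3.5$, so the claim is salvageable.

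The paper avoids all of this with a different split: rather than cutting spatially at $|u-\tfrac12|=\eta$, it subtracts the value of the numerator at $u=\tfrac12$ on $[0,\tfrac12]$, writing $I=I_0+I_++I_-$ with
\[
I_0=\int_0^{1/2}\frac{\tfrac12\, x^{-1/2}\,du}{\sqrt{\eta^2+(\tfrac12-u)^2}},\qquad
I_+=\int_0^{1/2}\frac{(ux^{-u}-\tfrac12 x^{-1/2})\,du}{\sqrt{\eta^2+(\tfrac12-u)^2}},\qquad
I_-=\int_{1/2}^\infty\frac{ux^{-u}\,du}{\sqrt{\eta^2+(\tfrac12-u)^2}}.
\]
Now $I_0=\tfrac{1}{2\sqrt{x}}\arsinh\tfrac{1}{2\eta}$ exactly, and since $\arsinh t-\log t$ is decreasing, the entire $\eta$-dependence is captured in one step as $\tfrac12\log(1/\eta)$ plus a bounded remainder. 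In $I_+$ the numerator vanishes at $u=\tfrac12$ and is nonnegative for $u\geq 1/L$, so the denominator may be replaced by $\tfrac12-u$; the substitution $v=\tfrac12-u$ then yields $\tfrac12\int_0^{1/2}\tfrac{x^v-1}{v}\,dv=\tfrac12(\Ei(L/2)-\log(L/2)-\gamma)$, with no $E_1$ and no $\sinh$. The tail $I_-$ is bounded crudely by $\eta^{-1}\int_{1/2}^\infty ux^{-u}\,du$. The pieces combine with only a single one-variable numerical check at $L=7$, and the argument works uniformly for all $0<\eta\leq e$ with no case split on $\eta$.
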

\begin{proof}
We can write our integral $I$ as $I_0+I_++I_-$, where
\[I_0 = \int_0^{\frac{1}{2}} \frac{\frac{1}{2} x^{-\frac{1}{2}}}{\sqrt{\eta^2+\left(\frac{1}{2}-u\right)^2}}du,\;\;\;
 I_+ = \int_{0}^{\frac{1}{2}} \frac{u x^{-u} - \frac{1}{2} x^{-\frac{1}{2}}}{\sqrt{\eta^2+\left(\frac{1}{2}-u\right)^2}}du,\;\;\;I_- = 
\int_{\frac{1}{2}}^\infty \frac{u x^{-u}}{\sqrt{\eta^2+\left(\frac{1}{2}-u\right)^2}}du.\]

\noindent {\bf On $I_0$.} By a change of variables $v=\frac{1}{2}-u$, $I_0 = \frac{1}{2 \sqrt{x}} \int_0^{\frac{1}{2}} \frac{dv}{\sqrt{\eta^2+v^2}}  = \frac{\arsinh \frac{1}{2\eta}}{2\sqrt{x}}$.

\noindent{\bf  On $I_+$.}
We would like to replace the denominator in  $I_+$ by just $(\frac{1}{2}-u)$. That is straightforward for an upper bound when the
numerator is non-negative.
Let $\epsilon=1/L$. For $u\geq 1/L$, since $u x^{-u}$ is decreasing, we know that
$u x^{-u} - \frac{1}{2} x^{-1/2}\geq 0$.  For $u<1/L$, we bound $u x^{-u} - \frac{1}{2} x^{-1/2}\leq u x^{-u}$ first, and then change denominators. Thus
%\textcolor{red}{Clearly $u x^{-u}$ is increasing for $0\leq u\leq \epsilon$ and decreasing for $u\geq \epsilon$. Thus $u x^{-u} - \frac{1}{2} x^{-\frac{1}{2}}\leq 0$ for 
%$0 \leq u\leq \epsilon$, and 
%$u x^{-u} - \frac{1}{2} x^{-\frac{1}{2}}\geq 0$ for 
%$\epsilon\leq u\leq \frac{1}{2}$.} 
\[I_+\leq \int_{\epsilon}^{\frac{1}{2}} \frac{u x^{-u} - \frac{1}{2} x^{-\frac{1}{2}}}{\frac{1}{2}-u}du
+ \int_{0}^{\epsilon} \frac{u x^{-u}}{\frac{1}{2}-u} du =
I_+^* + \frac{x^{-\frac{1}{2}}}{2} \int_0^\epsilon 
\frac{du}{\frac{1}{2}-u}  = 
I_+^* +\frac{-\log(1-2\epsilon)}{2\sqrt{x}} ,\] 
where $I_+^* = \int_0^{\frac{1}{2}} \frac{u x^{-u} - \frac{1}{2} x^{-\frac{1}{2}}}{\frac{1}{2} - u} du$.
 Now, by a change of variables $v=\frac{1}{2}-u$,
 \[I_+^* 
= \frac{1}{\sqrt{x}} \int_0^{\frac{1}{2}} \left(\frac{\frac{1}{2} (x^v-1)}{v} - x^v\right) dv = \frac{1}{\sqrt{x}}  \left(\frac{1}{2} \left(\Ei\left(\frac{L}{2}\right)
-\log \frac{L}{2} - \gamma\right)
- \frac{\sqrt{x}-1}{L}\right)
\]
by  $\int_0^1 \frac{e^{y t} -1}{t} dt = \Ei(y) - \log y-\gamma $ for $y>0$
\cite[(6.2.3), (6.2.7)]{zbMATH05765058}.

 \noindent {\bf On $I_-$.} We bound simply $I_-\leq \frac{1}{\eta} \int_{\frac{1}{2}}^{{\infty}} u x^{-u} du =
 \frac{1}{\eta} \left(\frac{1}{2L} + \frac{1}{L^2}\right)\frac{1}{\sqrt{x}}$.

We take totals: \[I\leq 
\left(\frac{\frac{1}{2} \Ei(L/2)}{e^{L/2}} - \frac{1}{L}\right) + 
\frac{1}{\sqrt{x}} \left(\frac{\arsinh \frac{1}{2\eta}}{2}
- \frac{\log L}{2}+\frac{\log 2 - \gamma}{2} - \frac{\log\left(1 - \frac{2}{L}\right)}{2}+\frac{1}{L}
+ {\frac{1+\frac{2}{L}}{2\eta L}}
\right).
\]

We know $\arsinh t - \log t$ is decreasing
because $\arsinh' t = \frac{1}{\sqrt{t^2+1}} < \frac{1}{t}$; hence, by $\eta\leq e$, {$\arsinh \frac{1}{2\eta} \leq \log \frac{1}{\eta} + \arsinh \frac{1}{2e} +1$}. 
Since 
$f(y) = \frac{\arsinh \frac{1}{2 e} + 1 - \log y+\log 2 -\gamma}{2} -\frac{\log\left(1 - \frac{2}{y}\right)}{2} + \frac{1}{y}$
is decreasing and $L\geq 7$, $f(L)\leq f(7)<0$.
We conclude \eqref{eq:arguc1} holds. By Lemma \ref{lem:bettnist}, \eqref{eq:arguc2} follows.
\end{proof}

\begin{lemma}\label{lem:convexistan}
Let $G:[0,a]\to \mathbb{R}$ be a $C^2$ function with 
$G(0)=0$. Assume $G''$ is increasing on $[0,a]$. Let $x>1$. Then
$$\int_0^a G(t) x^{-t} dt \leq \frac{G'(0)}{\log^2 x} +
\frac{\Delta}{\log^3 x} -  \left(\frac{a M}{x^a \log x} + \frac{G'(a)}{x^a \log^2 x}
+ \frac{\Delta}{x^a \log^3 x}\right),
$$
where $\Delta = \frac{1}{a} (G'(a) - G'(0))$ and
$M = \frac{1}{2} (G'(0) + G'(a))$. Note that $G(a)\leq a\cdot M$. 
\end{lemma}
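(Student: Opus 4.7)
The plan is to prove a pointwise bound of the form $G(t)\le G'(0)\,t + \tfrac{\Delta}{2} t^{2}$ on $[0,a]$ and then integrate against $x^{-t}$ explicitly. The miracle is that the resulting upper bound matches the claimed right-hand side exactly -- no slack is lost -- so the work is essentially algebraic once the pointwise bound is established.

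First I would show that $G'(t)\le G'(0) + \Delta t$ on $[0,a]$. Set $H(t)=G'(t)-G'(0)-\Delta t$; then $H(0)=0$, and $H(a)=G'(a)-G'(0)-a\Delta = 0$ by the definition of $\Delta$. Its derivative $H'(t)=G''(t)-\Delta$ is increasing, because $G''$ is increasing by hypothesis. A function whose derivative is monotone non-decreasing and which vanishes at both endpoints of an interval must be non-positive on that interval (either $H'\equiv 0$, giving $H\equiv 0$, or $H'$ is negative then positive, forcing $H$ to dip below $0$ and return). Hence $H(t)\le 0$, i.e. $G'(t)\le G'(0)+\Delta t$. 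Integrating from $0$ to $t$ and using $G(0)=0$ gives the desired $G(t)\le G'(0)\,t + \tfrac{\Delta}{2} t^{2}$.

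Next I would evaluate the two elementary integrals
\begin{align*}
\int_0^a t\,x^{-t}\,dt &= \frac{1}{\log^2 x} - \frac{x^{-a}}{\log^2 x} - \frac{a x^{-a}}{\log x}, \\
\int_0^a t^{2}\,x^{-t}\,dt &= \frac{2}{\log^3 x} - \frac{2 x^{-a}}{\log^3 x} - \frac{2a x^{-a}}{\log^2 x} - \frac{a^{2} x^{-a}}{\log x},
\end{align*}
by two iterations of integration by parts, and assemble
\[
\int_0^a \Bigl(G'(0)\,t + \tfrac{\Delta}{2} t^{2}\Bigr) x^{-t}\,dt = \frac{G'(0)}{\log^2 x} + \frac{\Delta}{\log^3 x} - \frac{x^{-a}}{\log x}\Bigl(a G'(0) + \tfrac{a^{2}\Delta}{2}\Bigr) - \frac{G'(0)+a\Delta}{x^a \log^2 x} - \frac{\Delta}{x^a \log^3 x}.
\]
Finally I would recognise $G'(0)+a\Delta = G'(a)$ and $a G'(0) + \tfrac{a^{2}\Delta}{2} = a\bigl(G'(0)+\tfrac{a\Delta}{2}\bigr) = a M$, after which the expression matches the claimed bound on the nose.

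No step looks like a real obstacle: the monotonicity argument for $H$ is the only non-routine piece, and it is a standard consequence of the mean value theorem applied to $H'$. The supplementary remark $G(a)\le a M$ is an immediate corollary, since $G(a)=\int_0^a G'(t)\,dt \le \int_0^a (G'(0)+\Delta t)\,dt = a G'(0) + \tfrac{a^{2}\Delta}{2} = aM$.
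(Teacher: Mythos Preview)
Your proof is correct and essentially identical to the paper's: both bound $G'(t)$ by the secant line $G'(0)+\Delta t$ (the paper phrases this as ``$G'$ is convex'', you spell out the argument via $H$), integrate to get $G(t)\le G'(0)t+\tfrac{\Delta}{2}t^2$, and then evaluate the same two integrals by parts. The only difference is cosmetic.
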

\begin{proof}
Since $G'$ is convex on $[0,a]$, $G'(t)\leq \frac{t}{a} (G'(a) - G'(0)) + G'(0)$ for all $0\leq t\leq a$. Hence, $G(t) = \int_0^t G'(u) du\leq q t^2 + G'(0) t$ for
$q =  \frac{G'(a)-G'(0)}{2 a}$. By repeated integration by parts,
\[\int_0^a t x^{-t} dt = %\left. -\frac{t x^{-t}}{\log x}\right|_0^a -
%\left. \frac{x^{-t}}{\log^2 x}\right|_0^a = 
\frac{1}{\log^2 x} - \frac{a}{x^a \log x} - \frac{1}{x^a \log^2 x},\; \int_0^a t^2 x^{-t} dt =  %\left. -\frac{t^2 x^{-t}}{\log x}\right|_0^a 
%- \left. \frac{2 t x^{-t}}{\log^2 x}\right|_0^a 
%- \left. \frac{2 x^{-t}}{\log^3 x}\right|_0^a = 
\frac{2}{\log^3 x} - \frac{a^2}{x^a \log x} - \frac{2 a}{x^a \log^2 x} - 
\frac{2}{x^a \log^3 x}.\]
Therefore,
\[\begin{aligned}
\int_0^a G(t) x^{-t} dt &\leq \frac{G'(0)}{\log^2 x} + \frac{2 q}{\log^3 x}
- \frac{q a^2 + G'(0) a}{x^a \log x} - \frac{2 q a + G'(0)}{x^a \log^2 x} -
\frac{2 q}{x^a \log^3 x}.
\end{aligned}\]
Here  $2 q = \frac{1}{a} (G'(a)-G'(0))$, $q a^2 + G'(0) a = \frac{a}{2} (G'(0)+G'(a))$ and $2 a q + G'(0) = G'(a)$.
\begin{comment}
\[\begin{aligned}&\int_0^a \left( G'(0) t + \frac{G'(a)-G'(0)}{2 a} t^2 \right) x^{-t} dt\\
&= \frac{G'(0)}{\log^2 x} + \frac{G'(a) -G'(0)}{a \log^3 x} -
\left(\frac{G'(0) a + \frac{G'(a) - G'(0)}{2 a }\cdot a^2}{x^a \log x} + 
\frac{G'(0) + \frac{G'(a)-G'(0)}{2 a} \cdot 2 a}{x^a \log^2 x} +
\frac{\frac{G'(a)-G'(0)}{2 a}}{x^a \log^3 x}\right).
    \end{aligned}\]

By integration by parts (twice) and the fact that $G''$ is increasing,
\[\begin{aligned}
\int_0^a G(t) x^{-t} dt &= -\left. G(t) \frac{x^{-t}}{\log x} \right|_0^a
- \left. G'(t) \frac{x^{-t}}{\log^2 x}\right|_0^a
+ \frac{1}{\log^2 x} \int_0^a G''(t) x^{-t} dt \\
&\leq \frac{G'(0)}{\log^2 x} + \frac{G''(a)}{\log^2 x} \int_0^a x^{-t} dt - \frac{G(a)}{x^a \log x} - \frac{G'(a)}{x^a \log^2 x}.
\end{aligned}\]
%By $\int_0^a x^{-t} dt = \left. \frac{-x^{-t}}{\log x} \right|_0^a$, we are done.
\end{comment}
\end{proof}

\subsection{Our weights in terms of a special function}\label{subs:darf2}

Here is a reasonably ``closed-form'' expression for our weights $\widehat{\varphi_{\rho}^+}$,
$\widehat{\varphi_{\rho}^-}$ on the integers. We use it only for plotting Figure \ref{fig:grahamvaaler}.

The {\em Lerch transcendent} $\Phi(z,s,\alpha)$ (not to be confused with $\Phi_\lambda$) is a special function defined by
\begin{equation}\label{eq:lerch}
\Phi(z,s,\alpha) = \sum_{n=0}^\infty \frac{z^n}{(n+\alpha)^s}\end{equation}
for $|z|<1$, provided that $s\in \mathbb{Z}_{>0}$ and
$\alpha\not\in \mathbb{Z}_{\leq 0}$  (or some other conditions that we need not
worry about)
\cite[\S 25.14]{zbMATH05765058}. For $|z|<1$ and $s\in \mathbb{Z}_{>0}$, $\sin \pi z \cdot \Phi(z,s,\alpha)$ tends
to a limit as $\alpha$ approaches a non-positive integer.
%, and so
%\eqref{eq:argor1} and \eqref{eq:argor2} still make sense for
%$\alpha\in\mathbb{Z}_{\leq 0}$. 

\begin{lemma}
 Let $\varphi_{\rho}^+$, $\varphi_{\rho}^-$ be as in \eqref{eq:arnor1}. Then, for $\rho\ne 0$,
$$\widehat{\varphi_{\rho}^+}(z) = \left(\frac{\sin \pi z}{\pi}\right)^2 \left(\Phi(e^{-\rho},2,z) + \rho \Phi(e^{-\rho},1,z)
- \frac{\rho/z}{1 - e^{-\rho}} \right),$$
where $\Phi$ is the Lerch transcendent \eqref{eq:lerch}.
Moreover,  $\widehat{\varphi_{\rho}^-}(z) = \widehat{\varphi_{\rho}^+}(z)
- \frac{\sin^2 \pi z}{(\pi z)^2}$.
\end{lemma}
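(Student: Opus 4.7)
The plan is to reduce to the Graham--Vaaler explicit formula already obtained in the proof of Proposition~\ref{prop:grahamvaaler}, then repackage the resulting rational series as values of the Lerch transcendent. The second claim, $\widehat{\varphi_\rho^-}(z) = \widehat{\varphi_\rho^+}(z) - (\sin\pi z/\pi z)^2$, is immediate from the identity $L_\rho(z) = M_\rho(z) - (\sin\pi z/\pi z)^2$ noted in the proof of Proposition~\ref{prop:grahamvaaler}, together with the Fourier-inversion identifications $\widehat{\varphi_\rho^+}(z) = M_\rho(z)$ and $\widehat{\varphi_\rho^-}(z) = L_\rho(z)$ for $\rho > 0$. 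So the real work is the first identity.

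For $\rho > 0$, Fourier inversion gives $\widehat{\varphi_\rho^+}(z) = M_\rho(z)$, and \eqref{eq:gowot} expresses this as $(\sin\pi z/\pi)^2$ times
\[
\frac{1}{z^2} + \sum_{n\geq 1}\frac{e^{-\rho n}}{(z-n)^2} - \rho\sum_{n\geq 1}\frac{e^{-\rho n}}{z-n} + \frac{\rho}{z}\sum_{n\geq 1}e^{-\rho n}.
\]
The next step is to match each term against the Lerch series $\Phi(w,s,a) = \sum_{n\geq 0} w^n/(n+a)^s$. After a suitable index manipulation and absorbing the separated $n=0$ contributions into the $n=0$ terms of the respective $\Phi$-series, the first two sums assemble into $\Phi(e^{-\rho}, 2, \cdot)$ and $\rho\,\Phi(e^{-\rho}, 1, \cdot)$ respectively; the last, geometric, sum equals $\rho e^{-\rho}/(1-e^{-\rho})$ and combines with the leftover $\rho/z$ pieces to produce the explicit constant $-\rho/(z(1-e^{-\rho}))$. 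The prefactor $(\sin\pi z)^2$ vanishes to second order at every integer, which cancels the double pole of $\Phi(e^{-\rho}, 2, \cdot)$ and the simple pole of $\Phi(e^{-\rho}, 1, \cdot)$ at the non-positive integers, so the right-hand side is analytic wherever $\widehat{\varphi_\rho^+}$ is. For $\rho < 0$, the defining Lerch series diverges outright since $|e^{-\rho}|>1$, so one interprets $\Phi(e^{-\rho}, s, \cdot)$ by analytic continuation in its first argument; the scaling relation $\varphi_\rho^\pm(t) = \varphi_{|\rho|}^\pm(\sgn(\rho) t)$, which by Fourier scaling gives $\widehat{\varphi_\rho^\pm}(z) = \widehat{\varphi_{|\rho|}^\pm}(\sgn(\rho) z)$, then reduces the general case to the case $\rho > 0$ already handled.

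The main obstacle is purely bookkeeping: the Lerch series begins at $n = 0$ whereas the Graham--Vaaler sums begin at $n = 1$, with the $n=0$ data encoded separately in the $1/z^2$ correction and in the $\rho/z$ derivative correction, so the rearrangement requires adding and subtracting $n=0$ contributions and then tracking the signs carefully until the leftover pieces coalesce into the claimed constant $-\rho/(z(1-e^{-\rho}))$. None of this is deep; it is a careful algebraic identification, and the payoff is the compact closed form suitable for numerical evaluation and plotting (as in Figure~\ref{fig:grahamvaaler}).
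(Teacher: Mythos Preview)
Your approach is exactly the paper's: Fourier-invert back to $M_\rho$, insert the explicit series \eqref{eq:gowot}, repackage the resulting rational sums as Lerch transcendents, and handle the minorant via $L_\rho = M_\rho - (\sin\pi z/\pi z)^2$.

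One sign to watch, though. Your identity $\widehat{\varphi_\rho^+}(z) = M_\rho(z)$ is indeed what Fourier inversion from $\varphi_\rho^+(t)=\widehat{M_\rho}(-t)$ gives; the paper's proof instead writes $\widehat{\varphi_\rho^+}(z) = M_\rho(-z)$, and the displayed formula in the lemma is precisely $M_\rho(-z)$ in Lerch form. If you carry your computation through from $M_\rho(z)$ with the denominators $(z-n)^2$, $(z-n)$ in \eqref{eq:gowot}, you land on
\[
\left(\frac{\sin\pi z}{\pi}\right)^2\!\left(\Phi(e^{-\rho},2,-z)+\rho\,\Phi(e^{-\rho},1,-z)+\frac{\rho/z}{1-e^{-\rho}}\right),
\]
i.e., the stated formula with $z\mapsto -z$ throughout. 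This is immaterial for the lemma's sole use (plotting Figure~\ref{fig:grahamvaaler}), but your sketch as written does not literally arrive at the displayed identity --- the ``suitable index manipulation'' you allude to cannot turn $\sum_n e^{-\rho n}/(z-n)^2$ into $\Phi(e^{-\rho},2,z)=\sum_{n\ge 0} e^{-\rho n}/(n+z)^2$ without a reflection. Be explicit about which sign you are adopting.
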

\begin{proof}
We defined $\varphi_\rho^+(t) = \widehat{M_\rho}(-t)$, and so
$\widehat{\varphi_\rho^+}(z) = M_\rho(-z) = \left(\frac{\sin \pi z}{\pi}\right)^2 f_\rho(-z)$, where
$$\begin{aligned}f_\rho(z) &= \sum_n \left(\frac{e^{-\rho n}}{(n-z)^2} + 
\frac{\rho e^{-\rho n}}{n-z} + \frac{\rho e^{-\rho n}}{z}\right) + \frac{1}{z^2} = \sum_{n=0}^\infty \frac{(e^{-\rho})^n}{(n-z)^2} + \rho
\sum_{n=0}^\infty \frac{(e^{-\rho})^n}{n-z} + \frac{\rho}{z} + 
\frac{\rho/z}{e^{\rho}- 1}\\
&=\Phi(e^{-\rho},2,-z) + \rho \Phi(e^{-\rho},1,-z)
+ \frac{\rho/z}{1 - e^{-\rho}}
.\end{aligned}$$
The statement on $\widehat{\varphi_\rho^-}(t)$ is as in 
 \cite[(3.7)]{zbMATH03758875}.
%whereas $m_\lambda(z) = \left(\frac{\sin \pi z}{\pi}\right)^2 
%\left(f(z)-\frac{1}{z^2}\right)$.
\end{proof}

\bibliographystyle{alpha}
%\printbibliography
\bibliography{chirhelf}
\end{document}